\renewcommand*{\fps@figure}{htb}
\numberwithin{equation}{section}
\pgfplotsset{compat=1.18}
\tikzset{middlearrow/.style={
			decoration={markings,
					mark= at position 0.55 with {\arrow{#1}} ,
				},
			postaction={decorate}
		}
}
\newtheorem{thmintro}{Theorem}
\newtheorem{corollaryintro}{Corollary}
\newtheorem{question}{Question}
\newtheorem{theorem}{Theorem}[section]
\newtheorem{corollary}[theorem]{Corollary}
\newtheorem{lemma}[theorem]{Lemma}
\newtheorem{proposition}[theorem]{Proposition}
\newtheorem*{proposition*}{Proposition}
\theoremstyle{definition}
\newtheorem{definition}[theorem]{Definition}
\newtheorem{remark}[theorem]{Remark}
\newtheorem{example}[theorem]{Example}
\newtheorem{notation}[theorem]{Notation}
\newtheorem{definitionintro}{Definition}
\NewDocumentCommand{\noproof}{}{%
  \pushQED{\qed}\AddToHookNext{env/\UseName{@currenvir}/end}{\popQED}%
}
\newcommand{\N}{\ensuremath{{\mathbb{N}}}}
\newcommand{\Z}{\ensuremath{{\mathbb{Z}}}}
\newcommand{\R}{\ensuremath{{\mathbb{R}}}}
\newcommand{\iso}{\cong}
\newcommand{\epi}{\twoheadrightarrow}
\NewDocumentCommand{\shortexactsequence}{m O{} m O{} m O{}}{
	\begin{tikzcd}[ampersand replacement=\&, column sep=small]
		1 \arrow[r] \& #1 \arrow[r, "#2"] \& #3 \arrow[r, "#4"] \& #5 \arrow[r] \& 1#6
	\end{tikzcd}}
\newcommand{\subgraph}{\subseteq}
\DeclarePairedDelimiter\abs||
\DeclarePairedDelimiter\norm\Vert\Vert
\DeclarePairedDelimiter\ceil\lceil\rceil
\DeclarePairedDelimiter\set\{\}
\NewDocumentCommand{\range}{O{1} m}{%
	\ifthenelse{\equal{#1}{1} \AND \equal{#2}{3}}{%
		\set{1,2,3}%
	}{%
		\ifthenelse{\equal{#1}{1} \AND \equal{#2}{2}}{%
			\set{1,2}%
		}{%
			\set{#1, \dots, #2}%
		}%
	}
}
\DeclarePairedDelimiterX\presentation[2]\langle\rangle{
	#1 \ifblank{#2}{}{ \mid #2}
}
\DeclarePairedDelimiter\cardinality\lvert\rvert
\newcommand\restrict[2]{{%
\left.\kern-\nulldelimiterspace %
#1 %
\vphantom{|} %
\right|_{#2} %
}}
\newcommand{\ee}[1]{\mathrm{E}_{#1}} %
\newcommand{\vv}[1]{\mathrm{V}_{#1}} %
\newcommand{\flag}[1]{\Delta(#1)}
\newcommand{\Link}[1]{\operatorname{Lk}(#1)} 
\newcommand{\Star}[1]{\operatorname{St}(#1)} 
\newcommand{\Pal}[1]{\operatorname{Pal}(#1)}
\newcommand{\supp}[1]{\operatorname{supp}(#1)}
\newcommand{\free}[1]{F(#1)}
\newcommand{\monoid}[1]{(#1 \cup #1^{-1})^\ast}
\newcommand{\freeid}{=_F}
\NewDocumentCommand{\groupid}{m}{=_{#1}}
\newcommand{\identical}{\equiv}
\NewDocumentCommand{\length}{m}{\abs{#1}}
\NewDocumentCommand{\area}{O{} m}{\operatorname{Area}_{#1}(#2)}
\newcommand{\calp}{\mathcal{P}}
\newcommand{\cay}[1]{\operatorname{Cay}(#1)}
\newcommand{\compl}[1]{#1^{\mathrm{c}}} %
\NewDocumentCommand{\finitetype}{o}{\ensuremath{\IfValueTF{#1}{\mathcal F_{#1}}{\mathcal F}}\xspace}
\newcommand{\Dehn}[1]{\delta_{#1}}
\NewDocumentCommand{\D}{s m}{%
    \IfBooleanTF{#1}{%
        \ensuremath{(\mathcal D_{#2})}%
    }{%
    \ifnum0<0#2\relax%
        \ref{D#2}%
    \else%
        \D*{#2}%
    \fi%
    }%
}
\renewcommand{\implies}{\Rightarrow}
\newcommand{\raag}[1]{A_{#1}} %
\newcommand{\raaggens}[1][\Gamma]{S_{#1}}
\newcommand{\raagpres}[1][\Gamma]{\mathcal P_{#1}}
\newcommand{\bbg}[1]{\mathit{BB}_{#1}} %
\newcommand{\bbggens}{S_T}
\newcommand{\bbgpres}[1][\Gamma]{\mathcal Q_{#1}}
\newcommand{\SB}[1]{\mathit{SB}_{#1}}
\NewDocumentCommand{\dehnexp}{}{{d(\Gamma)}}
\NewDocumentCommand{\pushdown}{O{0} m}{\operatorname{push}_{#1}(#2)}
\NewDocumentCommand{\cycpushdown}{O{} m}{\operatorname{cycpush}_{#1}(#2)}
\newcommand{\colword}[2]{[{#1}]_{#2}}
\NewDocumentCommand{\colwordproduct}{O{w} O{1} m}{\colword{#1_{#2}}{a_{#2}} \cdots \colword{#1_{#3}}{a_{#3}}}
\newcommand{\transitionword}[2]{\tau_{#1,#2}}
\newcommand{\colraaggens}[1][\Gamma]{S_{#1}^{\text{col}}} %
\NewDocumentCommand{\balance}{m O{\Gamma} m}{#1(#2#3^{-1})}
\newcommand{\boldword}{\mathbf w}
\newcommand{\bolddiag}{\mathbf D}
\newcommand{\numedges}[1]{\cardinality{E_{#1}}}
\newcommand{\numvertices}[1]{\cardinality{V_{#1}}}
\newcommand{\density}[1]{\rho(#1)}
\NewDocumentCommand{\infrel}{o}{R^{\infty}\IfValueTF{#1}{(#1)}{}}
\NewDocumentCommand{\infcolrel}{o}{R_{\text{col}}^{\infty}\IfValueTF{#1}{(#1)}{}}
\NewDocumentCommand{\infpres}{o}{\calp^{\infty}\IfValueTF{#1}{(#1)}{}}
\NewDocumentCommand{\infcolpres}{o}{\calp_{\text{col}}^{\infty}\IfValueTF{#1}{(#1)}{}}
\newcommand{\height}{\varphi_\Gamma}
\newcommand{\cayheight}{\tilde\varphi_\Gamma}
\newcommand{\flatarea}[1]{\operatorname{Area\!\flat}(#1)}
\newcommand{\flatdehn}{\delta_\flat}
\newcommand{\flatnorm}[1]{\norm{#1}_{\flat}}
\newcommand{\altwords}{\operatorname{Alt}_\Gamma}
\keywords{Dehn functions, filling invariants, Bestvina--Brady groups, subgroups of right-angled Artin groups, asymptotic cones}
\subjclass[2020]{Primary: 20F69.  Secondary: 20F05, 20F38, 20F65, 51F30}
\begin{document}

\title{Complete classification of the Dehn functions of Bestvina--Brady groups}

\author{Yu-Chan Chang}
\address{Department of Mathematics, The Ohio State University, 231 W. 18th Ave., Columbus, OH 43210, USA}
\email{chang.2628@osu.edu}

\author{Jer\'{o}nimo Garc\'{i}a-Mej\'{i}a}
\address{Mathematical Institute, Andrew Wiles Building, Observatory Quarter, University of Oxford, Oxford OX2 6GG, United Kingdom}
\email{jeronimo.garcia-mejia@maths.ox.ac.uk}

\author{Matteo Migliorini}
\address{Faculty of Mathematics, Karlsruhe Institute of Technology, Englerstra\ss e 2, 76131 Karlsruhe, Germany}
\email{matteo.migliorini@kit.edu}

\begin{abstract}
	We prove that the Dehn function of every finitely presented Bestvina--Brady group grows as a linear, quadratic, cubic, or quartic polynomial. In fact, we provide explicit criteria on the defining graph to determine the degree of this polynomial. As a consequence, we identify an obstruction that prevents certain Bestvina--Brady groups from admitting a CAT(0) structure.
\end{abstract}

\maketitle

\section{Introduction}

Bestvina and Brady introduced a class of subgroups of right-angled Artin groups, now known as \emph{Bestvina--Brady groups}, that exhibit exotic finiteness properties \cite{BestvinaBrady}. For every finite simplicial graph $\Gamma$, they defined a group $\bbg \Gamma$ whose finiteness properties are determined by the homotopy type of the flag complex $\flag \Gamma$ associated to $\Gamma$: they proved that $\bbg \Gamma$ is of (finiteness) type $\finitetype[n]$ if and only if $\flag \Gamma$ is $(n-1)$-connected.
Recall that types $\finitetype[1]$ and $\finitetype[2]$ correspond to the more familiar notions of being finitely generated and finitely presented, respectively.

The family of Bestvina--Brady groups includes the Stallings–Bieri groups $\SB n$, which were the first known examples of groups of type $\finitetype[n-1]$ but not of type $\finitetype[n]$. Stallings first constructed $\SB 3$ \cite{Stallings}, and Bieri later generalised this to $\SB n$ for $n > 3$ \cite{Bieri}. Each $\SB n$ can be realised as $\bbg \Gamma$, where $\Gamma$ is the join of $n$ copies of the graph consisting of two non-adjacent vertices.

In this work, we study the Dehn functions of finitely presented Bestvina--Brady groups. The \emph{Dehn function} of a finitely presented group is an important quasi-isometry invariant, which bounds the number of relations that must be applied to reduce a word representing the identity to the trivial word; see \cref{sec:dehn-functions} for a precise definition. In this sense, the Dehn function can be interpreted as a quantitative version of finite presentability. Geometrically, the Dehn function of a finitely presented group $G$ provides an upper bound on the area required to fill a loop in  the universal cover of a finite $2$-complex with fundamental group $G$. Notably, a group is hyperbolic if and only if its Dehn function is linear \cite{gromovsub}, whereas any finitely presented non-hyperbolic group has at least quadratic Dehn function \cite{gromovsub,Bowditchsubquad}.

Dison proved that the Dehn function of a finitely presented Bestvina--Brady group is bounded above by a quartic polynomial \cite{Dison}; see \cite{PushingRAAGs} for a purely geometric proof. This naturally raises the question:

\begin{question}\label{question:polynomial-dehn}
	Is the Dehn function of every finitely presented Bestvina--Brady group equivalent to a polynomial?
\end{question}

Brady presented examples of quadratic, cubic, and quartic Dehn functions \cite{Geometryfg}. Subsequently, Dison, Elder, Riley, and Young showed that Stallings' group $\SB 3$ has quadratic Dehn function \cite{WMTYStallings2009}. Later, Carter and Forester, using geometric arguments, generalised this result by showing that $\SB n$ has quadratic Dehn function for every $n \geq 3$ \cite{CarterForester}. In fact, more generally, they showed that if $\Gamma$ is a join of three nonempty graphs, then the Dehn function of $\bbg\Gamma$ is quadratic. More recently, Chang proved that the Dehn functions of certain Bestvina--Brady groups can be determined directly from the defining graphs \cite{chang2021dehnbb}.

In the present work, we complete the picture giving a positive answer to \cref{question:polynomial-dehn}, by showing that the Dehn function of every finitely presented Bestvina--Brady group is equivalent to a polynomial of degree $d \in \{1,2,3,4\}$.
Furthermore, we provide conditions on the defining graph of a finitely presented Bestvina--Brady group that determine the precise degree $d$; see \cref{main-theorem}. In this sense, we obtain an effective classification of finitely presented Bestvina--Brady groups in terms of their Dehn functions.

Besides providing an answer to \cref{question:polynomial-dehn}, our main result contributes to the understanding of the Dehn functions of subgroups of right-angled Artin groups; see \cite{Bridson13,bradysoroko2019dehn} for more examples other than Bestvina--Brady groups.

\subsection{Main result}

It is known that $\bbg\Gamma$ is hyperbolic if and only if $\Gamma$ is a tree, as can be seen directly by looking at the presentation given in \cite{DickLearyPres}. This characterises all the graphs $\Gamma$ such that $\bbg\Gamma$ has linear Dehn function. However, providing a general criterion that distinguishes Bestvina--Brady groups with quadratic, cubic, and quartic Dehn functions is a much more challenging task.

Recall that a graph $\Gamma$ is called \emph{reducible} if it is a join $\Gamma_1 \ast \Gamma_2$, where $\Gamma_1$ and $\Gamma_2$ are nonempty subgraphs\footnote{All subgraphs we consider are assumed to be \emph{induced} subgraphs.}; otherwise, it is called \emph{irreducible}.
Our main result gives a criterion on the defining graph $\Gamma$ that determines the precise Dehn function of $\bbg \Gamma$ in terms of the \emph{maximal reducible subgraphs} of $\Gamma$, that is, reducible subgraphs that are maximal with respect to inclusion among all reducible subgraphs of $\Gamma$.
To state this full characterisation, we introduce the following definitions.

\begin{definitionintro}[Essentially $2$-reducible graph]\label{def:essential}
	A graph is called \emph{essentially $2$-reducible} if it is a join of two irreducible subgraphs, each with at least two vertices; equivalently, if it is neither a cone graph nor a join of three or more subgraphs.
\end{definitionintro}

\begin{definitionintro}
	Let $\Gamma$ be a finite simplicial graph whose associated flag complex $\flag\Gamma$ is simply connected. If this is the case, we say that $\Gamma$ has property \crtcrossreflabel{\D*1}[D1], and additionally, it has property
	\begin{enumerate}
		\item[{\crtcrossreflabel{\D*2}[D2]}] if $\Gamma$ is not a tree;
		\item[{\crtcrossreflabel{\D*3}[D3]}] if $\Gamma$ contains a maximal reducible subgraph that is essentially $2$-reducible;
		\item[{\crtcrossreflabel{\D*4}[D4]}] if $\Gamma$ contains a maximal reducible subgraph whose associated flag complex is not simply connected.
	\end{enumerate}
\end{definitionintro}

We are now ready to state our main result.

\begin{thmintro}\label{main-theorem}
	Let $\Gamma$ be a finite simplicial graph such that the associated flag complex $\flag\Gamma$ is simply connected. Let $\dehnexp$ be the maximal $\alpha \in \{1,2,3,4\} $ such that $\Gamma$ has property \D {\alpha}. Then the Dehn function of the Bestvina--Brady group $\bbg \Gamma$ satisfies
	\[
		\Dehn{\bbg \Gamma}(n) \asymp n^{\dehnexp}.
	\]
\end{thmintro}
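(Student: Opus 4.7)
The plan is to establish matching upper and lower bounds $\Dehn{\bbg \Gamma}(n) \asymp n^{\dehnexp}$ by separately treating each regime $\dehnexp \in \{1,2,3,4\}$. Concretely, I would prove that each property \D*{\alpha} forces $\Dehn{\bbg \Gamma}(n) \succeq n^\alpha$, and that the failure of \D*{\alpha+1} forces $\Dehn{\bbg \Gamma}(n) \preceq n^\alpha$, with the convention that $(\mathcal D_5)$ is vacuous. The quartic upper bound is Dison's result, so the work concentrates on the cubic upper bound and on constructing sharp lower-bound word families in each regime.

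\textbf{The two easy regimes.} If $\Gamma$ is a tree then $\bbg\Gamma$ is hyperbolic, giving the linear bound; otherwise $\bbg\Gamma$ is not hyperbolic and the Bowditch--Gromov gap yields a quadratic lower bound. For the quadratic upper bound I would assume no maximal reducible subgraph of $\Gamma$ is essential, so every such subgraph is a cone or a join of at least three nonempty factors. Cone subgraphs contribute only linearly, since the cone vertex commutes with everything, and triple-or-more joins contribute quadratically by Carter--Forester. The key step is to reduce an arbitrary null-homotopic word to one supported on a single maximal reducible subgraph at quadratic cost, via a combinatorial push-down procedure compatible with the alternative presentation $\altpres$ introduced earlier in the paper.

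\textbf{The cubic and quartic lower bounds.} For the cubic lower bound, I would fix an essential reducible maximal subgraph $\Lambda = \Lambda_1 \ast \Lambda_2$ and a vertex $v \notin \Lambda$ which, by maximality, fails to commute with some $v_1 \in \Lambda_1$ and some $v_2 \in \Lambda_2$. I would construct words of linear length encoding an iterated commutator of a long element of $\bbg\Lambda$ with $v$, and argue by a slicing or asymptotic-cone estimate that every van Kampen diagram has area $\Omega(n^3)$: each of the $n$ occurrences of $v^{\pm 1}$ must sweep across a quadratic region of a $\Lambda$-grid. For the quartic lower bound I would choose a maximal reducible subgraph $\Lambda$ with non-simply-connected flag complex, so that $\bbg\Lambda$ fails to be finitely presented; a null-homotopic loop in $\bbg\Gamma$ that is genuinely two-dimensional in the factors of $\Lambda$ and is then iterated along a $v$-direction should produce $\Omega(n^4)$ area, combining a Dison--Riley style distortion estimate with the cubic mechanism above.

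\textbf{The cubic upper bound and the main obstacle.} When \D{4} fails, every maximal reducible subgraph $\Lambda$ has simply connected flag complex, so $\bbg\Lambda$ is itself finitely presented and subject to the quadratic bound already established. I would refine Dison's quartic argument, which factors as a product of two nested quadratic fillings: under the failure of \D{4}, the inner filling is carried out within a single maximal reducible subgraph and can be replaced by a linear one, using the quadratic bound for $\bbg\Lambda$ together with the push-down procedure. The main obstacle is to organise this reduction cleanly---ensuring that the inner loops arising in Dison's decomposition are genuinely supported on maximal reducible subgraphs and that the push-down cost remains linear---which will require tight accounting with the invariants $\ruglen{\cdot}$ and $\extruglen{\cdot}$ that the paper introduces.
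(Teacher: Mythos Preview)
Your overall scaffolding (match upper and lower bounds in each regime, with the linear and quartic endpoints coming from known results) matches the paper, but there are genuine gaps in the two central pieces.

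\textbf{The cubic lower bound does not work as stated.} You propose to fix an essential maximal reducible subgraph $\Lambda$ together with a vertex $v \notin \Lambda$, and build words from an iterated commutator of a $\bbg\Lambda$ element with $v$. But such a $v$ need not exist: when $\Gamma$ itself is reducible and essential (the case of \cref{main-theorem-reducible-case}), $\Lambda = \Gamma$ and there is no outside vertex, yet the Dehn function is cubic. The paper's lower bound words $w_n = [w'_n, w''_n]$ are supported entirely in $\Lambda = \Lambda' \ast \Lambda''$; the mechanism is not a $v$-sweep but rather the observation that an \emph{almost-flat} van Kampen diagram for $w_n$ must contain many annuli (forced by the height constraint), and irreducibility of $\Lambda'$ and $\Lambda''$ with at least two vertices each forces these annuli to cross quadratically often at each of linearly many locations. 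The quartic lower bound in the paper is a refinement of the same construction, using the disconnectedness of $\Lambda'$ and $\Lambda''$ (implied by \D4) to promote a one-parameter family of points to a two-parameter grid, not an extra commutation with an outside vertex.

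\textbf{The cubic upper bound argument is circular.} You say that when \D4 fails, each maximal reducible $\Lambda$ has simply connected flag complex, so $\bbg\Lambda$ is finitely presented and ``subject to the quadratic bound already established''. But the quadratic bound is only established under failure of \D3; an essential $\Lambda$ with simply connected flag complex has cubic $\Dehn{\bbg\Lambda}$ by the very theorem you are proving. The paper does not reduce to $\bbg\Lambda$ at all; instead it introduces coloured words and diagrams, cuts along corridors to bound the number of colours, and then fills fundamental pieces (monochromatic words, coloured bigons, coloured commutators) directly, with the failure of \D3 or \D4 entering only at the level of these pieces. Also, the invariants $\ruglen{\cdot}$ and $\extruglen{\cdot}$ that you cite are not used anywhere in the paper's argument.
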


Therefore, property $\D \alpha$ is the obstruction to $\bbg \Gamma$ having Dehn function strictly less than a polynomial of degree $\alpha$. This can be seen by combining \cref{main-theorem} with the fact that property
$\D \beta$ implies property $\D \alpha$ for $\beta>\alpha$ on every graph $\Gamma$ with $\flag\Gamma$ simply connected.
To see this, note that if $\Gamma$ contains a reducible subgraph $\Lambda = \Lambda' \ast \Lambda''$ such that $\flag \Lambda$ is not simply connected, then both $\Lambda$ and $\Lambda'$ are disconnected, and therefore irreducible with at least two vertices. This implies that $\Lambda$ is essentially $2$-reducible, so $\D4 \implies \D3$. It is also clear that if $\Gamma$ contains an essentially $2$-reducible subgraph, then it contains a square and cannot be a tree, so $\D3 \implies \D2$. Finally, \D1 is satisfied by definition.

If the defining graph $\Gamma$ is reducible, then it coincides with its unique maximal reducible subgraph, meaning that $\Gamma$ cannot have \D4. Thus, \cref{main-theorem} implies the following result.

\begin{corollaryintro}\label{main-theorem-reducible-case}
	If $\Gamma$ is an essentially $2$-reducible graph such that $\flag \Gamma$ is simply connected, then $\bbg \Gamma$ has cubic Dehn function.
\end{corollaryintro}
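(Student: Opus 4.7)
The plan is to derive this corollary as an immediate specialisation of \cref{main-theorem}: it suffices to verify that the exponent $\dehnexp$ equals exactly $3$ under the stated hypotheses, and the conclusion follows.

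The key observation is that when $\Gamma$ is itself reducible, every reducible subgraph of $\Gamma$ is contained in $\Gamma$, so $\Gamma$ is the unique maximal reducible subgraph of itself. I would then check properties \D3 and \D4 directly against this observation. Since by hypothesis $\Gamma$ is essential, this unique maximal reducible subgraph is essential, so \D3 holds. On the other hand, the hypothesis that $\flag\Gamma$ is simply connected forces \D4 to fail, because the only maximal reducible subgraph of $\Gamma$, namely $\Gamma$ itself, has simply connected flag complex. Thus $\dehnexp = 3$, and applying \cref{main-theorem} yields $\Dehn{\bbg\Gamma}(n) \asymp n^{3}$.

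There is no real obstacle in this argument: once the main theorem is in hand, the corollary reduces to a direct bookkeeping check of the definitions of \D3 and \D4 together with the trivial remark that a reducible graph is its own unique maximal reducible subgraph. All the substantive content is already absorbed into \cref{main-theorem}.
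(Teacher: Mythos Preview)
Your proposal is correct and matches the paper's own argument essentially verbatim: the paper observes just before stating the corollary that a reducible $\Gamma$ is its own unique maximal reducible subgraph, so \D4 fails while \D3 holds by the essentiality hypothesis, and then invokes \cref{main-theorem}.
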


If $\Gamma$ is reducible but not essentially $2$-reducible, then the Dehn function of $\bbg\Gamma$ is at most quadratic. While this is a simple application of \cref{main-theorem}, it also follows from previously known results. Indeed, a reducible graph $\Gamma$ that is not essentially $2$-reducible is either a cone graph, namely, $\Gamma = \{c\} * \Lambda$ for some subgraph $\Lambda$, or it splits as a join of three or more subgraphs. In the first case, the group $\bbg \Gamma$ is isomorphic to the right-angled Artin group $\raag \Lambda$, which is a $\mathrm{CAT}(0)$ group \footnote{A \emph{$\mathrm{CAT}(0)$ group} is a group admitting a proper and cocompact action on a $\mathrm{CAT}(0)$ space by isometries.} \cite[Theorem 2.6]{charney-intro-raags}, and therefore has at most quadratic Dehn function \cite[Theorem 6.2.1]{bridsongeometry}. In the second case, the Dehn function of $\bbg \Gamma$ being quadratic is a consequence of \cite[Corollary 4.3]{CarterForester}.
Note that although we draw some inspiration from the work of Carter and Forester, our argument does not rely on their results.

\Cref{main-theorem} is also consistent with \cite[Theorem 1.1]{chang2021dehnbb}. Indeed, let $\flag\Gamma$ be a $2$-dimensional triangulated disc with square boundary. On the one hand, if $\Gamma$ is reducible, then it is the suspension of a path. In this case, according to both \cref{main-theorem} and \cite[Theorem 1.1]{chang2021dehnbb}, the Dehn function is cubic if the path has length greater than or equal to $3$ and quadratic otherwise. On the other hand, if $\Gamma$ is irreducible, then it has $\D4$, as every reducible subgraph $\Lambda$ containing the boundary of $\flag\Gamma$ must be strictly contained in $\Gamma$. Therefore, the boundary of $\flag\Gamma$ is not null-homotopic in $\flag\Lambda$. Furthermore, the flag complex $\flag\Gamma$ must contain an interior $2$-simplex (otherwise, one gets a contradiction by \cite[Lemma 4.2]{chang2021dehnbb}). Thus, both statements imply that $\bbg\Gamma$ has quartic Dehn function.
However, note that the lower bound established in \cite[Proposition 1.5]{chang2021dehnbb} is incorrect.\footnote{For example, let $\Gamma=\{c\}\ast\Lambda$, where $\Lambda$ is the suspension of the path of length three. Then the Dehn function of $\bbg\Lambda$ is cubic, whereas the Dehn function of $\bbg\Gamma$ is quadratic since $\Gamma$ is a cone graph. The gap in the proof of \cite[Proposition 1.5]{chang2021dehnbb} is due to \cite[Proposition 5.3]{chang2021dehnbb}, which is erroneous.}

\subsection{Applications of the main result}

Showing that the Dehn function of a group is strictly greater than quadratic yields geometric constraints on the group. In particular, since $\mathrm{CAT}(0)$ groups are finitely presented and have at most quadratic Dehn function, condition \D 3 provides an obstruction to a Bestvina--Brady group acting properly and cocompactly on a CAT(0) space.

\begin{corollaryintro}\label{cor:D3-not-cat0}
	If a finite simplicial graph $\Gamma$ has \D 3, then $\bbg\Gamma$ is not a $\mathrm{CAT}(0)$ group.
\end{corollaryintro}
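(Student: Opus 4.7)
The plan is a direct combination of \cref{main-theorem} with the standard fact that every $\mathrm{CAT}(0)$ group has at most quadratic Dehn function, as in \cite[Theorem 6.2.1]{bridsongeometry}.

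First I would observe that, by the way the properties are defined, the hypothesis that $\Gamma$ has \D 3 already presupposes \D 1, namely that $\flag\Gamma$ is simply connected. By the Bestvina--Brady characterisation of finiteness properties, this is equivalent to $\bbg\Gamma$ being of type $\finitetype[2]$, so $\bbg\Gamma$ is finitely presented and its Dehn function is well-defined up to $\asymp$-equivalence.

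Next I would apply \cref{main-theorem} to obtain $\dehnexp \geq 3$, so that $\Dehn{\bbg\Gamma}(n) \asymp n^{\dehnexp}$ grows at least as a cubic polynomial. Suppose for contradiction that $\bbg\Gamma$ were a $\mathrm{CAT}(0)$ group. Then it would act properly and cocompactly by isometries on a $\mathrm{CAT}(0)$ space, hence would satisfy a quadratic isoperimetric inequality, giving a Dehn function bounded above by $n^2$. This directly contradicts the cubic lower bound for $\Dehn{\bbg\Gamma}$, proving the corollary.

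There is essentially no obstacle in this short deduction; it is a formal consequence of \cref{main-theorem}, with all the content concentrated in the cubic lower bound encoded in property \D 3, which constitutes the technical heart of the paper.
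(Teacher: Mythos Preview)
Your proposal is correct and follows exactly the same reasoning as the paper: the corollary is stated as an immediate consequence of \cref{main-theorem} together with the standard fact that $\mathrm{CAT}(0)$ groups have at most quadratic Dehn function \cite[Theorem 6.2.1]{bridsongeometry}. The paper does not give a separate proof environment for this corollary, merely noting in the surrounding text that ``since $\mathrm{CAT}(0)$ groups are finitely presented and have at most quadratic Dehn function, condition \D3 provides an obstruction to a Bestvina--Brady group acting properly and cocompactly on a $\mathrm{CAT}(0)$ space.''
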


In general, not every finitely presented Bestvina--Brady group with quadratic Dehn function is a $\mathrm{CAT}(0)$ group. Indeed, Stallings--Bieri groups have quadratic Dehn function, but they are not $\mathrm{CAT}(0)$ since they are not of type $\finitetype[\infty]$ \cite{BestvinaBrady}. This raises the following question:
\begin{question}\label{question-cat0}
	Does there exist a Bestvina--Brady group of type $\finitetype[\infty]$ with quadratic Dehn function that is not a $\mathrm{CAT}(0)$ group?
\end{question}
To the best of our knowledge, no such examples are known in the literature. One possible approach to finding such examples would be looking at higher filling functions. Informally, the $k$-dimensional Dehn function of a group measures the difficulty of filling $k$-spheres with $(k+1)$-balls in some suitable space on which the group acts; the $1$-dimensional Dehn function is the usual Dehn function. We refer the reader to \cite{PushingRAAGs} for a precise definition of $k$-dimensional Dehn functions. For a $\mathrm{CAT}(0)$ group, the $k$-dimensional Dehn function is at most $n^{(k+1)/k}$ \cite{Gromov-fillings,Wenger}. In \cite{PushingRAAGs}, the authors showed that for a Bestvina--Brady group of type $\finitetype[k+1]$, the $k$-dimensional Dehn function is bounded above by $n^{2(k+1)/k}$.

Finally, combining the result of \cite{Papasoglu} with \cref{main-theorem} yields the following result about the asymptotic cones of Bestvina--Brady groups.

\begin{corollaryintro}\label{cor: asymptotic cones}
	Let $\Gamma$ be a finite simplicial graph such that $\flag\Gamma$ is simply connected. If $\Gamma$ does not have \D 3, then the asymptotic cones of $\bbg \Gamma$ are simply connected.
\end{corollaryintro}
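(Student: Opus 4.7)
The plan is to combine \cref{main-theorem} with the theorem of Papasoglu~\cite{Papasoglu}, which asserts that any finitely presented group with at most quadratic Dehn function has simply connected asymptotic cones. This is exactly the reduction alluded to in the statement, and the proof should take no more than a handful of lines once \cref{main-theorem} is in hand.

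First I would observe that, since $\flag\Gamma$ is simply connected, the Bestvina--Brady group $\bbg\Gamma$ is finitely presented, so its Dehn function is well-defined. Next, using the implications $\D4 \Rightarrow \D3 \Rightarrow \D2 \Rightarrow \D1$ recorded in the discussion following \cref{main-theorem}, the hypothesis that $\Gamma$ does not have $\D3$ forces the maximal $\alpha \in \{1,2,3,4\}$ for which $\Gamma$ satisfies property $\D{\alpha}$ to belong to $\{1,2\}$. By \cref{main-theorem}, it follows that $\Dehn{\bbg\Gamma}(n) \asymp n^{\dehnexp}$ with $\dehnexp \in \{1,2\}$, so in particular the Dehn function of $\bbg\Gamma$ is at most quadratic. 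Applying Papasoglu's theorem then produces the claim.

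There is no substantive obstacle: given \cref{main-theorem} and the classical theorem of Papasoglu, \cref{cor: asymptotic cones} is a one-line consequence. The only caveat worth flagging is the linear case $\dehnexp = 1$, which occurs precisely when $\Gamma$ is a tree and $\bbg\Gamma$ is a (non-abelian) free group, hence hyperbolic; in that situation the asymptotic cones are $\R$-trees and therefore contractible, so the conclusion holds trivially. All the genuine work is concentrated in establishing \cref{main-theorem}.
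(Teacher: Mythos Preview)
Your proposal is correct and matches the paper's approach exactly: the paper presents this corollary without proof, simply noting that it follows by combining \cref{main-theorem} with Papasoglu's theorem \cite{Papasoglu}. Your write-up makes this deduction explicit and is entirely adequate.
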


It would be interesting to know if there exist finitely presented Bestvina--Brady groups with non-quadratic Dehn functions that have simply connected asymptotic cones.

\subsection{Examples}\label{subsec: examples}

We present some examples to illustrate how to concretely apply \cref{main-theorem} to determine the Dehn function of a Bestvina--Brady group. Consider the graphs $\Gamma_1$, $\Gamma_2$, and $\Gamma_3 $ shown in \cref{fig:examples}. The graphs $\Gamma_1$ and $\Gamma_2$ were considered by Brady in \cite{Geometryfg}, where he constructed Bestvina--Brady groups with cubic and quartic Dehn functions, respectively. \Cref{main-theorem} recovers these cases. Although the graph $\Gamma_3$ is a slight variation of $\Gamma_1$, the Dehn function of the corresponding Bestvina--Brady group was, to the best of our knowledge, not previously known. In particular, it was not addressed in \cite{CarterForester} or \cite{chang2021dehnbb}.

\newboolean{quartic}
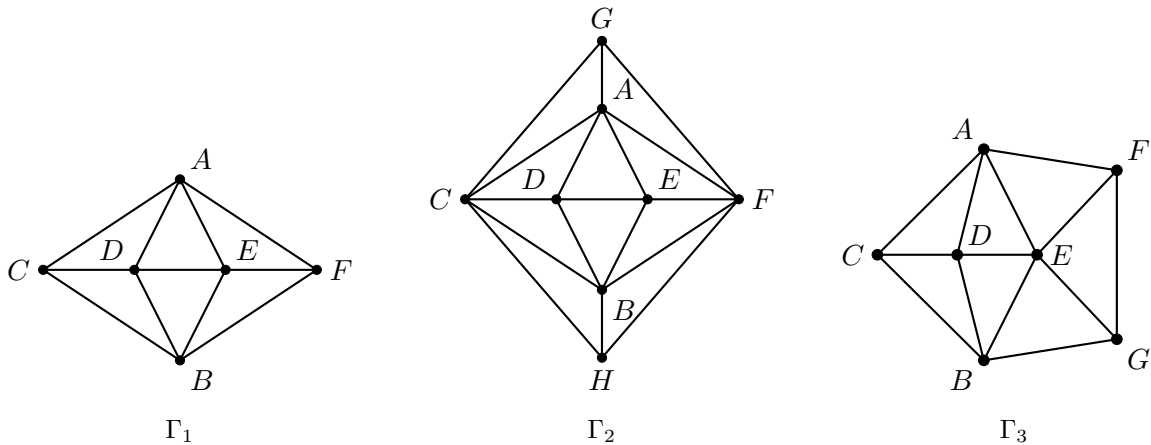
\begin{figure}
	\captionsetup[subfigure]{labelformat=empty, justification=centering}
	\centering
	\begin{subfigure}{0.3\textwidth}
		\begin{tikzpicture}[scale=.6]
	\coordinate (A) at (0,2);
	\coordinate (B) at (0,-2);
	\coordinate (C) at (-3,0);
	\coordinate (D) at (-1,0);
	\coordinate (E) at (1,0);
	\coordinate (F) at (3,0);
	\coordinate (G) at (0,3.5);
	\coordinate (H) at (0,-3.5);

	\foreach \p in {(A),(B)} {
			\foreach \q in {(C), (D), (E), (F)} {
					\draw[thick] \p -- \q;
				}
		}

	\draw[thick] (C) -- (D) -- (E) -- (F);
	\foreach \p in {(A),(B),(C),(D),(E),(F)} {
			\draw [fill] \p circle [radius=0.1];
		}

	\node[above right] at (A) {$A$};
	\node[below right] at (B) {$B$};
	\node[left=1] at (C) {$C$};
	\node[above left] at (D) {$D$};
	\node[above right] at (E) {$E$};
	\node[right=1] at (F) {$F$};

	\ifquartic
		\draw[thick]
		(G) -- (C)
		(G) -- (A)
		(G) -- (F)

		(H) -- (C)
		(H) -- (B)
		(H) -- (F)
		;

		\draw [fill] (G) circle [radius=0.1];
		\draw [fill] (H) circle [radius=0.1];
		\node[above=1] at (G) {$G$};
		\node[below=1] at (H) {$H$};
	\fi
\end{tikzpicture}
		\caption{$\Gamma_1$}
	\end{subfigure}
	\hfil \quad
	\setboolean{quartic}{true}
	\begin{subfigure}{0.3\textwidth}
		\begin{tikzpicture}[scale=.6]
	\coordinate (A) at (0,2);
	\coordinate (B) at (0,-2);
	\coordinate (C) at (-3,0);
	\coordinate (D) at (-1,0);
	\coordinate (E) at (1,0);
	\coordinate (F) at (3,0);
	\coordinate (G) at (0,3.5);
	\coordinate (H) at (0,-3.5);

	\foreach \p in {(A),(B)} {
			\foreach \q in {(C), (D), (E), (F)} {
					\draw[thick] \p -- \q;
				}
		}

	\draw[thick] (C) -- (D) -- (E) -- (F);
	\foreach \p in {(A),(B),(C),(D),(E),(F)} {
			\draw [fill] \p circle [radius=0.1];
		}

	\node[above right] at (A) {$A$};
	\node[below right] at (B) {$B$};
	\node[left=1] at (C) {$C$};
	\node[above left] at (D) {$D$};
	\node[above right] at (E) {$E$};
	\node[right=1] at (F) {$F$};

	\ifquartic
		\draw[thick]
		(G) -- (C)
		(G) -- (A)
		(G) -- (F)

		(H) -- (C)
		(H) -- (B)
		(H) -- (F)
		;

		\draw [fill] (G) circle [radius=0.1];
		\draw [fill] (H) circle [radius=0.1];
		\node[above=1] at (G) {$G$};
		\node[below=1] at (H) {$H$};
	\fi
\end{tikzpicture}
		\caption{$\Gamma_2$}
	\end{subfigure}
	\hfil \quad
	\begin{subfigure}{0.3\textwidth}
		\begin{tikzpicture}[scale=.7]
	\coordinate (A) at (-.5,0);
	\coordinate (B) at (1,0);
	\coordinate (C) at (-2,0);
	\coordinate (D) at (0,2);
	\coordinate (E) at (0,-2);
	\coordinate (F) at (2.5,1.6);
	\coordinate (G) at (2.5,-1.6);

	\foreach \p in {(B), (C), (D), (E)} {
			\draw[thick] (A) -- \p;
		}

	\foreach \p in {(D), (E), (F), (G)} {
			\draw[thick] (B) -- \p;
		}

	\foreach \p in {(A),(B),(C),(D),(E),(F), (G)} {
			\draw [fill] \p circle [radius=0.1];
		}

	\draw[thick] (C) -- (D) -- (F) -- (G) -- (E) -- (C);

	\node[above right] at (A) {$D$};
	\node[right=1] at (B) {$E$};
	\node[left=1] at (C) {$C$};
	\node[above left] at (D) {$A$};
	\node[below left] at (E) {$B$};
	\node[above right] at (F) {$F$};
	\node[below right] at (G) {$G$};

\end{tikzpicture}
		\caption{$\Gamma_3$}
	\end{subfigure}
	\caption{Brady's examples with cubic (left) and quartic (centre) Dehn functions. On the right, a variation of Brady's example whose Dehn function is quadratic.}
	\label{fig:examples}
\end{figure}

\emph{The graph $\Gamma_1$.}

The graph $\Gamma_1$ is the suspension of the path of length three induced by the vertices $C$, $D$, $E$, and $F$. Therefore, it is essentially $2$-reducible, so \cref{main-theorem-reducible-case} implies that $\bbg{\Gamma_1}$ has cubic Dehn function.

\emph{The graph $\Gamma_2$.} In this case, the graph $\Gamma_2$ has \D4. To see this, let $\Lambda$ be the join of $\set{C, F}$ and the subgraph induced by the vertices $G$, $A$, $B$, and $H$. Then $\Lambda$ is a maximal reducible subgraph of $\Gamma_2$, and $\flag\Lambda$ is not simply connected.
Therefore, the graph $\Gamma_2$ has \D4, and \cref{main-theorem} implies that the Dehn function of $\bbg{\Gamma_2}$ is quartic.

\emph{The graph $\Gamma_3$.} Clearly, the graph $\Gamma_3$ has \D2. Nevertheless, it does not have \D3, since none of the maximal reducible subgraphs is essentially $2$-reducible. In fact, all the maximal reducible subgraphs are cone graphs. The graph $\Gamma_3$ does contain a unique essentially $2$-reducible subgraph, namely, the square induced by the vertices $A$, $B$, $C$, and $E$, but it is not maximal. Therefore, by \cref{main-theorem}, the Dehn function of $\bbg {\Gamma_3}$ is quadratic.

\subsection{Strategy of the proof}\label{subsec:strategy-of-the-proof}

In \cref{main-theorem}, the linear case corresponds to $\bbg \Gamma$ being hyperbolic, while the quadratic lower bound and the quartic upper bound follow respectively from $\bbg\Gamma$ not being hyperbolic and from \cite[Theorem 1.1]{Dison}. Therefore, to obtain \cref{main-theorem}, it suffices to show the following statement, which involves only \D3 and \D4.

\begin{thmintro}\label{main-theorem-reduction}
	Let $\Gamma$ be a finite simplicial graph such that $\flag\Gamma$ is simply connected, and let $\alpha \in \set{3,4}$. If $\Gamma$ has property \D\alpha, then $\Dehn{\bbg\Gamma}(n) \succcurlyeq n^\alpha$. Otherwise, $\Dehn{\bbg\Gamma}(n) \preccurlyeq n^{\alpha-1}$.
\end{thmintro}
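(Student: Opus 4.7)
The plan is to prove the four implications of \cref{main-theorem-reduction} separately: the lower bounds $\D3 \Rightarrow \Dehn{\bbg\Gamma}(n) \succcurlyeq n^3$ and $\D4 \Rightarrow \Dehn{\bbg\Gamma}(n) \succcurlyeq n^4$, together with the matching upper bounds when the respective property fails.

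For the lower bounds, I would fix a maximal reducible subgraph $\Lambda \subseteq \Gamma$ witnessing the relevant property and construct an explicit family of null-homotopic words $(w_n)_{n \in \N}$ in $\bbg\Gamma$ with $\length{w_n} \asymp n$. In the essential case $\Lambda = \Lambda_1 \ast \Lambda_2$ with each $\Lambda_i$ irreducible of size at least two, each $\Lambda_i$ contains two non-adjacent vertices, providing the commutator structure needed to generalise Brady's cubic example from \cite{Geometryfg}. I would then establish the cubic lower bound on area by building a van Kampen diagram invariant, for instance a signed count of intersections with a suitable family of dual curves, that evaluates to $\asymp n^3$ on $w_n$ while remaining bounded linearly per $2$-cell. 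Here the maximality of $\Lambda$ is crucial: it guarantees that no vertex of $\Gamma \setminus \Lambda$ is adjacent to all of $\Lambda_1$ or all of $\Lambda_2$, so such external generators cannot be used to trivialise the invariant by pushing. For the $\D4$ case, I would nest the construction, inserting Brady-style cubic subloops along iterated paths representing a non-trivial element of $\pi_1(\flag\Lambda)$ to gain an extra linear factor and hence a quartic lower bound. A two-level invariant would then record both the cubic obstruction inside each nested piece and the winding around the essential loop in $\flag\Lambda$.

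For the upper bounds, I would follow the pushing strategy of \cite{Dison, PushingRAAGs}, in which any word $w$ of length $n$ representing the identity in $\bbg\Gamma$ is filled by rewriting its filling as a union of subdiagrams, each lying in a subgroup $\bbg\Lambda$ for $\Lambda$ a maximal reducible subgraph of $\Gamma$, with a controlled polynomial overhead for the pushing. When $\D3$ fails, every such $\Lambda$ is either a cone, so that $\bbg\Lambda$ is a $\mathrm{CAT}(0)$ right-angled Artin group with quadratic Dehn function, or a join of three or more subgraphs, in which case $\bbg\Lambda$ has quadratic Dehn function by \cite{CarterForester}; combining these local bounds with a linear pushing overhead should yield the quadratic upper bound. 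When $\D4$ fails, every maximal reducible $\Lambda$ has $\flag\Lambda$ simply connected, so each $\bbg\Lambda$ is finitely presented and, by the previously established upper bound, has at most quadratic Dehn function; feeding this improved input into the pushing framework should refine Dison's generic quartic estimate to the required cubic bound.

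The main obstacle will be the quartic lower bound. The invariant must simultaneously detect the essential reducible product structure and the non-trivial class in $\pi_1(\flag\Lambda)$, and must remain sufficiently additive on van Kampen diagrams to yield the area estimate. Designing this two-level invariant, and verifying its compatibility with every $2$-cell coming from generators outside $\Lambda$, will likely be the most intricate step; the maximality of $\Lambda$ will again be the key combinatorial lever. By contrast, the upper bounds should follow comparatively routinely from the pushing machinery once the necessary local input from \cite{CarterForester} and the inductive hypothesis is in place.
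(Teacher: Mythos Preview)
Your upper-bound strategy contains a genuine error. In the $\neg\D4$ case you assert that each $\bbg\Lambda$, for $\Lambda$ a maximal reducible subgraph with $\flag\Lambda$ simply connected, has at most quadratic Dehn function ``by the previously established upper bound''. But the bound you have just established is $\neg\D3 \Rightarrow$ quadratic, and $\Lambda$ need not fail $\D3$: since $\Lambda$ is reducible, its unique maximal reducible subgraph is $\Lambda$ itself, so if $\Lambda$ is essential then $\Lambda$ has $\D3$ and $\bbg\Lambda$ has \emph{cubic} Dehn function (this is precisely \cref{main-theorem-reducible-case}). Feeding cubic local fillings into a pushing scheme does not refine Dison's quartic bound to cubic. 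More fundamentally, the phrase ``rewriting its filling as a union of subdiagrams, each lying in a subgroup $\bbg\Lambda$'' hides the central difficulty: for irreducible $\Gamma$ there is no uniform way to decompose an element of $\bbg\Gamma$ into boundedly many pieces supported on reducible subgraphs, and the paper explicitly names this as the main obstacle (see \cref{subsec:strategy-of-the-proof}). The paper does not decompose into the $\bbg\Lambda$'s at all and does not invoke \cite{CarterForester}; instead it develops a \emph{coloured word} formalism, where each word carries a partition into blocks tagged by commuting generators. Via corridor-cutting in a minimal RAAG diagram, an arbitrary triangle is reduced to boundedly-coloured pieces and ultimately to three fundamental shapes---monochromatic words, coloured bigons, and coloured commutators---whose pushdowns are filled directly. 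The hypotheses $\neg\D3$ and $\neg\D4$ enter only at this last stage, through the structure of the palette of a single bigon or commutator (\cref{area-coloured-bigon,area-coloured-commutator}).

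Your lower-bound outline is in the right spirit but diverges from the paper. The paper does not nest constructions or use a winding invariant in $\pi_1(\flag\Lambda)$; it uses the \emph{same} commutator word $w_n=[w'_n,w''_n]$ for both $\D3$ and $\D4$, and the upgrade from $n^3$ to $n^4$ is a sharper count. In any almost-flat diagram for $w_n$ the bridge corridors form a grid, and since corridor-height and annulus-height must nearly cancel, vertices deep in the grid are enclosed by many annuli. Under $\D3$ one locates linearly many vertices $p_i$ no two of which share an enclosing annulus, and counting pairwise annulus crossings gives $\sum_i k_i^2\asymp n^3$. Under $\D4$ the disconnectedness of both join factors (via \cref{path-outside-annuli}) rules out $\mathcal A$- and $\mathcal B$-annuli separately, producing a quadratic family $p_{i,j}$ and hence $n^4$.
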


The proof of \cref{main-theorem-reduction} is divided into two parts: the quadratic and cubic upper bounds are given by \cref{upper-bounds}, and the cubic and quartic lower bounds are given in \cref{lower-bounds}.

From Bestvina and Brady's construction \cite{BestvinaBrady}, we can naturally view $\bbg \Gamma$ embedded in $\raag \Gamma$ as the $0$-level set of the height function $\height\colon\raag\Gamma\to\Z$, which sends every generator to $1$. This geometric perspective allows us, via a standard argument, to compute the Dehn function by considering null-homotopic words and van Kampen diagrams in the right-angled Artin group that have heights close to $0$. We formalise this by defining \emph{alternating words} and \emph{almost-flat van Kampen diagrams}.
This is particularly useful for establishing the lower bound: we show that the height of a vertex in a van Kampen diagram can be computed by looking at \emph{corridors} and \emph{annuli}, and the restriction on the height yields a lower bound on the number of intersections between annuli, and hence, on the area of the van Kampen diagram.

The proof of the upper bound employs a refined version of Dison's pushdown argument for the general quartic upper bound \cite{Dison}. Dison's argument does not yield an optimal upper bound: it is possible for a null-homotopic word to have support contained in a cone subgraph, in which case it has at most quadratic area, although the ``pushdown algorithm'' may not recognise this and produce an inefficient filling.

We address this issue by introducing the notion of $k$-\emph{coloured words}. A $k$-coloured word should be thought of as a word $w$ in the right-angled Artin group $\raag\Gamma$ with the usual generating set given by the vertices of $\Gamma$, together with a decomposition  $w = w_1 \cdots w_k$ and a sequence of generators $a_1, \dots, a_k$ of $\raag\Gamma$, such that every letter of $w_i$ commutes with $a_i$. We think of $a_i$ as colouring the subword $w_i$. We are particularly interested in the case where the coloured word has minimal $k$ among all coloured words representing a fixed group element in $\bbg \Gamma$.

The colouring information allows us to push coloured words down to the $0$-level set and to exploit the fact that some subwords have support contained in cone subgraphs. We show that this technique produces the optimal upper bounds for the Dehn function of $\bbg\Gamma$.

Throughout the proof of the upper bound, the case where $\Gamma$ is irreducible is the most challenging. Indeed, if $\Gamma$ is reducible, then any group element can be represented by a $2$-coloured word. This is a useful property that can be used to produce concrete triangular diagrams, as demonstrated in \cite{CarterForester}. However, for a general graph $\Gamma$, there is no uniform bound on the minimal number of colours required to represent an element.
We overcome this by using corridors in a van Kampen diagram over $\raag\Gamma$ to produce a suitable subdivision of triangular diagrams into regions that have a uniformly bounded number of colours, while keeping a good control over the area.

\subsection{Structure of the paper}

\Cref{sec:prelims} includes basic definitions and properties used throughout the paper.
In \cref{sec:alternating}, we define \emph{alternating words} and \emph{almost-flat van Kampen diagrams}, and show how they can be used to compute the Dehn function of Bestvina--Brady groups.
In \cref{sec:coloured-words}, we introduce \emph{coloured words} and show how the colouring allows us to ``push down'' coloured words to the $0$-level set in a canonical way to obtain alternating words. The entire \cref{sec:upper-bounds} is devoted to the proof of \cref{upper-bounds}, which establishes the upper bounds in \cref{main-theorem-reduction}. We establish the lower bounds in \cref{main-theorem-reduction} by proving \cref{lower-bounds} in \cref{sec:lower-bounds}.

\subsection*{Acknowledgements}
We are thankful for the environment provided by the 2023 thematic program \textit{Geometric Group Theory} at the Centre de Recherches Math\'{e}matiques, Universit\'{e} de Montr\'{e}al. We are grateful to Claudio Llosa Isenrich and Matt Zaremsky for helpful comments and suggestions on an earlier version of this work. We thank Claudio Llosa Isenrich for also raising interesting questions related to \cref
{cor:D3-not-cat0} and \cref{cor: asymptotic cones}. The second author is supported by the Royal Society of Great Britain. The second and third authors acknowledge funding from the DFG 281869850 (RTG 2229).
We thank the anonymous referee for their careful reading and for their valuable comments and suggestions, including the remark about higher Dehn functions in the discussion about \cref{question-cat0}.

\section{Preliminaries}\label{sec:prelims}

In this section, we introduce basic definitions, present well-known results, and establish general terminology and notation used throughout.

\subsection{Graphs}\label{sec:graphs}
A \emph{simplicial graph} is a graph without loops or multiple edges.
Given a finite simplicial graph $\Gamma$, its vertex set and edge set are denoted by $\vv\Gamma$ and $\ee\Gamma$, respectively. Two vertices $u$ and $v$ are \emph{adjacent} if they are connected by an edge, in which case we write $\{u,v\}\in\ee\Gamma$. An \emph{(induced) subgraph} $ \Lambda \subgraph \Gamma $ is a graph with $\vv\Lambda\subseteq \vv\Gamma$ and whose edge set $\ee\Lambda$ consists of all edges of $\Gamma$ that connect vertices in $\vv\Lambda$. Recall that all subgraphs considered in the present work are assumed to be induced subgraphs.

The flag complex associated to $\Gamma$, denoted by $\flag\Gamma$, is the simplicial complex whose $1$-skeleton is $ \Gamma $, and in which every complete subgraph of $\Gamma$ spans a simplex.

Let $u,v\in\vv\Gamma$. A \emph{(combinatorial) path} between $u$ and $v$ of length $k \in \N$ is a map $\gamma \colon \range[0]k \to \vv\Gamma$ such that $\gamma(i-1)$ is adjacent to $\gamma(i)$ for all $i \in \range k$.
The \emph{distance} between $u$ and $v$ in $\Gamma$ is the length of a shortest path between them.

The \emph{join} of two graphs $\Gamma_1$ and $\Gamma_2$, denoted by $\Gamma_1\ast\Gamma_2$, is the graph whose vertex set is $\vv{\Gamma_1}\cup\vv{\Gamma_2}$ and whose edge set is $\ee{\Gamma_1}\cup\ee{\Gamma_2}\cup\{\set{u,v} \ \vert \ u\in\vv{\Gamma_1}, v\in\vv{\Gamma_2}\}$. A graph is said to be \emph{reducible} if it decomposes as a join of two nonempty subgraphs, and \emph{irreducible} otherwise.

The \emph{link} of a vertex $v\in \vv\Gamma$, denoted by $\Link v$, is the subgraph induced by the vertices adjacent to $v$. The \emph{star} of $v$, denoted by $\Star v$, is the subgraph $\{v\}\ast\Link v$.

The \emph{complement} of $\Gamma$ is the graph $\compl{\Gamma}$ with the same vertex set as $\Gamma$, such that two vertices are adjacent in $\compl{\Gamma}$ if and only if they are not adjacent in $\Gamma$. Note that $\Gamma$ is irreducible if and only if its complement is connected. More generally, the graph $\Gamma$ admits a unique decomposition (up to the order) $\Gamma = \Gamma_1 \ast \dots \ast \Gamma_k$, where $\Gamma_1, \dots, \Gamma_k$ are irreducible subgraphs of $\Gamma$; these can be characterised as the connected components of the complement of $\Gamma$.

Let $ \Lambda$ be a reducible subgraph of $ \Gamma $. We say that $ \Lambda $ is \emph{maximal} if it is maximal with respect to inclusion among all reducible subgraphs of $ \Gamma $.
We say that $\Lambda$ is \emph{essentially $2$-reducible} if it is a join $\Lambda_1 \ast \Lambda_2$ of exactly two irreducible subgraphs, each of which has at least two vertices. Equivalently, an essentially $2$-reducible graph is a graph whose complement has exactly two connected components, neither of which is a single vertex.

\subsection{Words and free groups}\label{sec:words-and-free-groups}

For a set $S$, we denote by $S^{-1}$ the set of formal inverses of the elements in $S$. By a \emph{word $w$ in $S$} we mean an element of the free monoid $(S \cup S^{-1})^\ast$ generated by $S \cup S^{-1}$; that is, $w$ can be written as $ s_1 \cdots s_k$ for some $s_i \in S \cup S^{-1}$. In this case, the elements $s_1, \dots, s_k$ are called the \emph{letters of $w$}, and $k$ is called the \emph{length} of $w$, denoted by $\length w$. A \emph{subword of $w$} is a word of the form $s_{i} s_{i+1} \cdots s_j$ for some $1 \leq i \leq j \leq k$. A word in $S$ is \emph{freely reduced} if it does not contain subwords of the form $ss^{-1}$ or $s^{-1}s$. 

We denote the \emph{free group on a set $S$} by $\free S$. There is a natural epimorphism of monoids $\monoid S \epi \free S$ that sends a word $w$ to its equivalence class under \emph{free insertions and reductions}, that is, up to inserting or removing subwords of the form $ss^{-1}$ or $s^{-1}s$ in $w$. Two words $w$ and $w'$ in $S$ are said to be \emph{freely equivalent}, written $w \freeid w'$, if their images in $\free S$ coincide. We write $w \identical w'$ when words are actually identical ``letter by letter''.

\subsection{Groups and presentations}\label{sec:groups-and-presentations}

An (abstract) generating set for a group $G$ is a set $S$ equipped with a map $\iota\colon S \to G$ such that the induced map $\free S \to G$ is an epimorphism. Note that we do not require the map $\iota$ to be injective.
Let $R$ be a set of words in $S$. We denote by $\presentation SR$ the group defined as the quotient of $\free S$ by the normal closure of the image of $R$ in $\free S$. Given the canonical map $S \to \presentation SR$, the set $S$ is naturally a generating set for $\presentation SR$. We say that $\presentation SR$ is \emph{generated by $S$ with relations $R$}.

If $G$ is a group together with an isomorphism $G \iso \presentation SR$, we say that \emph{$\presentation SR$ is a presentation for $G$}.  When $S$ and $R$ are finite, we say that the \emph{presentation is finite} or that \emph{$G$ is finitely presented}.

Given a group $G$ with presentation $\presentation SR$, we say that two words $w$ and $w'$ in $S$ \emph{represent the same element in $G$}, written $w \groupid G w'$, if their images under the canonical surjective map $\monoid S \epi G$ coincide.

\subsection{Right-angled Artin groups and Bestvina--Brady groups}\label{sec:raags-bbgs}

Let $\Gamma$ be a finite simplicial graph, and let $\raaggens$ be a set in one-to-one correspondence with the vertex set $\vv\Gamma$; we denote the latter as $\{v_s : s \in \raaggens \}$.
The \emph{right-angled Artin group} $\raag \Gamma$ associated to $\Gamma$ is the group with finite presentation\footnote{We use the convention $[x,y]= xy x^{-1} y^{-1}$.}
\[
	\raagpres \coloneq \langle \raaggens \mid [s,t]=1 \ \text{for all} \ \{v_s,v_{t}\} \in \ee \Gamma \rangle.
\]
The set $\raaggens$ and the presentation $\raagpres$ are called the \emph{standard generating set} and the \emph{standard presentation} of $ \raag \Gamma $, respectively.

If $\Lambda$ is a subgraph of $\Gamma$, then $\raag\Lambda$ is naturally a subgroup of $\raag\Gamma$. We say that $ \raag\Gamma $ is \emph{irreducible} if there are no nontrivial subgraphs $ \Lambda_1, \Lambda_2 \subgraph \Gamma $ such that $ \raag\Gamma $ splits as $ \raag{\Lambda_1} \times \raag {\Lambda_2} $; this is equivalent to saying that the graph $\Gamma$ is irreducible.

Recall that associated to a right-angled Artin group $\raag\Gamma$, there is a natural epimorphism $\height\colon\raag\Gamma\to\Z$, called the \emph{height function}, defined by sending every generator $s\in\raaggens$ to $1$.
The \emph{Bestvina--Brady group} $\bbg \Gamma$ associated to $\Gamma$ is the kernel of the height function.

Bestvina and Brady \cite{BestvinaBrady} showed that $\bbg{\Gamma}$ is finitely presented if and only if the flag complex $ \flag\Gamma $ is simply connected. In this case, Dicks and Leary \cite{DickLearyPres} provided an algebraic proof of this result by exhibiting a finite presentation, which was further simplified by Papadima and Suciu \cite{papadimasuciu2007BBGs}. We now describe their presentation.

Fix a total ordering $\ll$ on the vertex set $\vv\Gamma$, orient the edges of $\Gamma$ increasingly, and choose a spanning tree $T$ of $\Gamma$. The generating set for $\bbg\Gamma$ is the set of edges of $T$, denoted by $\bbggens$.

Let $v_s$ and $v_t$ be two vertices of $\Gamma$. The unique simple path in $T$ from $v_s$ to $v_t$ defines a word $w_{s,t} \coloneq e^{\epsilon_1}_1\cdots e^{\epsilon_k}_k$ in $\bbggens$, where $e_i$ is the $i$th edge in the path, and $\epsilon_i = 1$ if the traversed direction agrees with the orientation of $e_i$, and $\epsilon_i = -1$ otherwise. If $e = (v_s,v_t)$ is an oriented edge, we also write $w_e \coloneq w_{s,t}$.

A \emph{directed triangle} is a triple of oriented edges $(e,f,g)$ in $\Gamma$ with $e = (v_{s_1}, v_{s_2})$, $ f = (v_{s_2}, v_{s_3}) $, and $g = (v_{s_1}, v_{s_3})$ for some vertices $v_{s_1} \ll v_{s_2} \ll v_{s_3}$. The presentation for $\bbg\Gamma$ provided by Papadima and Suciu can be expressed as
\[
	\bbgpres = \presentation{\bbggens}{[w_e,w_f] =1 \ \text{for all directed triangles $(e,f,g)$}}.
\]

\subsection{Dehn functions}\label{sec:dehn-functions}

Let $\calp :=  \presentation S R$ be a finite presentation for a group $G$.
We say that a word $w$ in $S$ is \emph{null-homotopic} if it represents the identity in $G$. The \emph{area with respect to $\calp$} of a null-homotopic word $w$ in $S$ is defined as
\[
	\area[\calp]{w}  := \min \left\{ \ell \in \mathbb{N} : w \freeid \prod_{i = 1}^{\ell} x_{i} r _{i} ^{\epsilon _{i}} x_{i} ^{-1}, \, \ \text{where} \  x_i \in \monoid{S}, \, r_i \in R, \, \ \text{and} \ \epsilon_i=\pm1 \right\}.
\]
The \emph{Dehn function of a finite presentation $\calp$} is the function $\delta_\calp \colon \mathbb{N} \rightarrow \mathbb{N}$ defined as
\[
	\Dehn\calp (n) := \max\{\area[\calp]{w} : w \ \text{null-homotopic word in} \ S \ \text{such that} \ \length w \leq n \}.
\]
Changing finite presentations of $G$ does not alter the asymptotic behaviour of the Dehn function of the presentations. More precisely, the \emph{Dehn function $\Dehn{G}$ of $G$} is well-defined up to \emph{$\asymp$-equivalence} of functions: two functions $f,g \colon \mathbb{N} \to \mathbb{N}$ are \emph{$\asymp$-equivalent}, and we write $f\asymp g$, if $f\preccurlyeq g$ and $g\preccurlyeq f$, where $f \preccurlyeq g$ means that there exist constants $A,B >0$ and $C,D,E \geq 0$ such that $f(n) \leq Ag(Bn +C) + Dn +E$ for all $n \geq 0$. If $\calp_1$ and $\calp_2$ are two finite presentations for $G$, then $\Dehn{\calp_1} \asymp \Dehn{\calp_2}$. Thus, the Dehn function of $G$ is defined to be the $\asymp$-equivalence class $\delta_\calp$ for some (hence any) finite presentation $\calp$ of $G$.

\subsection{Van Kampen diagrams}\label{sec:diagrams}

The study of van Kampen diagrams over a finite group presentation provides a powerful geometric tool for computing the areas of null-homotopic words, and thus for establishing bounds on the Dehn function.

Let  $\calp = \presentation S R$ be a finite presentation; we may always assume that the relation set $R$ is closed under taking cyclic conjugates and inverses.

Given a null-homotopic word $w$ in $\calp$, a \emph{van Kampen diagram} for $w$ is a connected, pointed, oriented, and labelled planar graph $D$ such that each edge is labelled by an element of $S$, and the boundary word of each bounded region in $\R^2 \setminus D$ is freely equivalent to a word in $R$.
The word is read either clockwise or counter-clockwise, from an arbitrary starting vertex in the boundary of the region. The label of each edge traversed this way has a $+1$ exponent (respectively, $-1$ exponent) if the reading direction coincides with (respectively, is opposite to) the orientation of the edge. Changing the starting point or direction alters the word by cyclic conjugation or inversion. The boundary word of $D$, when read from the base point, is labelled by $w$. The diagram $D$ can also be given the structure of a $2$-complex, where the graph is viewed as the $1$-skeleton, and each bounded region corresponds to a $2$-cell attached via the word labelling its boundary. The \emph{area of a van Kampen diagram $D$}, denoted by $\area{D}$, is the number of bounded regions in  $\R^2 \setminus D$, or equivalently, the number of $2$-cells.

By van Kampen's Lemma \cite[Theorem 4.2.2]{bridsongeometry}, the area of a null-homotopic word $w$ in a finite presentation $\calp$ satisfies
\[
	\area[\calp]{w} = \min\{\area{D} \mid \text{$D$ is a van Kampen diagram for $w$ over $\calp$}\}.
\]
A van Kampen diagram attaining this minimum is said to be a \emph{minimal-area} van Kampen diagram.

\subsection{Coarse diagrams}

In the definition of van Kampen diagrams, we may weaken the condition on the boundary words of bounded regions by allowing them to be arbitrary null-homotopic words instead of relations. This yields a broader class of diagrams that are helpful for our work.

\begin{definition}[Coarse diagram]\label{def:coarse diagram}
	Let $S$ a finite generating set for a group $G$. A \emph{coarse diagram over $S$} for a null-homotopic word $w$ in $S$ is a van Kampen diagram for $w$, except that the bounded regions may be labelled by arbitrary null-homotopic words.
\end{definition}

To be more precise, a coarse diagram can be defined as a van Kampen diagram over an \emph{infinite} presentation, where the set of relations includes all null-homotopic words in a set of generators. Although coarse diagrams cannot be used directly to compute the Dehn functions of finitely presented groups, they can still be used to construct van Kampen diagrams over finite presentations in a ``patchwork'' manner, as follows.

Let $\calp = \presentation S R$ be a finite presentation, and let $D$ be a coarse diagram for a null-homotopic word $w$ in $S$. By definition, each bounded region $R_i$ of $D$ is labelled by a null-homotopic word $w_i$ in $S$. In particular, for each $w_i$, there is a van Kampen diagram $D_i$ over $\calp$. Replacing each bounded region $R_i$ in $D$ with the corresponding van Kampen diagram $D_i$ yields a van Kampen diagram $D'$ for $w$ over the finite presentation $\calp$.
In \cref{sec:estimating-the-almost-flat-area}, we explain this ``two-step filling'' technique in greater detail for the case of Bestvina--Brady groups, which is the focus of this work.

\subsection{Corridors}

To estimate the area of van Kampen diagrams, it is often useful to study their geometric structure by means of  \emph{corridors} and \emph{annuli}. We are interested in the corridors and annuli in the van Kampen diagrams over the standard presentation $\raagpres$ for $\raag\Gamma$. For this reason, we give their definition only for this particular case.

Let $D$ be a van Kampen diagram over $\raagpres$ for a null-homotopic word $w$ in $\raaggens$,  and let $a\in\raaggens$. An edge in $D$ is called an \emph{$a$-edge} if it is labelled by $a$. Let $D^\star$ be the graph dual to the $1$-skeleton of $D$, together with a vertex $v_\infty$ dual to the unbounded region of $D \subset \R^2$. Let $\lambda$ be a loop in $D^\star$ whose edges are all dual to $a$-edges. The subdiagram $C$ of $D$, consisting of all the closed $1$-cells and $2$-cells of $D$ dual to $\lambda \setminus \{ v_\infty \}$, is called an \emph{$a$-corridor} if $\lambda$ includes $v_\infty$, and an \emph{$a$-annulus} if it does not. Moreover, if $C$ is an $a$-corridor, then the path obtained from $\lambda$ by removing its intersection with the interior of the unbounded region is called the \emph{core of $C$}. We say that two corridors \emph{cross} if they intersect in one or more $2$-cells; equivalently, if their cores intersect.

The standard orientation of $\R^2$ induces an orientation on the core of a corridor $C$, so that $a$-edges intersect the core transversally from left to right. The paths consisting of $1$-cells in $C$ that are parallel to the core are called the \emph{sides} of $C$. The \emph{length} of a corridor is the number of its $2$-cells, and the \emph{label} of a corridor is the word read along either of its sides, following the orientation.

For $a,b \in \raaggens$, if an $a$-corridor and a $b$-corridor cross each other, then $[a,b]=1$ must be a relation in $\raagpres$. This tells us that the corresponding vertices $v_a, v_b \in \Gamma$ are adjacent. In particular, we have $a \neq b$, since, while $a$ commutes with itself, the commutator $[a,a]$ is not a relation of $\raagpres$. Thus, a corridor cannot cross itself. Moreover, we have the following standard result regarding minimal-area van Kampen diagrams in right-angled Artin groups.

\begin{lemma}\label{no-annuli-and-bigons}
	Let $w$ be a null-homotopic word in the standard generating set $\raaggens$ of $\raag
		\Gamma$. If $D$ is a minimal-area van Kampen diagram for $w$ over $\raagpres$, then it contains no annuli, and if two corridors intersect, then they do so at most once.
\end{lemma}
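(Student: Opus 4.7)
The plan for both parts is the same: assume a minimal-area van Kampen diagram $D$ for $w$ contains the forbidden configuration, and construct a strictly smaller-area diagram for $w$, contradicting minimality of $D$.

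For the no-annuli claim, I would first record the standard observation that the two sides of any corridor $C$ carry the \emph{same} word. Indeed, each 2-cell of an $a$-corridor is a commutation square $[a,x]=1$ with the two $a$-edges on opposite sides and one $x$-edge on each side, so moving along the core one reads the same sequence of labels on each side of $C$. Now let $A$ be an $a$-annulus in $D$. Its inner and outer sides are then two disjoint loops that share a common label $u$. The inner side bounds a subdiagram $D_{\text{inner}}$ of $D$ with boundary word $u$. Removing $A$ together with $D_{\text{inner}}$ leaves a ``hole'' whose boundary (the outer side of $A$) also reads $u$; pasting $D_{\text{inner}}$ directly into this hole yields a van Kampen diagram for $w$ of area $\area{D} - \area{A} < \area{D}$, a contradiction.

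For the second claim, suppose corridors $C_1$ and $C_2$ with labels $a$ and $b$ intersect at least twice. Since $[a,a]$ is not a relation of $\raagpres$, a corridor cannot cross itself, so $a \neq b$ and $\{v_a, v_b\} \in \ee\Gamma$. I would then select an \emph{innermost bigon} $B$: a topological disc in $D$ bounded by an arc $\alpha_1 \subset C_1$ and an arc $\alpha_2 \subset C_2$ joining two intersection 2-cells $R_1, R_2$ (each a commutation square for $[a,b] = 1$), such that no sub-arcs of $C_1, C_2$ enclose a strictly smaller bigon inside $B$. The plan is to exploit the commutation $[a,b]=1$ to reroute $C_1$ and $C_2$ locally through $B$ so that they no longer intersect there; the corner squares $R_1$ and $R_2$ then form a removable cancelling pair, producing a diagram for $w$ of strictly smaller area.

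The main technical obstacle is carrying out this surgery in a well-defined way in the presence of other corridors passing through $B$. However, by the innermost assumption $C_1$ and $C_2$ do not re-cross each other inside $B$, and by the no-annuli claim already established, any third corridor that enters $B$ must traverse it transversally from $\alpha_1$ to $\alpha_2$ without further obstruction. This is enough to localise the surgery and secure the contradiction.
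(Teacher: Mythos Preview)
Your proposal is correct and follows essentially the same approach as the paper: remove an annulus to reduce area, and perform surgery on an innermost bigon to eliminate two $2$-cells. The paper's proof is just a terse sketch of exactly these two steps, and you have filled in the standard details (same side-labels for an annulus, the innermost-bigon argument) appropriately.
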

\begin{proof}
	If $D$ contained an annulus, then removing it would yield a diagram of smaller area, a contradiction. Suppose instead that two corridors cross more than once, and call the region between them a bigon. By performing surgery on an innermost bigon, we can remove the double intersection and obtain a van Kampen diagram with two fewer $2$-cells, again, a contradiction.
\end{proof}

\section{Alternating words and diagrams in Bestvina--Brady groups}\label{sec:alternating}

Throughout the rest of the work, we denote by $\Gamma$ a finite simplicial graph whose associated flag complex $\flag \Gamma$ is simply connected.

As mentioned before, the group $\bbg \Gamma$ is naturally embedded in $\raag \Gamma$ as the $0$-level set of the height function $\height\colon\raag\Gamma\to\Z$. This perspective allows us to exploit our understanding of the ambient group $\raag\Gamma$ to study $\bbg \Gamma$.

To this end, instead of working with the abstract presentation of $\bbg\Gamma$, we use the presentation for $\raag\Gamma$, but consider only words in $\raaggens$ that are close to the $0$-level set; we call these \emph{alternating words} (see \cref{alternating-word}). We also consider the van Kampen diagrams that are close to the $0$-level set, leading to the definition of \emph{almost-flat van Kampen diagrams} (see \cref{almost-flat-diagram}).
Almost-flat van Kampen diagrams are central in our estimates for the Dehn function of $\bbg \Gamma$.

\subsection{Alternating words and diagrams} We start by defining the set of alternating words in $\raaggens$, which represent elements in $\bbg \Gamma$.

\begin{definition}[Alternating words]\label{alternating-word}
	A word $w$ in the standard generating set $\raaggens$ for $ \raag \Gamma $ is called an \emph{alternating word} if it has even length and its letters alternate between elements of $\raaggens$ and $\raaggens^{-1}$, starting with an element in $\raaggens$.
\end{definition}

Geometrically, the height function $\height \colon \raag\Gamma \to \Z$ extends naturally to a map $\cayheight \colon \cay{\raag\Gamma, \raaggens} \to \R$, where $\cay{\raag\Gamma, \raaggens}$ denotes the Cayley graph of $\raag\Gamma$ with respect to the standard generating set $\raaggens$. Thus, a word is alternating if and only if it represents a path in $ \cayheight^{-1}([0,1]) \subset \cay{\raag \Gamma, \raaggens}$ that starts and ends at $\cayheight^{-1}(0)$.

The following lemma shows that (up to free equivalence) there is a one-to-one correspondence between alternating words and words in the standard generating set $\bbggens$ of $\bbg\Gamma$.

\begin{lemma}\label{alternating-words-and-words-in-bbg}
	Let $\ll$ be a total order on the vertices of $\Gamma$. Let $T$ be a spanning tree for $\Gamma$, where the edges are oriented increasingly. Let $\bbggens$ be the generating set for $\bbg \Gamma$ with respect to $T$, and let $\altwords$ be the set of alternating words in $\raaggens$.
	There exist canonical maps $\Psi \colon \monoid{\bbggens} \to \altwords$ and $\Phi \colon \altwords \to \monoid{\bbggens}$ that satisfy the following properties.
	\begin{enumerate}
		\item The composition $\Phi \circ \Psi$ is the identity map on $\monoid\bbggens$. \label{item:xi-of-psi}
		\item For every $u \in \altwords$, we have $\Psi(\Phi(u)) \freeid u$. \label{item:psi-of-xi}
		\item For every $u \in \altwords$, the words $ \Phi(u) $ and $u$ represent the same element in $\bbg\Gamma$. Similarly, for every $v \in \monoid \bbggens$, the words $\Psi(v)$ and $v$ represent the same element in $\bbg \Gamma$. \label{item:commutative-diagram}
		\item If $r$ is a relation in the Papadima--Suciu presentation corresponding to a directed triangle with vertices $ v_a \ll v_b \ll v_c $, then $ \Psi(r) $ is freely equivalent to a conjugate of $ac^{-1}ba^{-1}cb^{-1}$.  \label{item:psi-of-triangle}
	\end{enumerate}
	In particular, there exists a canonical isomorphism between $\free \bbggens$ and the image of $\altwords$ in $\free \raaggens$ via the natural quotient map $\monoid \raaggens \to \free \raaggens$.
\end{lemma}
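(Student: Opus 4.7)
The plan is to define $\Psi$ and $\Phi$ in the most natural way, interpreting each tree-edge generator of $\bbg\Gamma$ as a length-two alternating word in $\raaggens$ and, conversely, each alternating letter pair as a tree path. Concretely, I will set $\Psi(e) \coloneq s t^{-1}$ and $\Psi(e^{-1}) \coloneq t s^{-1}$ for every tree edge $e = (v_s, v_t)$ with $v_s \ll v_t$, and extend $\Psi$ to $\monoid{\bbggens}$ by concatenation; the output is automatically alternating, since each building block is one positive letter followed by one negative letter. Conversely, for an alternating word $u \identical s_1 t_1^{-1} \cdots s_n t_n^{-1}$, I will set $\Phi(u) \coloneq w_{s_1, t_1} \cdots w_{s_n, t_n}$, the concatenation of the tree-path words defined before the lemma.

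To establish \cref{item:xi-of-psi}, it suffices to check a single letter $e^{\pm 1}$: applying $\Psi$ produces a length-two alternating word whose endpoints are the two endpoints of $e$, so $\Phi$ sends it back to the one-letter tree path $e^{\pm 1}$ itself, as $e$ is already an edge of $T$. For \cref{item:psi-of-xi}, I will use a telescoping argument: if the tree path from $v_s$ to $v_t$ visits the successive vertices $v_s = v_{r_0}, v_{r_1}, \dots, v_{r_k} = v_t$, then $\Psi$ applied letterwise to $w_{s,t}$ yields
\[
    (r_0 r_1^{-1})(r_1 r_2^{-1}) \cdots (r_{k-1} r_k^{-1}),
\]
whose inner pairs $r_j^{-1} r_j$ cancel freely, leaving $st^{-1}$; concatenating this observation over the factors of $u$ gives $\Psi(\Phi(u)) \freeid u$.

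Item \cref{item:commutative-diagram} is immediate once I record that the canonical inclusion $\bbg\Gamma \hookrightarrow \raag\Gamma$ identifies the generator $e = (v_s, v_t)$ with $st^{-1}$, so both $\Psi$ and $\Phi$ preserve the underlying group element by construction. For \cref{item:psi-of-triangle}, I will apply $\Psi$ letterwise to $[w_e, w_f]$, obtaining the commutator word $[\Psi(w_e), \Psi(w_f)]$; the telescoping computation then yields $\Psi(w_e) \freeid ab^{-1}$ and $\Psi(w_f) \freeid bc^{-1}$, so
\[
    \Psi([w_e, w_f]) \freeid (ab^{-1})(bc^{-1})(ba^{-1})(cb^{-1}) \freeid a c^{-1} b a^{-1} c b^{-1},
\]
where the central $b^{-1}b$ cancels. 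The word ``conjugate'' in the statement simply accommodates that the cyclic starting point of the triangle relation may vary among its rotations.

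The final claim about the canonical isomorphism between $\free{\bbggens}$ and the image of $\altwords$ in $\free{\raaggens}$ follows automatically: \cref{item:xi-of-psi,item:psi-of-xi} imply that $\Psi$ and $\Phi$ descend to mutually inverse bijections between the relevant free-group quotients, and \cref{item:commutative-diagram} makes this bijection compatible with the canonical surjections onto $\bbg\Gamma$. I do not expect any conceptual obstacle; the only real work is the bookkeeping with orientations and signs when the telescoping computation is unfolded in \cref{item:psi-of-xi,item:psi-of-triangle}, and in particular verifying that the last letter of $\Psi(w_e)$ is $b^{-1}$ and the first letter of $\Psi(w_f)$ is $b$, so that cancellation across the junction in the commutator actually occurs.
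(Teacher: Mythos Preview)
Your proposal is correct and follows essentially the same approach as the paper: you define $\Psi$ and $\Phi$ exactly as the paper does, and your telescoping argument for \cref{item:psi-of-xi} and \cref{item:psi-of-triangle} is precisely the mechanism the paper has in mind when it says these items are ``clear from the definition of $\Psi$ and $\Phi$.'' Your version is in fact slightly more explicit than the paper's, which leaves \cref{item:xi-of-psi,item:psi-of-xi,item:commutative-diagram} unelaborated and only writes out the commutator computation for \cref{item:psi-of-triangle}.
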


\begin{proof}
	The maps $\Psi$ and $\Phi$ are defined as follows. If $w=e_1^{\epsilon_1} \cdots e_k^{\epsilon_k}$ is a word in $\bbggens$, where $e_i=(v_{s_i},v_{t_i})$ is an oriented edge of $T$ for $1\leq i\leq k$, we define
	\[
		\Psi(w) \coloneq (s_1 t_1^{-1})^{\epsilon_1} \cdots (s_k t_k^{-1})^{\epsilon_k}.
	\]
	Conversely, let $u=s_1t^{-1}_1\cdots s_nt^{-1}_n$ be an alternating word, where $s_i,t_i \in \raaggens$ for $1\leq i\leq n$. Recall from the definition of the Papadima--Suciu presentation that $w_{s_i,t_i}$ is the word in the generating set $\bbggens$ obtained by taking the product of the edges along the path in $T$ from $v_{s_i}$ to $v_{t_{i}}$. We define
	\[
		\Phi(u) \coloneqq w_{s_1,t_1} \cdots w_{s_n,t_n}.
	\]

	The statements \eqref{item:xi-of-psi}, \eqref{item:psi-of-xi}, and \eqref{item:commutative-diagram} are clear from the definition of $\Psi$ and $\Phi$, and the fact that $w_{s,t}$ represents the element $st^{-1}$. For \eqref{item:psi-of-triangle}, let $(e,f,g)$ be a directed triangle with $e = (v_a, v_b)$ and $f = (v_b, v_c)$. Since $\flag \Gamma$ is simply connected, the words $w_e e^{-1}$ and $w_f f^{-1}$ are null-homotopic. It then follows from the definition of $\Phi$ and $\Psi$ that $\Phi(w_e) = \Phi(e) = ab^{-1}$; similarly $\Phi(w_f) = bc^{-1}$. Thus, for $r = [w_e, w_f]$, we have
	\[
		\Phi(r) =  ab^{-1} bc^{-1} ba^{-1} cb^{-1} \freeid ac^{-1}ba^{-1}cb^{-1}. \qedhere
	\]
\end{proof}

Using alternating words, we define the following notion of length for an element in $\bbg \Gamma$.

\begin{definition}\label{def: flat norm}
	For $g \in \bbg\Gamma$, we define the \emph{flat norm} of $g$ as
	\[
		\flatnorm g \coloneq \min \set{\abs w : w \text{ is an alternating word representing } g}.
	\]
\end{definition}

For an element $g \in \bbg \Gamma$, the flat norm $\flatnorm g$ and the usual word metric with respect to $\bbggens$, denoted by $\norm g_{\bbggens}$, are Lipschitz equivalent, as shown by the following result.

\begin{lemma}
	Let $T$ be a spanning tree for $\Gamma$. There exists $ C>0 $ such that for every $ g \in \bbg\Gamma $, we have
	\[
		\frac 1C \norm g_{\bbggens} \leq \flatnorm g \leq C \norm g_{\bbggens}.
	\]
\end{lemma}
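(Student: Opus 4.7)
The plan is to exploit the canonical maps $\Psi\colon \monoid{\bbggens} \to \altwords$ and $\Phi\colon \altwords \to \monoid{\bbggens}$ provided by \cref{alternating-words-and-words-in-bbg}, which already give us the dictionary we need between words in the two generating sets. Both inequalities will follow by applying these maps to length-minimising words and tracking how the length changes.

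For the upper bound $\flatnorm g \leq C \norm g_{\bbggens}$, I would take a word $v = e_1^{\epsilon_1} \cdots e_k^{\epsilon_k} \in \monoid{\bbggens}$ of length $k = \norm g_{\bbggens}$ representing $g$, and consider $\Psi(v)$. By the definition of $\Psi$ recalled in the proof of \cref{alternating-words-and-words-in-bbg}, each generator $e_i = (v_{s_i}, v_{t_i})$ is sent to the two-letter alternating word $(s_i t_i^{-1})^{\epsilon_i}$, so $\Psi(v)$ is an alternating word of length $2k$, and by statement \eqref{item:commutative-diagram} of the lemma, it represents $g$ in $\bbg\Gamma$. Hence $\flatnorm g \leq 2 \norm g_{\bbggens}$.

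For the other inequality, I would take an alternating word $u = s_1 t_1^{-1} \cdots s_n t_n^{-1}$ of minimal length $|u| = 2n = \flatnorm g$ representing $g$, and use $\Phi(u) = w_{s_1,t_1} \cdots w_{s_n,t_n} \in \monoid{\bbggens}$, which also represents $g$ by statement \eqref{item:commutative-diagram}. Each word $w_{s_i,t_i}$ spells the unique simple path in the spanning tree $T$ from $v_{s_i}$ to $v_{t_i}$, so its length is at most $\diam(T)$. Therefore
\[
    \norm g_{\bbggens} \leq \length{\Phi(u)} \leq n \cdot \diam(T) = \tfrac{\diam(T)}{2} \cdot \flatnorm g.
\]
Setting $C \coloneq \max\{2, \diam(T)/2\}$ then yields both inequalities simultaneously.

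There is essentially no obstacle here: once the correspondence of \cref{alternating-words-and-words-in-bbg} is in hand, the proof reduces to observing that $\Psi$ doubles length and that $\Phi$ expands length by at most a factor depending only on the fixed spanning tree $T$. The only mild point worth emphasising is that the constant $C$ is \emph{not} intrinsic to the group but depends on the choice of $T$; this is fine, since the statement only asserts the existence of some such $C$ for a given tree.
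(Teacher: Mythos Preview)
Your proposal is correct and is essentially the same argument the paper intends: the paper's proof is a single sentence saying the result follows from the isomorphism in \cref{alternating-words-and-words-in-bbg}, and you have simply unpacked that by applying $\Psi$ and $\Phi$ to length-minimising words and bounding how each map distorts length. Your explicit constants ($2$ for $\Psi$ and $\diam(T)/2$ for $\Phi$) make transparent what the paper leaves implicit.
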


\begin{proof}
	The result follows directly from the isomorphism between $\free \bbggens$ and the image of $\altwords$ in $\free \raaggens$; see \cref{alternating-words-and-words-in-bbg}.
\end{proof}

Recall that coarse diagrams are van Kampen diagrams in which the bounded regions are labelled by arbitrary null-homotopic words. We now define a particular class of coarse diagrams, where the bounded regions are labelled by null-homotopic alternating words.

\begin{definition}[Alternating diagram]\label{def:alternating diagram}
	Let $w$ be a null-homotopic alternating word in the standard generating set $\raaggens$ of $\raag \Gamma$, and let $D$ be a coarse diagram over $\raagpres$ for $w$. We say that $D$ is an \emph{alternating diagram} for $w$ if every loop based at a vertex of height $0$ is labelled by an alternating word.
\end{definition}

\Cref{def:alternating diagram} can also be interpreted geometrically as follows. The $1$-skeleton of a coarse diagram $D$ over $\raagpres$ admits a natural immersion into $\cay{\raag\Gamma, \raaggens}$. To construct the immersion, for each vertex $v$ of $D$, choose a combinatorial path connecting the base point of $D$ to $v$, and send $v$ to the group element $g_v$ that is represented by the word labelling the chosen path. Since bounded regions are labelled by null-homotopic words, the element $g_v$ is independent of the choice of path. In particular, every vertex $v$ has a well-defined height $\height(g_v)$.

Alternating diagrams are the coarse diagrams whose images in the Cayley graph under the natural immersion are contained in $\cayheight^{-1}([0,1])$. Equivalently, they are coarse diagrams in which all vertices have height $0$ or $1$.

\subsection{Almost-flat van Kampen diagrams}

Note that a van Kampen diagram over the standard presentation $\raagpres$ of $ \raag\Gamma$ cannot be an alternating diagram (unless the boundary word is freely trivial), since none of the defining relations are alternating words. To address this, we allow the van Kampen diagrams to be slightly farther away from the $0$-level set.

\begin{definition}[Almost-flat van Kampen diagram]\label{almost-flat-diagram}
	Let $w$ be a null-homotopic alternating word in the standard generating set $\raaggens$ of $\raag \Gamma$. An \emph{almost-flat van Kampen diagram for $w$} is a van Kampen diagram $D$ for $w$ over $\raagpres$, such that every vertex $p \in D$ has height $\height(p) \in \set{0,1,2}$.
\end{definition}

The reader should think of alternating diagrams as the coarse version of almost-flat van Kampen diagrams. All the regions of an alternating diagram $D$ for a null-homotopic alternating word $w$ are labelled by alternating words (assuming that we start reading each word from a vertex at height $0$). So, by filling each bounded region of $D$ with an appropriate almost-flat van Kampen diagram, we obtain an almost-flat van Kampen diagram for $w$.

We use almost-flat van Kampen diagrams to compute the asymptotic behaviour of the Dehn function of $\bbg\Gamma$.

\begin{definition}[Almost-flat area]\label{def:almost-flat-vK-diagram}
	Let $w$ be a null-homotopic alternating word in the standard generating set $\raaggens$ of $\raag \Gamma$. The \emph{almost-flat area} of $w$ is the quantity defined as
	\[
		\flatarea w = \min\{ \area{D} : D \ \text{is an almost-flat van Kampen diagram for} \ w \}.
	\]
\end{definition}

\begin{proposition} \label{prop:area-bbg-vs-almost-flat}
	Define $\delta_\flat \colon \N \to \N$ by
	\[
		\delta_\flat (n) \coloneq \max\left\{\flatarea w : \begin{array}{c}\text{$w$ is a null-homotopic alternating word in} \ \raag\Gamma \ \text{with} \ \length w \leq n \end{array}\right\}.
	\]
	Then, we have $\delta_\flat \asymp \Dehn{\bbg\Gamma}$.
\end{proposition}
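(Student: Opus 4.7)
The equivalence $\delta_\flat \asymp \Dehn{\bbg\Gamma}$ will be established via two inequalities, both leveraging the correspondence between words in $\bbggens$ and alternating words in $\raaggens$ given by \cref{alternating-words-and-words-in-bbg}. Throughout, $K$ denotes a constant depending only on $\Gamma$ and $T$, whose exact value may vary from line to line.

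For $\delta_\flat \preccurlyeq \Dehn{\bbg\Gamma}$: given a null-homotopic alternating word $w$ with $\length{w} \leq n$, item \eqref{item:commutative-diagram} of \cref{alternating-words-and-words-in-bbg} makes $\Phi(w) \in \monoid{\bbggens}$ null-homotopic in $\bbg\Gamma$ with $\length{\Phi(w)} \leq Kn$, where $K$ bounds $\length{w_{s,t}}$ for $s,t \in \raaggens$. Writing $\Phi(w) \freeid \prod_{i=1}^{\ell} x_i r_i^{\epsilon_i} x_i^{-1}$ with each $r_i$ a Papadima--Suciu relation and $\ell \leq \Dehn{\bbg\Gamma}(Kn)$, and applying the free-monoid homomorphism $\Psi$ together with items \eqref{item:xi-of-psi}--\eqref{item:psi-of-xi}, I obtain
\[
w \freeid \prod_{i=1}^{\ell} \Psi(x_i)\,\Psi(r_i)^{\epsilon_i}\,\Psi(x_i)^{-1}.
\]
By item \eqref{item:psi-of-triangle}, each $\Psi(r_i)$ is freely equivalent to a cyclic conjugate of the hexagon $ac^{-1}ba^{-1}cb^{-1}$ corresponding to some triangle $(v_a, v_b, v_c)$ of $\Gamma$; this hexagon admits a van Kampen diagram of area $3$ over $\raagpres$ (using the commutators $[a,b]$, $[a,c]$, $[b,c]$) all of whose vertices lie at heights in $\set{0,1}$. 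Since each conjugator arm $\Psi(x_i)$ is an alternating word staying at heights $\set{0,1}$, the standard lollipop construction assembles these sub-diagrams into an almost-flat van Kampen diagram for $w$ of area at most $3\ell \leq 3\Dehn{\bbg\Gamma}(Kn)$.

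For $\Dehn{\bbg\Gamma} \preccurlyeq \delta_\flat$: given a null-homotopic $v \in \monoid{\bbggens}$ of length $n$, set $w \coloneq \Psi(v)$, a null-homotopic alternating word of length $2n$, and let $D$ be an almost-flat van Kampen diagram for $w$ over $\raagpres$ with $\area{D} \leq \delta_\flat(2n)$. The plan is to convert $D$ into a van Kampen diagram $D'$ for $v$ over $\bbgpres$ with $\area{D'} \leq K\area{D}$. Every $2$-cell of $D$ is a square whose boundary reads $[a,b]^{\pm}$ with $\set{v_a, v_b} \in \ee{\Gamma}$ and whose four vertices have heights $0, 1, 2, 1$ in cyclic order. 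Fix, for every vertex $p$ of $D$ of positive height, a descent path in $D$ of length at most $2$ from $p$ to a height-$0$ vertex; such a path always exists because each edge of $D$ changes the height by $\pm 1$ and heights are bounded above by $2$, and on the boundary of $D$ it can be chosen to be a single edge of $w$ itself. Around each $2$-cell, concatenating its boundary with the descent paths at its positive-height vertices encloses an alternating loop of length uniformly bounded in terms of $\Gamma$; applying $\Phi$ turns this into a null-homotopic word in $\bbggens$ of uniformly bounded length, hence of uniformly bounded area over $\bbgpres$. Gluing the resulting local fillings along the common descent paths produces a van Kampen diagram $D'$ for $v$ over $\bbgpres$ of area at most $K\area{D} \leq K\delta_\flat(2n)$.

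The main obstacle lies in the second direction: verifying rigorously that the cell-wise local fillings glue into a globally consistent van Kampen diagram for $v$, rather than for some modification of $v$. This requires careful bookkeeping of the descent paths at shared vertices---particularly at the height-$2$ vertices of $D$, where several $2$-cells meet and share portions of their descent data---so that the contributions along shared descent paths cancel and the boundary of the assembled diagram equals $v$ up to free equivalence. Once this accounting is carried out, the uniformly bounded area for each $2$-cell delivers the desired linear relation $\area[\bbgpres]{v} \leq K\delta_\flat(2n)$.
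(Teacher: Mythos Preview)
Your argument for $\delta_\flat \preccurlyeq \Dehn{\bbg\Gamma}$ is essentially the paper's: translate via $\Phi$, fill over $\bbgpres$, translate back via $\Psi$, and fill each hexagon $ac^{-1}ba^{-1}cb^{-1}$ with three commutator cells. One small slip: that area-$3$ filling cannot stay at heights $\{0,1\}$, since every commutator square $[s,t]$ has a vertex two levels above its lowest corner; this is harmless, as almost-flat diagrams allow heights in $\{0,1,2\}$.

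The reverse direction has a genuine gap, and it is not the gluing bookkeeping you flag. Your loop $\ell_\sigma$ around a $2$-cell---the boundary of $\sigma$ with descent-path detours inserted at the positive-height vertices---is \emph{not} alternating: the boundary of $\sigma$ still passes through its height-$2$ vertex, and detouring down to height $0$ and back at that vertex does not avoid it. Hence $\Phi$, which is only defined on alternating words, does not apply to $\ell_\sigma$, and you have no bounded-length word in $\bbggens$ to fill over $\bbgpres$. Descent paths \emph{inside $D$} cannot eliminate the height-$2$ vertices from the cell boundaries; that is the real obstruction, not the compatibility of the local fillings.

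The paper handles this direction differently. It invokes \cref{pushdown-from-bounded-height}, whose proof (deferred to \cref{sec:coloured-words}) replaces each edge of $D$ by a path built from \emph{transition words} $\transitionword a h$ living in the Cayley graph via the spanning tree $T$, not in $D$. This yields a genuine alternating diagram with the same number of regions as $D$, each labelled by an alternating word of length at most a constant $M_2$ depending only on $\Gamma$. Applying $\Phi$ to the resulting product of conjugates then gives $\Dehn{\calp}(n) \le \flatdehn(2n)$ for the auxiliary finite presentation $\calp = \presentation{\bbggens}{R}$, where $R$ consists of all null-homotopic words of length at most $M_2 \cdot \cardinality{\vv\Gamma}$; the paper does not attempt to work with $\bbgpres$ directly in this half.
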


\cref{prop:area-bbg-vs-almost-flat} follows from the following technical lemma.

\begin{restatable}{lemma}{techlemma}\label{pushdown-from-bounded-height}
	For every $H \in \N$, there exists $M_H \in \N$ such that the following holds. Let $w$ be a null-homotopic alternating word in $\raag\Gamma$. Suppose that $w$ admits a van Kampen diagram $D$ over $\raagpres$ such that the height of every vertex $v$ satisfies $\abs{\height(v)} \leq H$.
	Then $w$ admits an alternating diagram with the same number of bounded regions as $D$, such that each bounded region is labelled by an alternating word of length at most $M_H$.
\end{restatable}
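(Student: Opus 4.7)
The plan is to deform $D$ directly into an alternating coarse diagram $D'$ with the same number of $2$-cells, by pushing every vertex of $D$ down into the strip $\cayheight^{-1}(\{0,1\}) \subset \cay{\raag\Gamma,\raaggens}$ via left multiplication by a power of a fixed generator $a \in \raaggens$. Namely, for each vertex $v$ of $D$ I let $g_v$ be the group element it represents and $h_v = \height(g_v) \in [-H,H]$, and set
\[
\pushdown{v} := g_v\,a^{\epsilon_v - h_v}, \qquad \epsilon_v \in \{0,1\} \text{ with } \epsilon_v \equiv h_v \pmod 2,
\]
so that $\height(\pushdown{v}) = \epsilon_v$. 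The key feature is that $\pushdown{v}=g_v$ whenever $h_v \in \{0,1\}$; in particular, since $w$ is alternating, every boundary vertex of $D$ is left untouched by this push-down.

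For each edge $e$ of $D$ from $u$ to $v$ I then replace $e$ by a combinatorial path $\sigma_e$ from $\pushdown{u}$ to $\pushdown{v}$, of length at most a constant $N_H$ depending only on $H$, every vertex of which lies in $\cayheight^{-1}(\{0,1\})$; for boundary edges, $\sigma_e$ is simply the original edge. The diagram $D'$ is then assembled as a planar $2$-complex by replacing each edge of $D$ by the corresponding $\sigma_e$ (introducing fresh combinatorial intermediate vertices and edges) and keeping one $2$-cell per $2$-cell of $D$, bounded by the cyclic concatenation of the four relevant $\sigma_e$'s with the appropriate orientations. The verifications are then essentially formal: the boundary word of $D'$ is still $w$, every vertex of $D'$ has height $0$ or $1$, and the number of $2$-cells is preserved. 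Moreover, the four corners of any $2$-cell of $D'$ have the same parity pattern of heights as those of the corresponding $2$-cell of $D$, so in particular they alternate in $\{0,1\}$; reading the boundary of such a $2$-cell from a height-$0$ corner produces a loop through vertices of heights $\{0,1\}$ only, which spells out an alternating word (starting with a $\raaggens$-letter and alternating thereafter). Null-homotopy holds because the $\sigma_e$'s collectively represent the same element of $\raag\Gamma$ as the original commutator $[s,t]=1$. Setting $M_H := 4 N_H$ then bounds the length of each $2$-cell boundary word.

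The main obstacle is producing the paths $\sigma_e$ with length uniformly bounded by some $N_H$ depending only on $H$. The difference element $(\pushdown{u})^{-1}\pushdown{v} = a^{h_u-\epsilon_u}\,s\,a^{\epsilon_v-h_u-1}$ has $\raag\Gamma$-word length at most $2H+2$. Up to prepending or appending a single letter $a^{\pm 1}$ to move the endpoint between heights $0$ and $1$ when needed, producing $\sigma_e$ inside the strip reduces to joining two elements of $\bbg\Gamma$ whose difference has $\raag\Gamma$-word length at most $2H+3$. Since the ball of that radius in $\raag\Gamma$ is finite, such a difference has $\bbggens$-word length bounded by a constant depending only on $H$; applying the map $\Psi$ of \cref{alternating-words-and-words-in-bbg} to a corresponding $\bbggens$-word yields an alternating representative whose partial heights, starting at a height-$0$ endpoint, lie in $\{0,1\}$ by the very definition of an alternating word. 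This gives the desired constant $N_H$ and completes the plan.
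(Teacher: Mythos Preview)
Your argument is correct and takes a genuinely different route from the paper's. The paper proves this lemma by invoking the coloured-diagram machinery developed in \cref{sec:coloured-words}: it self-colours every edge label $s$ as $\colword ss$, takes the pushdown of the resulting coloured diagram (which produces transition words along the fixed spanning tree at every vertex), and then appeals to \cref{length-of-pushdown} and \cref{labels-of-regions-of-pushdown} to bound the region labels. Your approach is more elementary: you push every vertex down using powers of a \emph{single} fixed generator $a$, replace each edge by a short path in the strip $\cayheight^{-1}(\{0,1\})$, and obtain the length bound on these paths from the finiteness of balls in $\raag\Gamma$ together with the map $\Psi$ of \cref{alternating-words-and-words-in-bbg}. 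By declaring $\sigma_e$ to be the original edge on the boundary, you also avoid the folding step the paper needs to pass from a diagram for $w' \freeid w$ to one for $w$.

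What each approach buys: yours is self-contained and does not require setting up the pushdown of coloured diagrams, making it a cleaner proof of this particular lemma in isolation. The paper's approach, on the other hand, yields explicit formulae (the transition words $\transitionword a h$) rather than a non-constructive finiteness bound, and it fits seamlessly into the framework used throughout \cref{sec:upper-bounds}, where the same pushdown machinery is reused with varying colourings to obtain the sharper area estimates.
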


\cref{pushdown-from-bounded-height} can be proved using a fairly standard ``pushing down'' argument; see \cite{Dison,PushingRAAGs}. We postpone the proof of \cref{pushdown-from-bounded-height} until \cref{sec:coloured-words}, where we introduce a variation of this technique to establish the upper bounds in \cref{main-theorem}. We now show how \cref{pushdown-from-bounded-height} implies \cref{prop:area-bbg-vs-almost-flat}.

\begin{proof}[Proof of \cref{prop:area-bbg-vs-almost-flat}]
	Fix a spanning tree $T$ for $\Gamma$, and let the maps $\Phi \colon \altwords \to \monoid \bbggens$ and $\Psi \colon \monoid \bbggens \to \altwords$ be as in \cref{alternating-words-and-words-in-bbg}.

	We start by proving $\delta_\flat \preccurlyeq \Dehn{\bbg\Gamma}$. Let $u$ be a null-homotopic alternating word of length $2n$, and let $D$ be a van Kampen diagram for the word $w = \Phi(u)$ over the Papadima--Suciu presentation $\bbgpres$ of $\bbg\Gamma$; its edges are labelled by elements of $\bbggens$, corresponding to oriented edges of $T$. Replacing each edge of $D$, labelled with an oriented edge of $T$ oriented from $ v_s $ to $ v_t $, with two consecutive edges labelled by $st^{-1}$ yields an alternating diagram for $\Psi(w) \freeid u$, where all bounded regions are labelled with words of the form $\Psi(r)$ for some relation $r$ of $\bbgpres$.
	By \cref{alternating-words-and-words-in-bbg}~\cref{item:psi-of-triangle}, the relations of the presentation for $\bbg\Gamma$ are mapped to words that are, up to free equivalence and conjugation, of the form $ac^{-1}ba^{-1}cb^{-1}$. Such an alternating word admits an almost-flat van Kampen diagram of area $3$, so $u$ admits an almost-flat van Kampen diagram of area $3 \cdot \area D $. This shows that $\flatdehn(2n) \leq 3 \Dehn\bbgpres (n \cdot \cardinality{\ee\Gamma})$.

	We now prove the inequality $\flatdehn \succcurlyeq \delta_{\bbg \Gamma}$.
	Let $H=2$ and $M_2$ be the constants in \cref{pushdown-from-bounded-height}. Let $R$ be the set of null-homotopic words in $ \bbggens $ of length at most $M_2 \cdot \cardinality{\vv\Gamma}$ and $\calp \coloneq \presentation \bbggens R$.
	Consider a null-homotopic word $w$ of length $n$ in $ \bbggens $, and let $D$ be an almost-flat van Kampen diagram for $u \coloneq \Psi(w)$ with area $ A \leq \flatdehn(2n) $.

	Since $D$ in an almost-flat van Kampen diagram, the height of every vertex $v \in D$ satisfies $\abs{\height(v)} \leq 2$. It follows from  \cref{pushdown-from-bounded-height} that there exists a constant $M_2$, independent of $u$, such that $u$ admits an alternating diagram with $A$ bounded regions, all labelled by alternating words of length at most $M_2$. By van Kampen's Lemma \cite[Theorem 4.2.2]{bridsongeometry}, there are alternating words $ \alpha_i$ and $u_i$ for $i \in \range A$, such that $\length{u_i} \leq M_2 $ and
	\[
		u \freeid \prod_{i=1}^A \alpha_i u_i \alpha_i^{-1}.
	\]

	By applying $\Phi$ on both sides of the identity we get
	\[
		w \freeid \prod_{i=1}^A \Phi(\alpha_i)\Phi(u_i)\Phi(\alpha_i)^{-1}.
	\]
	A simple estimate on the length of $\Phi(u_i)$ shows that $\Phi(u_i) \in R$. This implies that $\calp$ is a presentation for $\bbg\Gamma$, and $\Dehn \calp(n) \leq \flatdehn(2n)$.
\end{proof}

\subsection{Estimating the almost-flat area}\label{sec:estimating-the-almost-flat-area}

In this section, we develop some technical tools that are helpful to construct almost-flat van Kampen diagrams with good estimates on their areas.

As previously mentioned, one approach is to start with an alternating diagram $D$ for a null-homotopic alternating word $w$ and then fill each bounded region with an almost-flat van Kampen diagram.
The following quantity, which is well-defined for any coarse diagram, allows us to control the area of the resulting almost-flat van Kampen diagram.

\begin{definition}[Density]\label{def:density}
	Let $G$ be a group with a finite generating set $S$, and let $D$ be a coarse diagram over $S$ for a null-homotopic word $w$ in $G$. Denote the labels of the bounded regions of $D$ by $w_i$, $ i \in I $. The \emph{density} of $D$ is defined as
	\[
		\density D \coloneq \frac{ \sum_{i\in I}\length{w_i}}{\length{w}}.
	\]
\end{definition}

Equivalently, the density relates the total number of edges of $D$ with the length of $w$.

\begin{lemma}\label{density-vs-num-edges}
	Let $D$ be a coarse diagram for a null-homotopic word $w$, and denote by $E_D$ the edge set of $D$. The density of $D$ satisfies
	\[
		\density D = \frac{2 \numedges D}{\length w} - 1.
	\]
\end{lemma}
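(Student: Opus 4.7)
The plan is a straightforward double-counting argument on the edges of the coarse diagram $D$. I would exploit the planarity of $D$ by observing that each edge has exactly two sides, and each side lies on the boundary of exactly one region of $\R^2 \setminus D$, either one of the bounded regions or the unique unbounded region. Note that these two regions may in fact coincide, as happens for bridge or pendant edges.

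The key step is to invoke the standard convention used to define boundary words in van Kampen and coarse diagrams: walking along the topological boundary of a region records each of its bordering sides exactly once. In particular, for a bridge whose two sides both border the same region, the boundary word of that region reads the label of that edge twice. Under this convention, each edge of $D$ contributes exactly $2$ to the total count of side–region incidences, giving $2\numedges{D}$ incidences in total. Partitioning this count according to whether the bordering region is bounded or unbounded, and using that the unbounded region contributes $\length{w}$ to the total while the $i$-th bounded region contributes $\length{w_i}$, I obtain
\[
\sum_{i \in I} \length{w_i} + \length{w} = 2 \numedges{D}.
\]
Dividing through by $\length{w}$ and rearranging yields the claimed identity
\[
\density{D} = \frac{\sum_{i\in I}\length{w_i}}{\length{w}} = \frac{2\numedges{D}}{\length{w}} - 1.
\]

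I do not anticipate any serious obstacle. The only subtlety worth spelling out is the bridge/pendant case, which could otherwise give the misleading impression that such an edge should contribute only $1$ rather than $2$ to the incidence count. Since this is consistent with the convention for reading off boundary words used throughout the preliminaries, no additional care is needed and the identity follows immediately from the counting.
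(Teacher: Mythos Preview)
Your proposal is correct and takes essentially the same approach as the paper: a double-counting argument showing $2\numedges{D} = \length{w} + \sum_{i\in I}\length{w_i}$, followed by dividing through by $\length{w}$. You have simply made explicit the bridge/pendant subtlety that the paper's one-line proof leaves implicit.
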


\begin{proof}
	Let $w_i$, $ i \in I $, denote the labels of the bounded regions of $D$. A double-counting argument on the lengths of $w$ and the $w_i$ yields
	\[
		2 \numedges D = \length w + \sum_{i \in I} \length {w_i}.
	\]
	The result now follows.
\end{proof}

The next result illustrates how density comes into play when estimating the almost-flat area of the boundary alternating word in terms of the areas of the bounded regions.

\begin{lemma}\label{lem:area-of-protodiagram}
	Let $ \alpha \geq 1$ and $C>0 $ be real numbers. Suppose that $w$ is a null-homotopic alternating word in $ \raaggens $ admitting an alternating diagram $D$, such that for every alternating word $w_i$ labelling a bounded region of $D$, we have $ \flatarea{w_i} \leq C \cdot \length{w_i}^\alpha $. Then
	\[
		\flatarea{w} \leq C \cdot \density D^{\alpha} \cdot \length{w}^\alpha.
	\]
\end{lemma}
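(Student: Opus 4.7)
The plan is to construct an almost-flat van Kampen diagram for $w$ by ``filling in'' each bounded region of the alternating diagram $D$, and then bound its area via the hypothesis and a standard $\ell^{\alpha}$ versus $\ell^{1}$ inequality.

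First I would use the hypothesis to select, for each bounded region of $D$ with boundary label $w_i$, an almost-flat van Kampen diagram $D_i$ with $\area{D_i} \leq C \cdot \length{w_i}^\alpha$. Since $D$ is an alternating diagram, all of its vertices have height $0$ or $1$, and each $w_i$ is a null-homotopic alternating word, so the $D_i$ are well-defined and the vertices they introduce all have heights in $\{0,1,2\}$. Gluing each $D_i$ into the corresponding bounded region of $D$, in the sense described after \cref{def:coarse diagram}, produces a genuine van Kampen diagram $D'$ for $w$ over $\raagpres$ whose vertices still have heights in $\{0,1,2\}$, i.e., an almost-flat van Kampen diagram for $w$.

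Next I would estimate $\area{D'}$. Summing the area bounds on the $D_i$ gives
\[
    \flatarea{w} \leq \area{D'} = \sum_i \area{D_i} \leq C \sum_i \length{w_i}^\alpha.
\]
Now the key arithmetic input is the fact that for non-negative reals and $\alpha \geq 1$, the $\ell^\alpha$-norm is dominated by the $\ell^1$-norm, i.e., $\sum_i \length{w_i}^\alpha \leq \bigl(\sum_i \length{w_i}\bigr)^\alpha$. Combining this with the defining identity $\sum_i \length{w_i} = \density D \cdot \length w$ from \cref{def:density} yields
\[
    \flatarea{w} \leq C \cdot \density D^\alpha \cdot \length w^\alpha,
\]
as desired.

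Since the argument is essentially a two-step filling followed by the norm inequality, there is no real obstacle; the only point worth flagging is the hypothesis $\alpha \geq 1$, which is precisely what makes the $\ell^\alpha \leq \ell^1$ comparison valid and allows us to absorb the sum of $\length{w_i}^\alpha$ into $\density D ^\alpha \length w^\alpha$. Note also that there is no issue with the conventions on reading boundary words of regions: each $w_i$ is determined only up to cyclic conjugation and inversion, but all such choices are again alternating words of the same length, so both the hypothesis on $\flatarea{w_i}$ and the definition of $\density D$ are insensitive to this ambiguity.
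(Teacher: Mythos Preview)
Your proof is correct and follows essentially the same approach as the paper: fill each region with an almost-flat diagram, sum the areas, apply the inequality $\sum_i \length{w_i}^\alpha \leq \bigl(\sum_i \length{w_i}\bigr)^\alpha$ (valid since $\alpha \geq 1$), and invoke the definition of density. Your added remarks on heights and on the insensitivity to cyclic conjugation are correct and simply make explicit what the paper leaves implicit.
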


\begin{proof}

	By assumption, we can fill each bounded region of $D$, labelled by $w_i$, with an almost-flat van Kampen diagram whose almost-flat area is at most $ C \cdot \length{w_i}^{\alpha} $. After filling each bounded region of $D$, we obtain an almost-flat van Kampen diagram $D'$ for $w$ with
	\[
		\area{D'}
		\leq\sum_i\flatarea{w_i}
		\leq\sum_i C \cdot \length{w_i}^\alpha \leq C \cdot \left(\sum_i \length{w_i}\right)^{\alpha}
		= C \cdot  \density D^{\alpha} \cdot  \length{w}^\alpha.
	\]
	This implies $\flatarea{w}\leq C \cdot \density D^{\alpha}\cdot\length{w}^\alpha$. \qedhere
\end{proof}

\cref{lem:area-of-protodiagram} tells us that, whenever we can control the density of the alternating diagram, the almost-flat area of the boundary word has the same asymptotic behaviour as the almost-flat area of the labels of the bounded regions. We formalise this strategy as the following corollary.

\begin{corollary}\label{two-step-filling}
	Let $W$ and $W' $ be two families of null-homotopic alternating words. Let $\alpha \geq 1$ and $C > 0$ be constants such that each $w' \in W'$ has almost-flat area bounded above by $C \cdot \length{w'}^\alpha$. Suppose that for every $w \in W$, there exists an alternating diagram $D_w$ such that
	\begin{itemize}
		\item all its bounded regions are labelled by words in $W'$, and
		\item its density is bounded above uniformly in $w$.
	\end{itemize}
	Then there exists $C'>0$ such that for all $w \in W$, we have $\flatarea w \leq C' \cdot \length w^\alpha$.
\end{corollary}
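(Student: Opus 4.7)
The plan is to apply \cref{lem:area-of-protodiagram} directly and then absorb the uniform density bound into a constant. Let $K > 0$ be a uniform upper bound for $\density{D_w}$ over all $w \in W$, which exists by hypothesis.

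Fix $w \in W$ and consider its alternating diagram $D_w$. Every bounded region of $D_w$ is labelled by some $w_i \in W'$, so by hypothesis each such $w_i$ satisfies $\flatarea{w_i} \leq C \cdot \length{w_i}^\alpha$. This is precisely the input required by \cref{lem:area-of-protodiagram}, which then yields
\[
    \flatarea w \leq C \cdot \density{D_w}^\alpha \cdot \length w^\alpha.
\]
Using $\density{D_w} \leq K$, the inequality becomes $\flatarea w \leq C K^\alpha \cdot \length w^\alpha$, so we may set $C' \coloneq C K^\alpha$.

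There is essentially no obstacle here: the statement is a packaging of \cref{lem:area-of-protodiagram} into a form that separates the ``local'' hypothesis (a power-law bound on the fillings of bounded regions) from the ``global'' hypothesis (a uniform density bound on the scaffolding diagram). The only thing worth being careful about is that $\alpha \geq 1$ is used implicitly in \cref{lem:area-of-protodiagram} (to pass from $\sum_i \length{w_i}^\alpha$ to $(\sum_i \length{w_i})^\alpha$ via superadditivity of $x \mapsto x^\alpha$ on $\N$), so no issues arise here either.
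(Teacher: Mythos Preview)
Your proof is correct and follows exactly the same approach as the paper: define $C' \coloneq C \cdot (\sup_{w \in W} \density{D_w})^\alpha$ and invoke \cref{lem:area-of-protodiagram}. The paper's proof is a one-liner to the same effect; your extra remark about the role of $\alpha \geq 1$ is accurate but not needed here.
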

\begin{proof}
	Let $C' \coloneq C \cdot( \sup\set{\density{D_w} : w \in W})^\alpha$, which is finite by hypothesis. The result is now a direct consequence of \cref{lem:area-of-protodiagram}.
\end{proof}

The two-step filling procedure described above can also be turned into a multistep procedure, by means of the following observation.

\begin{lemma}\label{lem:fill-sparse-with-sparse}
	Let $D$ be an alternating diagram for a null-homotopic alternating word $w$ in $\raaggens$, and denote by $w_i$, $ i \in I $, the labels of the bounded regions of $D$. Let $D'$ be the diagram where each bounded region is replaced with an alternating diagram $D_i$ for $w_i$. Then
	\[
		\density{D'} \leq \density D \cdot \max\set{\density {D_i} : i \in I }.
	\]
\end{lemma}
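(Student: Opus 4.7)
The plan is to unfold the definition of density directly and estimate. Let me denote by $w_{i,j}$, for $j \in J_i$, the labels of the bounded regions of each $D_i$. Since the bounded regions of $D'$ are precisely the union of the bounded regions of the $D_i$, the definition of density gives
\[
\density{D'} = \frac{\sum_{i \in I} \sum_{j \in J_i} \length{w_{i,j}}}{\length w}.
\]

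For each $i \in I$, the definition of density applied to $D_i$ yields $\sum_{j \in J_i} \length{w_{i,j}} = \density{D_i} \cdot \length{w_i}$. Substituting and then bounding each $\density{D_i}$ by $M \coloneq \max\set{\density{D_i} : i \in I}$ gives
\[
\density{D'} = \frac{\sum_{i \in I} \density{D_i} \cdot \length{w_i}}{\length w} \leq M \cdot \frac{\sum_{i \in I} \length{w_i}}{\length w} = M \cdot \density D,
\]
where the last equality is again the definition of density, this time for $D$.

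There is essentially no obstacle here: the entire content is a rearrangement of the definition of $\density{\cdot}$ as an average of the region-label lengths, weighted so that the bound on each inner density factors through. The only minor point worth emphasizing in the write-up is that when we ``replace each bounded region of $D$ with an alternating diagram $D_i$,'' the resulting outer boundary word of $D'$ is still $w$ (by construction of the replacement), so the denominator in $\density{D'}$ really is $\length w$, which is what makes the final telescoping work.
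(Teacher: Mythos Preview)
Your proof is correct and essentially identical to the paper's: both denote the inner region labels by $w_{i,j}$, write $\sum_{i,j}\length{w_{i,j}} = \sum_i \density{D_i}\cdot\length{w_i}$, bound by $M\sum_i\length{w_i} = M\cdot\density D\cdot\length w$, and divide through by $\length w$. The paper's version is just slightly terser, omitting your closing remark about the outer boundary still being $w$.
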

\begin{proof}
	Denote by $w_{i,j}$, $j\in J_i$, the labels on the bounded regions of $D_i$. Let $ M \coloneq \max\set{\density {D_i} : i \in I } $, we get
	\[
		\sum_{i\in I}\sum_{j\in J_i}\length{w_{i,j}}= \sum_{i \in I} \density {D_i} \cdot \length{w_i} \leq M \cdot \sum_{i\in I} \length{w_i}= M \cdot \density D \cdot \length w. \qedhere
	\]
\end{proof}

We conclude this section with a standard technique for obtaining upper bounds on the area of null-homotopic words. The idea behind this method can be traced back to Gromov \cite[{$5.A_3''$}]{Gromov}. Gersten and Short later formalised this idea and used it to provide upper bounds on the Dehn functions of certain subgroups of hyperbolic groups \cite[Lemma 2.2]{GerstenShort}. In the next result, we present a generalised version due to Carter and Forester \cite{CarterForester}, applied to our setting of alternating words and almost-flat area. Intuitively, it tells us that to obtain an estimate of the Dehn function, it suffices to consider a certain class of triangular diagrams.

\begin{lemma}[Triangle Lemma]\label{triangle-lemma}
	Let $ C>0$ and $\alpha \geq 2 $ be real numbers. For each $ g \in \bbg\Gamma $, choose an alternating word $ w_g $ representing $g$ such that $ \length{w_g} \leq C \cdot \flatnorm{g}$. Suppose that for every $ g_1, g_2, g_3 \in \bbg\Gamma $ satisfying $ g_1 g_2 g_3 = 1 $, we have
	\[
		\flatarea{w_{g_1}w_{g_2}w_{g_3}} \leq C \cdot (\flatnorm{g_1} + \flatnorm{g_2} + \flatnorm{g_3})^\alpha.
	\]
	Then, $ \Dehn {\bbg\Gamma}(n) \preccurlyeq n^\alpha $.
\end{lemma}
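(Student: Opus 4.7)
The plan is to apply \cref{prop:area-bbg-vs-almost-flat} to reduce showing $\Dehn{\bbg\Gamma}(n) \preccurlyeq n^\alpha$ to proving $\flatdehn(n) \leq K n^\alpha$ for some constant $K$, and I will establish this by strong induction on $n$. The key idea is to split a null-homotopic alternating word $u$ of length at most $n$ into $k$ alternating subwords $u = u_1 u_2 \cdots u_k$ with $\length{u_i} \leq \lceil n/k \rceil$---where $k$ will be a large integer depending only on $C$ and $\alpha$, chosen at the end---and to use the elements $g_i \in \bbg\Gamma$ represented by the $u_i$ together with the chosen words $w_{g_i}$ to decompose $u$ into ``outer'' pieces handled by induction and a ``central'' $k$-gon handled by the triangle hypothesis.

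Since $g_1 g_2 \cdots g_k = 1$, setting $r_i := u_i w_{g_i}^{-1}$ produces, by telescopic insertion of the pairs $w_{g_i}w_{g_i}^{-1}$, the identity in the free group
\[
u \freeid r_1 \cdot \bigl(w_{g_1}\, r_2\, w_{g_1}^{-1}\bigr) \cdots \Bigl(w_{g_1}\cdots w_{g_{k-1}}\, r_k\, w_{g_{k-1}}^{-1}\cdots w_{g_1}^{-1}\Bigr) \cdot \bigl(w_{g_1} \cdots w_{g_k}\bigr).
\]
This exhibits $u$ as a product of conjugates of null-homotopic alternating words, and since almost-flat area is invariant under conjugation and subadditive under concatenation of null-homotopic words, it gives
\[
\flatarea{u} \;\leq\; \sum_{i=1}^k \flatarea{r_i} + \flatarea{w_{g_1} w_{g_2} \cdots w_{g_k}}.
\]
Each $r_i$ is a null-homotopic alternating word of length at most $(1+C) n/k$ (up to an additive constant), so the inductive hypothesis bounds $\flatarea{r_i}$ by $K\bigl((1+C) n/k\bigr)^\alpha$.

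To bound the central $k$-gon I plan to use a \emph{balanced} recursive triangulation: letting $h_j := g_1 \cdots g_j$, the chord added at recursion depth $d$ connects two vertices $h_i$ and $h_{i + k/2^d}$ and corresponds to an element represented by the alternating word $u_{i+1} \cdots u_{i + k/2^d}$, hence of flat norm at most $n/2^d$. Applying the triangle hypothesis to every triangular face of this triangulation---where each interior chord lies in exactly two triangles and each original side in one---and using the power-mean inequality $(\ell_1 + \ell_2 + \ell_3)^\alpha \leq 3^{\alpha-1}(\ell_1^\alpha + \ell_2^\alpha + \ell_3^\alpha)$, I expect to obtain
\[
\flatarea{w_{g_1} \cdots w_{g_k}} \;\leq\; 3^{\alpha-1} C \biggl(\sum_{d \geq 1} 2^{d}\Bigl(\tfrac{n}{2^d}\Bigr)^{\!\alpha} + \frac{n^\alpha}{k^{\alpha-1}}\biggr) \;\leq\; C_1\, n^\alpha,
\]
where the geometric series converges precisely because $\alpha \geq 2 > 1$, and the constant $C_1$ depends only on $C$ and $\alpha$, not on $k$.

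Combining the two estimates gives the recurrence $\flatarea{u} \leq K(1+C)^\alpha n^\alpha / k^{\alpha-1} + C_1 n^\alpha$. The hard part is closing this recurrence: because the chord-side lengths scale with $C$, a naive three-way subdivision would only close the induction under the unnatural restriction $C < 3^{(\alpha-1)/\alpha}-1$. The plan circumvents this by choosing $k$ large enough---which is possible precisely because $\alpha \geq 2$---that $(1+C)^\alpha/k^{\alpha-1} \leq 1/2$, and then setting $K := 2 C_1$; together with the trivial base case for small $n$, this closes the induction and yields $\flatdehn(n) \leq K n^\alpha$, hence $\Dehn{\bbg\Gamma}(n) \preccurlyeq n^\alpha$ by \cref{prop:area-bbg-vs-almost-flat}.
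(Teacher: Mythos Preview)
Your argument is correct and rests on the same core idea the paper sketches (following Carter--Forester): a balanced dyadic subdivision into triangles whose total area is controlled by the convergent geometric series $\sum_d 2^d (n/2^d)^\alpha$, and then an appeal to \cref{prop:area-bbg-vs-almost-flat}.

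The only difference is organizational. The paper's version subdivides $u$ all the way down to pieces $u_i$ of bounded length (say length~$2$); then each outer bigon $u_i w_{g_i}^{-1}$ has uniformly bounded length and hence uniformly bounded almost-flat area, and the $O(n)$ such bigons contribute only $O(n)$ to the total. This gives the bound in one pass with no induction. Your route instead stops at a fixed $k$, handles the outer bigons by the inductive hypothesis, and then runs the dyadic argument on the central $k$-gon; the large-$k$ trick is what lets the recursion close. Both work, but the induction layer and the choice of $k$ are not needed---subdividing to the bottom already delivers the estimate directly.
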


\begin{proof}
	Since the proof is fairly standard and analogous to the proof of \cite[Theorem 4.2]{CarterForester}, we give only the main idea and omit the details. Starting with an empty diagram for an alternating word $w$, we subdivide it into triangular regions whose sides are labelled by alternating words of the form $w_g$. By hypothesis, we can fill each triangular region with an almost-flat van Kampen diagram and estimate the almost-flat area of the whole diagram.

	By exploiting the geometric series, one can see that the area is at most $C' \cdot \length w^\alpha$ for some $C' = C'(C,\alpha)$. Combining this with \cref{prop:area-bbg-vs-almost-flat} yields the result. 
\end{proof}

\section{Coloured words and coloured diagrams}\label{sec:coloured-words}

Establishing an upper bound for the area of a null-homotopic alternating word $w$ in $\bbg\Gamma$ is generally challenging. As mentioned in the introduction, there are cases where this is straightforward. For example, if we express $w$ as a word in $\raaggens$ and assume that all its letters commute with some fixed generator $a \in \raaggens$, then $w$ represents an element in $\bbg{\Star {v_a}}$, which is isomorphic to $\raag{\Link {v_a}}$ via the homomorphism induced by $sa^{-1} \mapsto s$ for $v_s\in\Link {v_a}$. Thus, the area of $w$ in $\bbg \Gamma$ is at most $\length{w}^2$. Concretely, to transform $w$ into the empty word, we commute its letters different from $a$ until they cancel out, using $a$ as a ``counterweight'' to keep the word alternating at all times.

To handle the general case, the main idea is to split a null-homotopic alternating word $w$ into subwords in such a way that each subword has its support contained in the star of some vertex of $\Gamma$. This provides a preferred way to manipulate each subword, as it belongs to the special case discussed above. We also want to keep track of the vertex used as a counterweight, as there might be multiple choices available.

This leads to the definition of \emph{coloured words}, which can be intuitively understood as follows. In the example above, the counterweight $a \in \raaggens$ is now thought as the colour of the word $w$. In general, the palette of colours consists of the standard generating set $\raaggens$ that corresponds to the generators used as counterweights. Given a word $w$ in $\raaggens$, we ``paint'' its letters so that each letter of $w$ commutes with its colour. 

This process splits $w$ into subwords by grouping together the maximal substrings of consecutive letters in $w$ that share the same colour. A subword with colour $a$ defines an element in the right-angled Artin group $\raag{\Star{v_a}}$. This gives rise to a canonical way to ``push down'' this coloured word to obtain an alternating word in $\bbg{\Star{v_a}}$; see \cref{def:push-down}. Moreover, we exploit this idea to define a class of diagrams for null-homotopic coloured words, called  \emph{coloured diagrams}, and a way to ``push down'' these diagrams to obtain alternating diagrams.

\subsection{Coloured words and pushdowns}
We now give the formal definitions of coloured words and their pushdowns, and present some basic properties.

\begin{definition}\label{def: coloured gen. set}
	The \emph{coloured generating set} is the subset $\colraaggens \subseteq \raaggens \times \raaggens$ consisting of all pairs $(s,a)$ such that $ [s,a]=1 $. It forms a generating set for $\raag\Gamma$ (with repeated generators) when equipped with the map $\colraaggens \to \raag\Gamma$ defined by $(s,a) \mapsto s$. A \emph{coloured word} is a word in $\colraaggens$, that is, an element in the free monoid generated by $\colraaggens \cup (\colraaggens)^{-1}$.
\end{definition}

The letter $ (s,a) \in \colraaggens $ should be thought of as the letter $s$ coloured with the colour $a$. To emphasise this, we introduce the following notation.

\begin{notation}\label{notation-colour-words}
	Let $a \in \raaggens$. If $s \in \raaggens$ commutes with $a$, then we denote by $\colword sa$ the element $(s,a) \in \colraaggens$ and refer to it as a \emph{coloured letter}. The formal inverse of $\colword sa$ is denoted by $\colword{s}{a}^{-1}$. Given a word $w= s_1 \cdots s_k $ in $\raaggens$ and $a \in \raaggens$ that commutes with $s_i$ for all $i \in \range k$, we write $\colword wa$ to denote the coloured word $\colword{s_1}a \cdots \colword{s_k}a$; we refer to $a$ as the \emph{colour} of $w$. Moreover, whenever we write $\colword wa$, we implicitly assume that all the letters of $w$ commute with $a$.
\end{notation}

\begin{definition}
	Let $\mathbf w = \colword{s_1}{a_1}  \cdots \colword{s_k}{a_k}$ be a coloured word. The \emph{underlying word} of $\mathbf w$ is the word $w =  s_1  \cdots s_k$ in $\raaggens$ obtained by forgetting all the colours.
\end{definition}

By definition, if $\boldword$ is a coloured word and $w$ its underlying word, then $\length \boldword = \length w$.

\begin{remark}
	In the rest of the paper, we use bold letters for coloured words in $\colraaggens$ and the same letters in normal font for the corresponding underlying word in $\raaggens$.
\end{remark}

A coloured word $\mathbf w$ and its underlying word $w$ represent the same element in $\raag \Gamma$. In particular, a coloured word $ \mathbf w $ is null-homotopic if and only if its underlying word $w$ is null-homotopic.

\begin{definition}[$k$-coloured word]
	A \emph{$k$-coloured word} is a coloured word of the form
	\[
		\colword{w_1}{a_1} \cdots \colword{w_k}{a_k},
	\]
	where $ w_i $ is a nonempty word for all $i$, and $a_i$ differs from $a_{i+1}$ for all $ i \in \range {k-1} $. We call $1$-coloured words \emph{monochromatic}.
\end{definition}

\begin{remark}
	As an example, the word $ \colword{w_1}{a} \colword{w_2}{b} \colword{w_3}{a} $ is $3$-coloured, even though a colour is repeated.
\end{remark}

Given a word $w = s_1^{\epsilon_1} \cdots s_k^{\epsilon_k}$ in $\raaggens$, where $ \epsilon_i \in \pm1 $, there is a natural way to ``colour it'', which we refer to as \emph{self-colouring}, namely, the coloured word $\colword{s_1}{s_1}^{\epsilon_1} \cdots \colword{s_k}{s_k}^{\epsilon_k}$.

Coloured words are useful because the colours provide a preferred way to ``push down'' a word so that it becomes an alternating word in $\raaggens$. We formalise this as follows.

We fix, once and for all, a generator $s_0 \in \raaggens$.

\begin{definition}[Transition word]\label{def: trans. word}
	Let $a \in \raaggens$ and $T$ a spanning tree of $\Gamma$. Let $\gamma_a \colon [0,\ell] \to T$ be the unique simple path in $T$ from $v_{s_0}$ to $v_a$. For $i \in \range[0]\ell$, let $t_i \in \raaggens$ be such that $v_{t_i} = \gamma_a(i) \in \vv \Gamma$; in particular, $t_0=s_0$, $t_\ell=a$, and $[t_i, t_{i+1}] = 1$. For $h \in \Z$, we define the \emph{transition word} $\transitionword ah$ as the alternating word
	\[
		\transitionword ah \coloneq (t_0 t_{1}^{-1})^h \cdots (t_{\ell-1} t_{\ell}^{-1})^h.
	\]
\end{definition}

Observe that $ \transitionword ah $ represents the element $ s_0^h a^{-h} \in \raag\Gamma $. For every pair of vertices $v_x,v_y \in T$, Dison defined the word $p_h(v_x,v_y)$ in $\bbggens$; see \cite[Section 4]{Dison}. The image of $p_h(v_x,v_y)$ under the map $\Psi \colon \monoid{\bbggens} \to \altwords$ from \cref{alternating-words-and-words-in-bbg} is the word $\transitionword{x}{h}^{-1}\transitionword{y}{h}$.

We record the following straightforward observation about the length of the transition words.

\begin{lemma}\label{length-transition-word}
	Let $a \in \raaggens$ and $h \in \Z$. If $a \neq s_0$, then $2 \abs h \leq \length {\transitionword ah} \leq 2 \abs h  \cdot\numvertices \Gamma$. Otherwise, $\transitionword ah$ is the trivial word.
\end{lemma}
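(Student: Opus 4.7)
The plan is to unwind the definition of $\transitionword ah$ and simply count letters. Recall that $\transitionword ah = (t_0 t_1^{-1})^h \cdots (t_{\ell-1} t_\ell^{-1})^h$, where $v_{t_0} = v_{s_0}, v_{t_1}, \dots, v_{t_\ell} = v_a$ are the consecutive vertices along the unique simple path $\gamma_a$ in the spanning tree $T$ from $v_{s_0}$ to $v_a$. Thus the length of $\transitionword ah$ is controlled by two pieces of data: the integer $\abs h$ and the combinatorial length $\ell$ of $\gamma_a$ in $T$.

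If $a = s_0$, then $\gamma_a$ is the constant path, so $\ell = 0$ and $\transitionword ah$ is an empty product of blocks, hence the trivial word; this disposes of the second assertion.

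If instead $a \neq s_0$, then $\gamma_a$ uses at least one edge, so $\ell \geq 1$. Each factor $(t_{i-1} t_i^{-1})^h$ expands, for $h > 0$, to the $2h$-letter word $t_{i-1} t_i^{-1} t_{i-1} t_i^{-1} \cdots t_{i-1} t_i^{-1}$, and for $h < 0$ to the $2\abs h$-letter word $t_i t_{i-1}^{-1} \cdots t_i t_{i-1}^{-1}$; in either case the block has length $2\abs h$. Since $\transitionword ah$ is the concatenation of $\ell$ such blocks, its length as written is exactly $2\abs h \cdot \ell$. The lower bound $\length{\transitionword ah} \geq 2\abs h$ then follows immediately from $\ell \geq 1$, and the upper bound follows from the fact that the simple path $\gamma_a$ in $T$ visits at most $\numvertices \Gamma$ distinct vertices, hence uses at most $\numvertices \Gamma - 1 \leq \numvertices \Gamma$ edges, so $\ell \leq \numvertices \Gamma$.

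There is no real obstacle in this argument; the statement is essentially a bookkeeping consequence of the definition of $\transitionword ah$ together with the fact that $T$ is a tree on $\numvertices\Gamma$ vertices. The only point that requires a moment of care is the sign convention when $h < 0$, which is needed to verify that each block still contributes $2\abs h$ letters (rather than $2h$).
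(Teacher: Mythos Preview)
Your proof is correct and follows exactly the same approach as the paper's own proof, which simply appeals to the definition of $\transitionword ah$ and the fact that the path $\gamma_a$ has length at most $\numvertices\Gamma$; you have merely spelled out the letter count in more detail.
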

\begin{proof}
	The result follows directly from the definition of $\transitionword ah$ and the observation that the length of the path $\gamma_a$ defining $\transitionword ah$ is at most $\numvertices \Gamma$.
\end{proof}

\begin{notation}
	For a word $w$ in $ \raaggens$ and $a \in \raaggens$, we denote by $\balance{w}{a}$ the word obtained from $w$ by replacing each letter $s \in \raaggens$ (respectively, $s^{-1} \in \raaggens^{-1}$) with $sa^{-1}$ (respectively, $as^{-1}$).
\end{notation}

\begin{definition}[Pushdown of coloured words]\label{def:push-down}
	Let $h \in \Z$ and let $\mathbf w = \colwordproduct k $ be a $k$-coloured word. Let $h_0=h$ and $h_i = h + \sum_{j=1}^i \height{(w_j)}$ for $i \in \range[1]k$. The \emph{$h$-pushdown of $\mathbf w$} is the alternating word
	\[
		\pushdown[h]{\mathbf w} \coloneq \prod_{i=1}^{k} \transitionword{a_i}{h_{i-1}} \balance{w_i}{a_i} {\transitionword {a_i}{h_i}^{-1}}.
	\]
\end{definition}

We give an example for the definition of the pushdown of a coloured word.

\begin{example}
	Let $\Gamma = \{v_a\} * \Lambda$ be a cone graph and set $a\in \raaggens$ to be the generator corresponding to the vertex $v_a$. Fix the spanning tree of $\Gamma$ consisting of all the edges that contain $v_a$ as a vertex. Let $w = s_1^2 s_2^{-1} s_3$ be a word in $\raaggens$. Consider the monochromatic word $\mathbf w = \colword wa = \colword{s_1}{a}^{2} \colword{s_2}{a}^{-1} \colword{s_3}{a}$. For $h \in \Z$, the $h$-pushdown of $\mathbf w$ is the alternating word
	\begin{align*}
		\pushdown[h]{\mathbf w} & = \transitionword{a}{h} \balance{w}{a} \transitionword{a}{h + \height{(w)}}^{-1}               \\
		                        & = \transitionword{a}{h} (s_1a^{-1})^{2} (as_2^{-1}) (s_3a^{-1}) \transitionword{a}{h + 2}^{-1} \\
		                        & = (s_0 a^{-1})^h (s_1a^{-1})^{2} (as_2^{-1}) (s_3a^{-1}) (a s_0^{-1})^{h+2},
	\end{align*}
	where $s_0$ is the chosen generator in $\raaggens$ corresponding to a chosen base point in the spanning tree.
\end{example}

We now prove a series of lemmas about the pushdown of coloured words.

\begin{lemma}\label{pushdown-properties}
	Let $h \in \Z$. The $h$-pushdown has the following properties.
	\begin{enumerate}
		\item\label{item: pushdown of coloured letter} Let $\colword sa ^{\epsilon}$ be a coloured letter and $ \epsilon=\pm1 $, then
		\[
			\pushdown[h]{\colword{s}a^\epsilon} = \transitionword{a}{h} (sa^{-1})^\epsilon {\transitionword a{h+\epsilon}^{-1}}.
		\]
		\item\label{item: pushdown of ww'} Let $ \mathbf w$ and $\mathbf w' $ be coloured words, then
		\[
			\pushdown[h]{\mathbf w \mathbf w'} \freeid \pushdown[h]{\mathbf w}\pushdown[h+\height(\mathbf w)]{ \mathbf w'}.
		\]
		\item\label{item: pushdown of w=w'} If $  \mathbf w \freeid \mathbf w' $, then $ \pushdown[h]{\mathbf w} \freeid \pushdown[h]{\mathbf w'} $.
		\item \label{item: pushdown of alternating} If the underlying word $w$ of $ \boldword $ is alternating, then $ \pushdown \boldword \freeid w $.
	\end{enumerate}
\end{lemma}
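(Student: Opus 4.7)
The four claims are all essentially computational unpackings of the definition of the $h$-pushdown, so my plan is to handle them in order, using each claim to reduce the work needed for the next. The key algebraic fact powering everything is the free reduction
\[
    \transitionword{a}{0} \freeid \varepsilon, \qquad \transitionword{a}{1} \freeid s_0 a^{-1}, \qquad \transitionword{a}{-1} \freeid a s_0^{-1},
\]
which follows immediately from the definition in \cref{def: trans. word}: the word $(t_0 t_1^{-1})(t_1 t_2^{-1})\cdots(t_{\ell-1}t_\ell^{-1})$ telescopes to $t_0 t_\ell^{-1}$ after free reduction of the internal $t_i^{-1} t_i$ pairs.

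For \cref{item: pushdown of coloured letter}, I would just apply \cref{def:push-down} with $k=1$, $w_1 = s^{\epsilon}$, $a_1 = a$, $h_0 = h$, and $h_1 = h + \epsilon$, noting that $\balance{s^\epsilon}{a}$ equals $sa^{-1}$ for $\epsilon = +1$ and $as^{-1}$ for $\epsilon = -1$, which is exactly $(sa^{-1})^\epsilon$. For \cref{item: pushdown of ww'}, I split into cases according to whether the last colour $a_k$ of $\mathbf w$ equals the first colour $a'_1$ of $\mathbf w'$. If they differ, the maximal coloured decomposition of $\mathbf w \mathbf w'$ is the concatenation of the two decompositions, and the starting height $h + \height(\mathbf w)$ for the second piece matches the definition, so the formula holds on the nose. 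If they coincide, the two blocks merge into $\colword{w_k w_1'}{a_k}$; writing out both sides of the claimed free equivalence, the interface factor $\transitionword{a_k}{h_k}^{-1}\transitionword{a_k}{h_k}$ appearing on the right freely cancels, and $\balance{w_k}{a_k}\balance{w_1'}{a_k}$ is identically $\balance{w_k w_1'}{a_k}$, yielding the equivalence.

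For \cref{item: pushdown of w=w'}, free equivalence of coloured words is generated by inserting or deleting pairs $\colword{s}{a}^{\epsilon}\colword{s}{a}^{-\epsilon}$, so by \cref{item: pushdown of ww'} it suffices to show that such a pair has pushdown freely equivalent to the empty word. Combining \cref{item: pushdown of coloured letter} and \cref{item: pushdown of ww'},
\[
    \pushdown[h]{\colword{s}{a}\colword{s}{a}^{-1}} \freeid \transitionword{a}{h}(sa^{-1})\transitionword{a}{h+1}^{-1}\transitionword{a}{h+1}(as^{-1})\transitionword{a}{h}^{-1},
\]
and free cancellation of the middle transition words and of $(sa^{-1})(as^{-1})$ does the rest. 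For \cref{item: pushdown of alternating}, write $w = s_1 t_1^{-1} \cdots s_n t_n^{-1}$ with the colouring $\mathbf w = \colword{s_1}{a_1}\colword{t_1^{-1}}{b_1}\cdots\colword{s_n}{a_n}\colword{t_n^{-1}}{b_n}$. The running height after an initial segment of $\mathbf w$ alternates between $0$ and $1$, so iterating \cref{item: pushdown of ww'} and applying \cref{item: pushdown of coloured letter} letter by letter produces
\[
    \pushdown[0]{\mathbf w} \freeid \prod_{i=1}^n s_i a_i^{-1} \transitionword{a_i}{1}^{-1}\transitionword{b_i}{1} b_i t_i^{-1},
\]
since $\transitionword{?}{0}$ is trivial. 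Using $\transitionword{a_i}{1}^{-1} \freeid a_i s_0^{-1}$ and $\transitionword{b_i}{1} \freeid s_0 b_i^{-1}$, the factor $a_i^{-1}\transitionword{a_i}{1}^{-1}\transitionword{b_i}{1} b_i$ freely reduces to the empty word, leaving exactly $w$.

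I expect the main subtlety to be in \cref{item: pushdown of ww'} when the colours at the interface coincide, because one must carefully keep track that the heights $h_k$ on both sides of the interface agree (so the cancelling transition words really are inverses) and that the balancing operation respects concatenation. The rest is straightforward bookkeeping once the telescoping behaviour of $\transitionword{a}{h}$ for $h \in \set{-1,0,1}$ is made explicit.
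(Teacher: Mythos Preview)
Your proof is correct and follows essentially the same approach as the paper: part~(1) directly from the definition, part~(2) by case analysis on whether the interface colours agree, part~(3) by reducing to the cancellation of $\pushdown[h]{\colword{s}{a}\colword{s}{a}^{-1}}$, and part~(4) by a letter-by-letter computation using the telescoping $\transitionword{a}{1} \freeid s_0 a^{-1}$ and $\transitionword{a}{0} = \varepsilon$.

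One small correction: your stated ``key fact'' $\transitionword{a}{-1} \freeid a s_0^{-1}$ is false in general. For $h=-1$ the word is $(t_1 t_0^{-1})(t_2 t_1^{-1})\cdots(t_\ell t_{\ell-1}^{-1})$, which does \emph{not} telescope freely (the adjacent letters are $t_0^{-1} t_2$, $t_1^{-1} t_3$, etc.); it only represents $a s_0^{-1}$ as a group element. Fortunately you never actually use this case --- parts~(3) and~(4) only invoke $h \in \{0,1\}$ --- so the argument stands. Just drop the $h=-1$ claim from your preamble.
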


\begin{proof}
	Statement \eqref{item: pushdown of coloured letter} follows from~\cref{def:push-down}. Statement \eqref{item: pushdown of ww'} also follows from \cref{def:push-down}. In fact, if consecutive colours are different we get an identity between words, whereas if the last colour of $\boldword$ coincides with the first colour of $\boldword'$, then a free insertion is required; we illustrate this in the case where $\boldword$ and $\boldword'$ are monochromatic. In this case, we have $\boldword \boldword' = \colword{w}{a}\colword{w'}{a} = \colword{ww'}{a}$, therefore
	\begin{align*}
		\pushdown[h]{\colword{ww'}{a}} & = \transitionword{a}{h} \cdot \balance {(ww')}a \cdot \transitionword{a}{h + \height(ww')}^{-1}                                                                            \\
		                               & \freeid \transitionword{a}{h}\balance wa\transitionword{a}{h + \height(w)}^{-1} \transitionword{a}{h + \height(w)}\balance{w'}{a}\transitionword{a}{h + \height(ww')}^{-1} \\
		                               & = \pushdown[h]{\colword wa}\pushdown[h+\height(w)]{\colword{w'}{a}}.
	\end{align*}

	Combining \eqref{item: pushdown of coloured letter} and \eqref{item: pushdown of ww'} shows that $\pushdown[h]{\colword sa \colword sa^{-1}}$ is freely trivial, and \eqref{item: pushdown of w=w'} follows. Finally, to show \eqref{item: pushdown of alternating}, it suffices to check that the identity holds for a coloured word $ \boldword = \colword ab \colword cd^{-1} $ of length 2.
	Here, we have
	\[
		\pushdown \boldword = \transitionword b0 ab^{-1} \transitionword b1^{-1} \transitionword d1 dc^{-1} \transitionword d0^{-1} \freeid ab^{-1} b s_0^{-1} s_0 d^{-1} dc^{-1} \freeid ac^{-1},
	\]
	where we use the fact that $\transitionword{a'}0$ is the trivial word and $\transitionword{a'}1 \freeid a'{s_0}^{-1}$ for $a' \in \raaggens$.
\end{proof}

\begin{lemma}\label{length-of-pushdown}
	Let $\mathbf w = \colword{w_1}{a_1} \cdots \colword{w_k}{a_k}$ be a $k$-coloured word. Let $h_0=h\in \Z$ and $h_i = h + \sum_{j=1}^i \height{(w_j)}$ for $i \in \range[1]k$. Then
	\[
		\length{\pushdown[h]{\mathbf w}} \leq 2 \length \boldword + 4\left(\sum_{i=0}^k \abs{h_i}\right) \cdot \numvertices \Gamma.
	\]
	Additionally, if $\boldword$ is null-homotopic and $a_1 \neq a_k$, then $h_0 = h_k$ and
	\[
		2\length \boldword + 2\sum_{i=1}^{k} \abs{h_i} \leq \length{\pushdown[h]{\mathbf w}}.
	\]
\end{lemma}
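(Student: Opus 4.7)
The approach is a direct computation using the explicit formula from \cref{def:push-down}. Each ``block'' in the pushdown has the shape $\transitionword{a_i}{h_{i-1}} \balance{w_i}{a_i} \transitionword{a_i}{h_i}^{-1}$, and contributes $\length{\transitionword{a_i}{h_{i-1}}} + 2\length{w_i} + \length{\transitionword{a_i}{h_i}}$ letters to the pushdown, since the balancing operation $w_i \mapsto \balance{w_i}{a_i}$ doubles length. Summing over $i \in \range k$ and rearranging, the total length equals $2\length\boldword + \sum_{i=1}^k \bigl(\length{\transitionword{a_i}{h_{i-1}}} + \length{\transitionword{a_i}{h_i}}\bigr)$, so the entire proof reduces to bounding the transition-word contributions by means of \cref{length-transition-word}.

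For the upper bound I will apply the inequality $\length{\transitionword{a}{h}} \leq 2|h| \cdot \numvertices\Gamma$ (which also holds in the edge case $a = s_0$, where both sides can include $0$). Each intermediate height $h_i$ with $i \in \range[1]{k-1}$ appears in exactly two blocks, as the endpoint of block $i$ and as the starting point of block $i+1$, contributing at most $4|h_i| \numvertices\Gamma$; the boundary heights $h_0$ and $h_k$ each appear once, contributing at most $2|h_0|\numvertices\Gamma$ and $2|h_k|\numvertices\Gamma$, respectively. Combining these estimates yields the claimed bound $\length{\pushdown[h]{\boldword}} \leq 2\length\boldword + 4\numvertices\Gamma \sum_{i=0}^k |h_i|$.

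For the lower bound, I will first observe that the null-homotopy of $\boldword$ forces $\sum_j \height(w_j) = 0$ (since the height function $\raag\Gamma \to \Z$ is a homomorphism), hence $h_0 = h_k$. Then I apply the inequality $\length{\transitionword{a}{h}} \geq 2|h|$, valid whenever $a \neq s_0$. For $i \in \range[1]{k-1}$, the height $h_i$ appears in the two blocks indexed by $i$ and $i+1$ coloured with $a_i$ and $a_{i+1}$, and since $a_i \neq a_{i+1}$ at most one of these colours equals $s_0$, so we still capture a contribution of at least $2|h_i|$. The delicate step, which is the main obstacle, is the boundary case: the height $h_0 = h_k$ shows up once at the very beginning, with colour $a_1$, and once at the very end, with colour $a_k$. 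Using the hypothesis $a_1 \neq a_k$, at most one of $a_1$ and $a_k$ equals $s_0$, so the sum of the two corresponding transition-word lengths is at least $2|h_k|$. Adding these contributions to $2\length\boldword$ yields the desired lower bound.
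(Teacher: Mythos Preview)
Your proposal is correct and follows essentially the same approach as the paper: both compute $\length{\pushdown[h]{\boldword}}$ directly from \cref{def:push-down}, regroup the transition-word contributions by height index, and then apply the two-sided bounds of \cref{length-transition-word}, using $a_i \neq a_{i+1}$ (and $a_1 \neq a_k$ at the boundary) to guarantee that at least one colour at each height differs from $s_0$.
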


\begin{proof}
	By \cref{def:push-down}, we have
	\begin{align*}
		\length{\pushdown[h]{\mathbf w}} & = \sum_{i=1}^{k} \length{\transitionword{a_i}{h_{i-1}}} +\sum_{i=1}^{k}\length{\balance{w_i}{a_i}} + \sum_{i=1}^{k}\length{\transitionword{a_i}{h_i}^{-1}}                                                             \\
		                                 & = \sum_{i=1}^{k}\length{\balance{w_i}{a_i}} + \sum_{i=1}^{k-1}(\length{\transitionword{a_{i+1}}{h_i}}+\length{\transitionword{a_{i}}{h_i}}) + \length{\transitionword{a_1}{h_0}} + \length{\transitionword{a_k}{h_k}}.
	\end{align*}
	Since $\length{\balance{w_i}{a_i}}=2\length{w_i}$, we have
	\[
		\sum_{i=1}^{k}\length{\balance{w_i}{a_i}}=2\sum_{i=1}^{k}\length{w_i}=2\length \boldword.
	\]
	Using \cref{length-transition-word} to bound the lengths of the transition words gives
	\[
		\sum_{i=1}^{k-1}(\length{\transitionword{a_{i+1}}{h_i}}+\length{\transitionword{a_{i}}{h_i}}) + \length{\transitionword{a_1}{h_0}} + \length{\transitionword{a_k}{h_k}} \leq \left( 4 \sum_{i=1}^{k-1} \abs{h_i} + 2 \length{h_0} + 2 \abs{h_k} \right) \cdot \numvertices \Gamma.
	\]
	Putting all the identities and inequalities together yields the desired upper bound.

	For the lower bound, note that $\boldword$ being null-homotopic implies that $h_0 = h_k$. Moreover, for $i \in \range{k-1}$, since $a_i$ and $a_{i+1}$ are different, at least one of them is different from $s_0$; the same holds for $a_1$ and $a_k$. Therefore, \cref{length-transition-word} yields
	\[
		\sum_{i=1}^{k-1}(\length{\transitionword{a_{i+1}}{h_i}}+\length{\transitionword{a_{i}}{h_i}}) + \length{\transitionword{a_1}{h_0}} + \length{\transitionword{a_k}{h_k}}  \geq 2 \sum_{i=1}^{k-1} \abs{h_i} + 2\length{h_k}. \qedhere
	\]
\end{proof}

\begin{lemma}\label{pushdown-as-group-element}
	Let $h \in \Z$. If $\mathbf w = \colword{w_1}{a_1} \cdots  \colword{w_k}{a_k}$ represents $g \in \raag \Gamma$, then $\pushdown[h]{\mathbf w}$ represents the element $s_0^h g s_0^{-h-\height(g)}$ in $\bbg \Gamma$.
\end{lemma}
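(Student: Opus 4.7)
The plan is a direct calculation using the two facts that $\transitionword{a}{h}$ represents $s_0^h a^{-h}$ in $\raag\Gamma$ (immediate from \cref{def: trans. word}) and that, because every letter of $w_i$ commutes with $a_i$, the balanced word $\balance{w_i}{a_i}$ represents $w_i a_i^{-\height(w_i)}$ in $\raag\Gamma$. The second fact follows by writing $w_i = s_1^{\epsilon_1} \cdots s_m^{\epsilon_m}$, noting that $\balance{w_i}{a_i}$ equals $s_1^{\epsilon_1} a_i^{-\epsilon_1} \cdots s_m^{\epsilon_m} a_i^{-\epsilon_m}$ (since $(sa_i^{-1})^{-1} = a_i s^{-1}$), and then using the commutation $s_j a_i = a_i s_j$ to collect all copies of $a_i$ on the right, giving $w_i \cdot a_i^{-\sum_j \epsilon_j} = w_i a_i^{-\height(w_i)}$.

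With these two ingredients, I would compute a single factor of the product defining $\pushdown[h]{\mathbf w}$. Namely, in $\raag\Gamma$ the $i$-th factor represents
\[
\transitionword{a_i}{h_{i-1}} \balance{w_i}{a_i} \transitionword{a_i}{h_i}^{-1} = s_0^{h_{i-1}} a_i^{-h_{i-1}} \cdot w_i a_i^{-\height(w_i)} \cdot a_i^{h_i} s_0^{-h_i}.
\]
Using the relation $h_i = h_{i-1} + \height(w_i)$, the $a_i$-exponents on the right of $w_i$ collapse to $h_{i-1}$, and commuting $a_i^{h_{i-1}}$ past $w_i$ cancels the $a_i^{-h_{i-1}}$ on the left. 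The $i$-th factor therefore represents $s_0^{h_{i-1}} w_i s_0^{-h_i}$.

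Taking the product over $i = 1,\dots,k$ gives a telescoping expression
\[
s_0^{h_0} w_1 s_0^{-h_1} \cdot s_0^{h_1} w_2 s_0^{-h_2} \cdots s_0^{h_{k-1}} w_k s_0^{-h_k} = s_0^{h_0} (w_1 w_2 \cdots w_k) s_0^{-h_k}.
\]
Since $w_1 \cdots w_k$ is the underlying word of $\mathbf w$, it represents $g$, while $h_0 = h$ and $h_k = h + \sum_{j=1}^k \height(w_j) = h + \height(g)$. Hence $\pushdown[h]{\mathbf w}$ represents $s_0^h g s_0^{-h - \height(g)}$, which has height $0$ and therefore lies in $\bbg\Gamma$.

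There is no real obstacle here: the argument is a mechanical commutation and telescoping computation. The only subtlety worth flagging is that the equalities above are equalities \emph{in the group} $\raag\Gamma$ (not free equalities), since we are repeatedly invoking the commutation relations $[s_j, a_i] = 1$ guaranteed by the coloured word structure and, implicitly, the corresponding relations between $s_0$ and itself.
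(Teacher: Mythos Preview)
Your proof is correct and follows essentially the same approach as the paper: both reduce to the two observations that $\transitionword{a_i}{h}$ represents $s_0^h a_i^{-h}$ and that $\balance{w_i}{a_i}$ represents $w_i a_i^{-\height(w_i)} = a_i^{h_{i-1}} w_i a_i^{-h_i}$, then telescope the product. The paper is simply more terse about the telescoping step.
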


\begin{proof}
	Let $h_0=h\in \Z$ and $h_i = h + \sum_{j=1}^i \height{(w_j)}$ for $i \in \range[1]k$.
	Since $a_i$ commutes with $w_i$ for $i \in \range k$, the word $\balance{w_i}
		{a_i}$ represents the element $w_i a_i^{-\height(w_i)}=a_i^{h_{i-1}}w_i a_i^{-h_i}$.
	Moreover, we have $\transitionword {a_i}h \groupid{\raag\Gamma} s_0^h {a_i}^{-h}$.
	The result follows from \cref{pushdown-properties}.
\end{proof}

\subsection{Efficient coloured words}
In the previous section, we showed that a coloured word can always be turned into an alternating word by means of the pushdown. In this section, we show that for each element of $\bbg\Gamma $, there is a particularly nice coloured word whose pushdown represents it.

\begin{definition}\label{def:proper-block}
	Let $ \mathbf{w} = \colword{w_1}{a_1} \cdots \colword{w_k}{a_k} $ be a $k$-coloured word. A \emph{proper block} of $ \mathbf w $ is a subword of the form $ \colword{w_i}{a_i}\colword{w_{i+1}}{a_{i+1}} \cdots \colword{w_j}{a_j} $ for some $ 2 \leq i \leq j \leq k-1 $.
\end{definition}

\begin{definition}
	We call a $k$-coloured word $ \mathbf w $ representing $ g \in \raag \Gamma $:
	\begin{itemize}
		\item \emph{geodesic}, if it is a minimal-length representative for $ g $;
		\item \emph{chromatically minimal}, if there are no $k'$-coloured words representing $g$ with $k'<k$;
		\item \emph{efficient}, if it is geodesic and every proper block is chromatically minimal.
	\end{itemize}
\end{definition}

\begin{remark}
	Note that a coloured word $\mathbf w$ is geodesic if and only if the underlying word $w$ is a minimal-length representative for $g$ with respect to the standard generating set $\raaggens$.
\end{remark}

If a coloured word is chromatically minimal and geodesic, then it is also efficient because the proper blocks of a chromatically minimal word are chromatically minimal. The reason for using proper blocks in the definition of efficient coloured words is that being chromatically minimal does not pass to subwords.
As an example, label the vertices of a path of length four, from left to right, by $a$, $b$, $c$, and $d$. Then $\colword {ab}a \colword {cd}c $ is chromatically minimal, whereas $\colword ba \colword {cd}c $ is not, since it represents the same element $bcd$ as $\colword {bcd}c$. However, it is not difficult to see that a subword of an efficient word is also efficient, as the proper blocks of a subword are, in particular, proper blocks of the original coloured word.

\begin{lemma}\label{lem:existence-efficient}
	For each element $g \in \bbg \Gamma$, there exists an efficient coloured word representing it.
\end{lemma}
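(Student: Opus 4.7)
The plan is to construct an efficient coloured representative of $g$ by a double minimisation. The set of geodesic coloured words representing $g$ is nonempty (any geodesic word in $\raaggens$ can be self-coloured), so I choose a geodesic coloured word $\boldword = \colword{w_1}{a_1} \cdots \colword{w_k}{a_k}$ that minimises $k$ among all geodesic coloured representations of $g$. It remains to show that every proper block of $\boldword$ is chromatically minimal.

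The key intermediate step is the following sub-lemma: for any element $h\in\raag\Gamma$ and any coloured word $\boldword_0$ representing $h$, there exists a geodesic coloured word representing $h$ with no more colour blocks than $\boldword_0$. I would prove this by induction on the length of $\boldword_0$: when $\boldword_0$ is not geodesic, the standard characterisation of non-geodesics in right-angled Artin groups provides a pair of inverse letters $s$ and $s^{-1}$ in the underlying word of $\boldword_0$ such that $s$ commutes with every letter strictly between them, and simply deleting these two letters produces a strictly shorter coloured word representing the same element. A short case analysis on whether each deleted letter forms a singleton colour block (and, if so, whether its two neighbours then have equal or different colours) shows that the deletion leaves the block count unchanged, decreases it by $1$, or decreases it by $2$; it never increases. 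Iterating yields the required geodesic representative.

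Granting the sub-lemma, suppose for contradiction that some proper block $\boldword_{\text{prop}} = \colword{w_i}{a_i} \cdots \colword{w_j}{a_j}$ of $\boldword$ (with $2\le i\le j\le k-1$) is not chromatically minimal, and let $h$ denote the element of $\raag\Gamma$ that it represents. Setting $m\coloneq j-i+1$, the hypothesis produces a coloured word for $h$ with $m' < m$ blocks; applying the sub-lemma to it gives a geodesic coloured representative $\boldword^\sharp$ of $h$ with at most $m'$ colour blocks. Since the underlying word of $\boldword_{\text{prop}}$ is a subword of the geodesic underlying word of $\boldword$, it is itself geodesic for $h$ in $\raag\Gamma$; hence $\boldword^\sharp$ has the same underlying length as $\boldword_{\text{prop}}$. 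Substituting $\boldword^\sharp$ in place of $\boldword_{\text{prop}}$ inside $\boldword$ yields a geodesic coloured word representing $g$ whose number of colour blocks is at most $k - m + m' < k$, contradicting the minimality of $k$.

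The main obstacle is the sub-lemma, and specifically the verification that deleting a cancelling pair never increases the colour-block count. The subtle case is when a singleton colour block is deleted and its two surrounding blocks carry distinct colours: one must check that these surrounding blocks remain distinct after the deletion, so that the total block count drops by exactly one rather than changing unpredictably.
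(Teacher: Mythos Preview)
Your proof is correct, but it is organised dually to the paper's. The paper starts from a \emph{chromatically minimal} representative and then shortens it to a geodesic by iterating exactly your deletion sub-lemma; the point is that each deletion cannot increase the block count, so a chromatically minimal word stays chromatically minimal throughout, and chromatically minimal plus geodesic is already efficient (the substitution argument you use in your final paragraph is precisely the content of the remark preceding the lemma). You instead start from a geodesic minimising the block count among geodesics, and then invoke the deletion sub-lemma a second time, on the hypothetical shorter-coloured proper block, to force the replacement $\boldword^\sharp$ to be geodesic before substituting. Both routes rest on the same two ingredients---the deletion argument and the substitution argument---but the paper's ordering avoids the extra pass through the sub-lemma: once you begin with a chromatically minimal word, you never need to worry about preserving geodesicity of substituted pieces, because the substitution happens only implicitly, in the observation that proper blocks of a chromatically minimal word are chromatically minimal.
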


\begin{proof}
	Let $ \mathbf w $ be a chromatically minimal representative for $g$. If it is not geodesic, then the underlying word $w$ has a subword of the form $ s  w' s^{-1} $, where $w'$ is a word commuting with $s \in \raaggens \cup \raaggens^{-1}$. Removing the coloured letters corresponding to $s$ and $ s^{-1} $ from $ \mathbf w $ yields another chromatically minimal word of shorter length. Repeating this procedure eventually yields a chromatically minimal and geodesic representative, and therefore, an efficient one.
\end{proof}

In general, the length of the pushdown of a coloured word $\mathbf w$ might be considerably greater than the length of $\mathbf w$ because transition words contribute to its length. Since Bestvina--Brady groups can be quadratically distorted in right-angled Artin groups \cite[Theorem 1.1]{TranDistortion}, the increase in the length of the pushdown cannot be avoided. Nevertheless, when $\mathbf w$ is efficient, the length of its pushdown is always optimal (up to a multiplicative constant).

\begin{proposition}\label{coloured-norm-vs-bbg-norm}
	Let $\mathbf w$ be an efficient $k$-coloured word representing an element $g \in \bbg \Gamma$. Then
	\[
		\flatnorm g \leq \length{\pushdown{\mathbf w}} \leq (24 \numvertices \Gamma +2) \flatnorm g.
	\]
\end{proposition}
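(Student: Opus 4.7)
The lower bound \(\flatnorm g \leq \length{\pushdown \boldword}\) is immediate: by \cref{pushdown-as-group-element} applied with \(h = 0\), the pushdown \(\pushdown \boldword\) represents \(s_0^0 \cdot g \cdot s_0^{-\height(g)} = g\) (using \(\height(g) = 0\) because \(g \in \bbg\Gamma\)), and being an alternating word, the definition of flat norm in \cref{def: flat norm} yields the desired inequality.

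For the upper bound, my plan is to apply the estimate of \cref{length-of-pushdown}. Because \(g \in \bbg\Gamma\) forces \(h_0 = h_k = 0\), this reads
\[
    \length{\pushdown \boldword} \leq 2 \length \boldword + 4 \numvertices\Gamma \sum_{i=1}^{k-1} \abs{h_i},
\]
so it suffices to prove the two linear estimates \(\length \boldword \leq \flatnorm g\) and \(\sum_{i=1}^{k-1} \abs{h_i} \leq 6 \flatnorm g\); combining them will yield the target \((24\numvertices\Gamma + 2)\flatnorm g\). The first estimate is a direct consequence of geodesicity: the underlying word \(w\) satisfies \(\length w = \norm{g}_{\raaggens}\), and since any alternating representative of \(g\) is in particular a word in \(\raaggens\) representing \(g\), we obtain \(\norm{g}_{\raaggens} \leq \flatnorm g\) and hence \(\length \boldword \leq \flatnorm g\).

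The hard part will be the linear bound \(\sum_{i=1}^{k-1} \abs{h_i} \leq 6 \flatnorm g\), and this is where chromatic minimality (not merely geodesicity) of \(\boldword\) must enter essentially. The naive approach---combining the trivial estimate \(\abs{h_i} \leq \length \boldword / 2 \leq \flatnorm g / 2\) with the bound \(k \leq \flatnorm g\) coming from chromatic minimality applied against the self-colouring of a minimum-length alternating representative---produces only the quadratic estimate \(\sum \abs{h_i} \leq \flatnorm g^2 / 2\), which is insufficient. To upgrade to a linear bound, I would exploit the efficiency of each proper block: if some \(\abs{h_i}\) were large relative to \(\flatnorm g\), one ought to be able either to shorten \(\boldword\) (contradicting geodesicity) or to merge blocks near position \(i\) using a common commuting colour (contradicting chromatic minimality of the surrounding proper block). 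I expect the cleanest formulation will involve comparing the pushdown of \(\boldword\) with that of the self-coloured minimal alternating representative \(\selfcolword v\), performing surgery on a null-homotopic concatenation such as \(\boldword \cdot \selfcolword{v^{-1}}\), and leveraging the lower-bound inequality of \cref{length-of-pushdown} to constrain the height profile \((h_i)\). This surgical control of the height excursions is the heart of the argument and where I anticipate the bulk of the technical work will lie.
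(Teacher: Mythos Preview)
Your lower bound and your reduction of the upper bound to the two estimates \(\length\boldword \leq \flatnorm g\) and \(\sum_{i=1}^{k-1}\abs{h_i} \leq 6\flatnorm g\) are correct and match the paper. The gap is that you do not actually prove the second estimate; you only speculate about it, and the direction you sketch (applying the lower bound of \cref{length-of-pushdown} to a concatenation \(\boldword \cdot \selfcolword{v^{-1}}\)) is not how the argument works and does not obviously lead anywhere.

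The missing idea is a corridor argument in a van Kampen diagram. Let \(u\) be a shortest alternating representative of \(g\), so \(\length u = \flatnorm g\), and take a minimal-area van Kampen diagram \(D\) for \(wu^{-1}\) over \(\raagpres\). For each \(i \in \range{k-1}\), let \(p_i\) be the boundary vertex between \(w_i\) and \(w_{i+1}\), and let \(C_i\) be a corridor through \(p_i\). Two facts now pin down \(\abs{h_i}\):
\begin{itemize}
    \item Geodesicity of \(w\) forces \(C_i\) to end on \(u\) (both ends on \(w\) would reveal a shortening). Since \(u\) is alternating, the endpoint of \(C_i\) on \(u\) has height in \(\set{0,1}\), and walking along \(C_i\) changes the height by at most one per step, giving \(\abs{h_i} \leq \abs{C_i} + 1\).
    \item Efficiency of \(\boldword\) controls \(\sum_i \abs{C_i}\). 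The key observation (\cref{bound-colour-distance-via-corridors}) is that if a corridor crossed both \(C_i\) and \(C_j\) with \(\abs{i-j} \geq 4\), then the proper block \(\colwordproduct[w][i+1]{j}\) could be represented by a \(3\)-coloured word, contradicting chromatic minimality. Hence each of the \(\tfrac12\length{wu^{-1}}\) corridors of \(D\) crosses at most four of the \(C_i\), and a double count yields \(\sum_i \abs{C_i} \leq 2\length{wu^{-1}}\) (this is \cref{estimate-sum-of-corridors-length}).
\end{itemize}
Combining, \(\sum_{i}\abs{h_i} \leq 2\length{wu^{-1}} + (k-1) \leq 4\flatnorm g + \flatnorm g\), and feeding this into \cref{length-of-pushdown} gives the claimed \((24\numvertices\Gamma + 2)\flatnorm g\). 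This is precisely where the full definition of efficiency (chromatic minimality of \emph{proper blocks}, not just of \(\boldword\) itself) is used.
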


To prove \cref{coloured-norm-vs-bbg-norm}, we need the following two lemmas.

\begin{lemma}\label{bound-colour-distance-via-corridors}
	Let $w$ be a null-homotopic word in $\raaggens$ and $D$ a van Kampen diagram for $w$. Let $p$ and $q$ be two vertices on the boundary of $D$, and let $g$ be the element represented by the word read from $p$ to $q$ along the boundary. Let $ C $ and $ C' $ be corridors containing $ p $ and $q$, respectively.
	\begin{enumerate}
		\item\label{item: 2-coloured represented} If $ C $ and $ C' $ cross, then $g$ can be represented by a $2$-coloured word.
		\item\label{item: 3-coloured represented} If there is a corridor that crosses both $ C$ and $C' $, then $g$ can be represented by a $3$-coloured word.
	\end{enumerate}
\end{lemma}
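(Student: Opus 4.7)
The plan is to build an explicit path from $p$ to $q$ inside $D$ whose label factors into at most three subwords, each commuting with a fixed generator, and then to observe that any such path represents $g$ because $D$ is a simply connected planar $2$-complex, so any two paths with the same endpoints give words that represent the same element of $\raag\Gamma$.

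For part \eqref{item: 2-coloured represented}, suppose $C$ is an $a$-corridor through $p$ and $C'$ a $b$-corridor through $q$, with a common $2$-cell $R$. Since $R$ is labelled by the relation $[a,b]=1$, we have $a \neq b$. Because every $2$-cell of an $a$-corridor is a commutator relation of the form $[a,\cdot]=1$, the edges along either side of $C$ are labelled by letters commuting with $a$; the same holds for $C'$ with respect to $b$. Walk from $p$ along a side of $C$ to a corner of $R$ to obtain a word $w_1$ all of whose letters commute with $a$; then continue from that corner along a side of $C'$ to $q$, producing a word $w_2$ all of whose letters commute with $b$. The concatenation is a path in $D$ from $p$ to $q$, so its label represents $g$, and $\colword{w_1}{a}\colword{w_2}{b}$ is the desired $2$-coloured representative.

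For part \eqref{item: 3-coloured represented}, let $C''$ be a $c$-corridor crossing both $C$ and $C'$. The same construction gives a path from $p$ along a side of $C$ to a corner of the intersection $C \cap C''$, then along a side of $C''$ to a corner of $C'' \cap C'$, and finally along a side of $C'$ to $q$. This decomposes the label into three subwords $w_1, w_2, w_3$ commuting with $a$, $c$, $b$ respectively. Since crossing corridors are labelled by commuting but distinct generators, we have $a \neq c$ and $c \neq b$, so $\colword{w_1}{a}\colword{w_2}{c}\colword{w_3}{b}$ is a valid $3$-coloured word representing $g$.

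The main technical point to check is that the path constructed above is well-defined near the intersection cells and at the endpoints $p, q$ (which live on the boundary, hence at the termini of their corridors), and that we may, if necessary, traverse a rung of a corridor without breaking the colouring—this is fine because the rungs of an $a$-corridor are themselves labelled by $a$, which trivially commutes with itself. Once these details are unpacked, the assertion of the lemma follows immediately.
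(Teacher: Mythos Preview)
Your proof is correct and follows essentially the same approach as the paper: build a path from $p$ to $q$ along corridor sides and colour each segment by the corridor's generator. The paper is marginally tidier in one respect: it specifies from the outset ``the side of $C$ containing $p$'' and ``the side of $C'$ containing $q$'', and observes that when $C$ and $C'$ cross, these two sides already share a corner of the intersection cell, so no rung-traversal correction is needed. Your version leaves the choice of side implicit and patches the possible mismatch with the rung remark, which also works; either way the argument goes through.
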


\begin{proof}
	To prove \eqref{item: 2-coloured represented}, let $ C $ be an $ a $-corridor and $C'$ a $b$-corridor for some $ a,b \in \raaggens $. If $ C $ and $C'$ cross, then the side of $C$ containing $p$ intersects the side of $ C' $ containing $q$ in a vertex $p'$. Let $w'$ be the word labelling the path along the side of $C$ from $ p $ to $p'$, and let $w''$ be the word labelling the path along the side of $C'$ from $p'$ to $q$. It follows that $ \colword {w'}a \colword{w''}b $ is a $2$-coloured word representing $g$.

	For the proof of \eqref{item: 3-coloured represented}, let $C''$ be a $c$-corridor for some $c\in\raaggens$ that crosses both $C$ and $C'$, and let $p'$ and $q'$ be the intersection vertices of one side of $C''$ with the sides of $C$ and $C'$ that contain $p$ and $q$, respectively. Let $w$, $w''$, and $w'$ be the words labelling the paths from $p$ to $p'$, $p'$ to $q'$, and $q'$ to $q$, respectively. Then, the element $ g $ is represented by the $3$-coloured word $ \colword wa \colword{w''}c \colword{w'}b $.
\end{proof}

\begin{lemma}\label{estimate-sum-of-corridors-length}
	Let $\mathbf w = \colword{w_1}{a_1} \cdots \colword{w_k}{a_k}$ be an efficient $k$-coloured word whose underlying word is $w=w_1 \cdots w_k$. Let $ w'$ be a word in $\raag \Gamma$ such that $w w'$ is null-homotopic, and let $D$ be a minimal-area van Kampen diagram for $w w'$. For $i \in \range{k-1}$, let $p_i$ be the vertex on the boundary of $D$ at the intersection of $w_i$ and $w_{i+1}$, and let $C_i$ be a corridor containing $p_i$. Then, for $i,j \in \range {k-1}$, the following statements hold.
	\begin{enumerate}
		\item\label{item:one-corridors} If $ \abs{i-j} \geq 3 $, then $ C_i $ and $ C_j $ do not cross.
		\item\label{item:two-corridors} If $ \abs{i-j} \geq 4 $, no corridor crosses both $ C_i $ and $ C_j $.
		\item\label{item:three-corridors} The total length of the corridors $C_i$ satisfies
		\[
			\sum_i \abs{C_i} \leq 2 \length{w w'}.
		\]
	\end{enumerate}
\end{lemma}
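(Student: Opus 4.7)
The plan is to prove parts \eqref{item:one-corridors} and \eqref{item:two-corridors} by contradiction using \cref{bound-colour-distance-via-corridors} as the bridge. Assuming without loss of generality that $i < j$, the portion of the boundary of $D$ read from $p_i$ to $p_j$ in the direction that spells $w$ is exactly $w_{i+1} \cdots w_j$, which represents an element $g \in \raag\Gamma$. Since $1 \leq i$ and $j \leq k-1$, the subword $\colword{w_{i+1}}{a_{i+1}} \cdots \colword{w_j}{a_j}$ is a proper block of $\boldword$, and hence chromatically minimal by efficiency. In particular, $g$ cannot be represented by any $k'$-coloured word with $k' < j - i$. For part \eqref{item:one-corridors}, if $C_i$ and $C_j$ were to cross, then \cref{bound-colour-distance-via-corridors}~\cref{item: 2-coloured represented} would produce a $2$-coloured word for $g$, forcing $j - i \leq 2$ and contradicting $\abs{i - j} \geq 3$. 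Part \eqref{item:two-corridors} follows by the same reasoning from \cref{bound-colour-distance-via-corridors}~\cref{item: 3-coloured represented}, which yields a $3$-coloured word and hence $j - i \leq 3$.

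For part \eqref{item:three-corridors}, I will argue by double counting. Since $D$ is minimal-area, \cref{no-annuli-and-bigons} ensures that each $2$-cell of $C_i$ corresponds to a unique crossing with a distinct corridor, so $\abs{C_i}$ equals the number of corridors of $D$ that cross $C_i$. Hence $\sum_i \abs{C_i}$ equals the number of pairs $(i, C)$ such that $C$ is a corridor of $D$ crossing $C_i$. I will bound this by fixing $C$ and counting the eligible values of $i$: if $C$ coincides with some $C_{i_0}$, part \eqref{item:one-corridors} forces $\abs{i - i_0} \leq 2$ and hence at most four values of $i$; if instead $C$ is not among the $C_j$'s, part \eqref{item:two-corridors} ensures that the set of eligible indices has diameter at most $3$, hence again at most four elements. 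In either case, every corridor of $D$ is counted at most four times.

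To finish, I need to bound the total number of corridors in $D$. By \cref{no-annuli-and-bigons}, $D$ contains no annuli, so each corridor contributes exactly two boundary edges to $\partial D$. Since every boundary edge of $D$ belongs to a unique corridor, the total number of corridors equals $\length{ww'}/2$. Combining this with the previous count yields
\[
	\sum_i \abs{C_i} \leq 4 \cdot \frac{\length{ww'}}{2} = 2 \length{ww'},
\]
the desired inequality. The main subtlety I anticipate is verifying the uniformity of the four-element bound when $C$ may or may not coincide with some of the $C_{i_0}$, but parts \eqref{item:one-corridors} and \eqref{item:two-corridors} each independently yield the bound of four, so no further case analysis is needed.
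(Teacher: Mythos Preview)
Your proof is correct and follows essentially the same approach as the paper's. The only minor difference is in part \eqref{item:three-corridors}: the paper observes that part \eqref{item:two-corridors} alone already bounds, for \emph{any} corridor $C$, the set of indices $i$ with $C$ crossing $C_i$ to an interval of length at most $3$ (hence at most four elements), so your case distinction between $C$ being or not being one of the $C_{i_0}$ is unnecessary---though harmless.
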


\begin{proof}
	Let $i,j \in \range {k-1}$ with $j \geq i+3$. If $C_i$ and $C_j$ cross, then by \cref{bound-colour-distance-via-corridors}, the group element $g$ represented by the proper block $\colwordproduct[w][{i+1}]{j}$ can be represented by a $2$-coloured word, contradicting the efficiency of $\mathbf w$. Similarly, let $j \geq i+4$, and suppose that $C_i$ and $C_j$ are crossed by the same corridor. Then the element $g$, defined as above, would be represented by a $3$-coloured word, again a contradiction. This proves \eqref{item:one-corridors} and \eqref{item:two-corridors}.
	In particular, the statement in \cref{item:two-corridors} implies that a corridor $C$ in $D$ can cross at most four of the corridors $C_1, \dots, C_{k-1}$.

	Since $D$ is a minimal-area diagram, it contains no annuli, and every two corridors cross at most once. Therefore, the length of a corridor equals the number of corridors crossing it. There are $\frac12 \length{w w'}$ corridors in total, and each corridor can cross at most four of the corridors in the family $\{C_1, \dots, C_{k-1}\}$. That is, each of the $\frac12 \length{w w'}$ corridors contributes  at most $4$ to the overall sum $\sum_i \abs{C_i}$. Statement \eqref{item:three-corridors} now follows from a double-counting argument.
\end{proof}

We now proceed with the proof of \cref{coloured-norm-vs-bbg-norm}.

\begin{proof}[Proof of \cref{coloured-norm-vs-bbg-norm}]
	The first inequality $\flatnorm{g} \leq \length{\pushdown{\mathbf w}} $ follows from the definition of $\flatnorm{\cdot}$; see \cref{def: flat norm}.
	For the second inequality, let $\mathbf w = \colword{w_1}{a_1} \cdots \colword{w_k}{a_k}$ be an efficient $k$-coloured word with underlying word $w=w_1\cdots w_k$, and let $u$ be a shortest alternating word representing $g$, meaning that $\length u = \flatnorm g$. Since $\mathbf w$ is geodesic, we have $\length w \leq \length u$.

	Let $D$ be a minimal-area van Kampen diagram for $ w u^{-1} $, and let $ p_1, \dots, p_{k-1}$ be the points on the boundary path labelled by $w$ as in \cref{estimate-sum-of-corridors-length}. We also denote by $p_0$ and $p_k$ the points respectively at the start and end of $w$. Then the height $h_i$ of $p_i$ is $ \height(w_1 \dots w_i)$.

	Consider corridors $C_i$ containing $p_i$ for $i \in \range{k-1}$.
	Since $\mathbf w$ is geodesic, the corridor $C_i$ cannot have both ends on $w$, so it must begin at an edge of $w$ and end at an edge of $u$. Since $u$ is alternating, all the vertices of $ u $ have height $0$ or $1$, so $p_i$ has distance at most $\abs{C_i} + 1$ from a vertex of height $0$. Therefore, we may estimate the height of $p_i$ as $\abs{h_i} \leq \abs{C_i} + 1$ for every $i \in \range[0]{k-1}$.

	Finally, applying \cref{estimate-sum-of-corridors-length,length-of-pushdown} gives
	\[
		\length{\pushdown {\mathbf w}} \leq 2\length w + 4 \left(\sum_{i=0}^k \abs{h_i}\right) \numvertices \Gamma
		\leq 2 \length w + 4 (2\length{wu^{-1}} + k-1 + \abs{h_k}) \numvertices \Gamma
		\leq (24 \numvertices \Gamma + 2) \length u,
	\]
	where we use the fact that $ \max\set{k, \abs{h_k}} \leq \length w \leq \length u $.
\end{proof}

\subsection{Coloured diagrams}

To compute the Dehn function of $\bbg\Gamma$, we need to produce an algorithm that, given an arbitrary null-homotopic alternating word $u$ in $\bbg\Gamma$, constructs an almost-flat van Kampen diagram with small area. However, if $u$ is the pushdown of a coloured word $\mathbf w$, it is more convenient to work directly with the coloured word. To this end, we define the notion of \emph{coloured diagram}, which can be pushed down to obtain an alternating diagram for $u$.

\begin{definition}[Coloured diagram]
	A \emph{coloured diagram} for a null-homotopic coloured word $ \mathbf w $ is a coarse diagram for $\mathbf w $ over the generating set $ \colraaggens $ .
\end{definition}

Each edge of a coloured diagram is labelled with a coloured letter $ \colword sa $, and we sometimes refer to $a$ as the \emph{colour} of the edge. In this sense, a coloured diagram is a coarse diagram over the standard generating set $\raaggens$, where every edge carries additional information given by the colour.

Just as the pushdown of a coloured word produces an alternating word, the pushdown of a coloured diagram produces an alternating diagram.

\begin{definition}[Pushdown of a coloured diagram]\label{def:pushdown-of-coloured-diagram}
	Let $\mathbf D$ be a coloured diagram for a null-homotopic coloured word $\mathbf w$, and let $h \in \Z$. The \emph{$h$-pushdown of $\mathbf D$} is the alternating diagram $\pushdown[h]{\mathbf D}$ obtained as follows.
	\begin{itemize}
		\item For each vertex $p$ in $\bolddiag$, there is a vertex $p'$ in $\pushdown[h]{\mathbf D}$.
		\item For each edge in $\mathbf D$, labelled by the coloured letter $\colword sa$ and oriented from $p$ to $q$, we add a path from the corresponding vertices $p'$ to $q'$ in $\pushdown[h]{\mathbf D}$. This path is labelled with
		      \[
			      \pushdown[h+\height(p)]{\colword sa} = \transitionword a{h+\height(p)} sa^{-1} \transitionword a {h+\height(q)}^{-1}.
		      \]
		\item For every vertex $p$ of $\mathbf D$, and for every pair of consecutive edges incident to $p$ labelled with a letter of the same colour $a$, the corresponding paths in $\pushdown[h]{\mathbf D}$ have an initial segment labelled by the same word $\transitionword a{\height(p)}$. We \emph{fold} these initial segments by identifying them together.
		\item If all the edges incident to $p$ have the same colour $a$, the vertex $p'$ corresponding to $p$ in $\pushdown[h]{\mathbf D}$ has, after collapsing, a single path starting at it, which is labelled with $\transitionword {a}{\height(p)}$: we remove $p'$ and the path altogether.
	\end{itemize}
\end{definition}

\begin{example}
	\cref{fig:pushdown} illustrates a coloured diagram for $\boldword = \colword{a^2}b \colword{b}a \colword{a^{-1}}b \colword{a^{-1}b^{-1}}a$
	together with its $h$-pushdown, which is an alternating diagram for $\pushdown[h] \boldword$ which is the alternating word
	\[
		\transitionword bh ab^{-1}ab^{-1} \transitionword b{h+2}^{-1}  \transitionword a{h+2} ba^{-1} \transitionword a{h+3}^{-1} \transitionword b{h+3} ba^{-1} \transitionword b{h+2}^{-1} \transitionword a{h+2} aa^{-1} ab^{-1} \transitionword ah^{-1}.
	\]
\end{example}

\newboolean{pushdown}
\newboolean{collapse}
\newboolean{removeedges}
\begin{figure}
	\centering
	\begin{subfigure}{0.4\textwidth}
		\begin{tikzpicture}
	\useasboundingbox (-2,-1) (5,7);
	\tikzset{->-/.style={decoration={markings,mark=at position #1+.01 with {\arrow{latex}}},postaction={decorate}}}
	\tikzset{-<-/.style={decoration={markings,mark=at position #1-.01 with {\arrowreversed{latex}}},postaction={decorate}}}
	\definecolor{happyblue}{RGB}{0,0,0}
	\definecolor{happypink}{RGB}{0,0,0}
	\tikzset{-|>-/.style={decoration={markings,mark=at position #1+.01 with {\arrow{latex}}},postaction={decorate}}}
	\tikzset{-<|-/.style={decoration={markings,mark=at position #1-.01 with {\arrowreversed{latex}}},postaction={decorate}}}

	\coordinate (A) at (0,0);
	\coordinate (B) at (0,3);
	\coordinate (C) at (0,6);
	\coordinate (D) at (3,6);
	\coordinate (E) at (3,3);
	\coordinate (F) at (3,0);

	\foreach\X in {A,B,C,D} {
			\foreach\Y in {A,B,C,D} {
					\coordinate (\X\Y) ($(\X)!1/3!(\Y)$);
				}
		}

	\unless\ifpushdown
		\draw[thick,-|>-=.55,happyblue] (A) -- (B) node[left,midway] {$\colword ab$};
		\draw[thick,-|>-=.55,happyblue] (B) -- (C) node[left,midway] {$\colword ab$};
		\draw[thick,->-=.55,happypink] (C) -- (D) node[above,midway] {$\colword ba$};
		\draw[thick,-|>-=.55,happyblue] (E) -- (D) node[right,midway] {$\colword ab$};
		\draw[thick,->-=.55,happypink] (F) -- (E) node[right,midway] {$\colword aa$};
		\draw[thick,->-=.55,happypink] (A) -- (F) node[above,midway] {$\colword ba$};
		\draw[thick,-|>-=.55,happyblue] (B) -- (E) node[above,midway] {$\colword bb$};
	\else
		\unless\ifcollapse
			\draw[-|>-=.3,-|>-=.7,happyblue] (A) -- +(0,2)
			node[left,pos=1/4] {$\transitionword bh$}
			node[left,pos=3/4] {$ab^{-1}$ }
			;
			\draw[-|>-=.3, -<|-=.7,happyblue] (B) ++(0,1) -- (C)
			node[left,pos=1/4] {$ab^{-1}$ }
			node[left,pos=3/4] {$\transitionword b{h+2}$}
			;

			\draw[-|>-=.55,happyblue] (B) ++(1,0) -- ++(1,0)
			node[above,pos=1/2] {$bb^{-1}$ }
			;
			\draw[dashed, -|>-=.55] (B) -- ++(0,1)
			node[left,midway] { $\transitionword b{h+1}$};
			\draw[dashed, -|>-=.55] (B) -- ++(1,0)
			node[above,pos=.6] { $\transitionword b{h+1}$};
			\draw[dashed, -|>-=.55] (B) -- ++(0,-1)
			node[left,midway] { $\transitionword b{h+1}$};
			\draw[dashed, -|>-=.55] (E) -- ++(-1,0)
			node[above,midway] { $\transitionword b{h+2}$};
			\draw[dashed, -|>-=.55] (E) -- ++(0,1)
			node[right,midway] { $\transitionword b{h+2}$};
		\else
			\draw[-|>-=.3,-|>-=.8,happyblue] (A) -- (B)
			node[left,pos=1/4] {$\transitionword bh$}
			node[left,pos=3/4] {$ab^{-1}$ }
			;
			\draw[-|>-=.3,-<|-=.7,happyblue] (B) -- (C)
			node[left,pos=1/4] {$ab^{-1}$ }
			node[left,pos=3/4] {$\transitionword b{h+2}$}
			;
			\draw[-|>-=.55,happyblue] (B) -- (E)
			node[above,midway] {$bb^{-1}$ }
			;
			\unless\ifremoveedges
				\draw[very thick,-<-=.45,happyblue] (B) -- ++(-1.5,0) node[above,midway] {$\transitionword b{h+1}$};
			\fi
		\fi
		\draw[->-=.22,->-=.55, -<-=.78,happypink] (C) -- (D)
		node[above,pos=1/6] {$\transitionword a{h+2}$}
		node[above,midway] {$ba^{-1}$ }
		node[above,pos=5/6] {$\transitionword a{h+3}$}
		;
		\unless\ifcollapse
			\draw[-|>-=.3, -<|-=.7,happyblue] (E) ++(0,1) -- (D)
			node[right,pos=1/4] {$ab^{-1}$ }
			node[right,pos=3/4] {$\transitionword b{h+3}$}
			;
			\draw[->-=.3, -<-=.7,happypink] (F) ++(0,1) -- (E)
			node[right,pos=3/4] {$\transitionword a{h+2}$}
			node[right,pos=1/4] {$aa^{-1}$ }
			;
			\draw[dashed, -|>-=.55] (F) -- ++(0,1)
			node[right,midway] { $\transitionword a{h+1}$};
			\draw[dashed, -|>-=.55] (F) -- ++(-1,0)
			node[below,midway] { $\transitionword a{h+1}$};

			\draw[->-=.3,->-=.7,happypink] (A) -- ++(2,0)
			node[above,pos=3/4] {$ba^{-1}$ }
			node[above,pos=1/4] {$\transitionword a{h}$}
			;
		\else
			\draw[-|>-=.3, -<|-=.7,happyblue] (E) -- (D)
			node[right,pos=1/4] {$ab^{-1}$ }
			node[right,pos=3/4] {$\transitionword b{h+3}$}
			;
			\draw[->-=.3,->-=.8,happypink] (A) -- (F)
			node[above,pos=1/4] {$\transitionword a{h}$}
			node[above,pos=3/4] {$ba^{-1}$ }
			;
			\draw[very thick,-<|-=.55,happyblue] (E) -- ++(0,-1)
			node[right,midway] {$\transitionword b{h+2}$}
			;
			\draw[->-=.3,-<-=.7,happypink] (F) -- ++(0,2)
			node[right,pos=1/4] {$aa^{-1}$}
			node[right,pos=3/4] {$\transitionword a{h+2}$}
			;

			\unless\ifremoveedges
				\draw[very thick,-<-=.45,happypink] (F) -- ++(+1.5,0) node[below,midway] {$\transitionword a{h+1}$};
			\fi
		\fi
	\fi
\end{tikzpicture}
	\end{subfigure}
	\setboolean{pushdown}{true}
	\begin{subfigure}{0.4\textwidth}
		\begin{tikzpicture}
	\useasboundingbox (-2,-1) (5,7);
	\tikzset{->-/.style={decoration={markings,mark=at position #1+.01 with {\arrow{latex}}},postaction={decorate}}}
	\tikzset{-<-/.style={decoration={markings,mark=at position #1-.01 with {\arrowreversed{latex}}},postaction={decorate}}}
	\definecolor{happyblue}{RGB}{0,0,0}
	\definecolor{happypink}{RGB}{0,0,0}
	\tikzset{-|>-/.style={decoration={markings,mark=at position #1+.01 with {\arrow{latex}}},postaction={decorate}}}
	\tikzset{-<|-/.style={decoration={markings,mark=at position #1-.01 with {\arrowreversed{latex}}},postaction={decorate}}}

	\coordinate (A) at (0,0);
	\coordinate (B) at (0,3);
	\coordinate (C) at (0,6);
	\coordinate (D) at (3,6);
	\coordinate (E) at (3,3);
	\coordinate (F) at (3,0);

	\foreach\X in {A,B,C,D} {
			\foreach\Y in {A,B,C,D} {
					\coordinate (\X\Y) ($(\X)!1/3!(\Y)$);
				}
		}

	\unless\ifpushdown
		\draw[thick,-|>-=.55,happyblue] (A) -- (B) node[left,midway] {$\colword ab$};
		\draw[thick,-|>-=.55,happyblue] (B) -- (C) node[left,midway] {$\colword ab$};
		\draw[thick,->-=.55,happypink] (C) -- (D) node[above,midway] {$\colword ba$};
		\draw[thick,-|>-=.55,happyblue] (E) -- (D) node[right,midway] {$\colword ab$};
		\draw[thick,->-=.55,happypink] (F) -- (E) node[right,midway] {$\colword aa$};
		\draw[thick,->-=.55,happypink] (A) -- (F) node[above,midway] {$\colword ba$};
		\draw[thick,-|>-=.55,happyblue] (B) -- (E) node[above,midway] {$\colword bb$};
	\else
		\unless\ifcollapse
			\draw[-|>-=.3,-|>-=.7,happyblue] (A) -- +(0,2)
			node[left,pos=1/4] {$\transitionword bh$}
			node[left,pos=3/4] {$ab^{-1}$ }
			;
			\draw[-|>-=.3, -<|-=.7,happyblue] (B) ++(0,1) -- (C)
			node[left,pos=1/4] {$ab^{-1}$ }
			node[left,pos=3/4] {$\transitionword b{h+2}$}
			;

			\draw[-|>-=.55,happyblue] (B) ++(1,0) -- ++(1,0)
			node[above,pos=1/2] {$bb^{-1}$ }
			;
			\draw[dashed, -|>-=.55] (B) -- ++(0,1)
			node[left,midway] { $\transitionword b{h+1}$};
			\draw[dashed, -|>-=.55] (B) -- ++(1,0)
			node[above,pos=.6] { $\transitionword b{h+1}$};
			\draw[dashed, -|>-=.55] (B) -- ++(0,-1)
			node[left,midway] { $\transitionword b{h+1}$};
			\draw[dashed, -|>-=.55] (E) -- ++(-1,0)
			node[above,midway] { $\transitionword b{h+2}$};
			\draw[dashed, -|>-=.55] (E) -- ++(0,1)
			node[right,midway] { $\transitionword b{h+2}$};
		\else
			\draw[-|>-=.3,-|>-=.8,happyblue] (A) -- (B)
			node[left,pos=1/4] {$\transitionword bh$}
			node[left,pos=3/4] {$ab^{-1}$ }
			;
			\draw[-|>-=.3,-<|-=.7,happyblue] (B) -- (C)
			node[left,pos=1/4] {$ab^{-1}$ }
			node[left,pos=3/4] {$\transitionword b{h+2}$}
			;
			\draw[-|>-=.55,happyblue] (B) -- (E)
			node[above,midway] {$bb^{-1}$ }
			;
			\unless\ifremoveedges
				\draw[very thick,-<-=.45,happyblue] (B) -- ++(-1.5,0) node[above,midway] {$\transitionword b{h+1}$};
			\fi
		\fi
		\draw[->-=.22,->-=.55, -<-=.78,happypink] (C) -- (D)
		node[above,pos=1/6] {$\transitionword a{h+2}$}
		node[above,midway] {$ba^{-1}$ }
		node[above,pos=5/6] {$\transitionword a{h+3}$}
		;
		\unless\ifcollapse
			\draw[-|>-=.3, -<|-=.7,happyblue] (E) ++(0,1) -- (D)
			node[right,pos=1/4] {$ab^{-1}$ }
			node[right,pos=3/4] {$\transitionword b{h+3}$}
			;
			\draw[->-=.3, -<-=.7,happypink] (F) ++(0,1) -- (E)
			node[right,pos=3/4] {$\transitionword a{h+2}$}
			node[right,pos=1/4] {$aa^{-1}$ }
			;
			\draw[dashed, -|>-=.55] (F) -- ++(0,1)
			node[right,midway] { $\transitionword a{h+1}$};
			\draw[dashed, -|>-=.55] (F) -- ++(-1,0)
			node[below,midway] { $\transitionword a{h+1}$};

			\draw[->-=.3,->-=.7,happypink] (A) -- ++(2,0)
			node[above,pos=3/4] {$ba^{-1}$ }
			node[above,pos=1/4] {$\transitionword a{h}$}
			;
		\else
			\draw[-|>-=.3, -<|-=.7,happyblue] (E) -- (D)
			node[right,pos=1/4] {$ab^{-1}$ }
			node[right,pos=3/4] {$\transitionword b{h+3}$}
			;
			\draw[->-=.3,->-=.8,happypink] (A) -- (F)
			node[above,pos=1/4] {$\transitionword a{h}$}
			node[above,pos=3/4] {$ba^{-1}$ }
			;
			\draw[very thick,-<|-=.55,happyblue] (E) -- ++(0,-1)
			node[right,midway] {$\transitionword b{h+2}$}
			;
			\draw[->-=.3,-<-=.7,happypink] (F) -- ++(0,2)
			node[right,pos=1/4] {$aa^{-1}$}
			node[right,pos=3/4] {$\transitionword a{h+2}$}
			;

			\unless\ifremoveedges
				\draw[very thick,-<-=.45,happypink] (F) -- ++(+1.5,0) node[below,midway] {$\transitionword a{h+1}$};
			\fi
		\fi
	\fi
\end{tikzpicture}
	\end{subfigure}

	\setboolean{collapse}{true}
	\begin{subfigure}{0.4\textwidth}
		\begin{tikzpicture}
	\useasboundingbox (-2,-1) (5,7);
	\tikzset{->-/.style={decoration={markings,mark=at position #1+.01 with {\arrow{latex}}},postaction={decorate}}}
	\tikzset{-<-/.style={decoration={markings,mark=at position #1-.01 with {\arrowreversed{latex}}},postaction={decorate}}}
	\definecolor{happyblue}{RGB}{0,0,0}
	\definecolor{happypink}{RGB}{0,0,0}
	\tikzset{-|>-/.style={decoration={markings,mark=at position #1+.01 with {\arrow{latex}}},postaction={decorate}}}
	\tikzset{-<|-/.style={decoration={markings,mark=at position #1-.01 with {\arrowreversed{latex}}},postaction={decorate}}}

	\coordinate (A) at (0,0);
	\coordinate (B) at (0,3);
	\coordinate (C) at (0,6);
	\coordinate (D) at (3,6);
	\coordinate (E) at (3,3);
	\coordinate (F) at (3,0);

	\foreach\X in {A,B,C,D} {
			\foreach\Y in {A,B,C,D} {
					\coordinate (\X\Y) ($(\X)!1/3!(\Y)$);
				}
		}

	\unless\ifpushdown
		\draw[thick,-|>-=.55,happyblue] (A) -- (B) node[left,midway] {$\colword ab$};
		\draw[thick,-|>-=.55,happyblue] (B) -- (C) node[left,midway] {$\colword ab$};
		\draw[thick,->-=.55,happypink] (C) -- (D) node[above,midway] {$\colword ba$};
		\draw[thick,-|>-=.55,happyblue] (E) -- (D) node[right,midway] {$\colword ab$};
		\draw[thick,->-=.55,happypink] (F) -- (E) node[right,midway] {$\colword aa$};
		\draw[thick,->-=.55,happypink] (A) -- (F) node[above,midway] {$\colword ba$};
		\draw[thick,-|>-=.55,happyblue] (B) -- (E) node[above,midway] {$\colword bb$};
	\else
		\unless\ifcollapse
			\draw[-|>-=.3,-|>-=.7,happyblue] (A) -- +(0,2)
			node[left,pos=1/4] {$\transitionword bh$}
			node[left,pos=3/4] {$ab^{-1}$ }
			;
			\draw[-|>-=.3, -<|-=.7,happyblue] (B) ++(0,1) -- (C)
			node[left,pos=1/4] {$ab^{-1}$ }
			node[left,pos=3/4] {$\transitionword b{h+2}$}
			;

			\draw[-|>-=.55,happyblue] (B) ++(1,0) -- ++(1,0)
			node[above,pos=1/2] {$bb^{-1}$ }
			;
			\draw[dashed, -|>-=.55] (B) -- ++(0,1)
			node[left,midway] { $\transitionword b{h+1}$};
			\draw[dashed, -|>-=.55] (B) -- ++(1,0)
			node[above,pos=.6] { $\transitionword b{h+1}$};
			\draw[dashed, -|>-=.55] (B) -- ++(0,-1)
			node[left,midway] { $\transitionword b{h+1}$};
			\draw[dashed, -|>-=.55] (E) -- ++(-1,0)
			node[above,midway] { $\transitionword b{h+2}$};
			\draw[dashed, -|>-=.55] (E) -- ++(0,1)
			node[right,midway] { $\transitionword b{h+2}$};
		\else
			\draw[-|>-=.3,-|>-=.8,happyblue] (A) -- (B)
			node[left,pos=1/4] {$\transitionword bh$}
			node[left,pos=3/4] {$ab^{-1}$ }
			;
			\draw[-|>-=.3,-<|-=.7,happyblue] (B) -- (C)
			node[left,pos=1/4] {$ab^{-1}$ }
			node[left,pos=3/4] {$\transitionword b{h+2}$}
			;
			\draw[-|>-=.55,happyblue] (B) -- (E)
			node[above,midway] {$bb^{-1}$ }
			;
			\unless\ifremoveedges
				\draw[very thick,-<-=.45,happyblue] (B) -- ++(-1.5,0) node[above,midway] {$\transitionword b{h+1}$};
			\fi
		\fi
		\draw[->-=.22,->-=.55, -<-=.78,happypink] (C) -- (D)
		node[above,pos=1/6] {$\transitionword a{h+2}$}
		node[above,midway] {$ba^{-1}$ }
		node[above,pos=5/6] {$\transitionword a{h+3}$}
		;
		\unless\ifcollapse
			\draw[-|>-=.3, -<|-=.7,happyblue] (E) ++(0,1) -- (D)
			node[right,pos=1/4] {$ab^{-1}$ }
			node[right,pos=3/4] {$\transitionword b{h+3}$}
			;
			\draw[->-=.3, -<-=.7,happypink] (F) ++(0,1) -- (E)
			node[right,pos=3/4] {$\transitionword a{h+2}$}
			node[right,pos=1/4] {$aa^{-1}$ }
			;
			\draw[dashed, -|>-=.55] (F) -- ++(0,1)
			node[right,midway] { $\transitionword a{h+1}$};
			\draw[dashed, -|>-=.55] (F) -- ++(-1,0)
			node[below,midway] { $\transitionword a{h+1}$};

			\draw[->-=.3,->-=.7,happypink] (A) -- ++(2,0)
			node[above,pos=3/4] {$ba^{-1}$ }
			node[above,pos=1/4] {$\transitionword a{h}$}
			;
		\else
			\draw[-|>-=.3, -<|-=.7,happyblue] (E) -- (D)
			node[right,pos=1/4] {$ab^{-1}$ }
			node[right,pos=3/4] {$\transitionword b{h+3}$}
			;
			\draw[->-=.3,->-=.8,happypink] (A) -- (F)
			node[above,pos=1/4] {$\transitionword a{h}$}
			node[above,pos=3/4] {$ba^{-1}$ }
			;
			\draw[very thick,-<|-=.55,happyblue] (E) -- ++(0,-1)
			node[right,midway] {$\transitionword b{h+2}$}
			;
			\draw[->-=.3,-<-=.7,happypink] (F) -- ++(0,2)
			node[right,pos=1/4] {$aa^{-1}$}
			node[right,pos=3/4] {$\transitionword a{h+2}$}
			;

			\unless\ifremoveedges
				\draw[very thick,-<-=.45,happypink] (F) -- ++(+1.5,0) node[below,midway] {$\transitionword a{h+1}$};
			\fi
		\fi
	\fi
\end{tikzpicture}
	\end{subfigure}
	\setboolean{removeedges}{true}
	\begin{subfigure}{0.4\textwidth}
		\begin{tikzpicture}
	\useasboundingbox (-2,-1) (5,7);
	\tikzset{->-/.style={decoration={markings,mark=at position #1+.01 with {\arrow{latex}}},postaction={decorate}}}
	\tikzset{-<-/.style={decoration={markings,mark=at position #1-.01 with {\arrowreversed{latex}}},postaction={decorate}}}
	\definecolor{happyblue}{RGB}{0,0,0}
	\definecolor{happypink}{RGB}{0,0,0}
	\tikzset{-|>-/.style={decoration={markings,mark=at position #1+.01 with {\arrow{latex}}},postaction={decorate}}}
	\tikzset{-<|-/.style={decoration={markings,mark=at position #1-.01 with {\arrowreversed{latex}}},postaction={decorate}}}

	\coordinate (A) at (0,0);
	\coordinate (B) at (0,3);
	\coordinate (C) at (0,6);
	\coordinate (D) at (3,6);
	\coordinate (E) at (3,3);
	\coordinate (F) at (3,0);

	\foreach\X in {A,B,C,D} {
			\foreach\Y in {A,B,C,D} {
					\coordinate (\X\Y) ($(\X)!1/3!(\Y)$);
				}
		}

	\unless\ifpushdown
		\draw[thick,-|>-=.55,happyblue] (A) -- (B) node[left,midway] {$\colword ab$};
		\draw[thick,-|>-=.55,happyblue] (B) -- (C) node[left,midway] {$\colword ab$};
		\draw[thick,->-=.55,happypink] (C) -- (D) node[above,midway] {$\colword ba$};
		\draw[thick,-|>-=.55,happyblue] (E) -- (D) node[right,midway] {$\colword ab$};
		\draw[thick,->-=.55,happypink] (F) -- (E) node[right,midway] {$\colword aa$};
		\draw[thick,->-=.55,happypink] (A) -- (F) node[above,midway] {$\colword ba$};
		\draw[thick,-|>-=.55,happyblue] (B) -- (E) node[above,midway] {$\colword bb$};
	\else
		\unless\ifcollapse
			\draw[-|>-=.3,-|>-=.7,happyblue] (A) -- +(0,2)
			node[left,pos=1/4] {$\transitionword bh$}
			node[left,pos=3/4] {$ab^{-1}$ }
			;
			\draw[-|>-=.3, -<|-=.7,happyblue] (B) ++(0,1) -- (C)
			node[left,pos=1/4] {$ab^{-1}$ }
			node[left,pos=3/4] {$\transitionword b{h+2}$}
			;

			\draw[-|>-=.55,happyblue] (B) ++(1,0) -- ++(1,0)
			node[above,pos=1/2] {$bb^{-1}$ }
			;
			\draw[dashed, -|>-=.55] (B) -- ++(0,1)
			node[left,midway] { $\transitionword b{h+1}$};
			\draw[dashed, -|>-=.55] (B) -- ++(1,0)
			node[above,pos=.6] { $\transitionword b{h+1}$};
			\draw[dashed, -|>-=.55] (B) -- ++(0,-1)
			node[left,midway] { $\transitionword b{h+1}$};
			\draw[dashed, -|>-=.55] (E) -- ++(-1,0)
			node[above,midway] { $\transitionword b{h+2}$};
			\draw[dashed, -|>-=.55] (E) -- ++(0,1)
			node[right,midway] { $\transitionword b{h+2}$};
		\else
			\draw[-|>-=.3,-|>-=.8,happyblue] (A) -- (B)
			node[left,pos=1/4] {$\transitionword bh$}
			node[left,pos=3/4] {$ab^{-1}$ }
			;
			\draw[-|>-=.3,-<|-=.7,happyblue] (B) -- (C)
			node[left,pos=1/4] {$ab^{-1}$ }
			node[left,pos=3/4] {$\transitionword b{h+2}$}
			;
			\draw[-|>-=.55,happyblue] (B) -- (E)
			node[above,midway] {$bb^{-1}$ }
			;
			\unless\ifremoveedges
				\draw[very thick,-<-=.45,happyblue] (B) -- ++(-1.5,0) node[above,midway] {$\transitionword b{h+1}$};
			\fi
		\fi
		\draw[->-=.22,->-=.55, -<-=.78,happypink] (C) -- (D)
		node[above,pos=1/6] {$\transitionword a{h+2}$}
		node[above,midway] {$ba^{-1}$ }
		node[above,pos=5/6] {$\transitionword a{h+3}$}
		;
		\unless\ifcollapse
			\draw[-|>-=.3, -<|-=.7,happyblue] (E) ++(0,1) -- (D)
			node[right,pos=1/4] {$ab^{-1}$ }
			node[right,pos=3/4] {$\transitionword b{h+3}$}
			;
			\draw[->-=.3, -<-=.7,happypink] (F) ++(0,1) -- (E)
			node[right,pos=3/4] {$\transitionword a{h+2}$}
			node[right,pos=1/4] {$aa^{-1}$ }
			;
			\draw[dashed, -|>-=.55] (F) -- ++(0,1)
			node[right,midway] { $\transitionword a{h+1}$};
			\draw[dashed, -|>-=.55] (F) -- ++(-1,0)
			node[below,midway] { $\transitionword a{h+1}$};

			\draw[->-=.3,->-=.7,happypink] (A) -- ++(2,0)
			node[above,pos=3/4] {$ba^{-1}$ }
			node[above,pos=1/4] {$\transitionword a{h}$}
			;
		\else
			\draw[-|>-=.3, -<|-=.7,happyblue] (E) -- (D)
			node[right,pos=1/4] {$ab^{-1}$ }
			node[right,pos=3/4] {$\transitionword b{h+3}$}
			;
			\draw[->-=.3,->-=.8,happypink] (A) -- (F)
			node[above,pos=1/4] {$\transitionword a{h}$}
			node[above,pos=3/4] {$ba^{-1}$ }
			;
			\draw[very thick,-<|-=.55,happyblue] (E) -- ++(0,-1)
			node[right,midway] {$\transitionword b{h+2}$}
			;
			\draw[->-=.3,-<-=.7,happypink] (F) -- ++(0,2)
			node[right,pos=1/4] {$aa^{-1}$}
			node[right,pos=3/4] {$\transitionword a{h+2}$}
			;

			\unless\ifremoveedges
				\draw[very thick,-<-=.45,happypink] (F) -- ++(+1.5,0) node[below,midway] {$\transitionword a{h+1}$};
			\fi
		\fi
	\fi
\end{tikzpicture}
	\end{subfigure}
	\captionsetup{singlelinecheck=off}
	\caption{Example of the pushdown of a coloured diagram. The top left picture shows a coloured diagram $\bolddiag$. To obtain the $h$-pushdown, we first construct a diagram by replacing each edge of $\bolddiag$ with a path labelled by the pushdown of the label on that edge, as shown in the top right picture. Next, we fold the paths originating from the same vertex that are labelled with the same transition word, as illustrated in the bottom left picture. Finally, wherever we have collapsed all the paths arising from a vertex, we can remove the resulting path; see the bottom right picture.}
	\label{fig:pushdown}
\end{figure}

To ensure that the $h$-pushdown of a coloured diagram is well-defined, we must check that the labels of the $h$-pushdown of the bounded regions are null-homotopic words in $\raaggens$. This follows directly from the next result.

\begin{lemma}\label{labels-of-regions-of-pushdown}
	Let $h \in \Z$. Let $\mathbf D$ be a coloured diagram for a null-homotopic coloured word. Let $\mathbf w = \colwordproduct k$ be the word read on a (possibly unbounded) region of $ \mathbf D $, starting from a point $p$, and let $u$ be the alternating word read on the corresponding region of $\pushdown[h]{\mathbf D} $. If $a_k \neq a_1$, then $u = \pushdown[h+\height(p)]{\mathbf w}$. Otherwise, if $a_k = a_1 = a $, then $\pushdown[h+\height(p)]{\mathbf w}=\transitionword a{h+\height(p)} u \transitionword a{h+\height(p)}^{-1}$. In either case, the word $u$ is null-homotopic.
\end{lemma}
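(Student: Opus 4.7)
The plan is to trace the pushdown construction around the boundary of a fixed region and read off its label directly, keeping careful track of where transition words do and do not get folded.

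I would first focus on one generator $\colword{w_i}{a_i}$ at a time: since all letters of $w_i$ share the colour $a_i$, every pair of consecutive edges of the monochromatic block have the same colour, so by the folding step in \cref{def:pushdown-of-coloured-diagram}, the internal $\transitionword{a_i}{\ldots}$ pieces coming out of each vertex inside $w_i$ get identified. Consequently, the segment of $u$ coming from this block reads exactly
\[
\transitionword{a_i}{h+\height(p)+h_{i-1}}\,\balance{w_i}{a_i}\,\transitionword{a_i}{h+\height(p)+h_i}^{-1},
\]
where $h_j = \height(w_1 \cdots w_j)$. This is precisely the $i$-th factor appearing in $\pushdown[h+\height(p)]{\mathbf w}$, as given in \cref{def:push-down}.

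Next, I would analyze what happens at the vertices separating consecutive blocks. At an interior separating vertex $p_i$ (for $1 \leq i \leq k-1$), the two edges on the region side have different colours $a_i \neq a_{i+1}$, so no folding occurs between the trailing transition word $\transitionword{a_i}{\cdot}^{-1}$ of the $i$-th block and the leading transition word $\transitionword{a_{i+1}}{\cdot}$ of the $(i+1)$-st. Hence both transition words appear consecutively in $u$, exactly matching the concatenation in $\pushdown[h+\height(p)]{\mathbf w}$ given by \cref{pushdown-properties}~\cref{item: pushdown of ww'}. At the basepoint $p = p_0 = p_k$, two subcases arise. If $a_k \neq a_1$, the same analysis applies and $u$ reads exactly $\pushdown[h+\height(p)]{\mathbf w}$. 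If $a_k = a_1 = a$, then the two edges meeting at $p$ on the region side share colour $a$, and their common initial segment $\transitionword{a}{h+\height(p)}$ gets folded; additionally, if all edges of $\bolddiag$ incident to $p$ have colour $a$, the vertex $p$ itself together with its transition-word path is removed. In either situation, the effect on reading the boundary is to strip a prefix $\transitionword{a}{h+\height(p)}$ and the matching suffix $\transitionword{a}{h+\height(p)}^{-1}$ from what would otherwise be $\pushdown[h+\height(p)]{\mathbf w}$, yielding $\pushdown[h+\height(p)]{\mathbf w} = \transitionword{a}{h+\height(p)}\, u\, \transitionword{a}{h+\height(p)}^{-1}$.

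Finally, for null-homotopicity: because $\mathbf w$ bounds a region of the coarse diagram $\bolddiag$, its underlying word is null-homotopic in $\raag\Gamma$, so $\mathbf w$ represents the identity and in particular $\height(\mathbf w)=0$. By \cref{pushdown-as-group-element}, $\pushdown[h+\height(p)]{\mathbf w}$ represents $s_0^{h+\height(p)}\cdot 1\cdot s_0^{-(h+\height(p))} = 1$, hence is null-homotopic. In the first case $u$ equals this word, and in the second case it is a conjugate by $\transitionword{a}{h+\height(p)}^{-1}$, so $u$ is null-homotopic as well. The main care point of the argument — and the only place where one has to be somewhat delicate — is the bookkeeping at the basepoint $p$, since this is where the folding/removal rules of \cref{def:pushdown-of-coloured-diagram} interact differently with the boundary word depending on whether the cyclic sequence of colours closes up monochromatically or not.
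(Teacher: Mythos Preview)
Your proposal is correct and follows essentially the same approach as the paper: trace the pushdown construction block by block, observe that folding removes the internal transition words within each monochromatic segment, and then analyse whether the initial and terminal transition words at the basepoint survive depending on whether $a_1 = a_k$. Your write-up is more explicit than the paper's (in particular you spell out the null-homotopicity via \cref{pushdown-as-group-element}, which the paper leaves implicit), but the underlying argument is the same.
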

\begin{proof}
	For $ i \in \range[0]{k} $, let $ h_i = h + \height(p) + \height(w_1 \cdots w_i) $. Since $\mathbf w$ is the label of a (possibly unbounded) region in $\bolddiag$, it is null-homotopic, so $h_0 = h_k = h+\height(p)$. From \cref{def:pushdown-of-coloured-diagram}, we get that, if $a_1 \neq a_k$, then
	\[
		u = \prod_{i=1}^k \transitionword {a_i}{h_{i-1}} \balance{w_i}{a_i} \transitionword{a_i}{h_i}^{-1} = \pushdown[h+\height(p)]{\mathbf w}.
	\]
	The key point is that all the transition words appearing between letters of the same colour disappear after folding. The first and the last transition words are not folded because of the assumption $ a_1 \neq a_k $. Otherwise, if they are folded, that is, if $a_k=a_1=a$, we obtain
	\[
		\pushdown[h+\height(p)]{\mathbf w}=\transitionword a{h+\height(p)} u \transitionword a{h+\height(p)}^{-1}
	\]

	In both cases, the fact that $u$ is null-homotopic follows directly from \cref{pushdown-as-group-element}, since $\boldword$ is null-homotopic.
\end{proof}

As highlighted in \cref{labels-of-regions-of-pushdown}, it is often convenient to assume that the first and last colour of a null-homotopic coloured word $ \mathbf w $ are different.
This is not a critical assumption, since monochromatic words are simple enough that we do not need to examine their coloured diagrams.
If $ \mathbf w $ is not monochromatic, we can always achieve this assumption by considering a cyclic conjugate of $ \mathbf w $. Geometrically, if we imagine $ \mathbf w $ drawn on a circle, this corresponds to reading the word starting from a vertex adjacent to edges of different colours. Therefore, we get the following consequence.

\begin{corollary}\noproof\label{labels-of-monochromatic-regions-of-pushdown}
	Let $h \in \Z$. If $\mathbf D$ is a coloured diagram for a null-homotopic $k$-coloured word $\mathbf w = \colwordproduct k$ with $a_k \neq a_1$, then $\pushdown[h]{\mathbf D}$ is an alternating diagram for $\pushdown[h]{\mathbf w}$. If the boundary word $\mathbf w = \colword wa$ is monochromatic, then $\pushdown[h]{\mathbf D}$ is an alternating diagram for $\balance{w}{a}$. \qedhere
\end{corollary}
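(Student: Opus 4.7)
The plan is to derive this corollary directly from \cref{labels-of-regions-of-pushdown} by applying it separately to the bounded regions (to verify that $\pushdown[h]{\bolddiag}$ is a valid coarse diagram satisfying the alternating condition) and to the unbounded region (to identify the boundary word).

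For the coarse diagram structure, each bounded region of $\bolddiag$ carries a null-homotopic coloured word label. Applying \cref{labels-of-regions-of-pushdown} to each such region shows that the corresponding region of $\pushdown[h]{\bolddiag}$ is labelled by a null-homotopic word; this follows either directly (in the first case) or after conjugating by a transition word (in the second case), combined with the fact that the pushdown of a null-homotopic coloured word is null-homotopic by \cref{pushdown-as-group-element}. For the alternating condition, I would note that every edge in $\pushdown[h]{\bolddiag}$ is labelled by a single letter of $\raaggens \cup \raaggens^{-1}$ arising either from a balanced letter $sa^{-1}$ or from a segment of a transition word; since every region boundary is the pushdown of a coloured word, and hence is an alternating word by \cref{pushdown-properties}, every loop based at a height-$0$ vertex is labelled by an alternating word.

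For the boundary word, I would apply \cref{labels-of-regions-of-pushdown} to the unbounded region of $\bolddiag$, with the basepoint $p$ taken at height $0$. In the case $a_1 \neq a_k$, the lemma directly yields $\pushdown[h]{\mathbf w}$ as the boundary label. In the monochromatic case $\mathbf w = \colword{w}{a}$, the lemma provides the identity $\pushdown[h]{\mathbf w} = \transitionword{a}{h}\, u\, \transitionword{a}{h}^{-1}$, where $u$ is the boundary label of the corresponding region of $\pushdown[h]{\bolddiag}$. Combining this with the direct computation $\pushdown[h]{\mathbf w} = \transitionword{a}{h}\,\balance{w}{a}\,\transitionword{a}{h}^{-1}$, which holds because $w$ is null-homotopic so the inner and outer transition words in \cref{def:push-down} coincide, yields $u = \balance{w}{a}$.

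The main subtlety lies in the monochromatic case at the basepoint. The definition of pushdown removes the transition tooth at a vertex only when \emph{all} incident edges share the same colour; if the basepoint has interior edges of differing colours, its colour-$a$ tooth persists in $\pushdown[h]{\bolddiag}$. Nevertheless, traversing this tooth out and back contributes only a freely trivial pair $\transitionword{a}{h}\,\transitionword{a}{h}^{-1}$ to the boundary reading, which matches precisely the conjugation structure in the second case of \cref{labels-of-regions-of-pushdown}, so the boundary label coincides with $\balance{w}{a}$ up to free equivalence, as required.
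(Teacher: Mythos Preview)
Your proposal is correct and follows the paper's implicit approach (the paper gives no proof, treating the result as immediate from \cref{labels-of-regions-of-pushdown}); your first two paragraphs carry out exactly that deduction. The final paragraph is an unnecessary hedge: the identity $u = \balance{w}{a}$ already follows cleanly from your second paragraph via the second case of the lemma, independently of whether the basepoint tooth persists, so the phrase ``up to free equivalence'' is not needed (and is slightly inaccurate, since conjugation by $\transitionword{a}{h}$ is not a free equivalence).
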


We are now able to prove \cref{pushdown-from-bounded-height}, which we recall here for the reader's convenience.

\techlemma*

\begin{proof}%
	Let $\mathbf D$ be the coloured diagram obtained from $D$ by replacing each label $s$ with $ \colword ss $. The pushdown of $\mathbf D$ is an alternating diagram $D'$ for a word $w'$ that is freely equivalent to $w$ by \cref{pushdown-properties}~\eqref{item: pushdown of alternating}. After performing appropriate foldings in $D'$, we obtain an alternating diagram $D_w$ for $w$. Since the bounded regions of $\mathbf D$ are labelled with words of length $4$, by \cref{labels-of-regions-of-pushdown,length-of-pushdown}, the bounded regions of $D_w$ are labelled with the words whose lengths are bounded by a constant $M_H$ depending only on $H$.
\end{proof}

Recall that our strategy for obtaining upper bounds on the Dehn function of $\bbg \Gamma$ is to produce an algorithm that takes an alternating word $u$ as input and outputs an almost-flat van Kampen diagram whose almost-flat area satisfies the desired upper bound.

We introduced coloured diagrams to construct this algorithm in the case where $u = \pushdown[h]\boldword$ for some null-homotopic coloured word $\mathbf w$ and some $h \in \Z$. In this case, if $\bolddiag$ is a coloured diagram for $\boldword$, then $\pushdown[h]\bolddiag$ is an alternating diagram for $u$. By \cref{labels-of-regions-of-pushdown}, the bounded regions of $\pushdown[h]\bolddiag$ are labelled by the pushdowns of the labels of the bounded regions of $\bolddiag$. Therefore, to estimate $\flatarea u$, it suffices to estimate the almost-flat area of the label of each bounded region of $\pushdown[h]\bolddiag$.

By \cref{lem:area-of-protodiagram}, the estimates are particularly nice if we have a uniform bound on the density of
$\pushdown[h]\bolddiag$. Note that the coloured diagram $\bolddiag$, which is by definition a coarse diagram over the generating set $ \colraaggens $, also has a well-defined density. However, estimating the density of $\bolddiag$ is not enough to obtain an estimate of the density of its pushdown, as one needs to take into account for the lengths of the transition words, which appear at every vertex incident to edges of different colours.

\begin{definition}
	Let $\mathbf D$ be a coloured diagram. A vertex of $\mathbf D$ is called \emph{polychromatic} if it is incident to at least two edges of different colours.
	It is called \emph{$ \partial $-polychromatic} if it is on the boundary of $ \mathbf D $ and incident to two consecutive boundary edges of different colours.
\end{definition}

\begin{proposition}\label{sparse-pushdown}
	For every $M > 0$, there exists $C_M > 0$ such that the following holds.
	Let $\mathbf D$ be a coloured diagram, and for every polychromatic vertex $p$, let $\gamma_p$ be a path in $\bolddiag$ from $p$ to an arbitrary $\partial$-polychromatic vertex (if $p$ is $\partial$-polychromatic, then we can choose $\gamma_p$ to be the constant path). Suppose that
	\begin{enumerate}
		\item for every polychromatic vertex $p$ in $\bolddiag$, we have $\deg p \leq M$, and \label{item:bound-degree}
		\item for every edge $e$ in $\bolddiag$, we have $\cardinality{ \set{ \gamma_p \mid e \in \gamma_p} } \leq M$, and \label{item:bound-paths-through-edge}
		\item $\density \bolddiag \leq M$.\label{item:bound-density-coloured-diagram}
	\end{enumerate}
	Then $\density{\pushdown[h]{\mathbf D}} \leq C_M$ for every $h \in \Z$.
\end{proposition}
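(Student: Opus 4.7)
The plan is to reduce the problem via Lemma~\ref{density-vs-num-edges} to showing that $\numedges{\pushdown[h]{\bolddiag}} \leq C'_M \length{u}$, where $u$ is the boundary word of $\pushdown[h]{\bolddiag}$ and $C'_M$ depends only on $M$ and $\numvertices{\Gamma}$. By Definition~\ref{def:pushdown-of-coloured-diagram}, the edges of the pushdown split into two families: a ``middle'' pair $sa^{-1}$ for each edge $\colword{s}{a}$ of $\bolddiag$, contributing $2E$ edges, where $E = \numedges{\bolddiag}$; and, after folding and the final removal step, one transition word $\transitionword{a}{h+\height(p)}$ for each polychromatic vertex $p$ and each distinct colour $a$ incident to $p$. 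Condition (1) bounds the number of distinct colours at each $p \in P$ by $M$ (where $P$ denotes the set of polychromatic vertices), and Lemma~\ref{length-transition-word} bounds each transition word's length, so the total transition contribution is at most $2M\numvertices{\Gamma}\sum_{p\in P}|h+\height(p)|$.

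The crux is to bound $\sum_{p\in P}|h+\height(p)|$ in terms of the boundary. Since the height changes by $\pm 1$ along each edge of $\bolddiag$, the triangle inequality gives $|h+\height(p)| \leq |h+\height(q_p)| + \length{\gamma_p}$, where $q_p \in \partial P$ is the endpoint of $\gamma_p$. Summing and applying condition (2) yields $\sum_{p\in P}\length{\gamma_p} \leq M\cdot E$. Moreover, each $q\in\partial P$ is the endpoint of at most $M^2+1$ paths $\gamma_p$: at most $M$ edges are incident to $q$, each used by at most $M$ paths, so at most $M^2$ non-constant paths terminate at $q$, plus at most one constant path (when $p = q$). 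This yields $\sum_{p\in P}|h+\height(q_p)| \leq (M^2+1)\sum_{q\in\partial P}|h+\height(q)|$.

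To close the argument, assume first that $\boldword$ is not monochromatic and cyclically conjugate it so that $a_1 \neq a_k$. The heights $h_1,\dots,h_k$ appearing in Lemma~\ref{length-of-pushdown} are then the values $h+\height(q)$ at the colour-transitions along the boundary, each occurring at some $q \in \partial P$. The lemma's lower bound $\length{u}\geq 2\length{\boldword}+2\sum_i|h_i|$ therefore yields both $\length{\boldword}\leq\frac{1}{2}\length{u}$ and $\sum_{q\in\partial P}|h+\height(q)|\leq\frac{1}{2}\length{u}$. Combined with $2E \leq (M+1)\length{\boldword}$ from condition (3) and Lemma~\ref{density-vs-num-edges}, this assembles to $\numedges{\pushdown[h]{\bolddiag}}\leq C'_M\length{u}$, as required. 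In the monochromatic case, $\partial P = \emptyset$, so the very existence of the paths $\gamma_p$ forces $P = \emptyset$; then no transition words appear, $\length{u} = 2\length{\boldword}$, and $\density{\pushdown[h]{\bolddiag}} = \density{\bolddiag} \leq M$.

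The main obstacle is the transfer from interior height data at polychromatic vertices to boundary height data at $\partial$-polychromatic vertices, the latter being what drives the length of $\pushdown[h]{\boldword}$. This is precisely what conditions (1) and (2) are engineered to enable: condition (1) limits how many distinct transition words accumulate at each polychromatic vertex, while condition (2) prevents the paths $\gamma_p$ from overloading any edge, keeping $\sum_{p}\length{\gamma_p}$ linear in $E$ and therefore in $\length{\boldword}$.
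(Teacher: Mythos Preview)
Your proof is correct and follows essentially the same approach as the paper: reduce to an edge count via Lemma~\ref{density-vs-num-edges}, split the edges of the pushdown into central pairs and transition words, bound the transition contribution at each polychromatic vertex by condition~(1), transfer the height sum from $P$ to $\partial P$ using the paths $\gamma_p$ together with conditions~(1)--(2) and the $M^2+1$ bound, and close with the lower bound of Lemma~\ref{length-of-pushdown}. One minor imprecision: after folding, the number of surviving transition paths at a polychromatic vertex $p$ is the number of maximal \emph{runs} of same-coloured consecutive edges around $p$, not the number of distinct colours (for instance, colours alternating $a,b,a,b$ around $p$ give four transition paths, not two); however, this count is still bounded by $\deg p \leq M$, so your estimate goes through unchanged.
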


\begin{proof}
	Recall that by \cref{density-vs-num-edges}, it suffices to estimate the number of edges $\numedges{\pushdown[h]\bolddiag}$ for the pushdown diagram. We assume that there is at least one $\partial$-polychromatic vertex; otherwise, there would not be any polychromatic vertices, so the labels of all bounded regions of $\bolddiag$ would be monochromatic, and it would follow from \cref{labels-of-monochromatic-regions-of-pushdown} that $\density\bolddiag = \density{\pushdown[h]\bolddiag}$.

	Let $\boldword = \colwordproduct k$ be the boundary word of $\bolddiag$. Since the statement is independent on the choice of base point of $\bolddiag$, we may assume that the base point is $\partial$-polychromatic, so $a_k \neq a_1$. In particular, the diagram $\bolddiag$ has $\partial$-polychromatic vertices $p_1, \dots, p_k$, where $k\geq2$, and $p_i$ is at the end of $\colword{w_i}{a_i}$. 

	Recall that an edge in $\bolddiag$ from $p$ to $q$ labelled by $\colword sa$ is replaced by a path in $\pushdown[h]\bolddiag$ labelled by $\pushdown[h+\height(p)]{\colword sa} = \transitionword a{h+\height(p)} sa^{-1} \transitionword a {h+\height(q)}^{-1}$. Call the two edges in the middle labelled with $sa^{-1}$ \emph{central edges}, and the ones in the paths labelled with $\transitionword a{\height(p)}$ and $\transitionword a{\height(q)}$ \emph{transition edges}. Denote by $Z$ and $T$ the number of central and transition edges, respectively.

	For every edge of $ \mathbf D $, we get two central edges in $\pushdown[h]{\mathbf D}$, so
	\[
		Z = 2\numedges{\bolddiag}.
	\]
	If a vertex $p$ is not polychromatic, then all the transition edges arising from it get removed. Otherwise, the vertex $p$ gives rise to at most $2M \cdot \numvertices \Gamma \cdot \length{h+\height(p)}$ transition edges by \cref{length-transition-word}. Therefore, we obtain
	\[
		T \leq  2M \cdot \numvertices \Gamma \cdot \sum_{p \text{ polychr.}} \abs{h+\height(p)}.
	\]
	For $i \in \range k$, let $P_i$ be the set of polychromatic vertices $p$ connected to $p_i$ by $\gamma_p$. It follows from the assumptions \eqref{item:bound-degree} and \eqref{item:bound-paths-through-edge} that
	\[
		\cardinality{P_i} \leq M^2 + 1,
	\]
	since there are at most $M$ paths coming through each of the edges incident to $p_i$, plus the constant path from $p_i$ to itself. For $p \in P_i$, we can estimate
	\[
		\abs{\height(p)-\height(p_i)} \leq \length{\gamma_p}.
	\]

	Putting the estimates together yields
	\begin{align*}
		\numedges{\pushdown[h]\bolddiag} & = Z + T                                                                                                                                                              \\
		                                 & \leq 2\numedges\bolddiag  + 2M \cdot \numvertices \Gamma \cdot \sum_{p \text{ polychr.}} \abs{h+\height(p)}                                                          \\
		                                 & = 2\numedges\bolddiag  + 2M \cdot \numvertices \Gamma \cdot \sum_{i=1}^k \sum_{p \in P_i} \abs{h+\height(p)}                                                         \\
		                                 & \leq 2\numedges\bolddiag  + 2M \cdot \numvertices \Gamma \cdot \sum_{i=1}^k \sum_{p \in P_i} \abs{\gamma_p} + \abs{h+\height(p_i)}                                   \\
		                                 & \leq 2\numedges\bolddiag  + 2M \cdot \numvertices \Gamma \cdot M \numedges\bolddiag + 2M \cdot \numvertices \Gamma (M^2 + 1)\sum_{i=1}^k \abs{h+\height(p_i)}        \\
		                                 & = (1 + M^2 \numvertices \Gamma) (\length \boldword \density\bolddiag + \length \boldword) + 2M \cdot \numvertices \Gamma (M^2 + 1)\sum_{i=1}^k \abs{h+\height(p_i)},
	\end{align*}
	where the last equality follows from \cref{density-vs-num-edges}, which tells us $2\numedges\bolddiag = \length \boldword \cdot \density\bolddiag + \length \boldword$.

	Finally, by \cref{length-of-pushdown}, we have $\length{\pushdown[h]\boldword} \geq 2 \length{\boldword}+2\sum_{i=1}^k \abs{h+\height(p_i)}$. Therefore, we can deduce
	\begin{align*}
		\density{\pushdown[h]\bolddiag} & = \frac{2\numedges{\pushdown[h]\bolddiag}}{\length{\pushdown[h]\boldword}} - 1                             \\
		                                & \leq (1+M^2 \numvertices \Gamma) \cdot (\density\bolddiag+1)+ 2M(M^2 + 1) \numvertices \Gamma - 1          \\
		                                & \leq 3 \numvertices \Gamma \cdot M^3 + \numvertices \Gamma \cdot M^2 + (1 + 2\numvertices \Gamma) \cdot M,
	\end{align*}
	where the last estimate follows from the assumption \eqref{item:bound-density-coloured-diagram}.
\end{proof}

\subsection{Cutting along corridors}\label{sec:cut-along-corridors}

In this section, we develop a method to ``cut'' a coloured word along corridors to produce a coloured diagram.

Let $ \mathbf{w}$ and $ \mathbf{w'} $ be coloured words representing the same element in $\raag\Gamma$, and let $w$ and $w'$ denote their underlying words, respectively. Let $D$ be a minimal-area van Kampen diagram for $w w'^{-1}$, and let $p_0$ be the starting vertex of $w$, which we set as the base point of $D$.
Let $k \in \N$, and for $i \in \range {k-1}$, let $C_i$ be an $a_i$-corridor of $D$ connecting an edge of $w$ to an edge of $ w' $. Assume that the corridors $C_i$ are pairwise disjoint and are ordered by increasing distance from $p_0$; see the left diagram in \cref{fig:cut-corridors}.

\newboolean{coloured}
\begin{figure}
	\setboolean{coloured}{false}
	\begin{subfigure}{0.4\textwidth}
		\centering
		\begin{tikzpicture}
	\tikzset{|->-|/.style={decoration={markings,
			mark=at position 0 with {\arrow[ultra thin]{|}},
			mark=at position #1 with {\arrow{latex}},
			mark=at position 1 with {\arrow[ultra thin]{|}},
		},postaction={decorate}}}
	\tikzset{->-/.style={decoration={markings,mark=at position #1+.01 with {\arrow{latex}}},postaction={decorate}}}
	\tikzset{-<-/.style={decoration={markings,mark=at position #1-.01 with {\arrowreversed{latex}}},postaction={decorate}}}

	\coordinate (p0) at (0,0);
	\coordinate (p5) at (0,-8);
	\coordinate (q0) at (p0);
	\coordinate (q5) at (p5);

	\draw[thick, ->-=.23,->-=.43,->-=.63,->-=.83](p0) to[out=-135, in=135]
	coordinate[pos=0.2](p1)
	coordinate[pos=0.4](p2)
	coordinate[pos=0.6](p3)
	coordinate[pos=0.8](p4)
	coordinate[pos=0.26](r1)
	coordinate[pos=0.46](r2)
	coordinate[pos=0.66](r3)
	coordinate[pos=0.86](r4)
	(p5);
	\draw[thick, ->-=.23,->-=.43,->-=.63,->-=.83](p0) to[out=-45, in=+45]
	coordinate[pos=0.2](q1)
	coordinate[pos=0.4](q2)
	coordinate[pos=0.6](q3)
	coordinate[pos=0.8](q4)
	coordinate[pos=0.26](s1)
	coordinate[pos=0.46](s2)
	coordinate[pos=0.66](s3)
	coordinate[pos=0.86](s4)
	(p5);

	\node[above] at (p0) {$p_0=q_0$};
	\node[below] at (p5) {$p_5=q_5$};
	\draw[fill=black] (p0) circle (.04);
	\draw[fill=black] (p5) circle (.04);

	\foreach \ii in {1,...,4} {
			\draw[thick,->-=.53] (p\ii) -- (q\ii) coordinate[midway] (m\ii);
			\unless\ifcoloured
				\draw[dashed] (r\ii) -- (s\ii);
				\node[right] at ($(p\ii)!.5!(r\ii)$) {$a_\ii$};
				\node[left] at ($(q\ii)!.5!(s\ii)$) {$a_\ii$};

			\fi

			\node[left] at (p\ii) {$p_\ii$};
			\node[right] at (q\ii) {$q_\ii$};
			\draw[fill=black] (p\ii) circle (.04);
			\draw[fill=black] (q\ii) circle (.04);

			\ifcoloured
				\node[above] at (m\ii) {$\colword{u_\ii}{a_\ii}$};

			\else
				\path[pattern=north west lines, opacity=0.3] (p\ii) -- (q\ii) -- (s\ii) -- (r\ii);
				\node[below] at (m\ii) {$C_\ii$};
				\node[above] at (m\ii) {${u_\ii}$};
			\fi

		}

	\foreach \ii in {0,...,4} {
			\def\jj{\the\numexpr\ii+1\relax}
			\ifcoloured
				\node[left=2] at ($(p\ii)!.5!(p\jj)$) {$\mathbf w_\jj$};
				\node[right=2] at ($(q\ii)!.5!(q\jj)$) {$\mathbf w'_\jj$};
			\else

				\node[left=2] at ($(p\ii)!.5!(p\jj)$) {$ w_\jj$};
				\node[right=2] at ($(q\ii)!.5!(q\jj)$) {$ w'_\jj$};
			\fi
		}

\end{tikzpicture}
	\end{subfigure}
	\setboolean{coloured}{true}
	\begin{subfigure}{0.4\textwidth}
		\centering
		\begin{tikzpicture}
	\tikzset{|->-|/.style={decoration={markings,
			mark=at position 0 with {\arrow[ultra thin]{|}},
			mark=at position #1 with {\arrow{latex}},
			mark=at position 1 with {\arrow[ultra thin]{|}},
		},postaction={decorate}}}
	\tikzset{->-/.style={decoration={markings,mark=at position #1+.01 with {\arrow{latex}}},postaction={decorate}}}
	\tikzset{-<-/.style={decoration={markings,mark=at position #1-.01 with {\arrowreversed{latex}}},postaction={decorate}}}

	\coordinate (p0) at (0,0);
	\coordinate (p5) at (0,-8);
	\coordinate (q0) at (p0);
	\coordinate (q5) at (p5);

	\draw[thick, ->-=.23,->-=.43,->-=.63,->-=.83](p0) to[out=-135, in=135]
	coordinate[pos=0.2](p1)
	coordinate[pos=0.4](p2)
	coordinate[pos=0.6](p3)
	coordinate[pos=0.8](p4)
	coordinate[pos=0.26](r1)
	coordinate[pos=0.46](r2)
	coordinate[pos=0.66](r3)
	coordinate[pos=0.86](r4)
	(p5);
	\draw[thick, ->-=.23,->-=.43,->-=.63,->-=.83](p0) to[out=-45, in=+45]
	coordinate[pos=0.2](q1)
	coordinate[pos=0.4](q2)
	coordinate[pos=0.6](q3)
	coordinate[pos=0.8](q4)
	coordinate[pos=0.26](s1)
	coordinate[pos=0.46](s2)
	coordinate[pos=0.66](s3)
	coordinate[pos=0.86](s4)
	(p5);

	\node[above] at (p0) {$p_0=q_0$};
	\node[below] at (p5) {$p_5=q_5$};
	\draw[fill=black] (p0) circle (.04);
	\draw[fill=black] (p5) circle (.04);

	\foreach \ii in {1,...,4} {
			\draw[thick,->-=.53] (p\ii) -- (q\ii) coordinate[midway] (m\ii);
			\unless\ifcoloured
				\draw[dashed] (r\ii) -- (s\ii);
				\node[right] at ($(p\ii)!.5!(r\ii)$) {$a_\ii$};
				\node[left] at ($(q\ii)!.5!(s\ii)$) {$a_\ii$};

			\fi

			\node[left] at (p\ii) {$p_\ii$};
			\node[right] at (q\ii) {$q_\ii$};
			\draw[fill=black] (p\ii) circle (.04);
			\draw[fill=black] (q\ii) circle (.04);

			\ifcoloured
				\node[above] at (m\ii) {$\colword{u_\ii}{a_\ii}$};

			\else
				\path[pattern=north west lines, opacity=0.3] (p\ii) -- (q\ii) -- (s\ii) -- (r\ii);
				\node[below] at (m\ii) {$C_\ii$};
				\node[above] at (m\ii) {${u_\ii}$};
			\fi

		}

	\foreach \ii in {0,...,4} {
			\def\jj{\the\numexpr\ii+1\relax}
			\ifcoloured
				\node[left=2] at ($(p\ii)!.5!(p\jj)$) {$\mathbf w_\jj$};
				\node[right=2] at ($(q\ii)!.5!(q\jj)$) {$\mathbf w'_\jj$};
			\else

				\node[left=2] at ($(p\ii)!.5!(p\jj)$) {$ w_\jj$};
				\node[right=2] at ($(q\ii)!.5!(q\jj)$) {$ w'_\jj$};
			\fi
		}

\end{tikzpicture}
	\end{subfigure}
	\caption{On the left is a minimal-area van Kampen diagram for $ ww'^{-1} $, cut by a family of non-crossing corridors. On the right is the resulting coloured diagram for $ \mathbf w \mathbf w'^{-1} $.}
	\label{fig:cut-corridors}
\end{figure}

Let $\gamma_i$ be a side of $C_i$, and let $p_i$ and $q_i$ be its endpoints on $w$ and $w'$, respectively. For consistency of notation, we also denote the base point $p_0$ as $q_0$, and let $p_k=q_k$ be the other point where $w$ and $w'$ meet. Let $w_i$ be the word read along the path from $p_{i-1}$ to $p_i$, $w'_i$ the word read along the path from $q_{i-1}$ to $q_i$, and $u_i$ the word read along $ \gamma_i $ from $p_i$ to $q_i$.

The decompositions $ w=w_1 \cdots w_k $ and $ w' = w'_1 \cdots w'_k $ naturally induce decompositions $ \mathbf w = \mathbf w_1  \cdots \mathbf w_k  $ and $ \mathbf w'= \mathbf w' _1 \cdots \mathbf w' _k $ into coloured subwords, where $w_i$ and $w'_i$ are the underlying words of the coloured words $\mathbf w_i$ and $\mathbf w'_i$, respectively.
By the definition of corridor, all letters of $u_i$ commute with $a_i$, so $u_i$ is the underlying word of the monochromatic word $ \colword{u_i}{a_i} $.

We can now construct a coloured diagram for $\mathbf{w}\mathbf{w'}^{-1}$ by cutting along the $\gamma_i$: this coloured diagram has $k$ bounded regions, $R_1, \dots, R_k$, where the boundary word of $R_i$, when read starting from $p_i$, is $\mathbf w_i \colword{u_i}{a_i} \mathbf{w}_i'^{-1} \colword{u_{i-1}^{-1}}{a_{i-1}}$, as shown in the right diagram in \cref{fig:cut-corridors}. We apply this construction to prove the following result.

\begin{lemma}\label{cut-efficient-diagram}
	There exists a constant $C > 0$ such that the following holds.
	Let $ \mathbf w $ be an efficient coloured word, and let $ \mathbf w' $ be another coloured word representing the same element as $ \mathbf w $.
	There exists a coloured diagram $\mathbf D$ for $\mathbf w\mathbf w'^{-1}$ with $ \density{\pushdown[h]{\mathbf D}} \leq C $ for every $h \in \Z$, such that the label on every region of $ \mathbf D $ decomposes as a product $ \mathbf u \mathbf u'^{-1} $, where $\mathbf u$ is at most $5$-coloured, and $\mathbf u'$ is a subword of $ \mathbf w' $.
\end{lemma}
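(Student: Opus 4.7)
The plan is to construct $\bolddiag$ by cutting a minimal-area van Kampen diagram $D$ for $w{w'}^{-1}$ over $\raagpres$ along a carefully chosen family of pairwise non-crossing corridor sides. Write $\mathbf{w} = \colwordproduct k$. In the trivial case $k \leq 3$, the coarse diagram with boundary $\mathbf{w}{\mathbf{w}'}^{-1}$ and a single bounded region already satisfies the conclusion with $\mathbf{u} = \mathbf{w}$ and $\mathbf{u}' = \mathbf{w}'$, so we may assume $k \geq 4$.

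For each $i$ in $I \coloneq \{3, 6, 9, \dots\} \cap \{1, \dots, k-1\}$, let $p_i$ be the transition vertex on the $\mathbf{w}$-portion of $\partial D$ between $\mathbf{w}_i$ and $\mathbf{w}_{i+1}$, and fix a corridor $C_i$ of $D$ through $p_i$. Because the underlying word $w$ is geodesic, the two ends of $C_i$ cannot both lie on $w$, so $C_i$ must connect an edge of $w$ to an edge of ${w'}^{-1}$. Since the indices in $I$ pairwise differ by at least $3$, \cref{estimate-sum-of-corridors-length} (1) ensures that the corridors $\{C_i : i \in I\}$ are pairwise non-crossing. Selecting a side $\gamma_i$ of each $C_i$ running from $p_i$ to a point $q_i$ on ${w'}^{-1}$ and carrying out the cutting construction of \cref{sec:cut-along-corridors} produces the coloured diagram $\bolddiag$.

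Each region of $\bolddiag$ lies between two consecutive cuts (with extremal regions bounded on one side by only one cut), and its cyclic boundary word decomposes as $\mathbf{u}\mathbf{u}'^{-1}$, where $\mathbf{u}$ is the concatenation of at most three consecutive coloured blocks of $\mathbf{w}$ together with at most two monochromatic corridor-side words of the form $\colword{u_i^{\pm 1}}{a_i}$; thus $\mathbf{u}$ is at most $5$-coloured and $\mathbf{u}'$ is a contiguous subword of $\mathbf{w}'$. A direct double-counting yields $\density{\bolddiag} = 1 + 2\sum_{i \in I}\length{\gamma_i}/\length{\mathbf{w}{\mathbf{w}'}^{-1}}$; combined with $\length{\gamma_i} = \abs{C_i}$ and \cref{estimate-sum-of-corridors-length} (3), this gives $\density{\bolddiag} \leq 5$.

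To promote the density bound to $\pushdown[h]{\bolddiag}$, we apply \cref{sparse-pushdown} using the following choice of paths $\gamma_p$: the constant path for every $\partial$-polychromatic vertex (covering all colour transitions on $\mathbf{w}$ and $\mathbf{w}'$), and the corridor side $\gamma_i$ traversed from $q_i$ back to $p_i$ for every polychromatic-but-not-$\partial$-polychromatic vertex $q_i$ (these arise only as the ${w'}^{-1}$-endpoints of the cuts). Since the $\gamma_i$'s are non-crossing, every vertex of $\bolddiag$ has degree at most $3$, giving condition (1); no boundary edge carries any path, and each corridor-side edge carries at most one path, giving (2); and (3) is the density bound just proved. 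The main obstacle is the selection of corridors: \cref{estimate-sum-of-corridors-length} (1) only forbids crossings at index distance $\geq 3$, so cutting at every transition would produce genuine crossings that destroy the planarity of the cutting construction, whereas cutting at every third transition is the densest non-crossing choice and is exactly what forces the sharp bound "$\mathbf{u}$ is at most $5$-coloured".
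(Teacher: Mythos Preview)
Your proof is correct and follows essentially the same approach as the paper: choose a minimal-area diagram for $ww'^{-1}$, cut along corridor sides through every third transition vertex $p_i$ of $\mathbf w$ (using \cref{estimate-sum-of-corridors-length}~(1) to guarantee non-crossing and part~(3) to bound $\density{\bolddiag}\le 5$), and then invoke \cref{sparse-pushdown} with the corridor sides as the paths $\gamma_p$ for the endpoints $q_i$. The paper makes the identical choices, differing only in notation (it indexes the cuts by $i\in\{1,\dots,\lceil k/3\rceil-1\}$ and fixes the specific corridor through the first edge of $w_{3i+1}$ rather than an arbitrary corridor through the transition vertex).
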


\begin{figure}
	\centering
	\begin{tikzpicture}
	\tikzset{|->-|/.style={decoration={markings,
			mark=at position 0 with {\arrow[ultra thin]{|}},
			mark=at position #1 with {\arrow{latex}},
			mark=at position 1 with {\arrow[ultra thin]{|}},
		},postaction={decorate}}}
	\tikzset{->-/.style={decoration={markings,mark=at position #1+.01 with {\arrow{latex}}},postaction={decorate}}}
	\tikzset{-<-/.style={decoration={markings,mark=at position #1-.01 with {\arrowreversed{latex}}},postaction={decorate}}}

	\coordinate (p0) at (0,0);
	\coordinate (p9) at (0,-8);
	\coordinate (q0) at (p0);
	\coordinate (q9) at (p9);

	\draw[thick, ->-=.06,->-=.17,->-=.28,->-=.39,->-=.50,->-=.61,->-=.72,->-=.83,->-=.94](p0) to[out=-135, in=135]
	coordinate[pos=1/9](p1)
	coordinate[pos=2/9](p2)
	coordinate[pos=3/9](p3)
	coordinate[pos=4/9](p4)
	coordinate[pos=5/9](p5)
	coordinate[pos=6/9](p6)
	coordinate[pos=7/9](p7)
	coordinate[pos=8/9](p8)
	(p9);
	\draw[thick, ->-=.15,->-=.43,->-=.63,->-=.92](p0) to[out=-45, in=+45]
	coordinate[pos=3/9](q3)
	coordinate[pos=6/9](q6)
	(p9);

	\node[above] at (p0) {$p_0=q_0$};
	\node[below] at (p9) {$p_3=q_3$};
	\draw[fill=black] (p0) circle (.04);
	\draw[fill=black] (p9) circle (.04);

	\foreach \ii in {1,2} {
			\def\jj{\the\numexpr\ii*3\relax}
			\draw[thick][->-=.5] (p\jj) -- (q\jj) coordinate[midway] (m\jj);

			\node[left] at (p\jj) {$p_\ii$};
			\node[right] at (q\jj) {$q_\ii$};

			\node[above] at (m\jj) {$\colword{u_\ii}{a_\the\numexpr\ii*3+1\relax}$};
			\draw[fill=black] (q\jj) circle (.04);
		}
	\foreach \jj in {1, ..., 8} {
			\draw[fill=black] (p\jj) circle (.04);
		}

	\foreach \ii in {0,...,8} {
			\def\jj{\the\numexpr\ii+1\relax}
			\node[left=2] at ($(p\ii)!.5!(p\jj)$) {$\colword{w_{\jj}}{a_{\jj}}$};
		}
	\foreach \ii in {1,2,3} {
	\def\jj{\the\numexpr3*\ii-3\relax}
	\def\kk{\the\numexpr3*\ii\relax}
	\node[right=5] at ($(q\jj)!.5!(q\kk)$) {$\mathbf {w}'_{\ii}$};
	}
\end{tikzpicture}
	\caption{Cutting a van Kampen diagram along corridors determined by an efficient coloured word on its boundary.}
	\label{fig:cut-corridors-bis}
\end{figure}

\begin{proof}
	Let $w$ and $w'$ be the underlying words of $\mathbf{w}$ and $\mathbf{w}'$, respectively. Assume that $ \mathbf w $ decomposes as $ \colwordproduct k $, and let $D$ be a van Kampen diagram for $w w'^{-1}$. Let $m = \ceil{\frac k3}$, and for each $i \in \range {m-1}$, let $C_i$ be the corridor starting at the first edge of $w_{3i+1}$. Let $\gamma_i$ be the side of $C_i$ whose endpoint $p_i$ on $w$ separates $w_{3i}$ and $w_{3i+1}$, and let $q_i$ be the other endpoint of $\gamma_i$, and let $ u_i $ be the word read along $ \gamma_i $ from $p_i$ to $q_i$. Since $ \mathbf w $ is efficient, the point $ q_i $ must belong to $ w' $.

	By \cref{estimate-sum-of-corridors-length}, the corridors $C_i$ do not cross each other, so we can construct a coloured diagram $\mathbf D$ by cutting along the $\gamma_i$, where the corridor sides are labelled with $\colword{u_i}{a_{3i+1}}$; see \cref{fig:cut-corridors-bis}. For each $i \in \range m$, the $i$th region of $\mathbf D$ is labelled by the word
	\[
		\colword{u_{i-1}}{a_{3i-2}}^{-1} \colword{w_{3i-2}}{a_{3i-2}} \colword{w_{3i-1}}{a_{3i-1}}\colword{w_{3i}}{a_{3i}} \colword{u_i}{a_{3i+1}} \mathbf w_i'^{-1},
	\]
	where we conventionally set $u_0 = u_m$ to be the empty word, and similarly for $w_i$ when $k < i \leq 3m$. Then this word is of the form $\mathbf{u}\mathbf{u'}^{-1}$, where $\mathbf u'=\mathbf w_i'$, so it satisfies the desired properties.

	It remains to establish the upper bound on the density of $ \pushdown[h] {\mathbf D} $; we do so by checking the hypotheses for \cref{sparse-pushdown}.
	By \cref{estimate-sum-of-corridors-length}, part \eqref{item:three-corridors}, we have $\sum_{i=1}^{m-1} \length{u_i} \leq 2 \length {ww'}$, so the density of $ \bolddiag$ is at most $5$.
	The only vertices of $ \bolddiag $ that may be polychromatic but not $ \partial $-polychromatic are the $ q_i $; this occurs when the two boundary edges incident to $ q_i $ have the same colour, which is different from the colour $a_{3i+1}$ of the interior edge. In this case, we connect $q_i$ to the $ \partial $-polychromatic vertex $ p_i $ via $ \gamma_i $. Since the paths $ \gamma_i $ are all disjoint and all vertices of $ \bolddiag $ have degree at most $3$, we can apply \cref{sparse-pushdown} and conclude.
\end{proof}

\section{Upper bounds}\label{sec:upper-bounds}

We now have all the necessary tools to establish upper bounds on the Dehn functions of Bestvina--Brady groups.

\begin{theorem}\label{upper-bounds}
	Let $\Gamma$ be a finite simplicial graph such that the associated flag complex $\flag\Gamma$ is simply connected, and let $\alpha \in \set{3,4}$. If $\Gamma$ does not have $\D \alpha$, then $\Dehn{\bbg\Gamma}(n) \preccurlyeq n^{\alpha-1}$.
\end{theorem}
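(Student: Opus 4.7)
The strategy is to apply the Triangle Lemma (\cref{triangle-lemma}) with efficient coloured representatives. For each $g \in \bbg\Gamma$, let $\mathbf w_g$ be an efficient coloured word representing $g$ (which exists by \cref{lem:existence-efficient}), and set $w_g \coloneq \pushdown{\mathbf w_g}$; \cref{coloured-norm-vs-bbg-norm} shows $\length{w_g} \asymp \flatnorm g$. It then suffices to prove that for any triangle $g_1 g_2 g_3 = 1$ in $\bbg\Gamma$, one has
\[
    \flatarea{w_{g_1} w_{g_2} w_{g_3}} \preccurlyeq (\flatnorm{g_1} + \flatnorm{g_2} + \flatnorm{g_3})^{\alpha - 1}.
\]

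The first step is to establish the bound in the \emph{reducible base case}, where $\Gamma$ itself is a join $\Gamma_1 \ast \Gamma_2$ and every element of $\bbg\Gamma$ admits a $2$-coloured representative with colours in $\Gamma_1$ and $\Gamma_2$. For $\alpha = 3$, the hypothesis not-$\D{3}$ forces $\Gamma$ to be a cone or a join of at least three nonempty subgraphs; in the cone case $\bbg\Gamma$ is isomorphic to a right-angled Artin group and hence has quadratic Dehn function, while in the triple-join case the quadratic bound follows from the Carter--Forester construction. For $\alpha = 4$ with $\Gamma$ essential reducible and $\flag\Gamma$ simply connected, the $2$-coloured structure combined with explicit coarse triangular fillings built from transition words yields the cubic bound.

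For general (possibly irreducible) $\Gamma$, I would apply the cutting-along-corridors construction of \cref{cut-efficient-diagram} to the concatenation $\mathbf w_{g_1} \mathbf w_{g_2} \mathbf w_{g_3}$, iterating across all three sides of the triangle. This produces a coloured diagram whose pushdown is an alternating diagram of uniformly bounded density (by \cref{sparse-pushdown}), and whose bounded regions have boundaries of the form $\mathbf u \mathbf u'^{-1}$, with $\mathbf u$ at most $5$-coloured and $\mathbf u'$ a subword of some $\mathbf w_{g_i}$. The underlying support of each region's boundary lies in a reducible subgraph of $\Gamma$ generated by the handful of colours appearing, and therefore inside some maximal reducible subgraph $\Lambda$. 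The hypothesis not-$\D{\alpha}$ guarantees that $\Lambda$ has the structural property required to apply the base case to the Bestvina--Brady subgroup supported in $\Lambda$, giving each region an almost-flat filling whose area is at most a constant multiple of the $(\alpha - 1)$st power of its perimeter. \cref{two-step-filling}, together with the density control, then assembles these regional fillings into a global almost-flat van Kampen diagram of the desired area.

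The main obstacle is the precise passage from ``bounded coloured complexity'' to ``support inside a maximal reducible subgraph of the prescribed type'': a $5$-coloured word a priori has support in a union of stars, which need not itself be reducible, and one must exploit the efficiency of the $\mathbf w_{g_i}$ (\cref{estimate-sum-of-corridors-length}) and carefully design the cutting strategy to ensure that each region really sees a reducible subgraph. The secondary challenge is to maintain bounded density throughout the iterated cuts so that \cref{sparse-pushdown} keeps applying, which requires the corridors coming from different sides of the triangle to be arranged compatibly with one another and to remain well-separated.
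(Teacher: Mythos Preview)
Your high-level architecture matches the paper: Triangle Lemma with efficient representatives, then iterate \cref{cut-efficient-diagram} across the three sides to land in bounded-colour regions with controlled density, then fill each region. The paper carries this out exactly (see \cref{cut-triangle-15-coloured}), ending with regions that are at most $15$-coloured.

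The genuine gap is what you do with those regions. You propose that each region, having support in a union of a few stars, lies inside some maximal reducible subgraph $\Lambda$, and that the not-$\D\alpha$ hypothesis then lets you invoke a ``reducible base case'' on $\bbg\Lambda$. This step fails: a $15$-coloured word (or even a $5$-coloured one) need not have support contained in any reducible subgraph of $\Gamma$---the stars of the various colours overlap in no controlled way, and efficiency of the boundary words gives you information about corridor separation, not about the global support of a region. You flag this yourself as ``the main obstacle,'' and it is not resolvable by clever cutting alone.

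What the paper does instead is one more layer of decomposition. A bounded-colour region is filled, via \cref{subdivide-k-gon} and the induction of \cref{almost-flat area-k-coloured}, by \emph{fundamental pieces}: monochromatic words, coloured bigons $\colword wa\colword wb^{-1}$, and coloured commutators $[\colword{w_1}a,\colword{w_2}b]$. It is only at this finer level that the support genuinely sits in a reducible subgraph---for a bigon, $\supp w\cup\set{v_a,v_b}$ lies in $\supp w\ast(\Pal w\setminus\supp w)$; for a commutator, $\supp{w_1}\cup\supp{w_2}$ spans a join---and the not-$\D\alpha$ hypothesis is applied there (\cref{area-coloured-bigon,area-coloured-commutator}) rather than via an external ``base case''. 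Your proposal is missing this intermediate decomposition into bigons and commutators, which is the actual mechanism by which the graph-theoretic hypothesis enters the area estimate.
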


To prove \cref{upper-bounds}, we need to show that, under the hypothesis above, every null-homotopic alternating word $w$ in $\bbg\Gamma$ has an almost-flat area bounded above by $C \cdot \length w^{\alpha-1}$ for some constant $C>0$. This is done by constructing an alternating diagram for $w$ and applying our multi-step strategy to reduce the task to estimating the almost-flat area of simpler null-homotopic alternating words.

We begin by establishing upper bounds for these relatively simple words. Then, we explain how these pieces can be viewed as the building blocks of a general coloured diagram by means of cutting along corridors as discussed in \cref{sec:cut-along-corridors}.

\subsection{Fundamental pieces}

In the previous section, we showed that given a coloured diagram $ \bolddiag $ for a null-homotopic coloured word $\boldword$, its $h$-pushdown $\pushdown[h]\bolddiag$ is an alternating diagram for $\pushdown[h]\boldword$. This allows us to estimate the almost-flat area of $\pushdown[h]\boldword$ in terms of the almost-flat area of the bounded regions of $\pushdown[h]\bolddiag$.

We start with the case where $\boldword$ belongs to one of the following three families of null-homotopic words, which we call \emph{fundamental pieces}: these are \emph{monochromatic words}, \emph{coloured bigons} (\cref{bigons}), and \emph{coloured commutators} (\cref{coloured-commutators}). These fundamental pieces are combinatorially easier to handle and can be used to estimate the almost-flat area of the $h$-pushdown of more complicated null-homotopic coloured words.

When dealing with the almost-flat area estimates of the fundamental pieces, we illustrate the power of working with diagrams: the filling of a coloured diagram, and thus of its pushdown, can be turned into a purely algebraic algorithm that tells us how to reduce the pushdown of the corresponding coloured word by applying relations at each step; see, for instance, \cref{bigon-quadratic-join}, where estimating areas is equivalent to bounding the number of relations involved in proving an identity between words.

\begin{definition}[Support of a word]
	Let $w$ be a word in $\raaggens$. The \emph{support of $w$}, denoted by $\supp w$, is the subgraph of $\Gamma$ induced by the vertices $v_s$ such that $s^{\pm 1}$ appears in $w$.
\end{definition}

\begin{definition}[Palette of a word]
	The \emph{palette of $w$}, denoted by $\Pal w$, is the subgraph of $\Gamma$ consisting of the intersection of the stars of all vertices in $\supp w$. That is, the vertex set of $\Pal w$ corresponds to the set of colours $a$ such that $\colword w a$ is a well-defined coloured word.
\end{definition}

\begin{definition}[Coloured bigon]\label{bigons}
	A \emph{coloured bigon} is a coloured word of the form $\colword w a \colword{w}{b}^{-1}$, where $w$ is a word in $\raaggens$, and $v_a$ and $v_b$ are vertices of $\Pal w$.
\end{definition}

\begin{definition}[Coloured commutator]\label{coloured-commutators}
	A \emph{coloured commutator} is the commutator $[\colword{w_1}{a}, \colword{w_2}{b}]$ of two monochromatic words $\colword{w_1}{a}$ and $\colword{w_2}{b}$ such that $\set{v_a} \cup \supp{w_2} \subset \Pal{w_1}$ and $\set{v_b} \cup \supp{w_1} \subset \Pal{w_2}$. 
\end{definition}

By definition, the $h$-pushdown of coloured bigons and coloured commutators are null-homotopic alternating words. Observe that the $h$-pushdown of a coloured commutator is not in general the commutator of the $h$-pushdowns of the corresponding monochromatic coloured words.

To establish upper bounds for the areas of null-homotopic monochromatic words, coloured bigons, and coloured commutators, we first need some preliminary results. We begin by recalling a result from \cite{Dison}, stated in our notation.

\begin{lemma}\textnormal{(\cite[Lemma 4.7]{Dison})}\label{fellow-quadratic}
	There exists a constant $K >0$ such that the following holds. Let $h \in \Z$ and $\{v_a,v_b\} \in \ee \Gamma$. The alternating  word $\transitionword a{h}^{-1} \transitionword b{h} (ab^{-1})^{-h}$ is null-homotopic, and its almost-flat area satisfies
	\[
		\flatarea{\transitionword a{h}^{-1} \transitionword b{h} (ab^{-1})^{-h}} \leq K \abs h ^2.
	\]
\end{lemma}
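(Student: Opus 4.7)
The plan is to construct an almost-flat van Kampen diagram of area $O(h^2)$ for the word
$W_h \coloneq \transitionword a h^{-1} \transitionword b h (ab^{-1})^{-h}$, arguing by induction on $|h|$; I may assume $h \geq 0$ without loss of generality, since the case $h < 0$ is analogous by considering the inverse word.

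For null-homotopy, note that by construction $\transitionword a h$ and $\transitionword b h$ represent $s_0^h a^{-h}$ and $s_0^h b^{-h}$ in $\raag\Gamma$, while $a$ and $b$ commute (since $\set{v_a, v_b}\in \ee\Gamma$), so $(ab^{-1})^h \groupid{\raag\Gamma} a^h b^{-h}$. Thus $W_h$ represents $a^h s_0^{-h} \cdot s_0^h b^{-h} \cdot (a^h b^{-h})^{-1} = 1$.

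For the area bound, the base case $h=1$ is immediate: $W_1$ is an alternating word of length at most $2 + 2(\ell_a + \ell_b) \leq 2(1 + 2\numvertices\Gamma)$, where $\ell_a, \ell_b$ are the lengths of the tree paths $\gamma_a$ and $\gamma_b$ defining the transition words, so $\flatarea{W_1}$ is bounded by a constant depending only on $\Gamma$. For the inductive step, I would produce a free equivalence $W_h \freeid \alpha\, W_{h-1}\, \alpha^{-1}\, R_h$, where $\alpha$ is an alternating word of length depending only on $\Gamma$ and $R_h$ is a null-homotopic alternating word admitting an explicit almost-flat filling of area $O(h)$. The strip $R_h$ is obtained by peeling off one ``layer'' from each of the three components of $W_h$, and it is tiled by a bounded number of $[a,b]=1$ squares (the algebraic input from $\set{v_a, v_b} \in \ee\Gamma$) glued to $O(\ell_a + \ell_b)$ tree-path commutator squares of the form $[t_i, t_{i+1}]=1$ and $[r_j, r_{j+1}]=1$ along $\gamma_a$ and $\gamma_b$. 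Summing areas yields $\flatarea{W_h} \leq \flatarea{W_{h-1}} + C h \leq K h^2$ for $K$ large enough.

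The main obstacle is ensuring that every interior vertex of the constructed filling has height in $\set{0, 1, 2}$. Indeed, the naive Euclidean filling of the triangular region bounded by $a^h b^{-h}$ and $(ab^{-1})^h$ inside the $\Z^2$-subgroup $\raag{\set{v_a, v_b}}$ has interior vertices reaching height $h$, so it is not almost-flat. The strip $R_h$ must instead be built by interleaving the $[a,b]$-squares with the tree-path commutators along $\gamma_a$ and $\gamma_b$, so that each $a$- or $b$-edge that would push the height above $2$ is paired with a nearby $s_0^{\pm 1}$-edge supplied by the transition words, pulling the diagram back towards the $0$-level set; carefully verifying that this interleaving keeps the bound $\height \in \set{0,1,2}$ at every step of the induction is where the technical work lies.
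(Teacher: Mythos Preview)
The paper does not supply a proof of this lemma; it is quoted directly from \cite[Lemma~4.7]{Dison}. Your argument for null-homotopy is correct.

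For the area estimate, the framework is sound (for any alternating $\alpha$, setting $R_h \coloneq \alpha W_{h-1}^{-1}\alpha^{-1}W_h$ makes the free equivalence automatic, and $\flatarea{\alpha W_{h-1}\alpha^{-1}}=\flatarea{W_{h-1}}$), so the entire content lies in showing $\flatarea{R_h}=O(h)$. Your justification of this has two problems. First, a counting inconsistency: you claim area $O(h)$ but describe a tiling by $O(1)+O(\ell_a+\ell_b)=O(1)$ commutator cells. Second, and more seriously, the relations you list --- $[a,b]$ together with the tree-edge commutators $[t_i,t_{i+1}]$ along $\gamma_a$ and $\gamma_b$ --- do not suffice. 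The word $R_h$ decomposes into pieces indexed by the blocks $(t_{i-1}t_i^{-1})^h$ of the transition words, and gluing the piece for block $i$ to that for block $i{+}1$ forces the connecting segment to lie in $\langle t_{i-1},t_i\rangle\cap\langle t_i,t_{i+1}\rangle=\langle t_i\rangle$; carrying this constraint through all blocks with a single short $\alpha$ runs into the fact that non-consecutive tree vertices $t_{i-1},t_{i+1}$ need not commute. You also never invoke the standing hypothesis that $\flag\Gamma$ is simply connected, and without some use of it the listed relations alone cannot close up the strip.

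The argument behind the cited result is organised differently: rather than induct on $h$, one observes that the edge $\{v_a,v_b\}$ together with the tree path from $v_a$ to $v_b$ forms a cycle in $\Gamma$, and simple connectivity of $\flag\Gamma$ provides a simplicial disk that this cycle bounds. Each $2$-simplex $\{v_x,v_y,v_z\}$ of the disk yields a hexagonal word $(xy^{-1})^h(yz^{-1})^h(zx^{-1})^h$ admitting an explicit almost-flat filling of area $O(h^2)$ inside $\langle x,y,z\rangle\cong\Z^3$; summing over the boundedly many triangles gives the bound.
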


Throughout the rest of the section, we use the following notation.
\begin{notation}
	Given two alternating words $u$ and $v$ in $\raaggens$ that represent the same element in $\bbg \Gamma$, we define the \emph{almost-flat area of the identity $u \groupid{\bbg \Gamma} v$} to be the almost-flat area of the null-homotopic word $uv^{-1}$. If $A$ is the almost-flat area of the identity $u \groupid{\bbg \Gamma} v$, we say that \emph{$u$ can be rewritten as $v$ with almost-flat area $A$}.
\end{notation}

\begin{proposition}\label{area-monochromatic}
	Let $h \in \Z$, $a \in \raaggens$, and $\colword wa$ be a null-homotopic monochromatic word. Then
	\[
		\flatarea{\pushdown[h]{\colword wa}} \leq 3 \length w^2.
	\]
\end{proposition}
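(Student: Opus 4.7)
The plan is to reduce the estimate to bounding $\flatarea{\balance{w}{a}}$, and then to push down a coloured version of a small-area van Kampen diagram of $w$ in the subgroup $\raag{\Star{v_a}}$.

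Since $\colword{w}{a}$ is null-homotopic, so is its underlying word $w$; as every letter of $w$ commutes with $a$, the support $\supp w$ lies in $\Star{v_a}$, so $w$ already represents the identity in the subgroup $\raag{\Star{v_a}} \cong \langle a \rangle \times \raag{\Link{v_a}}$. In particular $\height(w) = 0$, so by \cref{def:push-down} we have
\[
\pushdown[h]{\colword{w}{a}} = \transitionword{a}{h}\,\balance{w}{a}\,\transitionword{a}{h}^{-1}.
\]
Because $\transitionword{a}{h}$ is an alternating word (of height zero), attaching pendant copies of it at the basepoint of any almost-flat van Kampen diagram for $\balance{w}{a}$ yields an almost-flat van Kampen diagram for the conjugate without introducing any new $2$-cells. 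It therefore suffices to show $\flatarea{\balance{w}{a}} \leq 4\length{w}^2$.

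Next, I would construct a small-area coloured diagram for $\colword{w}{a}$. A standard pairing-and-commuting induction on $\length w$ in the RAAG $\raag{\Star{v_a}}$ (first sort the $a^{\pm 1}$ letters to one side and cancel them, then fill the remaining word in $\raag{\Link{v_a}}$) gives a van Kampen diagram $D$ for $w$ over the standard presentation of $\raag{\Star{v_a}}$ with $\area D \leq \length{w}^2$, whose $2$-cells are labelled by defining commutators $[s,t]$ for adjacent $v_s, v_t \in \Star{v_a}$. Assigning colour $a$ to every edge of $D$ — which is valid since every such generator commutes with $a$ — produces a coloured diagram $\bolddiag$ with boundary $\colword{w}{a}$ and $\area \bolddiag = \area D$.

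By \cref{labels-of-monochromatic-regions-of-pushdown}, the $h$-pushdown $\pushdown[h]{\bolddiag}$ is an alternating diagram for $\balance{w}{a}$ whose bounded regions are labelled by the balanced commutators
\[
\balance{[s,t]}{a} = sa^{-1}\,ta^{-1}\,as^{-1}\,at^{-1}.
\]
A direct computation shows that this length-eight alternating word can be reduced to the empty word by applying, in order, the relations $[a,t] = 1$, $[a,s] = 1$, and $[s,t] = 1$ (interspersed with free reductions), and that every intermediate word has vertices only at heights $\{0,1,2\}$; hence $\flatarea{\balance{[s,t]}{a}} \leq 3$. When $s$ or $t$ equals $a$, the balanced commutator reduces to the trivial word by free cancellation alone. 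Summing over the bounded regions of $\pushdown[h]{\bolddiag}$ via \cref{lem:area-of-protodiagram} then yields
\[
\flatarea{\balance{w}{a}} \;\leq\; 3\area \bolddiag \;\leq\; 3\length{w}^2 \;\leq\; 4\length{w}^2,
\]
as required.

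The main point requiring explicit verification is the almost-flat filling of the balanced commutator $\balance{[s,t]}{a}$: one must witness the three-step commutator cascade and confirm that no intermediate vertex escapes the height band $\{0,1,2\}$. Once this small calculation is in hand, everything else is a direct application of the coloured-diagram pushdown machinery from \cref{sec:coloured-words}, together with the standard quadratic Dehn function bound for the RAAG $\raag{\Star{v_a}}$.
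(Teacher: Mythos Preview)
Your proof is correct and follows the same reduction as the paper: both arguments observe that $\pushdown[h]{\colword{w}{a}}$ is, up to conjugation by transition words, the alternating word $\balance{w}{a}$, and then bound $\flatarea{\balance{w}{a}}$ quadratically using that $w$ is null-homotopic in the RAAG $\raag{\Star{v_a}}$. The paper compresses the second step into a single sentence invoking the isomorphism $\bbg{\Star{v_a}} \cong \raag{\Link{v_a}}$, whereas you spell out the mechanism behind that sentence by pushing down an explicit monochromatic van Kampen diagram and filling each resulting balanced commutator $\balance{[s,t]}{a}$ with three cells; this is a faithful elaboration of the paper's ``Thus'', not a different route.
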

\begin{proof}
	Since the monochromatic word $\colword wa$ is null-homotopic, we have $\height(w) = 0$. It follows from the definition of the pushdown (\cref{def:push-down}) that
	\[
		\pushdown[h]{\colword wa} \freeid \transitionword ah \balance wa \transitionword ah ^{-1}.
	\]
	The word $\balance wa$ is null-homotopic in $\bbg {\Star {v_a}}$. The latter is isomorphic to the right-angled Artin group $\raag {\Link {v_a}}$ via the homomorphism defined by $sa^{-1} \mapsto s$ for all $v_s \in \Link{v_a}$, which sends $\balance wa$ to $w$.
	Since the area of $w$ in $\raag {\Link {v_a}}$ is at most $\abs w^2$, it follows from \cref{prop:area-bbg-vs-almost-flat} that the word $\balance wa$ admits an almost-flat van Kampen diagram of quadratic area.

	To get the precise bound on the almost-flat area, note that a relation $[b,c]=1$ in $\raag {\Link {v_a}}$ is mapped to the null-homotopic alternating word $b a^{-1} cb^{-1} ac^{-1}$ in $\bbg {\Star {v_a}}$, which admits an almost-flat diagram of area $3$. Therefore, a van Kampen diagram for $w$ can be transformed into an almost-flat van Kampen diagram for $\balance wa$ of area at most $3 \length w^2$, as done in the proof of \cref{prop:area-bbg-vs-almost-flat}.
\end{proof}

\subsubsection{Coloured bigons} We now proceed to establish upper bounds for the almost-flat area of the $h$-pushdown of coloured bigons.

\begin{lemma}\label{bigon-quadratic-combinatorial-path}
	There exists a constant $C > 0$ with the following property. Let $\mathbf w = \colword wa \colword wb ^{-1}$ be a coloured bigon.
	Assume that $v_a$ and $v_b$ lie in the same connected component of $\Pal w$.
	Then, for all $h \in \Z$, we have
	\[
		\flatarea{\pushdown[h]{\mathbf{w}}} \leq C \cdot ( \length{w} + \abs{h} )^2.
	\]
\end{lemma}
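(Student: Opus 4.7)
The strategy is to reduce to the special case where $v_a$ and $v_b$ are adjacent in $\Pal w$, and to prove that special case by invoking \cref{fellow-quadratic} to eliminate the transition words. The general case is then handled by a direct coloured-diagram construction that telescopes along a path in $\Pal w$.

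First, I would address the base case where $v_a$ and $v_b$ are adjacent in $\Pal w$. Writing $n \coloneq \height(w)$, the pushdown takes the explicit form
\[
\pushdown[h]{\mathbf w} = \transitionword{a}{h}\,\balance{w}{a}\,\transitionword{a}{h+n}^{-1}\,\transitionword{b}{h+n}\,(\balance{w}{b})^{-1}\,\transitionword{b}{h}^{-1}.
\]
Two applications of \cref{fellow-quadratic} allow us to replace the pair $\transitionword{a}{h+n}^{-1}\transitionword{b}{h+n}$ by $(ab^{-1})^{h+n}$ and, by inverting the same relation, replace the final $\transitionword{b}{h}^{-1}$ by $(ab^{-1})^{-h}\transitionword{a}{h}^{-1}$, at almost-flat cost $K\!\left((h+n)^2 + h^2\right)$. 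The remaining outer conjugation by $\transitionword{a}{h}$ adds no $2$-cells, so we are reduced to filling the null-homotopic alternating word
\[
v \coloneq \balance{w}{a}\,(ab^{-1})^{h+n}\,(\balance{w}{b})^{-1}\,(ab^{-1})^{-h}.
\]
Since $v_a, v_b \in \Pal w$ are adjacent in $\Gamma$, the induced subgraph $\Lambda \coloneq \supp w \cup \{v_a, v_b\}$ splits as the join $\{v_a, v_b\} * \supp w$ and hence is a cone graph with apex $v_a$. Thus $\bbg\Lambda \cong \raag{\{v_b\} * \supp w}$ is a right-angled Artin group, which is CAT(0) and has at most quadratic Dehn function. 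Via \cref{prop:area-bbg-vs-almost-flat}, $v$ admits an almost-flat filling in $\raag\Lambda$ of area $O(\length v^2) = O((\length w + \abs h)^2)$, and this is simultaneously an almost-flat filling in $\raag\Gamma$. Combining these contributions establishes the base case.

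For the general case, I would fix a combinatorial path $v_a = v_{c_0}, v_{c_1}, \dots, v_{c_k} = v_b$ in $\Pal w$ with $k \leq \numvertices\Gamma - 1$, and build a coloured diagram $\bolddiag$ for $\mathbf w$ whose underlying graph consists of two vertices $p_0, p_1$ joined by $k+1$ parallel arcs labelled $\colword{w}{c_0}, \colword{w}{c_1}, \dots, \colword{w}{c_k}$. The $k$ resulting bounded regions are coloured bigons $\colword{w}{c_{i-1}}\colword{w}{c_i}^{-1}$ between colours adjacent in $\Pal w$. By \cref{labels-of-regions-of-pushdown}, the pushdown $\pushdown[h]\bolddiag$ is an alternating diagram for $\pushdown[h]\mathbf w$ whose bounded regions are, up to conjugation by a short transition word, pushdowns of these smaller bigons; each of them has almost-flat area $O((\length w + \abs h)^2)$ by the base case. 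Since $k$ is bounded by the constant $\numvertices\Gamma - 1$, summing gives the required bound. The principal technical point, and the anticipated main obstacle, is the base case: one has to handle the bookkeeping of transition words carefully so that the two substitutions via \cref{fellow-quadratic} leave a genuinely alternating residual word $v$ supported on the cone subgraph $\Lambda$, and then ensure that its almost-flat filling inside $\raag\Lambda$ transports to an almost-flat filling inside $\raag\Gamma$ of comparable area.
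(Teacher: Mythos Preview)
Your proposal is correct and follows essentially the same route as the paper: first handle adjacent colours by applying \cref{fellow-quadratic} twice to strip off the transition words and reduce to a null-homotopic alternating word supported on a cone subgraph (where it has quadratic almost-flat area), then telescope along a path in $\Pal w$ of length at most $\numvertices\Gamma$. The only differences are cosmetic: the paper works directly in $\bbg{\Star{v_a}} \cong \raag{\Link{v_a}}$ rather than your $\Lambda$ (which also avoids the minor wrinkle that $\{v_a,v_b\} * \supp w$ is not literally a join when $v_a$ or $v_b$ already lies in $\supp w$, though your conclusion that $\Lambda$ is a cone with apex $v_a$ remains valid), and it handles the general case by direct summation of the adjacent-case bounds rather than by building an explicit coloured diagram.
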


\begin{proof}
	For any pair of adjacent vertices $v_{p}$ and $v_{q}$ in $\Pal w$, we have that the alternating word
	\begin{align*}
		\transitionword ph^{-1} \pushdown[h]{\colword w{p} \colword w{q}^{-1}}  \transitionword ph \freeid \balance wp \transitionword p{h + \height(w)} ^{-1} \transitionword q{h + \height(w)} \balance wq ^{-1} \transitionword qh ^{-1} \transitionword ph
	\end{align*}
	can be rewritten using \cref{fellow-quadratic} as
	\[
		\balance wp (pq^{-1})^{h + \height(w)} \balance wq ^{-1} (qp^{-1})^h.
	\]
	This rewriting has almost-flat area at most $2 K(\length w + \abs{h})^2$. This last word is a null-homotopic alternating word in the generating set of $\bbg {\Star{v_p}}$, which is isomorphic to the right-angled Artin group $\raag {\Link{v_p}}$. In particular, there is a constant $K'>0$ such that it has almost-flat area bounded above by $K' \cdot (\length{w} + \abs h)^2$. Putting everything together yields
	\[
		\flatarea{\pushdown[h]{\colword wp} \pushdown[h]{\colword wq}^{-1}} \leq (2K+K) \cdot (\length w + \abs h)^2.
	\]

	Since a shortest path in $\Pal w$ connecting $v_a$ to $v_b$ has length at most $\cardinality{\vv\Gamma}$, by applying the above argument at most $\cardinality{\vv\Gamma}$ times, we obtain
	\begin{align*}
		\flatarea{\pushdown[h]{\mathbf w}} & = \flatarea{\pushdown[h]{\colword wa} \pushdown[h]{\colword wb}^{-1}}             \\
		                                   & \leq \cardinality{\vv\Gamma} \cdot (2K+K') \cdot (\length w + \abs h)^2. \qedhere
	\end{align*}
\end{proof}

In particular, if $\Pal w$ is connected, then by \cref{bigon-quadratic-combinatorial-path}, the almost-flat area of the pushdown of the coloured bigon is at most quadratic in its length.

\begin{lemma}\label{bigon-quadratic-join}
	There exists a constant $C > 0$ with the following property. Let $\mathbf w = \colword wa \colword wb ^{-1}$ be a coloured bigon. Assume that $\supp w$ is reducible. Then, for all $h \in \Z$, we have
	\[
		\flatarea{\pushdown[h]{\mathbf{w}}} \leq C \cdot ( \length{w} + \abs{h} )^2.
	\]
\end{lemma}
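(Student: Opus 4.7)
The plan is to reduce to \cref{bigon-quadratic-combinatorial-path} by exploiting the join decomposition $\supp w = \Lambda_1 \ast \Lambda_2$. Using the commutation relations between $\Lambda_1$ and $\Lambda_2$, one can rewrite $w$ in $\raag\Gamma$ as a product $w_1 w_2$ with $\supp{w_i} \subseteq \Lambda_i$. The key observation is that any vertex $v_d \in \Lambda_2$ belongs to $\Pal{w_1}$, since $v_d$ is adjacent in $\Gamma$ to every vertex of $\Lambda_1 \supseteq \supp{w_1}$; moreover, both $v_a$ and $v_b$ also lie in $\Pal{w_1}$ because $\Pal w \subseteq \Pal{w_1}$, and they are adjacent in $\Gamma$ to $v_d \in \supp w$. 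Hence $v_a$ and $v_b$ are joined inside $\Pal{w_1}$ by a path of length at most two through $v_d$, and \cref{bigon-quadratic-combinatorial-path} yields a constant $C_0>0$ (independent of the height shift) with
\[
    \flatarea{\pushdown[h]{\colword{w_1}{a}\colword{w_1}{b}^{-1}}} \leq C_0 \bigl(\length{w_1}+\abs h\bigr)^2.
\]
The symmetric argument, with any $v_c \in \Lambda_1$ playing the role of $v_d$, gives the analogous estimate for $\colword{w_2}{a}\colword{w_2}{b}^{-1}$ at any height shift.

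It remains to rewrite $\pushdown[h]{\mathbf w}$ as a concatenation of these two bigon pushdowns, up to free reduction and conjugation, at extra cost quadratic in $\length w$. First, \cref{area-monochromatic} applied to the null-homotopic monochromatic coloured word $\colword{w}{a}\colword{w_1 w_2}{a}^{-1}$ (null-homotopic because $w = w_1 w_2$ in $\raag\Gamma$) gives a pushdown of almost-flat area at most $16\length{w}^2$; by subadditivity of $\flatarea$ under concatenation of null-homotopic alternating words, this lets me replace $\colword{w}{a}$ with $\colword{w_1}{a}\colword{w_2}{a}$ in $\pushdown[h]{\mathbf w}$ at this cost. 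A symmetric rewriting handles $\colword{w}{b}^{-1}$, so it suffices to bound the almost-flat area of the pushdown of $\colword{w_1}{a}\colword{w_2}{a}\colword{w_2}{b}^{-1}\colword{w_1}{b}^{-1}$. I would then insert the freely trivial pair $\colword{w_1}{b}^{-1}\colword{w_1}{b}$ after $\colword{w_1}{a}$ and invoke \cref{pushdown-properties} to split the resulting pushdown as
\[
    \pushdown[h]{\colword{w_1}{a}\colword{w_1}{b}^{-1}} \cdot \pushdown[h]{\colword{w_1}{b}\cdot \colword{w_2}{a}\colword{w_2}{b}^{-1}\cdot \colword{w_1}{b}^{-1}}.
\]
The first factor is exactly the bigon on $w_1$ already estimated.

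For the second factor, note that it takes the form $y z y^{-1}$ with $y = \pushdown[h]{\colword{w_1}{b}}$ an alternating word (necessarily of height $0$) and $z = \pushdown[h+\height(w_1)]{\colword{w_2}{a}\colword{w_2}{b}^{-1}}$ the bigon on $w_2$, whose almost-flat area the symmetric use of \cref{bigon-quadratic-combinatorial-path} bounds by $C_0 (\length{w_2}+\abs{h+\height(w_1)})^2 \leq C_0(\length w + \abs h)^2$. Since $y$ is alternating, every vertex along a stalk labelled $y$ lies at height $0$ or $1$; attaching such a stalk to a minimal almost-flat diagram for $z$ thus yields an almost-flat diagram for $y z y^{-1}$ with the same number of $2$-cells. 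Summing all contributions gives $\flatarea{\pushdown[h]{\mathbf w}} \leq C(\length w + \abs h)^2$ for a suitable $C$. The main obstacle is purely bookkeeping---verifying each pushdown identity via \cref{pushdown-properties} and tracking heights at each step---while the conceptual heart of the argument is the observation that the join supplies a common neighbour of $v_a$ and $v_b$ inside each of $\Pal{w_1}$ and $\Pal{w_2}$, reducing both halves of the bigon to the connected-palette case.
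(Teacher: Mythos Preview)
Your proposal is correct and follows essentially the same approach as the paper: both exploit the join decomposition to find, for each factor $w_i$, a common neighbour of $v_a$ and $v_b$ inside $\Pal{w_i}$, thereby reducing to \cref{bigon-quadratic-combinatorial-path}, and both pay for the splitting $w \groupid{\raag\Gamma} w_1w_2$ via \cref{area-monochromatic}. The only difference is cosmetic: where you insert a cancelling pair and use a conjugation argument for the second factor, the paper (in its word-rewriting version) simply changes the colour on $\pushdown[h]{\colword{w_1}a}$ and $\pushdown[h+\height(w_1)]{\colword{w_2}a}$ separately and reassembles via \cref{pushdown-properties}\,\cref{item: pushdown of ww'}, which avoids the stalk argument altogether.
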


\begin{proof}
	Let $\Lambda \coloneq \supp w$, and let $\Lambda = \Lambda' * \Lambda''$ be a join decomposition. Due to the decomposition $\raag\Lambda=\raag{\Lambda'} \times \raag{\Lambda''}$, the element $ g \in \raag\Gamma $ represented by $w$ can also be represented by a product of two words $w'$ and $w''$, whose supports lie respectively in $\Lambda'$ and $\Lambda''$, such that $\length{w'} + \length{w''} \leq \length{w}$.

	This allows us to build a coloured diagram $\bolddiag$ for $\colword wa \colword wb^{-1}$ with four bounded regions. Two of these regions are labelled by the null-homotopic monochromatic words $\boldword_1 = \colword wa \colword{w''}a^{-1} \colword{w'}a^{-1}$ and $\boldword_2 = \colword{w'}b  \colword{w''}b \colword wb^{-1} $; the other two are labelled by the coloured bigons $\boldword_3 = \colword{w'}a \colword{w'}b^{-1}$ and $\boldword_4 = \colword{w''}a \colword{w''}b^{-1}$; see \cref{fig:peanut}.

	\begin{figure}
		\centering
		\begin{tikzpicture}
    \definecolor{happya}{RGB}{230,96,0}
	\definecolor{happyb}{RGB}{93,58,155}
	
	\tikzset{|->-|/.style={decoration={markings,
			mark=at position 0 with {\arrow[ultra thin]{|}},
			mark=at position #1 with {\arrow{latex}},
			mark=at position 1 with {\arrow[ultra thin]{|}},
		},postaction={decorate}}}
	\tikzset{->-/.style={decoration={markings,mark=at position #1+.01 with {\arrow{latex}}},postaction={decorate}}}
	\tikzset{-<-/.style={decoration={markings,mark=at position #1-.01 with {\arrowreversed{latex}}},postaction={decorate}}}

	\coordinate (p0) at (0,0);
	\coordinate (p2) at (0,-6);
	\coordinate (q0) at (p0);
	\coordinate (q2) at (p2);
	\coordinate (m) at ($(p0)!0.5!(q2)$);

	\draw[thick,->-=.5, happya](p0) to[out=-180, in=+180]
	coordinate[pos=1/2](p1)
	(p2);
	\draw[thick,->-=.5, happyb](p0) to[out=0, in=0]
	coordinate[pos=1/2](q1)
	(p2);
	\draw[thick,draw opacity=0,->-=.5](p0) to[out=-135, in=135]
	coordinate[pos=1/2](m11)
	(m);
	\draw[thick,draw opacity=0,->-=.5](p0) to[out=-45, in=45]
	coordinate[pos=1/2](m12)
	(m);
	\draw[thick,draw opacity=0,->-=.5](m) to[out=-135, in=135]
	coordinate[pos=1/2](m21)
	(p2);
	\draw[thick,draw opacity=0,->-=.5](m) to[out=-45, in=45]
	coordinate[pos=1/2](m22)
	(p2); 
	
	\node[left, happya] at (p1) {$\colword wa$};
	\node[right, happyb] at (q1) {$\colword wb $};
	\node[left, fill=white, fill opacity=.6, text opacity=1, text=happya] at (m11) {$\colword {w'}a$};
	\draw[thick,->-=.5, happya](p0) to[out=-135, in=135](m);
	\node[right, fill=white, fill opacity=.6, text opacity=1, text=happyb] at (m12) {$\colword {w'}b$};
	\draw[thick,->-=.5, happyb](p0) to[out=-45, in=45](m);
	\node[left, fill=white, fill opacity=.6, text opacity=1, text=happya] at (m21) {$\colword {w''}a$};
	\draw[thick,->-=.5, happya](m) to[out=-135, in=135](p2);
	\node[right, fill=white, fill opacity=.6, text opacity=1, text=happyb] at (m22) {$\colword {w''}b$};
	\draw[thick,->-=.5, happyb](m) to[out=-45, in=45](p2);
	
	\draw[fill=black] (p0) circle (.04);
	\draw[fill=black] (p2) circle (.04);
	\draw[fill=black] (m) circle (.04);
\end{tikzpicture}
		\caption{Filling of a coloured diagram for the coloured bigon when $\Lambda$ decomposes as a join $\Lambda' * \Lambda''$.}
		\label{fig:peanut}
	\end{figure}

	Note that $ \pushdown[h]\bolddiag $ is an alternating diagram for $ \pushdown[h]\boldword $.
	We now proceed to estimate the almost-flat area of the four bounded regions in $\pushdown[h] \bolddiag$. These regions are labelled by the alternating words $u_i = \pushdown[h_i]{\boldword_i}$ with $i \in \set{1,2,3,4}$, where $h_1 = h_2 = h_3 = h$ and $h_4 = h + \height(w')$.
	\begin{enumerate}[label=(\alph*)]
		\item For the pushdown of the null-homotopic monochromatic words $\boldword_1 = \colword wa \colword{w''}a^{-1} \colword{w'}a^{-1}$ and $\boldword_2= \colword wb \colword{w''}b^{-1} \colword{w'}b^{-1}$, we apply \cref{area-monochromatic} to obtain $\flatarea{u_i} \leq 12 \cdot \length w ^2$ for $i \in \{1,2\}$. \label{monochromatic-pieces}
		\item For the pushdown of the coloured bigon $\boldword_3 = \colword{w'}a \colword{w'}b^{-1}$, we can apply \cref{bigon-quadratic-combinatorial-path}, since for every $v_c \in \Lambda'' \subgraph \Pal{w'}$, we have $v_a,v_b \in \Star{v_c}$, and thus $v_a$ and $v_b$ lie in the same connected component of $\Pal{w'}$. The same argument holds for $\boldword_4 = \colword{w''}a \colword{w''}b^{-1}$. Therefore, we obtain $\flatarea{u_i} \leq K \cdot (\length{w} + \abs {h_i})^2$ for $i \in \{3,4\}$ and a sufficiently large constant $K>0$.  \label{bigon-pieces}
	\end{enumerate}
	Summing up, we find a constant $K'>0 $ such that $\flatarea{u_i} \leq K' \cdot \length{u_i}^2$ for all $i \in \range 4$. For this, we are using the lower bound on $\length{u_i}$ given by \cref{length-of-pushdown}.

	There are now two different ways to conclude. The first is to apply \cref{sparse-pushdown} with $M = 4$, obtaining a uniform bound $C_M$ on the density of $\pushdown[h]\bolddiag$, and then apply \cref{lem:area-of-protodiagram} to obtain
	\[
		\flatarea{\boldword} \leq K' \cdot C_M^2 \cdot \length{\pushdown[h] \boldword }^2  \leq C \cdot(\length{w} + \abs h)^2
	\]
	for large enough $C>0$.

	Alternatively, we can translate the coloured diagram $\bolddiag$ into an algorithm to reduce the $h$-pushdown of $\colword{w}a \colword{w}b^{-1}$. Reducing this alternating word is equivalent to estimating the almost-flat area of the identity $\pushdown[h]{\colword wa} \groupid{\bbg \Gamma} \pushdown[h]{\colword wb}$.

	From \ref{monochromatic-pieces}, we get that the identity
	\begin{equation}\label{bigon-mono-color-a}
		\pushdown[h_1]{ \colword{w}a } \groupid{\bbg \Gamma} \pushdown[h_1]{ \colword{w'}a \colword{w''}a }
	\end{equation}
	has almost-flat area at most $12 \cdot \length{w}^2$. Similarly, the almost-flat area of the identity
	\begin{equation}\label{bigon-mono-color-b}
		\pushdown[h_2]{ \colword{w}b } \groupid{\bbg \Gamma} \pushdown[h_2]{ \colword{w'}b \colword{w''}b }
	\end{equation}
	is at most $12 \cdot \length{w}^2$. On the other hand, it follows from \ref{bigon-pieces} that the identity
	\begin{equation}\label{bigon-bigon-first-factor}
		\pushdown[h_3]{\colword {w'}a} \groupid{\bbg \Gamma} \pushdown[h_3]{\colword {w'}b}
	\end{equation}
	holds with almost-flat area at most $K \cdot (\length{w} + \abs{h_3})^2$. Similarly, the almost-flat area of the identity 
	\begin{equation}\label{bigon-bigon-second-factor}
		\pushdown[h_4]{ \colword {w''}a } \groupid{\bbg \Gamma} \pushdown[h_4]{ \colword{w''}b }
	\end{equation}
	is at most $K \cdot (\length{w} + \abs{h_4})^2$.

	To assemble these identities together, we use the following property of the pushdown
	\begin{equation}\label{pushdown-split}
		\pushdown[\ell]{\mathbf{u} \mathbf{u}'} \freeid \pushdown[\ell]{\mathbf{u}} \pushdown[h + \height(u)]{\mathbf{u}'},
	\end{equation}
	for any coloured words $\mathbf{u}$ and $\mathbf{u}'$; see \cref{pushdown-properties}, part \eqref{item: pushdown of ww'}. Recall $h_1 = h_2 =h_3 = h$ and $h_4 = h + \height(w')$. We obtain that the following identities
	\begin{align*}
		\pushdown[h]{ \colword wa } & \groupid{\bbg \Gamma} \pushdown[h_1]{\colword{w'}a \colword{w''}a}                 &  & \text{\small{by \eqref{bigon-mono-color-a}}}                                             \\
		                            & \freeid \pushdown[h_1]{\colword{w'}a} \pushdown[h_4]{\colword{w''}a}               &  & \text{\small{by \eqref{pushdown-split}}}                                                 \\
		                            & \groupid{\bbg \Gamma} \pushdown[h_2]{\colword{w'}b} \pushdown[h_4]{\colword{w''}b} &  & \text{\small{by \eqref{bigon-bigon-first-factor} and \eqref{bigon-bigon-second-factor}}} \\
		                            & \freeid \pushdown[h_2]{ \colword{w'}b \colword{w''}b }                             &  & \text{\small{by \eqref{pushdown-split}}}                                                 \\
		                            & \groupid{\bbg \Gamma} \pushdown[h]{ \colword wb}                                   &  & \text{\small{by \eqref{bigon-mono-color-b}}}
	\end{align*}
	hold with almost-flat area at most
	\begin{align*}
		\,      & 24 \cdot \length w ^2 + K \cdot \left( (\length{w} + \abs{h})^2  + (\length{w} + \abs{h + \height(w')})^2 \right) \\
		\leq \, & 24 \cdot \length w ^2 + K \cdot \left( (\length{w} + \abs{h})^2  + (2\length{w} + \abs{h})^2 \right)              \\
		\leq \, & 29 \cdot (K+1) \cdot (\length{w} + \abs{h})^2.
	\end{align*}

	This second procedure yields in general a better constant, since we are estimating the areas of the regions of $ \pushdown\bolddiag $ more precisely without relying on the loose estimate of the density given by \cref{sparse-pushdown}.
\end{proof}

We saw that under certain conditions, the pushdown of a coloured bigon has quadratic area. We now show that in general we get a cubic upper bound.

\begin{lemma}\label{bigon-cubic}
	There exists a constant $C > 0$ with the following property. Let $\boldword = \colword wa \colword wb ^{-1}$ be a coloured bigon. Then, for all $h \in \Z$, we have
	\[
		\flatarea{ \pushdown[h]{\mathbf{w}}} \leq C \cdot \left( \length{w} + \abs{h} \right)^3.
	\]
\end{lemma}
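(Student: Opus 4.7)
The plan is to reduce the bigon by swapping colours one letter at a time and invoking \cref{bigon-quadratic-combinatorial-path} on single-letter bigons, for which the palette is always a connected star and hence the quadratic bound applies uniformly.

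Write $w = s_1 \cdots s_\ell$ with $s_i \in \raaggens \cup \raaggens^{-1}$, and define the interpolating coloured words
\[
    W_k \coloneq \colword{s_1 \cdots s_k}{b}\, \colword{s_{k+1} \cdots s_\ell}{a}, \qquad k = 0, 1, \dots, \ell,
\]
so that $W_0 = \colword{w}{a}$ and $W_\ell = \colword{w}{b}$. Each $W_k$ represents the group element $w \in \raag\Gamma$, and by \cref{pushdown-properties}\eqref{item: pushdown of ww'}, both $\pushdown[h]{W_{k-1}}$ and $\pushdown[h]{W_k}$ share the same outer factors and differ only in their middle one:
\[
    \pushdown[h]{W_{k-1}} \freeid \pushdown[h]{\colword{s_1 \cdots s_{k-1}}{b}} \cdot \pushdown[h_k]{\colword{s_k}{a}} \cdot \pushdown[h_k + \height(s_k)]{\colword{s_{k+1} \cdots s_\ell}{a}},
\]
and similarly for $W_k$ with $a$ replaced by $b$ in the middle factor, where $h_k \coloneq h + \height(s_1 \cdots s_{k-1})$.

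To bound the cost of the $k$-th swap, I would estimate the almost-flat area of the identity $\pushdown[h_k]{\colword{s_k}{a}} \groupid{\bbg\Gamma} \pushdown[h_k]{\colword{s_k}{b}}$. Since $v_a, v_b \in \Pal w \subseteq \Pal{s_k} = \Star{v_{s_k}}$ and $\Star{v_{s_k}}$ is connected, \cref{bigon-quadratic-combinatorial-path} applied to the single-letter bigon $\colword{s_k}{a}\colword{s_k}{b}^{-1}$ at height $h_k$ yields a bound of order $(1 + \abs{h_k})^2$. The estimate $\abs{h_k} \leq \abs h + \ell \leq \abs h + \length w$ then gives a bound of order $(\length w + \abs h)^2$, with a constant depending only on $\Gamma$.

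Chaining the $\ell$ rewritings $\pushdown[h]{W_{k-1}} \groupid{\bbg\Gamma} \pushdown[h]{W_k}$ for $k = 1, \dots, \ell$ then yields an almost-flat diagram witnessing the identity $\pushdown[h]{\colword wa} \groupid{\bbg\Gamma} \pushdown[h]{\colword wb}$ of almost-flat area at most $\ell \cdot O((\length w + \abs h)^2) = O((\length w + \abs h)^3)$. A direct application of \cref{pushdown-properties} identifies $\pushdown[h]{\boldword}$ with $\pushdown[h]{\colword wa} \cdot \pushdown[h]{\colword wb}^{-1}$ up to free equivalence, yielding the desired cubic bound on $\flatarea{\pushdown[h]{\boldword}}$. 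The only obstacle anticipated is the routine bookkeeping of the intermediate heights $h_k$ and the careful use of the compatibility of pushdown with concatenation.
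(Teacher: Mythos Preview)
Your proposal is correct and essentially identical to the paper's proof: both decompose the bigon into $\length w$ single-letter bigons $\colword{s_k}{a}\colword{s_k}{b}^{-1}$, invoke \cref{bigon-quadratic-combinatorial-path} on each (using that $\Pal{s_k}=\Star{v_{s_k}}$ is connected), and sum the resulting quadratic bounds. The only difference is cosmetic---the paper presents the decomposition as a ``necklace'' coloured diagram (\cref{fig:necklace}), whereas you phrase it as a chain of rewritings through the interpolating words $W_k$.
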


\begin{proof}
	Let $k = \length w$, and write $w = s_1^{\epsilon_1} \cdots s_k^{\epsilon_k}$, where ${s_i} \in \raaggens$ and $\epsilon_i \in \{\pm 1\}$. By definition, we have
	\[
		\colword wa = \colword{s_1}a^{\epsilon_1} \cdots \colword{s_k}a^{\epsilon_k}.
	\]
	A coloured diagram for $\boldword = \colword wa \colword wb ^{-1}$ can be constructed using $k$ coloured bigons of the form $\colword{s_i}a^{\epsilon_i} \colword{s_i}b^{-\epsilon_i}$; see \cref{fig:necklace}.

	\begin{figure}
		\centering
		\begin{tikzpicture}
	\definecolor{happya}{RGB}{230,96,0}
	\definecolor{happyb}{RGB}{93,58,155}
	\tikzset{|->-|/.style={decoration={markings,
			mark=at position 0 with {\arrow[ultra thin]{|}},
			mark=at position #1 with {\arrow{latex}},
			mark=at position 1 with {\arrow[ultra thin]{|}},
		},postaction={decorate}}}
	\tikzset{nodes={fill=white,fill opacity=0,text opacity=1}}
	\tikzset{->-/.style={decoration={markings,mark=at position #1+.01 with {\arrow{latex}}},postaction={decorate}}}
	\tikzset{-<-/.style={decoration={markings,mark=at position #1-.01 with {\arrowreversed{latex}}},postaction={decorate}}}
	
	\def\p{4};
	
	\foreach \i in {1,...,\p} { 
	\draw[thick,->-=.5, happya] (3.2*\i,0) arc (130:50:2.5) node[midway,above]{$\colword{s_{\i}}{a}^{\epsilon_\i}$};
	\draw[thick,->-=.5, happyb] (3.2*\i,0) arc (-130:-50:2.5) node[midway,below]{$\colword{s_\i}{b}^{\epsilon_\i}$};
	\draw[fill=black] (3.2*\i,0) circle (.04);
    \draw[fill=black] (3.2*5,0) circle (.04);
	}
 
\end{tikzpicture}
		\caption{Filling of a coloured diagram for the coloured bigon $\colword wa \colword wb ^{-1}$ with $w =s_1^{\epsilon_1} \cdots s_k^{\epsilon_k}$. The filling consists of $k$ coloured bigons.}
		\label{fig:necklace}
	\end{figure}

	For every $i \in \range {k}$ and $\ell \in \Z$, the alternating word $\pushdown[\ell]{\colword{s_i}a^{\epsilon_i} \colword{s_i}b^{-\epsilon_i}}$ is null-homotopic. Moreover, since $\Pal {s_i^{\epsilon_i} } = \Star{s_i}$ is connected, \cref{bigon-quadratic-combinatorial-path} implies that
	\[
		\flatarea{\pushdown[\ell]{\colword{s_i}a^{\epsilon_i} \colword{s_i}b^{-\epsilon_i}}} \leq K \cdot \left( 1 + \abs{\ell} \right)^2.
	\]
	Therefore, for $h_0 = h$ and $h_i = h + \height(s_1 \cdots s_i)$, putting these $k$ pieces together using the properties of the pushdown given in \cref{pushdown-properties}, we obtain the identities
	\[
		\pushdown[h]{\colword wa} \freeid \prod_{i=1}^k \pushdown[h_{i-1}]{\colword{s_i}a^{\epsilon_i}} \groupid{\bbg \Gamma} \prod_{i=1}^k \pushdown[h_{i-1}]{\colword{s_i}b^{\epsilon_i}} \freeid \pushdown[h]{\colword wb}.
	\]
	Thus, the almost-flat area of $\pushdown[h]{\boldword}$ satisfies 
	\begin{align*}
		\flatarea{\pushdown[h]{\boldword}} & \leq K \cdot \sum_{i = 1}^k \left( 1 + \abs{h_{i-1}} \right)^2                             \\
		                                   & = K \cdot \sum_{i = 1}^k 1 + 2\abs{h_{i-1}} + \abs{h_{i-1}}^2                              \\
		                                   & \leq K \cdot \length{w} \left(1 + 2(\length{w} + \abs h ) + (\length{w} + \abs h)^2\right) \\
		                                   & \leq 4K \cdot (\length{w} + \abs h)^3. \qedhere
	\end{align*}
\end{proof}

We now have all the tools to prove that coloured bigons satisfy \cref{upper-bounds}.

\begin{proposition}\label{area-coloured-bigon}
	There exists a constant $C > 0$ with the following property. Assume that $\alpha \in \set{3,4}$ and that $\Gamma$ does not have $\D\alpha$. Let $\boldword = \colword wa \colword wb ^{-1}$ be a coloured bigon. Then, for all $ h \in \Z $, we have
	\[
		\flatarea{ \pushdown[h]\boldword} \leq C \cdot \length{\pushdown[h]\boldword}^{\alpha - 1}.
	\]
\end{proposition}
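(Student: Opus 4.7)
The plan is to split according to $\alpha$ and handle the two cases separately. For $\alpha = 4$, the bound is essentially immediate from \cref{bigon-cubic}, which already gives $\flatarea{\pushdown[h]\boldword} \leq C(\length w + \abs h)^3$; I would then invoke the lower bound in \cref{length-of-pushdown} (applied to the null-homotopic coloured word $\colword wa \colword wb^{-1}$, where one may assume $a \neq b$ since otherwise the pushdown is freely trivial) to convert $(\length w + \abs h)^3$ into a constant multiple of $\length{\pushdown[h]\boldword}^3$.

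The substantive case is $\alpha = 3$, where the hypothesis $\neg \D 3$ must be exploited to upgrade the cubic bound to a quadratic one. My strategy is to reduce to either \cref{bigon-quadratic-combinatorial-path} or \cref{bigon-quadratic-join}, both of which already deliver the desired quadratic bound in $\length w + \abs h$. Setting $\Lambda \coloneq \supp w$, I would first dispose of two easy subcases: if $\Lambda$ is reducible, \cref{bigon-quadratic-join} applies directly, and if $\Lambda$ has at most one vertex, then $\Pal w$ is either all of $\Gamma$ or a star, hence connected, so that $v_a$ and $v_b$ trivially lie in the same component of $\Pal w$ and \cref{bigon-quadratic-combinatorial-path} applies. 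The heart of the argument is the remaining case, where $\Lambda$ is irreducible with at least two vertices, and the claim is that $v_a$ and $v_b$ must still lie in the same connected component of $\Pal w$.

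To establish this combinatorial claim, I would first observe that irreducibility of $\Lambda$ together with having at least two vertices forces $\Pal w \cap \Lambda = \emptyset$: a common vertex would be adjacent to every other vertex of $\Lambda$, making $\Lambda$ a cone. Consequently, the induced subgraph on $\Pal w \cup \Lambda$ is the genuine join $\Pal w \ast \Lambda$, and this join is reducible. Extending it to a maximal reducible subgraph $\mu$ of $\Gamma$ and invoking $\neg \D 3$, $\mu$ is either a cone $\{v_c\} \ast \mu'$ or a join $\mu_1 \ast \cdots \ast \mu_k$ of $k \geq 3$ irreducible factors. In the cone case, I would argue that $v_c \notin \Lambda$ (otherwise $\Lambda$ itself would be a cone), so $v_c \in \Pal w$ and $v_c$ is adjacent to every other vertex of $\mu$, yielding a path of length at most two from $v_a$ to $v_b$ inside $\Pal w$. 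In the multi-factor case, $\Lambda$ must lie in a single factor (say $\mu_1$), and every vertex of $\mu_j$ with $j \neq 1$ automatically lies in $\Pal w$; a short case analysis on which factors contain $v_a, v_b$ then produces a bridging vertex $v_d \in \Pal w$ adjacent to both.

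I expect the main technical subtlety to arise in the multi-factor subcase where $v_a$ and $v_b$ happen to lie in the same factor $\mu_i$: the bridging vertex $v_d$ must be chosen in a factor $\mu_\ell$ with both $\ell \neq 1$ (so that $v_d \in \Pal w$ is guaranteed) and $\ell \neq i$ (so that $v_d$ is adjacent to both $v_a$ and $v_b$). Having at least three factors is precisely what makes such a choice possible, and this is where the hypothesis $\neg \D 3$ is genuinely used. Once the component claim is established, the quadratic bound $(\length w + \abs h)^2$ from \cref{bigon-quadratic-combinatorial-path} (or \cref{bigon-quadratic-join}) is converted into $\length{\pushdown[h]\boldword}^2$ via \cref{length-of-pushdown}, completing the proof.
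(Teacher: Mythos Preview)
Your proposal is correct and follows essentially the same approach as the paper. For $\alpha = 4$ both you and the paper invoke \cref{bigon-cubic} directly; for $\alpha = 3$ both arguments extend the join $\supp w \ast (\Pal w \setminus \supp w)$ to a maximal reducible subgraph and use the non-essentiality hypothesis to force $v_a$ and $v_b$ into the same component of $\Pal w$. The only difference is organisational: the paper packages the $\alpha = 3$ case as a proof by contradiction (assume $\Pal w$ is disconnected, deduce that the maximal join has exactly two irreducible factors with one of them a single vertex, and reach a contradiction), whereas you carry out a direct case analysis on whether the maximal join is a cone or has at least three factors. Your explicit invocation of the lower bound in \cref{length-of-pushdown} to convert $(\length w + \abs h)^{\alpha-1}$ into $\length{\pushdown[h]\boldword}^{\alpha-1}$ is a detail the paper leaves implicit.
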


\begin{proof}
	Observe that the case $\alpha = 4$ follows directly from \cref{bigon-cubic}, so we assume $\alpha = 3$. Since $\Gamma$ does not have property \D3, every maximal join is not essentially $2$-reducible.

	We may assume that $v_a$ and $v_b$ are not adjacent and that $\Pal w$ is disconnected; otherwise, we could conclude by \cref{bigon-quadratic-combinatorial-path}. In particular, the vertices $v_a$ and $v_b$ do not belong to $\supp w$. If $\supp w$ were reducible, we would also be done by \cref{bigon-quadratic-join}, so we assume that $\supp w$ is irreducible.

	Since the subgraph $\supp w \ast (\Pal w \setminus \supp w)$ is reducible, it is contained in a maximal reducible subgraph $\Lambda \subgraph \Gamma$. Consider the decomposition $\Lambda = \Lambda_1 \ast \Lambda_2 \ast \dots \ast \Lambda_k$, where $k \geq 2$ and $\Lambda_i$ is irreducible for all $i \in \range k$.

	Since $\supp w \subgraph \Lambda$ is irreducible, it is contained in some $\Lambda_i$, say $\Lambda_1$.
	Note that $\Pal w = (\Pal w \cap \Lambda_1) \ast \Lambda_2 \ast \dots \ast \Lambda_k$. Since $\Pal w$ is disconnected, it follows that $k=2$ and $\Pal w \cap \Lambda_1 = \emptyset$. Thus, the subgraph $\Lambda_2$ has at least two vertices $v_a$ and $v_b$. Since $ \Lambda $ is not essentially $2$-reducible, we have $ \Lambda_1 = \set{v_c}$. In this case, we obtain $v_c \in \Pal w \cap \Lambda_1$, which is a contradiction.
\end{proof}

\subsubsection{Coloured commutators}

We now give an upper bound on the almost-flat area of the $h$-pushdown of a coloured commutator.

\begin{lemma}\label{commutator-common-colour}
	There exists a constant $C > 0$ such that the following holds. Assume that $\alpha \in \set{3,4}$ and that $\Gamma$ does not have $\D\alpha$. Let $\mathbf w = [\colword {w_1} a, \colword {w_2} b]$ be a coloured commutator. If $\Pal{w_1} \cap \Pal{w_2} \neq \emptyset$, then for all $h \in \Gamma$, we have
	\[
		\flatarea{\pushdown[h]{\boldword}} \leq C \cdot (\length{w_1} + \length{w_2} + \abs h)^{\alpha - 1}.
	\]
\end{lemma}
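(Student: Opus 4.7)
The plan is to build a coloured diagram $\bolddiag$ for $\boldword = \colword{w_1}a \colword{w_2}b \colword{w_1}a^{-1} \colword{w_2}b^{-1}$ with five bounded regions---four coloured bigons and one null-homotopic monochromatic word---and then estimate the almost-flat area of its $h$-pushdown region by region using the fundamental-piece bounds already proved. Pick any vertex $v_c \in \Pal{w_1} \cap \Pal{w_2}$. Since $\supp{w_2} \subseteq \Pal{w_1}$, the elements $w_1$ and $w_2$ commute in $\raag\Gamma$, so the coloured word $\colword{w_1 w_2 w_1^{-1} w_2^{-1}}{c}$ is well-defined and null-homotopic. I construct $\bolddiag$ as a pair of nested squares with the four corresponding corners identified: the outer boundary reads $\boldword$, the inner boundary reads $\colword{w_1}c \colword{w_2}c \colword{w_1}c^{-1} \colword{w_2}c^{-1}$, the four annular bigonal regions are labelled by $\colword{w_1}a \colword{w_1}c^{-1}$, $\colword{w_2}b \colword{w_2}c^{-1}$, $\colword{w_1}c \colword{w_1}a^{-1}$, and $\colword{w_2}c \colword{w_2}b^{-1}$, and the innermost region is the monochromatic word above.

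If $v_a = v_b$ the boundary word is itself monochromatic and \cref{area-monochromatic} gives the bound directly, so I assume $v_a \neq v_b$. Then the first and the last colours on the boundary word of $\bolddiag$ differ, so by \cref{labels-of-monochromatic-regions-of-pushdown} the pushdown $\pushdown[h]\bolddiag$ is an alternating diagram for $\pushdown[h]\boldword$ whose bounded regions are labelled by the pushdowns of the labels of $\bolddiag$. For each of the four bigons, the height parameter at the basepoint is bounded by $\abs h + \length{w_1} + \length{w_2}$, so \cref{length-of-pushdown} gives a pushdown length of order $O(\length{w_1} + \length{w_2} + \abs h)$. Applying \cref{area-coloured-bigon} under the hypothesis that $\Gamma$ does not have $\D\alpha$, the almost-flat area of each of these bigon pieces is at most $O((\length{w_1} + \length{w_2} + \abs h)^{\alpha - 1})$. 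The inner monochromatic region is null-homotopic of length $2\length{w_1} + 2\length{w_2}$, so by \cref{area-monochromatic} its pushdown has almost-flat area at most $16(\length{w_1} + \length{w_2})^2$, which is absorbed into the desired bound since $\alpha - 1 \geq 2$. Summing the five contributions yields the claim.

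The main combinatorial point is ensuring that $\pushdown[h]\bolddiag$ is genuinely an alternating diagram for $\pushdown[h]\boldword$ rather than for a conjugate of it, which is exactly what the assumption $v_a \neq v_b$ buys through \cref{labels-of-monochromatic-regions-of-pushdown}. The substantive use of the hypothesis $\Pal{w_1} \cap \Pal{w_2} \neq \emptyset$ is the existence of the bridging colour $c$: this is precisely what allows the commutator to be cut into the three types of fundamental pieces (bigons and monochromatic words) for which optimal almost-flat area estimates are already available.
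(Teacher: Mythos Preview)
Your proof is correct and follows essentially the same approach as the paper: both build the identical coloured diagram with four coloured bigons surrounding the monochromatic commutator $[\colword{w_1}c,\colword{w_2}c]$, and both invoke \cref{area-coloured-bigon} and \cref{area-monochromatic} on the five pieces. The only cosmetic difference is that the paper routes the final estimate through \cref{sparse-pushdown} and \cref{lem:area-of-protodiagram} (bounding the density of the pushdown), whereas you directly sum the five region areas---perfectly legitimate here since the number of regions is fixed---and you explicitly dispose of the degenerate case $v_a = v_b$, which the paper's density machinery absorbs silently.
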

\begin{proof}
	Pick $v_c \in \Pal{w_1} \cap \Pal{w_2}$. A coloured diagram $\bolddiag$ for $\boldword$ consists of five bounded regions labeled by $\boldword_1, \boldword_2, \boldword_3, \boldword_4$, and $\boldword_5$. The first four coloured words are coloured bigons of the form $\boldword_1 = \colword{w_1} a \colword {w_1} c ^{-1}$, $\boldword_2 = \colword{w_2}b \colword{w_2}c^{-1}$, $\boldword_3 = \colword{w_1}a^{-1} \colword{w_1}c $, and $\boldword_4 = \colword{w_2}b^{-1} \colword{w_2}c$, respectively, while the last one is a null-homotopic monochromatic word $\boldword_5 = [\colword{w_1}c, \colword{w_2}c]$; see \cref{fig:common-colour}.

	\begin{figure}
		\centering
		\begin{tikzpicture}
	\definecolor{happya}{RGB}{136,34,85}
	\definecolor{happyb}{RGB}{17,119,51}
	\tikzset{|->-|/.style={decoration={markings,
			mark=at position 0 with {\arrow[ultra thin]{|}},
			mark=at position #1 with {\arrow{latex}},
			mark=at position 1 with {\arrow[ultra thin]{|}},
		},postaction={decorate}}}
	\tikzset{nodes={fill=white,fill opacity=0,text opacity=1}}
	\tikzset{->-/.style={decoration={markings,mark=at position #1+.01 with {\arrow{latex}}},postaction={decorate}}}
	\tikzset{-<-/.style={decoration={markings,mark=at position #1-.01 with {\arrowreversed{latex}}},postaction={decorate}}}

	\coordinate (p0) at (0,0);
	\coordinate (p2) at (0,-3);
	\coordinate (ml) at ($(p0)!0.5!(q2)$);

	\coordinate (r0) at (5,0);
	\coordinate (r2) at (5,-3);
	\coordinate (mr) at ($(r0)!0.5!(s2)$);

	\draw[thick,->-=.5, happya](p0) to[out=-135, in=135]
	coordinate[pos=1/2](m1)
	(p2);
	\draw[thick,->-=.5](p0) to[out=-45, in=45]
	coordinate[pos=1/2](m2)
	(p2);

	\draw[thick,->-=.5](r0) to[out=-135, in=135]
	coordinate[pos=1/2](m3)
	(r2);
	\draw[thick,->-=.5, happya](r0) to[out=-45, in=45]
	coordinate[pos=1/2](m4)
	(r2);

	\draw[thick,->-=.5, happyb](p0) to[out=20, in=160]
	coordinate[pos=1/2](top1)
	(r0);
	\draw[thick,->-=.5](p0) to[out=-20, in=-160]
	coordinate[pos=1/2](top2)
	(r0);

	\draw[thick,->-=.5](p2) to[out=20, in=160]
	coordinate[pos=1/2](bottom1)
	(r2);
	\draw[thick,->-=.5, happyb](p2) to[out=-20, in=-160]
	coordinate[pos=1/2](bottom2)
	(r2);

	\node[left, happya] at (m1) {$\colword {w_1}a$};
	\node[right] at (m2) {$\colword {w_1}c$};

	\node[left] at (m3) {$\colword {w_1}c$};
	\node[right, happya] at (m4) {$\colword {w_1}a$};

	\node[above] at (bottom1) {$\colword {w_2}c$};
	\node[below, happyb] at (bottom2) {$\colword {w_2}b$};
	\node[above, happyb] at (top1) {$\colword {w_2}b$};
	\node[below] at (top2) {$\colword{w_2}c$};
	
	\draw[fill=black] (p0) circle (.04);
	\draw[fill=black] (p2) circle (.04);

	\draw[fill=black] (r0) circle (.04);
	\draw[fill=black] (r2) circle (.04);

\end{tikzpicture}
		\caption{Filling of a coloured diagram for the coloured commutator $\boldword = [\colword{w_1}a, \colword{w_2}b]$ in the case where $v_c \in \Pal{w_1} \cap \Pal{w_2}$.}
		\label{fig:common-colour}
	\end{figure}

	We now estimate the almost-flat area of the bounded regions in $\pushdown[h]{\bolddiag}$, which are labeled by the alternating word $u_i = \pushdown[h_i]{\boldword_i}$ for $i \in \range 5$, where $h_1 = h_5 = h, h_2 = h + \height(w_1), h_3 = h + \height(w_1 w_2)$, and $h_4 = h + \height(w_2)$.

	On the one hand, for all the coloured bigons, it follows from \cref{area-coloured-bigon} that there exists $C_1>0$ such that $\flatarea{u_i} \leq C_1 \cdot \length{u_i}^{\alpha - 1}$ for all $i \in \range 4$. On the other hand, it follows from \cref{area-monochromatic} that for the null-homotopic monochromatic word $\boldword_5$, we have $\flatarea{u_5} \leq 12 \cdot (\length{w_1} + \length{w_2})^{2}$.

	All together, we find a constant $C_3>0$ such that for all $i \in \range 5$, we have
	\[
		\flatarea{u_i} \leq C_3 \cdot \length{u_i}^{\alpha - 1}.
	\]

	We conclude by first applying \cref{sparse-pushdown} with $M=4$, obtaining a uniform bound $C_M$ on the density of $\pushdown[h]{\bolddiag}$, and then by \cref{lem:area-of-protodiagram}, we obtain
	\[
		\flatarea{\pushdown[h]{\boldword}} \leq C_3 \cdot C_M^2 \cdot \length{\pushdown[h]{\boldword}}^{\alpha-1} \leq C \cdot (\length{w_1} + \length{w_2} + \abs h)^{\alpha-1}
	\]
	for sufficiently large $C>0$.
\end{proof}

\begin{lemma}\label{commutator-3joins}
	There exists a constant $C > 0$ such that the following holds. Assume that $\alpha \in \set{3,4}$ and that $\Gamma$ does not have $\D\alpha$. Let $\mathbf w = [\colword {w_1} a, \colword {w_2} b]$ be a coloured commutator. If either $\supp{w_1}$ or $\supp{w_2}$ is reducible, then for all $h \in \Z$, we have
	\[
		\flatarea{ \pushdown[h]{\boldword}} \leq C \cdot \left( \length{w_1} + \length{w_2} + \abs{h} \right)^{\alpha-1}.
	\]
\end{lemma}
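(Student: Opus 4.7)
The plan is to reduce to \cref{commutator-common-colour} by splitting the reducible side along its join decomposition, producing two sub-commutators that each admit a common colour with $\colword{w_2}{b}$. Without loss of generality, assume $\supp{w_1}$ is reducible; the case in which only $\supp{w_2}$ is reducible follows by symmetry, using the identity $[\colword{w_1}{a}, \colword{w_2}{b}]^{-1} = [\colword{w_2}{b}, \colword{w_1}{a}]$ of coloured words together with the fact (from \cref{pushdown-properties}) that the $h$-pushdown of the inverse of a null-homotopic coloured word is freely equivalent to the inverse of its $h$-pushdown, so almost-flat area is preserved. Let $\supp{w_1} = \Lambda' \ast \Lambda''$ be a nontrivial join. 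Using the commutation relations of $\raag\Gamma$, the element represented by $w_1$ can be written as $w_1 \groupid{\raag\Gamma} w_1' w_1''$, with $\supp{w_1'} \subseteq \Lambda'$, $\supp{w_1''} \subseteq \Lambda''$, and $\length{w_1'} + \length{w_1''} \leq \length{w_1}$; since both $\Lambda'$ and $\Lambda''$ are nonempty, both $w_1'$ and $w_1''$ are necessarily nonempty.

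I will then build a coloured diagram $\bolddiag$ for $\boldword$ with four bounded regions, in the spirit of the construction of \cref{bigon-quadratic-join}: two monochromatic regions of colour $a$, on the top and bottom sides of the commutator square, with boundary word $\colword{w_1}{a} \colword{w_1''}{a}^{-1} \colword{w_1'}{a}^{-1}$ (and its inverse) rewriting $\colword{w_1}{a}$ into the concatenation $\colword{w_1'}{a}\colword{w_1''}{a}$; and two rectangular regions labelled by the sub-commutators $[\colword{w_1'}{a}, \colword{w_2}{b}]$ and $[\colword{w_1''}{a}, \colword{w_2}{b}]$, separated by a new internal edge labelled $\colword{w_2}{b}$ joining the interior split points of the top and bottom sides. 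Both sub-commutators are valid coloured commutators, since $\supp{w_1'}, \supp{w_1''} \subseteq \supp{w_1} \subseteq \Pal{w_2}$ and $\{v_a\} \cup \supp{w_2} \subseteq \Pal{w_1} \subseteq \Pal{w_1'} \cap \Pal{w_1''}$.

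The key point, and the only substantive check in the argument, is that each sub-commutator now satisfies the hypothesis of \cref{commutator-common-colour}: every vertex of $\Lambda''$ is adjacent to every vertex of $\supp{w_1'} \subseteq \Lambda'$ and lies in $\supp{w_1} \subseteq \Pal{w_2}$, so $\emptyset \neq \Lambda'' \subseteq \Pal{w_1'} \cap \Pal{w_2}$, and symmetrically $\Lambda' \subseteq \Pal{w_1''} \cap \Pal{w_2}$. Hence \cref{commutator-common-colour} bounds the almost-flat area of each of the two sub-commutator regions (after pushdown) by a constant multiple of $(\length{w_1} + \length{w_2} + \abs h)^{\alpha - 1}$, while \cref{area-monochromatic} controls the two monochromatic regions in quadratic almost-flat area, which is absorbed since $\alpha \geq 3$. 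Finally, a direct inspection shows that every polychromatic vertex of $\bolddiag$ has degree at most $3$ and either is $\partial$-polychromatic or can be connected to a $\partial$-polychromatic vertex through one of the short monochromatic internal paths of colour $a$; together with a uniform bound on $\density\bolddiag$, this lets me apply \cref{sparse-pushdown} to obtain a uniform density bound for $\pushdown[h]\bolddiag$, and \cref{lem:area-of-protodiagram} then assembles the pieces into the desired estimate $\flatarea{\pushdown[h]\boldword} \leq C \cdot (\length{w_1} + \length{w_2} + \abs h)^{\alpha - 1}$.
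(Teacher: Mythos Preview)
Your proposal is correct and follows essentially the same approach as the paper: you build the same four-region coloured diagram (two monochromatic rewriting pieces for $\colword{w_1}{a} \groupid{\raag\Gamma} \colword{w_1'}{a}\colword{w_1''}{a}$ and two sub-commutators), observe that each sub-commutator has a common colour coming from the other join factor so that \cref{commutator-common-colour} applies, and conclude via \cref{sparse-pushdown} with $M=3$ and \cref{lem:area-of-protodiagram}. The only cosmetic difference is the orientation of the commutator square in your description versus the paper's \cref{fig:square-peanut}.
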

\begin{proof}
	Suppose without loss of generality that $\supp{w_1}$ decomposes as a join $\Lambda' * \Lambda''$. 
	Set $\Lambda = \supp{w_1}$. In this case, due to the decomposition $\raag\Lambda = \raag{\Lambda'} \times \raag{\Lambda''}$, the element $ g \in \raag\Gamma $ represented by $w$ can also be represented by a product of two words $w'$ and $w''$ whose supports lie respectively in $\Lambda'$ and $\Lambda''$, and such that $\length{w'} + \length{w''} \leq \length{w}$. Thus, a coloured diagram $\bolddiag$ for the alternating word $\pushdown[h]{ \boldword }$ can be constructed with four bounded regions whose labels are coloured words $\boldword_1, \boldword_2, \boldword_3$, and $\boldword_4$. Two of these are null-homotopic monochromatic words $\boldword_1 = \colword{w_1}a \colword{w''}a^{-1} \colword{w'}a^{-1}$ and $\boldword_3 = \colword{w'}a \colword{w''}a \colword{w_1}a^{-1}$, and the others are coloured commutators $\boldword_2 = [\colword{w''}a, \colword{w_2}b]$ and $\boldword_4 = [\colword{w'}a, \colword{w_2}b]$; see \cref{fig:square-peanut}.

	\begin{figure}
		\centering
		\begin{tikzpicture}
	\definecolor{happya}{RGB}{136,34,85}
	\definecolor{happyb}{RGB}{17,119,51}
	\tikzset{|->-|/.style={decoration={markings,
			mark=at position 0 with {\arrow[ultra thin]{|}},
			mark=at position #1 with {\arrow{latex}},
			mark=at position 1 with {\arrow[ultra thin]{|}},
		},postaction={decorate}}}
	\tikzset{->-/.style={decoration={markings,mark=at position #1+.01 with {\arrow{latex}}},postaction={decorate}}}
	\tikzset{-<-/.style={decoration={markings,mark=at position #1-.01 with {\arrowreversed{latex}}},postaction={decorate}}}

	\coordinate (p0) at (0,0);
	\coordinate (p2) at (0,-4);
	\coordinate (q0) at (p0);
	\coordinate (q2) at (p2);
	\coordinate (ml) at ($(p0)!0.5!(q2)$);
	
	\coordinate (r0) at (4,0);
	\coordinate (r2) at (4,-4);
	\coordinate (s0) at (r0);
	\coordinate (s2) at (r2);
	\coordinate (mr) at ($(r0)!0.5!(s2)$);

	\draw[thick,->-=.5, happya](p0) to[out=-180, in=+180]
	coordinate[pos=1/2](p1)
	(p2);
	\draw[thick,draw opacity=0,->-=.5, happya](p0) to[out=-135, in=135]
	coordinate[pos=1/2](ml11)
	(ml);
	\draw[thick,draw opacity=0,->-=.5, happya](ml) to[out=-135, in=135]
	coordinate[pos=1/2](ml21)
	(p2);
	
	\draw[thick,->-=.5, happya](r0) to[out=0, in=0]
	coordinate[pos=1/2](s1)
	(r2);
	\draw[thick,draw opacity=0,->-=.5, happya](s0) to[out=-45, in=45]
	coordinate[pos=1/2](mr12)
	(mr);
	\draw[thick,draw opacity=0,->-=.5, happya](mr) to[out=-45, in=45]
	coordinate[pos=1/2](mr22)
	(s2); 
	
	\draw[thick,->-=.5, happyb](p0) to%
	coordinate[pos=1/2](top1)
	(r0);
	
	\draw[thick,->-=.5, happyb](ml) to%
	coordinate[pos=1/2](mid2)
	(mr);
	
	\draw[thick,->-=.5, happyb](p2) to 
	coordinate[pos=1/2](bottom)
	(s2);
	
	\node[left, happya] at (p1) {$\colword {w_1}a$};
	\node[left, fill=white,fill opacity=.6,text opacity=1, text=happya] at (ml11) {$\colword {w'}a$};
	\draw[thick,->-=.5, happya](p0) to[out=-135, in=135](ml);
	\node[left, fill=white,fill opacity=.6,text opacity=1, text=happya] at (ml21) {$\colword {w''}a$};
	\draw[thick,->-=.5, happya](ml) to[out=-135, in=135](p2);
	
	\node[right, happya] at (s1) {$\colword {w_1}a$};
	\node[right, fill=white,fill opacity=.6,text opacity=1, text=happya] at (mr12) {$\colword {w'}a$};
	\draw[thick,->-=.5, happya](s0) to[out=-45, in=45](mr);
	\node[right, fill=white,fill opacity=.6,text opacity=1, text=happya] at (mr22) {$\colword {w''}a$};
	\draw[thick,->-=.5, happya](mr) to[out=-45, in=45](s2); 
	
	\node[below, happyb] at (bottom) {$\colword {w_2}b$};
	\node[below, happyb] at (mid2) {$\colword {w_2}b$};
	\node[above, happyb] at (top1) {$\colword {w_2}b$};
	
		\draw[fill=black] (p0) circle (.04);
	\draw[fill=black] (p2) circle (.04);
	\draw[fill=black] (ml) circle (.04);
	
	\draw[fill=black] (r0) circle (.04);
	\draw[fill=black] (r2) circle (.04);
	\draw[fill=black] (mr) circle (.04);
	
\end{tikzpicture}
		\caption{Filling of a coloured diagram for the coloured commutator $[\colword{w_1}a, \colword{w_2}b]$ in the case where $\supp{w_1}$ is reducible}
		\label{fig:square-peanut}
	\end{figure}

	For $i \in \range 4$, let $u_i = \pushdown[h_i]{\boldword_i}$ be the alternating word that labels the corresponding bounded region of $\pushdown[h]{\bolddiag}$, where $h_1=h_4 = h$, $h_2 = h + \height(w_1)$, and $h_3 = h + \height(w_2)$.
	By \cref{area-monochromatic}, for $i \in \set{1,3}$, we have $\flatarea{u_i} \leq 12 \cdot \length{w_1}^2$.

	Since $\Lambda = \supp{w_1}$ decomposes as a join $\Lambda' * \Lambda''$, the intersection  $\Pal{w''} \cap \Pal{w_2}$ contains $\Lambda'$ so it is non-empty. Therefore, it follows from \cref{commutator-common-colour} that
	\[
		\flatarea{u_2} \leq C_2 \cdot ( \length{w''} + \length{w_2} + \abs {h_2})^{\alpha-1}.
	\]
	for sufficiently large $C_2>0$. By symmetry, the same holds for $u_4$, namely, $\flatarea{u_4} \leq C_2 \cdot (\length{w'} + \length{w_2} + \abs{h_4})^{\alpha-1}$.

	All together, we have $\flatarea{u_i} \leq C_3 \cdot \length{u_i}^{\alpha-1}$ for sufficiently large $C_3>0$. A straightforward application of \cref{sparse-pushdown} with $M=3$ gives us a uniform bound $C_M$ on the density of the pushdown of the coloured diagram for $\boldword$. Thus, it follows from \cref{lem:area-of-protodiagram} that
	\[
		\flatarea{\pushdown[h]{\boldword}} \leq C_3 \cdot C_M^2 \cdot \length{\pushdown[h]{\boldword}}^{\alpha-1} \leq C \cdot (\length{w_1} + \length{w_2} + \abs h)^{\alpha-1}
	\]
	for sufficiently large $C>0$.
\end{proof}

\begin{lemma}\label{commutator-cubic}
	There exists a constant $C > 0$ such that the following holds. Let $\mathbf w = [\colword {w_1} a, \colword {w_2} b]$ be a coloured commutator.  Assume that $\Lambda_1$ and $\Lambda_2$ are subgraphs of $\Gamma$ spanning a join with $\supp {w_1} \cup \set{v_b} \subseteq \Lambda_1$ and $\supp{w_2} \cup \set{v_a} \subseteq \Lambda_2$, such that either $\Lambda_1$ or $\Lambda_2$ is connected. Then for all $h \in \Z$, we have
	\[
		\flatarea{\pushdown[h]\boldword} \leq C \cdot \left( \length{w_1} + \length{w_2} + \abs h \right)^3.
	\]
\end{lemma}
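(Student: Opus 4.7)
Without loss of generality, assume that $\Lambda_1$ is connected. The join hypothesis yields the inclusions $\Lambda_1 \subseteq \Pal{w_2}$ and $\Lambda_2 \subseteq \Pal{w_1}$, since every vertex of $\Lambda_i$ is adjacent (by the join) to every vertex of the other $\Lambda_j \supseteq \supp{w_j}$. The connectedness of $\Lambda_1$ then ensures that $v_b$ and any other vertex of $\Lambda_1$ lie in the same connected component of $\Pal{w_2}$, so that any coloured bigon of the form $\colword{w_2}{b}\colword{w_2}{c'}^{-1}$ with $v_{c'}\in\Lambda_1$ has quadratic almost-flat area by \cref{bigon-quadratic-combinatorial-path}. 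By contrast, since $\Lambda_2$ is not assumed connected, any coloured bigon of the form $\colword{w_1}{a}\colword{w_1}{c}^{-1}$ with $v_c\in\Lambda_2$ is controlled only by the unconditional cubic bound of \cref{bigon-cubic}.

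My plan is to build a coloured diagram $\bolddiag$ for $\boldword = [\colword{w_1}{a},\colword{w_2}{b}]$ by picking adjacent auxiliary vertices $v_c\in\Lambda_2$ and $v_{c'}\in\Lambda_1$ (adjacency being automatic from the join) and inserting interior arcs labelled $\colword{w_1}{c}^{\pm 1}$ across the top and bottom edges, together with interior arcs labelled $\colword{w_2}{c'}^{\pm 1}$ across the left and right edges. This produces four perimetric regions labelled by coloured bigons (two cubic bigons of the form $\colword{w_1}{a}\colword{w_1}{c}^{-1}$ and two quadratic bigons of the form $\colword{w_2}{b}\colword{w_2}{c'}^{-1}$) and a single central region labelled by the coloured commutator $[\colword{w_1}{c},\colword{w_2}{c'}]$. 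I would then reduce the central region by inserting two further interior arcs that swap $\colword{w_1}{c}$ for $\colword{w_1}{c'}$ along an edge-path in $\Lambda_1\subseteq\Pal{w_2}$ (each such intermediate bigon of the form $\colword{w_1}{c}\colword{w_1}{c'}^{-1}$ is cubic by \cref{bigon-cubic} and only a bounded number of them is needed since the path has length at most $\cardinality{\vv\Gamma}$), thus leaving a null-homotopic monochromatic inner piece, which is filled by \cref{area-monochromatic} with quadratic area; a symmetric manoeuvre handles the $w_2$-side, reducing the whole central region to bounded combinations of quadratic and cubic pieces.

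Having produced $\bolddiag$ with a bounded number of regions, the bounded degree of polychromatic vertices and the explicit choice of short paths $\gamma_p$ between them allow me to invoke \cref{sparse-pushdown} to obtain a uniform bound on $\density{\pushdown[h]\bolddiag}$. I would then apply \cref{lem:area-of-protodiagram} with $\alpha=3$ to sum up the contributions from each region and conclude the desired inequality $\flatarea{\pushdown[h]\boldword} \leq C\cdot(\length{w_1}+\length{w_2}+\abs h)^3$.

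The main obstacle I anticipate is the bookkeeping inside the central region: since neither $v_c\in\Pal{w_2}$ nor $v_{c'}\in\Pal{w_1}$ can be assumed a priori, one cannot directly collapse $[\colword{w_1}{c},\colword{w_2}{c'}]$ to a monochromatic word. The connected hypothesis on $\Lambda_1$ is used precisely to carry out the cascade of colour-swaps for $w_2$ with quadratic cost, and the cubic cost of the $w_1$-swaps (via \cref{bigon-cubic}) is what ultimately forces the exponent to be $3$ rather than $2$. This is consistent with the statement, which only asserts a cubic bound even under the connectedness hypothesis.
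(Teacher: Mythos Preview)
Your outer frame is fine, but the treatment of the central region has a genuine gap. After inserting the four perimetric bigons you are left with $[\colword{w_1}{c},\colword{w_2}{c'}]$, which is a coloured commutator of exactly the same shape as the original: you have merely replaced $(a,b)$ by $(c,c')$ with $v_c\in\Lambda_2$ and $v_{c'}\in\Lambda_1$. Your next step, ``swap $\colword{w_1}{c}$ for $\colword{w_1}{c'}$'', requires $v_{c'}\in\Pal{w_1}$; but $v_{c'}$ lies in $\Lambda_1$, which \emph{contains} $\supp{w_1}$, and there is no reason $v_{c'}$ should be adjacent to every vertex of $\supp{w_1}$. Thus $\colword{w_1}{c'}$, and hence the bigon $\colword{w_1}{c}\colword{w_1}{c'}^{-1}$, need not be defined at all. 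The symmetric move (changing the colour of $w_2$ from $c'$ to $c$) fails for the same reason, since $v_c\in\Lambda_2\supseteq\supp{w_2}$. The ``edge-path in $\Lambda_1\subseteq\Pal{w_2}$'' you mention is a path in the palette of $w_2$, not of $w_1$, so it cannot be used to recolour $w_1$. In short, a diagram with a bounded number of regions does not reduce this commutator to pieces you can fill.

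The paper's argument is different in kind: it does not aim for boundedly many regions. Writing $w_1=s_1^{\epsilon_1}\cdots s_k^{\epsilon_k}$, it slices the commutator into $4k+1$ pieces (\cref{fig:miniplunder}): for each $i$ a monochromatic commutator $[\colword{s_i}{s_i}^{\epsilon_i},\colword{w_2}{s_i}]$, two single-letter bigons $\colword{s_i}{a}^{\pm\epsilon_i}\colword{s_i}{s_i}^{\mp\epsilon_i}$, and a bigon $\colword{w_2}{s_{i-1}}\colword{w_2}{s_i}^{-1}$ (with $b$ at the two ends). Every one of these has \emph{quadratic} almost-flat area: the $w_2$-bigons because $v_b,v_{s_1},\dots,v_{s_k}$ all lie in the connected subgraph $\Lambda_1\subseteq\Pal{w_2}$, so \cref{bigon-quadratic-combinatorial-path} applies; the single-letter bigons because $\Pal{s_i}=\Star{v_{s_i}}$ is connected; the monochromatic commutators by \cref{area-monochromatic}. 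The cubic exponent then arises by summing $O(\length{w_1})$ quadratic terms whose heights $h_i$ range over an interval of size $O(\length{w_1}+\abs h)$. Note that \cref{bigon-cubic} is not used at all; the $3$ comes from the number of slices, not from any individual region. The connectedness of $\Lambda_1$ is used precisely to make the $w_2$-bigons (whose colours are the \emph{letters} of $w_1$) quadratic.
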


\begin{proof}

	Assume without loss of generality that $\Lambda_1$ is connected.
	Let $w_1 =s_1^{\epsilon_1} \dots s_k^{\epsilon_k}$ with  $\epsilon_i \in \{\pm 1\}$ for all $i \in \range k$. A coloured diagram for the coloured commutator $\boldword = [\colword {w_1} a, \colword {w_2} b]$ can be constructed using a total of $4k +1$ bounded regions; see \cref{fig:miniplunder}:
	\begin{itemize}
		\item  $k + 1$ bounded regions whose labels are coloured bigons of the form $\colword{w_2}b \colword{w_2}{s_1}^{-1}$ (corresponding to the top region in \cref{fig:miniplunder}), of the form $\colword{w_2}{s_i} \colword{w_2}{s_{i+1}}^{-1}$ for $i \in \range {k-1}$, and of the form $\colword{w_2}{s_k} \colword{w_2}b^{-1}$ (corresponding to the bottom region in \cref{fig:miniplunder}); 

		\item  $2k$ bounded regions whose labels are coloured bigons of the form $\colword{s_i}{a}^{\epsilon_i} \colword{s_i}{s_i}^{-\epsilon_i}$ and $\colword{s_i}{s_i}^{\epsilon_i} \colword{s_i}{a}^{-\epsilon_i}$ (corresponding to the regions on the left and right sides in \cref{fig:miniplunder}, respectively); 

		\item $k$ bounded regions whose labels are null-homotopic monochromatic words of the form $[\colword{s_i}{s_i}^{\epsilon_i}, \colword{w_2}{s_i}]$ for $i \in \range k$ (corresponding to the regions with four sides in \cref{fig:miniplunder}). 
	\end{itemize}

	\begin{figure}
		\centering
		\begin{tikzpicture}
	
	\definecolor{happyaa}{RGB}{136,34,85}
	\definecolor{happybb}{RGB}{17,119,51}
	\definecolor{happyss1}{RGB}{254,97,0}
	\definecolor{happyss2}{RGB}{100,143,255}
	\definecolor{happyss3}{RGB}{255,176,0}
	
	\tikzset{|->-|/.style={decoration={markings,
			mark=at position 0 with {\arrow[ultra thin]{|}},
			mark=at position #1 with {\arrow{latex}},
			mark=at position 1 with {\arrow[ultra thin]{|}},
		},postaction={decorate}}}
	\tikzset{nodes={fill=white,fill opacity=0,text opacity=1}}
	\tikzset{->-/.style={decoration={markings,mark=at position #1+.01 with {\arrow{latex}}},postaction={decorate}}}
	\tikzset{-<-/.style={decoration={markings,mark=at position #1-.01 with {\arrowreversed{latex}}},postaction={decorate}}}

	\def\b{3};

	\pgfmathsetmacro{\bb}{\b+1}

    \foreach \i in {1,...,\b} {
    	\draw[thick,->-=.5, happyaa] (0,-2*\i+2) to[out=-135, in=135] (0,-2*\i);
    	\draw[thick,->-=.5] (0,-2*\i+2) to[out=-45, in=45] (0,-2*\i);
    	\draw[thick,->-=.5, happyaa] (6,-2*\i+2) to[out=-45, in=45] (6,-2*\i);
    	\draw[thick,->-=.5] (6,-2*\i+2) to[out=-135, in=135] (6,-2*\i);
    
    	\node[happyaa] at (-1,-2*\i+1.1){$\colword{s_{\i}}{a}^{\epsilon_\i}$};
    	\node at (1,-2*\i+1.1){$\colword{s_{\i}}{s_{\i}}^{\epsilon_\i}$};
	    	
	    \node[happyaa] at (7,-2*\i+1.1){$\colword{s_{\i}}{a}^{\epsilon_\i}$};
    	\node at (5,-2*\i+1.1){$\colword{s_{\i}}{s_{\i}}^{\epsilon_\i}$};
	}
	
	\foreach \i in {1,...,\bb} {
	    \def\shift{1}
        \pgfmathtruncatemacro{\ii}{\i-\shift}
        
        \ifthenelse{\i=1}{
        	\draw[thick,->-=.5, happybb] (0,-2*\i+2) to[out=10, in=170] node[midway,above]{$\colword{w_2}b$} (6,-2*\i+2);
			\draw[thick,->-=.5] (0,-2*\i+2) to[out=-10, in=-170] node[midway,below]{$\colword{w_2}{s_\i}$} (6,-2*\i+2);
		}{
	     	\ifthenelse{\i=\bb}{
				\draw[thick,->-=.5, happybb] (0,-2*\i+2) to[out=-10, in=-170] node[midway,below]{$\colword{w_2}b$} (6,-2*\i+2);
				\draw[thick,->-=.5] (0,-2*\i+2) to[out=10, in=170] node[midway,above]{$\colword{w_2}{s_\b}$} (6,-2*\i+2);
			}{
	    
	    		\draw[thick,->-=.5] (0,-2*\i+2) to[out=10, in=170] node[midway,above]{$\colword{w_2}{s_{\ii}}$} (6,-2*\i+2);
	    		\draw[thick,->-=.5] (0,-2*\i+2) to[out=-10, in=-170] node[midway,below]{$\colword{w_2}{s_{\i}}$} (6,-2*\i+2);
			}
		}
	    \draw[fill=black] (0,-2*\i+2) circle (.04);
    	\draw[fill=black] (6,-2*\i+2) circle (.04);
	}
\end{tikzpicture}
		\caption{Filling of the coloured diagram for the coloured commutator $[\colword{w_1}a, \colword{w_2}b]$, where $w_1 = s_1^{\epsilon_1} s_2^{\epsilon_2} \dots s_k^{\epsilon_k}$.}
		\label{fig:miniplunder}
	\end{figure}

	Let $h_{0} = h$ and $h_{i} = h + \height(s_1^{\epsilon_1} \cdots s_i^{\epsilon_i})$ for $i \in \range k$. The pushdown of each of these $4k+1$ pieces can be filled with at most quadratic almost-flat area:
	\begin{itemize}
		\item Since $\Lambda_1$ is connected, it follows from \cref{bigon-quadratic-combinatorial-path} that there exists $C_1>0$ such that the $h$-pushdown of the coloured bigon $\colword{w_2}b \colword{w_2}{s_1}^{-1}$ has almost-flat area at most $C_1 \cdot (\length{w_2} + \abs h)^2$, the $h_i$-pushdown of the coloured bigon $\colword{w_2}{s_i} \colword{w_2}{s_{i+1}}^{-1}$ has almost-flat area at most $C_1 \cdot (\length{w_2} + \abs {h_{i}})^2$ for $i \in \range{k-1}$, and the $h_k$-pushdown of the coloured bigon $\colword{w_2}{s_k} \colword{w_2}b^{-1}$ has almost-flat area at most $C_1 \cdot (\length{w_2} + \abs {h_{k}})^2$.

		\item For all $i \in \range k$, it follows from \cref{bigon-quadratic-combinatorial-path} that there exists $C_2>0$ such that the $h_i$-pushdown of the coloured bigon $\colword{s_i}{a}^{\epsilon_i}\colword{s_i}{s_i}^{-\epsilon_i}$ has almost-flat area at most $C_2 \cdot (1 + \abs{h_{i-1}} )^2$. Similarly, we have that the almost-flat area of the $(h_i + \height{(w_2)})$-pushdown of the coloured bigon $\colword{s_i}{s_i}^{\epsilon_i} \colword{s_i}{a}^{-\epsilon_i}$ is at most $C_2 \cdot (1 + \abs{h_{i-1} + \height(w_2)} )^2$ for all $i \in \range k$.

		\item Finally, \cref{area-monochromatic} implies that for all $i \in \range k$, the $h_i$-pushdown of the null-homotopic monochromatic word $[\colword{s_i}{s_i}^{\epsilon_i}, \colword{w_2}{s_i}]$ has almost-flat area at most $12 \cdot (1 + \length{w_2})^2$.
	\end{itemize}
	By setting $C_3 = \max\set{C_1, C_2, 16}$, we obtain that $\flatarea{\pushdown[h]{\boldword}}$ is bounded above by
	\[
		C_3 \left( \sum_{i=0}^{k} (\length{w_2} + \abs{h_i})^2 + \sum_{i=1}^k  (1 + \abs{h_{i-1}} )^2 + (1 + \abs{h_{i-1} + \height(w_2)} )^2 + (1 + \length{w_2} + \abs{h_i})^2 \right).
	\]
	Since for all $i \in \range[0]k$, we have $\abs{h_i}, \abs{h_i + \height(w_2)} \leq \length{w_1} + \length{w_2} + \abs{h}$, and $k = \length{w_1}$, we obtain
	\begin{align*}	\flatarea{\pushdown[h]\boldword} & \leq C_3  \left( \length{w_1} (\length{w_2} + \length{w_1} + \abs h)^2 + 3  \length{w_1} (1 + \length{w_1} + \length{w_2} + \abs h)^2	 \right) \\
                                                & \leq C  \left( \length{w_1} + \length{w_2} + \abs h \right)^3
	\end{align*}
	for sufficiently large $C>0$.
\end{proof}

We now conclude this section showing that coloured commutators satisfy \cref{upper-bounds}.

\begin{proposition}\label{area-coloured-commutator}
	There exists a constant $C > 0$ with the following property. Assume that $\alpha \in \set{3,4}$ and that $\Gamma$ does not have $\D\alpha$. Let $h \in \Z$. The almost-flat area of the $h$-pushdown of the coloured commutator $\boldword =[\colword {w_1} a, \colword {w_2} b]$ satisfies
	\[
		\flatarea{\pushdown[h]\boldword} \leq C \cdot \length{\pushdown[h]\boldword}^{\alpha-1}.
	\]
\end{proposition}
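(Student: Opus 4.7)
My plan is to reduce the proposition to the three preceding lemmas \cref{commutator-common-colour}, \cref{commutator-3joins}, and \cref{commutator-cubic} via a three-way case analysis on $\boldword = [\colword{w_1}a, \colword{w_2}b]$. When $\Pal{w_1} \cap \Pal{w_2} \neq \emptyset$, \cref{commutator-common-colour} applies. When instead $\supp{w_1}$ or $\supp{w_2}$ is reducible, \cref{commutator-3joins} applies. Both produce bounds of the form $C(\length{w_1} + \length{w_2} + \abs h)^{\alpha - 1}$, which translate to the required bound in $\length{\pushdown[h]\boldword}^{\alpha-1}$ via the lower bound in \cref{length-of-pushdown}; the degenerate subcase $v_a = v_b$ makes $\boldword$ monochromatic and is handled by \cref{area-monochromatic}.

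The heart of the proof is the third case: $\Pal{w_1} \cap \Pal{w_2} = \emptyset$ and both $\supp{w_i}$ irreducible. A quick observation forces both supports to have at least two vertices, for if $\supp{w_1} = \{v\}$ then $v \in \Star{v} = \Pal{w_1}$ while simultaneously $v \in \supp{w_1} \subseteq \Pal{w_2}$, contradicting palette disjointness. Hence $\supp{w_1} \ast \supp{w_2}$ is a join of two irreducible subgraphs each of size at least two, and therefore a reducible subgraph of $\Gamma$. I take a maximal reducible subgraph $\Lambda \supseteq \supp{w_1} \ast \supp{w_2}$ and write $\Lambda = \Lambda_1 \ast \dots \ast \Lambda_m$ as a join of irreducible factors. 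Irreducibility of each $\supp{w_i}$ places it inside a single factor, and palette disjointness then forces $m=2$ with $\supp{w_1} \subseteq \Lambda_1$ and $\supp{w_2} \subseteq \Lambda_2$: any other factor $\Lambda_j$ would be adjacent to everything in $\supp{w_1} \cup \supp{w_2}$ and hence contained in $\Pal{w_1} \cap \Pal{w_2} = \emptyset$.

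For $\alpha = 3$, the resulting $\Lambda = \Lambda_1 \ast \Lambda_2$ is essential, contradicting the hypothesis that $\Gamma$ lacks $\D 3$; so the third case is vacuous. For $\alpha = 4$, ``not $\D 4$'' forces $\flag\Lambda = \flag{\Lambda_1} \ast \flag{\Lambda_2}$ to be simply connected, which for a join of two nonempty simplicial complexes is equivalent to at least one factor being path-connected; after relabelling I take $\Lambda_1$ connected. I then pick any $v_c \in \supp{w_2} \subseteq \Lambda_2$ and $v_d \in \supp{w_1} \subseteq \Lambda_1$, so that $[\colword{w_1}c, \colword{w_2}d]$ is a valid coloured commutator whose colours sit in the two factors of the join, satisfying the hypotheses of \cref{commutator-cubic} with this $\Lambda_1, \Lambda_2$. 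I then assemble a coloured diagram for $\boldword$ with five regions: four coloured bigons on the sides realising the colour changes $\colword{w_1}a \leftrightarrow \colword{w_1}c$ (top and bottom) and $\colword{w_2}b \leftrightarrow \colword{w_2}d$ (left and right), plus the central modified commutator. The bigons are bounded cubically by \cref{area-coloured-bigon}, the central commutator by \cref{commutator-cubic}; the diagram has bounded density and at most four polychromatic corner vertices of bounded degree, so \cref{sparse-pushdown} and \cref{lem:area-of-protodiagram} assemble the pieces into the required cubic upper bound on $\flatarea{\pushdown[h]\boldword}$.

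The main obstacle I anticipate is the topological step in the $\alpha = 4$ subcase: extracting from ``not $\D 4$'' the precise connectivity consequence that one irreducible factor of the maximal reducible subgraph $\Lambda$ is connected, using the fact that a join $K_1 \ast K_2$ of nonempty simplicial complexes is simply connected if and only if at least one factor is path-connected. The warning example $\flag{\{v_1, v_2\} \ast \{u_1, u_2\}} \iso S^1$, a join of two disconnected zero-spheres, illustrates why this nontrivial topological input is needed rather than a purely combinatorial one. Once this step is in hand, the remaining combinatorial verifications are direct analogues of the coloured-diagram constructions already carried out for \cref{bigon-quadratic-join} and \cref{commutator-cubic}.
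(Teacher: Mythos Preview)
Your argument is correct and follows the same three-case skeleton as the paper, but it diverges in how the third case (both $\supp{w_i}$ irreducible, $\Pal{w_1}\cap\Pal{w_2}=\emptyset$) is finished for $\alpha=4$. The paper keeps the colours in the picture from the outset, working with the set $\supp{w_1}\cup\supp{w_2}\cup\{v_a,v_b\}$ and arguing that, after its reductions, $\supp{w_1}\cup\{v_b\}\subseteq\Lambda_1$ and $\supp{w_2}\cup\{v_a\}\subseteq\Lambda_2$ for a maximal reducible $\Lambda=\Lambda_1\ast\Lambda_2$; this lets it apply \cref{commutator-cubic} \emph{directly} to the original commutator $[\colword{w_1}a,\colword{w_2}b]$, with no auxiliary diagram. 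Your route instead uses only the smaller join $\supp{w_1}\ast\supp{w_2}$, and then swaps colours to $c\in\supp{w_2}$, $d\in\supp{w_1}$ via four surrounding bigons before invoking \cref{commutator-cubic} on $[\colword{w_1}c,\colword{w_2}d]$. The paper's path is shorter when it works, but note that placing $v_a$ and $v_b$ in opposite factors of a join forces $v_a\sim v_b$, an adjacency that is not part of the definition of a coloured commutator and can genuinely fail (take $\supp{w_1}=\{x_1,x_2\}$, $\supp{w_2}=\{y_1,y_2\}$ spanning a square, with $v_a$ adjacent only to $x_1,x_2$ and $v_b$ only to $y_1,y_2$); your colour-change manoeuvre sidesteps this subtlety at the modest cost of one extra layer of coloured diagram. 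For $\alpha=3$ the two arguments essentially coincide: both produce an essential maximal reducible subgraph, contradicting the hypothesis that $\Gamma$ does not have \D3.
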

\begin{proof}
	Let $\boldword = [\colword {w_1} a, \colword {w_2} b]$ be a coloured commutator.
	Since $\supp{w_1} \cup \set{v_b} \subgraph \Pal{w_2}$ and $\supp{w_2} \cup \set{v_a} \subgraph \Pal{w_1}$, we may assume that
	\[(\supp{w_1} \cup \set{v_b}) \cap (\supp{w_2} \cup \set{v_a}) = \emptyset,\]
	otherwise, $\Pal{w_1} \cap \Pal{w_2} \neq \emptyset$, and the statement would follow from \cref{commutator-common-colour}.

	In particular, the subgraph induced by $\supp{w_1} \cup \supp{w_2} \cup \set{v_a, v_b}$ is reducible, and is contained in a maximal reducible subgraph $\Lambda \subgraph \Gamma$. Let $\Lambda = \Lambda_1 \ast \dots \ast \Lambda_k$ for $k \geq 2$, with each $\Lambda_i$ irreducible.

	Suppose that $\supp{w_1} \cup \set{v_b}$ is reducible. Then, either $\supp{w_1}$ is reducible or $v_b \in \Pal{w_1}$, and we conclude by \cref{commutator-3joins} or \cref{commutator-common-colour}, respectively. So, we can assume that $\supp{w_1} \cup \set{v_b}$ is irreducible and, symmetrically, that the same holds for $\supp{w_2} \cup \set{v_a}$.
	This implies that $\supp{w_1} \cup \set{v_a} \subgraph \Lambda_i$ and $\supp{w_2} \cup \set{v_a} \subgraph \Lambda_j$ for some $i,j \in \range k$. We can also assume that $k=2$ and that $\set{i,j}=\range 2$. If this were not the case, we would have $\Pal{w_1} \cap \Pal{w_2} \neq \emptyset$, and the statement would follow from \cref{commutator-common-colour}. Therefore, we have $\Lambda = \Lambda_1 * \Lambda_2$ with $\supp {w_1} \cup \set{v_b} \subgraph \Lambda_1$ and $\supp{w_2} \cup \set{v_a} \subgraph \Lambda_2$.

	If $\Gamma$ does not have \D4, then $\flag{\Lambda}$ is simply connected and either $\Lambda_1$ or $\Lambda_2$ is connected. In this case, it follows from \cref{commutator-cubic} that $\flatarea{\pushdown[h]\boldword} \leq C \cdot \length{\pushdown[h]\boldword}^3$ for some $C>0$.
	If $\Gamma$ does not have \D3, then $\Lambda$ is not essentially $2$-reducible, so either $\Lambda_1$ or $\Lambda_2$ is a single vertex. In this case, the desired almost-flat area estimate now follows from \cref{commutator-common-colour}.
\end{proof}

\subsection{Putting the pieces together}

Now that we have proven the upper bounds for the pushdown of the fundamental pieces, we combine them to obtain upper bounds for progressively more complicated words. We begin with the pushdown of null-homotopic $k$-coloured words whose underlying words admit van Kampen diagrams with well-behaved corridors. For now, we also allow constants to depend on the number of colours.

\NewDocumentCommand{\leftword}{s m}{\IfBooleanTF{#1}{\colword{u_{#2}^{<}}{a_{#2}}}{u_{#2}^{<}}}
\NewDocumentCommand{\Leftword}{s m}{\IfBooleanTF{#1}{\colword{u_{#2}^{<<}}{a_{#2}}}{u_{#2}^{<<}}}
\NewDocumentCommand{\Rightword}{s m}{\IfBooleanTF{#1}{\colword{u_{#2}^{>>}}{a_{#2}}}{u_{#2}^{>>}}}
\NewDocumentCommand{\rightword}{s m}{\IfBooleanTF{#1}{\colword{u_{#2}^{>}}{a_{#2}}}{u_{#2}^{>}}}

\begin{lemma}\label{subdivide-k-gon}
	For every $k \geq 2$, there exists a constant $C_k>0$ such that the following holds. Let $\alpha \in \set{3,4}$ and assume that $\Gamma$ does not have $\D\alpha$. Let $\boldword = \colwordproduct k$ be a null-homotopic $k$-coloured word with $a_k \neq a_1$, and $h \in \Z$. Assume that the underlying word $ w=w_1 \cdots w_k $ admits a minimal-area van Kampen diagram $D$ such that, for all $i \in \range k$, each corridor starting from an edge of $ w_i $ ends in an edge of either $w_{i-2}, w_{i-1}, w_i, w_{i+1}$, or $w_{i+2}$ (indices intended mod $k$).
	Then
	\[
		\flatarea{\pushdown[h]\boldword} \leq C_k \cdot \length{\pushdown[h]\boldword}^{\alpha-1}.
	\]
\end{lemma}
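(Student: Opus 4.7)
The plan is to construct a coloured diagram $\bolddiag$ for $\boldword$ whose bounded regions are fundamental pieces of the three types analysed earlier in this section, then aggregate the regional almost-flat area estimates using the density machinery developed in \cref{sec:coloured-words}.

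I would begin by recording the corridor structure of the given minimal-area van Kampen diagram $D$ for the underlying word $w$. Let $p_0, \ldots, p_k = p_0$ be the boundary vertices of $D$ separating the subwords $w_i$. Call a corridor of $D$ a \emph{cross-corridor} if its two endpoints lie in different $w_i$'s; by the hypothesis on $D$, the two endpoints of any cross-corridor lie on $w_i$ and $w_j$ with cyclic distance $\abs{i-j} \in \set{1,2}$. By \cref{no-annuli-and-bigons}, distinct cross-corridors pairwise cross at most once. Following the construction of \cref{sec:cut-along-corridors}, I would cut $D$ along a carefully chosen side of each cross-corridor, producing a coloured diagram $\bolddiag$ for $\boldword$.

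Next, I would classify the bounded regions of $\bolddiag$. Each such region is bounded by an alternating sequence of arcs of $w$ (each carrying its inherited colour $a_i$) and monochromatic cross-corridor side segments. The corridor-distance hypothesis guarantees that the indices $i$ of the $w_i$'s appearing on the boundary of a fixed region are consecutive modulo $k$ and span at most three values, producing three types of regions, labelled respectively by: (i) null-homotopic monochromatic coloured words; (ii) coloured bigons between two adjacent colours $a_i$ and $a_{i+1}$, separated by a cross-corridor of cyclic distance one; and (iii) coloured commutators coming from cross-corridors of cyclic distance two that ``jump over'' the intermediate $w_{i+1}$. For each type, \cref{area-monochromatic}, \cref{area-coloured-bigon}, and \cref{area-coloured-commutator} respectively provide a bound $\flatarea{\cdot} \leq C \cdot L^{\alpha-1}$, where $L$ is the length of the pushdown of the region's boundary label; here \cref{area-coloured-bigon} and \cref{area-coloured-commutator} are precisely the place where the hypothesis that $\Gamma$ does not have $\D\alpha$ is used.

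To aggregate these estimates, I would control the density of both $\bolddiag$ and its $h$-pushdown. The polychromatic vertices of $\bolddiag$ are the $p_i$'s, so there are $k$ of them, each of degree at most $3$; the density of $\bolddiag$ itself is bounded in terms of $k$ via the analogue of \cref{estimate-sum-of-corridors-length}~\eqref{item:three-corridors} applied to the family of cutting cross-corridors. Then \cref{sparse-pushdown} provides a uniform density bound $C_M$ on $\pushdown[h]\bolddiag$, depending only on $k$, and \cref{lem:area-of-protodiagram} delivers $\flatarea{\pushdown[h]\boldword} \leq C_k \cdot \length{\pushdown[h]\boldword}^{\alpha-1}$ as required.

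The main obstacle will lie in the analysis of type~(iii) regions: to apply \cref{area-coloured-commutator}, one must verify that such a region's boundary label is, up to free equivalence, a coloured commutator in the strict sense of \cref{coloured-commutators}, meaning the support and palette conditions of that definition actually hold. The required commutation relations come from the grid of corridors internal to such a region in the minimal-area diagram $D$; translating this combinatorial picture into the palette conditions on the boundary word, while also absorbing the extra monochromatic corridor-side segments sitting on the boundary, is the most delicate step of the argument.
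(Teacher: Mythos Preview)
Your overall strategy matches the paper's, but the execution has a genuine gap in the construction of $\bolddiag$. Cutting along \emph{every} cross-corridor of $D$ does not yield regions that are fundamental pieces in the required sense. A region carved out this way is bounded by boundary arcs and corridor sides; a corridor side of an $s$-corridor must be coloured by $s$ (not by any $a_i$), so the polychromatic vertices are not just the $p_i$'s as you claim, and more importantly the label on such a region is a $k'$-coloured word whose structure is controlled neither in length nor in form. In particular, your ``type~(iii)'' region, coming from a distance-two corridor, has no reason to be of the shape $[\colword{u}{a}, \colword{v}{b}]$ with the palette inclusions of \cref{coloured-commutators}; you correctly flag this as the main obstacle, but your construction gives no mechanism to enforce it.

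The paper resolves this with a preliminary normalisation that you are missing. Before cutting, it replaces each $w_i$ by a reduced representative $w_i'$ and chooses, among all diagrams $D'$ for $w_1'\cdots w_k'$ satisfying the corridor hypothesis, one of \emph{minimal area}. This extra minimality (beyond that of $D$) forces two facts: no corridor connects two edges of the same $w_i'$, and corridors emanating from the same $w_i'$ are pairwise disjoint. Once this holds, each $w_i'$ splits canonically as $u_i^{<}\,u_i^{<<}\,u_i^{>>}\,u_i^{>}$, where corridors from $u_i^{>}$ go to $u_{i+1}^{<}$ and corridors from $u_i^{>>}$ go to $u_{i+2}^{<<}$; the crossings between the latter two families give exactly the commutation needed for $[\colword{u_i^{>>}}{a_i},\colword{u_{i+1}^{<<}}{a_{i+1}}]$ to be a genuine coloured commutator. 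The coloured diagram is then built \emph{by hand} with exactly $4k$ regions (monochromatic pieces $\colword{w_i}{a_i}\colword{w_i'}{a_i}^{-1}$, two families of bigons, and the commutators), not by slicing along all corridors. This also explains why the polychromatic-vertex count is $3k+1$ rather than $k$, and why \cref{sparse-pushdown} is invoked with $M=3k+1$.
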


\begin{proof}
	We aim to construct a coloured diagram for $\boldword$ whose bounded regions are fundamental pieces, and whose density can be bounded in terms of the number of colours.

	For every $i \in \range k$, we choose a freely reduced coloured word $\colword{w_i'}{a_i}$ such that $w_i \groupid{\raag\Gamma} w_i'$, and we choose a van Kampen diagram $D'$ for $w_1' \cdots w_k'$ such that corridors starting from an edge in $w_i'$ end in an edge of $ w'_{i-2}$, $ w'_{i-1}$, $ w'_i$, $ w'_{i+1}$ or $w'_{i+2} $. We make these choices so that the area of $D'$ is the smallest possible. Note that it is always possible to make such choices, as one can always take $w'_i = w_i$ and $D' = D$.

	Two corridors in $D'$ that originate from the same ${w_i'}$ cannot intersect. Otherwise, consider an innermost pair of intersecting corridors starting from ${w_i'}$; these corridors must begin at adjacent letters. Exchanging those letters would yield a word $w_i''$ such that $w'_1 \cdots w''_i \cdots w'_k$ admits a diagram with smaller area, a contradiction. Similarly, a corridor cannot connect two edges of the same $w_i'$. Since corridors starting from the same $w'_i$ do not intersect, we can decompose each ${w_i'}$ as
	\[
		{w_i'} = \leftword i \Leftword i \Rightword i \rightword i
	\]
	in such a way that the corridors of $D'$ starting in $\rightword i$ end in $\leftword{i+1}$, and those starting in $\Rightword i$ end in $\Leftword{i+2}$. In particular, $\rightword i = (\leftword{i+1})^{-1}$ and $\Rightword{i}=(\Leftword{i+2})^{-1}$, and all letters of $\Rightword i $ commute with all letters of $ \Leftword{i+1}$.

	\begin{figure}
		\begin{tikzpicture}[scale=2, line width=0.20mm]
	\usetikzlibrary{calc}
	\usetikzlibrary{decorations.markings}
	\tikzset{nodes={fill=white,fill opacity=.6,text opacity=1}}
	\tikzset{->-/.style={decoration={markings,mark=at position #1 with {\arrow{latex}}},postaction={decorate}}}
	\tikzset{-<-/.style={decoration={markings,mark=at position #1-.05 with {\arrowreversed{latex}}},postaction={decorate}}}

	\def\n{5}

	\def\outr{4}

	\def\midr{2.8}

	\def\inr{2}

	\def\bigonang{30}

	\foreach \ii in {1,...,\n} {
			\def\ang{360/\n};
			\def\curang{360/\n*\ii+\ang/2+90};
			\draw[thick, ->-=.45] (\curang:\outr) -- (\curang+\ang:\outr);
			\draw[thick, ->-=.8] (\curang:\outr) to[out=\curang+180-\bigonang,in=\curang+\bigonang] (\curang:\midr);
			\draw[thick, -<-=.8] (\curang:\outr) to[out=\curang+180+\bigonang,in=\curang-\bigonang] (\curang:\midr);
			\draw[thick, ->-=.8] (\curang:\midr) -- (\curang+\ang/2:\inr);
			\draw[thick, -<-=.8] (\curang+\ang:\midr) -- (\curang+\ang/2:\inr);
			\draw[thick, ->-=.7] (\curang+0.5*\ang:\inr) to[out=\curang+\ang/2+180-\bigonang, in=\curang+\ang/2+\bigonang] (0,0);
			\draw[thick, -<-=.7] (\curang+0.5*\ang:\inr) to[out=\curang+\ang/2+180+\bigonang, in=\curang+\ang/2-\bigonang] (0,0);
		};
	\foreach \ii in {1,...,\n} {
			\def\ang{360/\n};
			\def\curang{360/\n*\ii+\ang/2+90};
			\node at ($(\curang:\outr)!0.5!(\curang+\ang:\outr)+(\curang+\ang/2:.2)$) {$\colword{w_\ii}{a_\ii}$};
			\node at ($(\curang:\outr)!0.5!(\curang:\midr)+(\curang+90:.3)$) {$\leftword*\ii$};
			\node at ($(\curang+\ang:\outr)!0.5!(\curang+\ang:\midr)+(\curang+\ang-90:.3)$) {$\rightword*\ii$};
			\node at ($(\curang:\midr)!0.5!(\curang+\ang/2:\inr)$) {$\Leftword*\ii$};
			\node at ($(\curang+\ang:\midr)!0.5!(\curang+\ang/2:\inr)$) {$\Rightword*\ii$};
			\node at ($(0,0)!0.5!(\curang-\ang/2:\inr)+(\curang-\ang/2+90:.33)$) {$\Leftword*\ii$};
			\node at ($(0,0)!0.5!(\curang+1.5*\ang:\inr)+(\curang+1.5*\ang-90:.33)$) {$\Rightword*\ii$};
		};
\end{tikzpicture}
		\caption{Filling of a coloured $k$-gon with coloured bigons and coloured commutators.}
		\label{fig:fill-k-coloured}
	\end{figure}

	A coloured diagram $\bolddiag$ for $\boldword$ can be constructed, as shown in \cref{fig:fill-k-coloured}, using the following fundamental pieces:
	\begin{itemize}
		\item $k$ monochromatic regions labelled by
		      \[
			      \colword{w_i}{a_i} \colword{w_i'}{a_i}^{-1} = \colword{w_i}{a_i} \rightword* i ^{-1} \Rightword*i^{-1} \Leftword* i ^{-1} \leftword* i ^{-1};
		      \]
		\item $k$ bigons labelled by $ \leftword* i \rightword* {i-1}^{-1} $;
		\item $k$ bigons labelled by $ \Leftword*{i+1} \Rightword*{i-1} $;
		\item $k$ coloured commutators $ [\Rightword* i, \Leftword*{i+1}] $.
	\end{itemize}

	In this way, we obtain a coloured diagram $\bolddiag$ which has $3k+1$ polychromatic vertices, including the $k$ $\partial$-polychromatic vertices that are the vertices of the $k$-gon. By applying \cref{sparse-pushdown} with $M=3k+1$, the density of $\pushdown[h] \bolddiag$ is bounded above by some constant $C_k'>0$.

	Since every word $w$ labelling the bounded regions of $\pushdown[h]{\mathbf D}$ has almost-flat area bounded above by $C \cdot \length w^{\alpha-1}$ by \cref{area-coloured-bigon,area-coloured-commutator}, for some constant $C>0$, we conclude by applying \cref{lem:area-of-protodiagram}.
\end{proof}

By an induction argument, we can remove the hypothesis on the corridors in \cref{subdivide-k-gon}.

\begin{proposition}\label{almost-flat area-k-coloured}
	Let $\alpha \in \set{3,4}$ and assume that $\Gamma$ does not have $\D\alpha$. For every $k \geq 1$, there exists a constant $ C_k $ such that for every null-homotopic $k$-coloured word $\boldword$ and every $ h \in \Z $, we have
	\[
		\flatarea {\pushdown[h]{\boldword}}  \leq C_k \cdot \length{\pushdown[h]\boldword}^{\alpha-1}.
	\]
\end{proposition}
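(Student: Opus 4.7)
The plan is to proceed by strong induction on $k$. For the base case $k = 1$, a null-homotopic monochromatic word $\colword{w}{a}$ satisfies $\flatarea{\pushdown[h]{\colword wa}} \leq 4\length{w}^2$ by \cref{area-monochromatic}, and since $\length{\pushdown[h]{\colword wa}} \geq 2\length{w}$, the bound $\flatarea{\pushdown[h]{\colword wa}} \leq \length{\pushdown[h]{\colword wa}}^{\alpha - 1}$ follows with $C_1 = 1$.

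For the inductive step, I would assume the result holds for all $k' < k$ and set $C^* = \max_{k' < k} C_{k'}$. If $a_1 = a_k$, the coloured letters $\colword{w_k}{a_k}$ and $\colword{w_1}{a_1}$ can be combined via the identity $\pushdown[h']{\colword{w_k}{a}\colword{w_1}{a}} \freeid \pushdown[h']{\colword{w_k w_1}{a}}$, so up to cyclic conjugation $\pushdown[h]\boldword$ is freely equivalent to the pushdown of a $(k-1)$-coloured word, and the induction hypothesis applies. Thus I may assume $a_1 \neq a_k$, and I fix a minimal-area van Kampen diagram $D$ over $\raagpres$ for the underlying word $w = w_1 \cdots w_k$.

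Now I split into two cases depending on the corridors of $D$. \emph{Case 1}: every corridor starting from an edge of $w_i$ terminates in an edge of $w_j$ with cyclic distance $\abs{i-j} \leq 2$; then \cref{subdivide-k-gon} directly produces the desired bound. \emph{Case 2}: some $a$-corridor $C$ joins an edge of $w_i$ to an edge of $w_j$ with cyclic distance at least $3$. Let $\gamma$ be a side of $C$, with endpoints $p \in w_i$ and $q \in w_j$, and let $u$ be the word read along $\gamma$ from $p$ to $q$; by \cref{no-annuli-and-bigons}, $\length{u} \leq \length{w}/2$. Decomposing $w_i = w_i^< w_i^>$ at $p$ and $w_j = w_j^< w_j^>$ at $q$, the cut splits (a cyclic conjugate of) $\boldword$ into
\[
\boldword' = \colword{w_i^>}{a_i} \colword{w_{i+1}}{a_{i+1}} \cdots \colword{w_j^<}{a_j} \colword{u^{-1}}{a}
\quad \text{and} \quad
\boldword'' = \colword{u}{a} \colword{w_j^>}{a_j} \cdots \colword{w_i^<}{a_i}.
\]
Since $3 \leq \abs{i-j} \leq k-3$ cyclically, both $\boldword'$ and $\boldword''$ are null-homotopic $k'$-coloured words with $k' \leq k-1$, even after merging in the cases $a = a_i$ or $a = a_j$.

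I would then assemble $\boldword'$ and $\boldword''$ into a single coloured diagram $\bolddiag$ for (a cyclic conjugate of) $\boldword$ with two bounded regions sharing the interior path labelled $\colword{u}{a}$. Its density satisfies $\density{\bolddiag} \leq 1 + 2\length{u}/\length{\boldword} \leq 2$; all polychromatic vertices have degree at most $3$; and the only (at most two) polychromatic vertices that are not $\partial$-polychromatic are the cut endpoints $p$ and $q$, which I connect by disjoint short paths along the outer boundary to the nearest $\partial$-polychromatic vertices. These are precisely the hypotheses of \cref{sparse-pushdown}, yielding a constant $D_k$ depending only on $k$ with $\density{\pushdown[h]\bolddiag} \leq D_k$. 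Combining the inductive bounds $\flatarea{\pushdown[\cdot]{\boldword'}} \leq C^* \cdot \length{\pushdown[\cdot]{\boldword'}}^{\alpha-1}$ and its analogue for $\boldword''$ with \cref{lem:area-of-protodiagram} will close the induction with $C_k$ bounded in terms of $C^*$, $D_k$, and the constant from \cref{subdivide-k-gon}. The main obstacle will be the bookkeeping around the cut: confirming that $\boldword'$ and $\boldword''$ genuinely have strictly fewer colour blocks after all mergers, and verifying the hypotheses of \cref{sparse-pushdown} uniformly in $\boldword$.
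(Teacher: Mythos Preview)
Your proposal is correct and follows essentially the same approach as the paper: strong induction on $k$, with the base case via \cref{area-monochromatic}, reduction to $a_1 \neq a_k$ by cyclic conjugation, the dichotomy between applying \cref{subdivide-k-gon} directly and cutting along a far-reaching corridor, and then bounding the density of the pushdown of the resulting two-region coloured diagram via \cref{sparse-pushdown} to invoke \cref{lem:area-of-protodiagram}. You are in fact slightly more explicit than the paper in verifying the hypotheses of \cref{sparse-pushdown} (the paper simply asserts the density bound); note that with your choice of paths the constant from \cref{sparse-pushdown} can be taken independent of $k$, so your $D_k$ need not depend on $k$ at all, though this does not affect correctness since $C_k$ is allowed to.
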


\begin{proof}

	We proceed by induction on $k$. The case $ k=1 $ was handled in \cref{area-monochromatic}. Suppose that the statement holds for $\ell<k$.

	By a cyclic conjugation, we may assume $\boldword=\colwordproduct k$ with $a_k \neq a_1$. Let $D$ be a van Kampen diagram for the underlying word $w=w_1 \cdots w_k$. Assume that there is a corridor in $D$ connecting a letter of $w_i$ with a letter of $w_j $, with $3 \leq \abs{i-j} \leq \abs{n-3}$; otherwise, we are under the hypotheses of \cref{subdivide-k-gon} and can conclude directly. 

	By cutting along the boundary of the corridor, we obtain a coloured diagram $\bolddiag$ with only two regions, labelled by a $( \abs{i-j} + 2 )$-coloured word and a $( n-\abs{i-j} + 2 )$-coloured word respectively; both have fewer than $k$ colours. We have $\density\bolddiag \leq 2$, and by \cref{sparse-pushdown}, there is a constant $C>0$ independent of $k$ such that $\density {\pushdown[h]\bolddiag} \leq C$.

	By combining the inductive hypothesis with \cref{lem:area-of-protodiagram}, we obtain that the statement holds with $ C_k \coloneq \max\set{C_\ell : \ell < k} \cdot C^{\alpha-1} $.
\end{proof}

Finally, we remove the dependence of the multiplicative constant $C_k$ on the number of colours. The key to this is the following lemma.

\begin{lemma}\label{cut-triangle-15-coloured}
	There exists a constant $C>0$ such that the following holds. Let $ \mathbf w_1$, $\mathbf w_2$, and $\mathbf w_3$ be efficient coloured words such that $ \mathbf w = \mathbf w_1 \mathbf w_2 \mathbf w_3$ is null-homotopic. Then there exists a coloured diagram $ \mathbf D $ for $ \mathbf w $ whose bounded regions are at most $ 15 $-coloured, and such that $\density {\pushdown[h]{\mathbf D}} \leq C$. 
\end{lemma}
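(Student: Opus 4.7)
The plan is to apply \cref{cut-efficient-diagram} three times in succession, using $\mathbf w_1$, $\mathbf w_2$, and $\mathbf w_3$ as the efficient word in the three passes. Each pass will constrain the colours contributed by one of the three sides of the ``triangle'' $\mathbf w_1\mathbf w_2\mathbf w_3$ to at most $5$, so that after the third pass every region is at most $5+5+5=15$-coloured.

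In the first pass, I would take $\mathbf w = \mathbf w_1$ and $\mathbf w' = \mathbf w_3^{-1}\mathbf w_2^{-1}$, which represent the same group element because $\mathbf w_1\mathbf w_2\mathbf w_3=1$. Applying \cref{cut-efficient-diagram} produces a coloured diagram $\mathbf D^{(1)}$ for $\mathbf w_1\mathbf w_2\mathbf w_3$ whose bounded regions have boundary of the form $\mathbf u\mathbf u'^{-1}$, with $\mathbf u$ at most $5$-coloured (arising from $\mathbf w_1$ together with two corridor labels) and $\mathbf u'$ a subword of $\mathbf w_3^{-1}\mathbf w_2^{-1}$; moreover the density of $\pushdown[h]{\mathbf D^{(1)}}$ is uniformly bounded in $h$.

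For the second pass, I would split the label $\mathbf u'$ of each region $R$ of $\mathbf D^{(1)}$ canonically as $\mathbf u' = \mathbf q_3\mathbf q_2$, where $\mathbf q_3$ is a (possibly empty) subword of $\mathbf w_3^{-1}$ and $\mathbf q_2$ a (possibly empty) subword of $\mathbf w_2^{-1}$. Since a subword of an efficient coloured word is itself efficient, $\mathbf q_2^{-1}$ is efficient, and I would apply \cref{cut-efficient-diagram} inside $R$ with $\mathbf q_2^{-1}$ as the efficient word and the remaining part of the boundary of $R$ as the ``other word''. The new subregions have boundary $\mathbf v\mathbf v'^{-1}$, where $\mathbf v$ is at most $5$-coloured and comes from $\mathbf w_2$, while $\mathbf v'$ is a subword of the concatenation of an at-most-$5$-coloured piece from $\mathbf w_1$ and a subword of $\mathbf w_3^{-1}$. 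The third pass repeats this construction inside each subregion, using its $\mathbf w_3$-portion as the efficient word. After this third application, the boundary of every final region splits into three arcs, one for each of $\mathbf w_1, \mathbf w_2, \mathbf w_3$, each at most $5$-coloured, so the region is at most $15$-coloured.

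For the density estimate, each of the three passes produces a coloured diagram whose pushdown has density bounded by the constant of \cref{cut-efficient-diagram}; three successive applications of \cref{lem:fill-sparse-with-sparse} at the level of the corresponding alternating diagrams yield an overall density bounded by the product of these three constants, which is a uniform constant $C$. The main obstacle I expect is the bookkeeping in the second and third passes: after the first cut, the ``other word'' fed into \cref{cut-efficient-diagram} is a concatenation of pieces coming from two different sides of the triangle, and I need to verify carefully that such an input is a legitimate coloured word representing the correct group element, and that the canonical splitting of $\mathbf u'$ is well defined even when $\mathbf u'$ straddles the interface between $\mathbf w_3^{-1}$ and $\mathbf w_2^{-1}$. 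Once this bookkeeping is in place, the colour count follows by addition across the three passes and the density bound by composition.
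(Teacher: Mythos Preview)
Your proposal is correct and follows essentially the same three-pass strategy as the paper: apply \cref{cut-efficient-diagram} successively with $\mathbf w_1$, then the $\mathbf w_2$-portion of each region, then the $\mathbf w_3$-portion, so that each side contributes at most $5$ colours, and control the density via \cref{lem:fill-sparse-with-sparse}. The only minor slip is that two applications of \cref{lem:fill-sparse-with-sparse} (not three) suffice to bound the final density by the cube of the constant from \cref{cut-efficient-diagram}; the bookkeeping concern you raise about splitting $\mathbf u'$ across the $\mathbf w_2/\mathbf w_3$ interface is exactly what the paper handles, and your canonical splitting is the right way to do it.
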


\begin{proof}
	We argue using \cref{cut-efficient-diagram} to construct a coloured diagram for the null-homotopic coloured word $\mathbf w_1 \mathbf w'^{-1}$, where $\boldword'^{-1} = \boldword_2 \boldword_3$. This yields a coloured diagram $\bolddiag'$ such that $\density {\pushdown[h]{\bolddiag'}} \leq C'$ for some constant $ C'>0$, and whose bounded regions are labelled by words of the form $\mathbf u'_1 \mathbf w'_2 \mathbf w'_3$, where $\mathbf u'_1$  is at most $5$-coloured, and $\mathbf w'_2$ and $ \mathbf w'_3$ are (possibly empty) subwords of $\mathbf w_2$ and $\mathbf w_3$, respectively; they are, in particular, efficient.

	By \cref{lem:fill-sparse-with-sparse}, it suffices to show that we can find a coloured diagram for the labels of the bounded regions of $\bolddiag'$, whose regions are at most $15$-coloured and such that the density of the $h$-pushdown of $\bolddiag'$ is at most $C$ for some constant $C>0$ independent of $h$.

	To do so, we perform two more iterations of \cref{cut-efficient-diagram}. First, we apply it to each bounded region of $\bolddiag'$ to obtain coloured diagrams $\bolddiag''$ for the words $\mathbf w'_2 \cdot (\mathbf w'_3 \mathbf u'_1)$, such that $\density {\pushdown[h']{\bolddiag''}} \leq C'$ for every $h' \in \Z$, and whose regions are labelled by words of the form $\mathbf u''_2 \mathbf w''_3 \mathbf u''_1$, where $ \mathbf u''_2$ is at most $5$-coloured, $\mathbf w''_3$ is a subword of $ \mathbf w'_3 $, and $ \mathbf u''_1$ is a subword of $\mathbf u'_1$ and therefore is at most $5$-coloured. We replace each bounded region of $\bolddiag'$ with the corresponding coloured diagram $\bolddiag''$.

	Second, we apply \cref{cut-efficient-diagram} once more to each bounded region of the coloured diagram constructed above, obtaining coloured diagrams $\bolddiag'''$ for the words $\boldword_3'' \cdot (\mathbf{u}_1'' \mathbf{u_2}'')$ such that $\density{\pushdown[h'']{\bolddiag'''}} \leq C''$ for every $h'' \in \Z$, and whose regions are labelled by words of the form $\mathbf{u}_3''' \mathbf{u}_1''' \mathbf{u}_2'''$, where $\mathbf{u}_3'''$ is at most $5$-coloured, $\mathbf{u}_1'''$ is a subword of $\mathbf{u}_1''$, and $\mathbf{u}_2'''$ is a subword of $\mathbf{u}_2''$. We again replace each bounded region with the corresponding coloured diagram $\bolddiag'''$.

	In this way, we obtain a coloured diagram for $\mathbf w$ whose regions are labelled by words that are at most $15$-coloured and whose pushdown has density at most $C'^3$ by \cref{lem:fill-sparse-with-sparse}.
\end{proof}

We conclude the section by establishing the upper bound for the Dehn function of $\bbg \Gamma$.

\begin{proof}[Proof of \cref{upper-bounds}]
	For every $g \in \bbg\Gamma $, choose a word $u_g$ representing $g$ that is the $0$-pushdown of an efficient coloured word $ \mathbf w_g $. By \cref{coloured-norm-vs-bbg-norm}, the length of $ u_{g} $ is bounded linearly in terms of $ \flatnorm{g} $, so we may apply \cref{triangle-lemma}. Now, it suffices to estimate the area of the word $ u=u_{g_1}u_{g_2}u_{g_3} $, where $g_1,g_2,g_3\in\bbg\Gamma$ such that $ g_1g_2g_3 = 1 $.

	To do so, consider the coloured diagram $\bolddiag$ produced by \cref{cut-triangle-15-coloured} for the null-homotopic word $ \mathbf w = \mathbf w_{g_1} \mathbf w_{g_2} \mathbf w_{g_3} $. Its $0$-pushdown is an alternating diagram for the null-homotopic word
	\[
		\pushdown{\boldword} = \pushdown{\boldword_{g_1}} \pushdown{\boldword_{g_2}} \pushdown{\boldword_{g_3}} = u;
	\]
	here, we use the fact $ \height(\boldword_{g_i})=0 $ and apply \cref{pushdown-properties}. The density $\density{\pushdown\bolddiag}$ is bounded above by a constant $ C $, and the regions of $\bolddiag$ are labelled by alternating words $u_i$, which are the $h_i$-pushdown of coloured words that are at most $15$-coloured. By \cref{almost-flat area-k-coloured}, each $u_i$ admits an almost-flat diagram of area at most $ C_{15} \cdot \length{u_i}^{\alpha-1} $. We conclude by applying \cref{lem:area-of-protodiagram}.
\end{proof}

\section{Lower bounds}\label{sec:lower-bounds}

We now turn our attention to the lower bounds. The aim of this section is to prove the following.

\begin{theorem}\label{lower-bounds}
	Let $\Gamma$ be a finite simplicial graph such that the associated flag complex $\flag\Gamma$ is simply connected, and let $\alpha \in \set{3,4}$. If $\Gamma$ has property $\D \alpha$, then $\Dehn{\bbg\Gamma}(n) \succcurlyeq n^\alpha$.
\end{theorem}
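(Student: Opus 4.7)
The plan is to exhibit, for each $\alpha \in \set{3,4}$, a sequence of null-homotopic alternating words $w_n$ in $\raaggens$ of length $O(n)$ whose almost-flat area satisfies $\flatarea{w_n} \succcurlyeq n^\alpha$; the theorem then follows from \cref{prop:area-bbg-vs-almost-flat}.

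For \D3, I would fix an essential maximal reducible subgraph $\Lambda = \Lambda_1 \ast \Lambda_2$ of $\Gamma$. Since each $\Lambda_i$ is irreducible with at least two vertices, its complement is connected and contains at least one edge, so there exist non-adjacent pairs $v_a, v_b \in \Lambda_1$ and $v_c, v_d \in \Lambda_2$. Because $\Lambda$ is a join, each of $a,b$ commutes with each of $c,d$ in $\raag\Gamma$, so the commutator
\[
    w_n \coloneq [(ab^{-1})^n,(cd^{-1})^n]
\]
is freely trivial in $\raag\Gamma$ and hence null-homotopic in $\bbg\Gamma$; moreover it is an alternating word of length $8n$. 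For \D4, I would fix a maximal reducible subgraph $\Lambda = \Lambda_1 \ast \Lambda_2$ whose flag complex is not simply connected; this forces both $\Lambda_1$ and $\Lambda_2$ to be disconnected, and the pairs above can then be chosen in distinct connected components of each factor, producing a $4$-cycle $v_a v_c v_b v_d$ in $\Gamma$ that is not null-homotopic in $\flag\Lambda$. I would use the same shape of commutator $w_n$ (possibly nested once more), now extracting the extra factor of $n$ from the non-trivial loop in $\flag\Lambda$.

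The lower bound on $\flatarea{w_n}$ would be obtained via the corridor and annulus analysis outlined in \cref{subsec:strategy-of-the-proof}. In any almost-flat van Kampen diagram $D$ for $w_n$, \cref{no-annuli-and-bigons} says that corridors are injectively immersed arcs; since $v_a, v_b$ (resp.\ $v_c, v_d$) are non-adjacent, no $a$-corridor crosses a $b$-corridor (resp.\ $c$- and $d$-corridors do not cross), so the $n$ many $a$- and $b$-corridors coming from $(ab^{-1})^{\pm n}$ must each traverse the central $(cd^{-1})^{\pm n}$ blocks, crossing many $c$- and $d$-corridors. The constraint $\height(v) \in \set{0,1,2}$ rigidly controls the heights of the interior vertices in terms of the corridor pattern and so prevents efficient fillings. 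Maximality of $\Lambda$ is the other essential ingredient: any corridor routed through a vertex outside $\Lambda$ while connecting two boundary letters supported in $\Lambda$ would produce a strictly larger reducible subgraph containing $\Lambda$, contradicting maximality, so cheap ``shortcut'' fillings are precluded.

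The main obstacle is to turn this picture into a rigorous count. For \D3, one needs an algorithm that recovers the height of each interior vertex of $D$ from the combinatorics of the corridors and combines the almost-flat constraint with the maximality obstruction to force the total length of the relevant corridors to grow cubically in $n$, and hence the area as well. For \D4, this cubic bound must be amplified by a homotopical ingredient: the non-null-homotopic $4$-cycle in $\flag\Lambda$ should translate into an obstruction demanding an additional linear-in-$n$ contribution to the area of any almost-flat filling. Formalising this final homotopical-to-combinatorial translation is likely the hardest step of the proof.
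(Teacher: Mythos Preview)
Your word $w_n = [(ab^{-1})^n,(cd^{-1})^n]$ is too small to detect the cubic lower bound: it only involves four vertices of $\Lambda$, and maximality of $\Lambda$ does \emph{not} prevent the existence of a vertex $v_e \in \Gamma \setminus \Lambda$ adjacent to all four of $v_a,v_b,v_c,v_d$ (it only prevents $v_e$ from being adjacent to all of $\Lambda_1$ or all of $\Lambda_2$). When such an $e$ exists, your $w_n$ is supported in $\Star{v_e}$, and the naive $2n\times 2n$ grid diagram for the commutator in $\raag{\{a,b,c,d\}}\cong F_2\times F_2$ is already almost-flat (the vertex at grid position $(i,j)$ has height $(i\bmod 2)+(j\bmod 2)\in\{0,1,2\}$), so $\flatarea{w_n}\leq 4n^2$. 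A concrete instance: take $\Lambda_1$ and $\Lambda_2$ each a path of length~$3$ with endpoints $v_a,v_b$ and $v_c,v_d$ respectively, and add a vertex $v_e$ joined only to $v_a,v_b,v_c,v_d$; then $\Lambda$ is an essential maximal reducible subgraph, $\Gamma$ has \D3, yet your word has quadratic almost-flat area.

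The paper's remedy is to build $w_n$ from sequences $a_1,\dots,a_k$ and $b_1,\dots,b_\ell$ that visit \emph{every} generator of $\Lambda_1$ and $\Lambda_2$, with consecutive elements non-commuting; maximality then gives the crucial dichotomy that any annulus crossing all $\mathcal A$-bridges must be a $\mathcal B$-annulus and vice versa. The counting mechanism is also different from what you sketch: rather than bounding corridor lengths, one observes that in an almost-flat diagram the height of a vertex equals its signed corridor count plus its signed annulus count, so vertices deep in the grid of bridges are enclosed by linearly many annuli; a combinatorial lemma then shows that a linear family of such vertices forces quadratically many annulus--annulus crossings at each, yielding the cubic bound. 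For \D4 the same word is used, with $v_{a_1},v_{a_k}$ (and $v_{b_1},v_{b_\ell}$) chosen in distinct components of the factors; a topological Jordan-curve-type argument then produces a \emph{two}-parameter family of suitable vertices, upgrading the sum to $n^4$. Your suggestion of ``nesting once more'' does not capture this.
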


To establish cubic and quartic lower bounds for the Dehn functions in \cref{lower-bounds}, we need to produce a family of null-homotopic alternating words $ w_n $ such that we can find a cubic, respectively quartic, lower bound for the area of almost-flat van Kampen diagrams for $w_n$. To obtain these lower bounds, we analyse the behaviour of corridors and annuli inside such diagrams.

Recall that both corridors and annuli inherit a natural orientation: our convention is that the positive orientation of an $a$-corridor or $a$-annulus is the one such that the $a$-edges transverse to it are oriented from left to right.

Let $D$ be an almost-flat van Kampen diagram over the standard presentation $\raagpres$ of $\raag \Gamma$. Removing the interior of a corridor or annulus subdivides $D$ into two connected components. We say that a corridor $C$ \emph{separates} two vertices $ p $ and $q$ if they belong to different connected components after removing the interior of $C$, and that an annulus $A$ \emph{encloses} a vertex $p$ if $p$ is contained in the connected component that does not contain the boundary of $D$.

Let $p_0$ denote the base point of $D$. For every vertex $ p \in D $, we define
\def\cheight{\kappa}
\def\annheight{\alpha}
\begin{align*}
	\cheight_+(p)   & \coloneq \cardinality{\set{C : C \ \text{is a corridor separating $p_0$ and $p$, with $p$ on the right of $C$} }} \\
	\cheight_-(p)   & \coloneq \cardinality{\set{C  : C \ \text{is a corridor separating $p_0$ and $p$, with $p$ on the left of $C$}} } \\
	\annheight_+(p) & \coloneq \cardinality{\set{A: A \ \text{is an annulus oriented clockwise enclosing $p$}} }                        \\
	\annheight_-(p) & \coloneq \cardinality{\set{A: A \ \text{is an annulus oriented counterclockwise enclosing $p$}} }
\end{align*}
and set
\[
	\annheight(p) \coloneq \annheight_+(p) - \annheight_-(p) \quad
	\text{and} \quad \cheight(p) = \cheight_+(p) - \cheight_-(p).
\]

\begin{lemma}\label{heights}
	For every vertex $p \in D$, we have $ \height(p) = \annheight(p) + \cheight(p)$.
\end{lemma}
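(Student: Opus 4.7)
The plan is to fix any combinatorial path $\gamma$ from $p_0$ to $p$ in the $1$-skeleton of $D$, and compute $\height(p)$ by interpreting the traversals of $\gamma$ as signed crossings of corridors and annuli.

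First I would observe that since $\height$ sends every generator to $1$, reading off the exponents along $\gamma$ gives $\height(p)=\sum_{e\in\gamma}\epsilon_e$, where $\epsilon_e=\pm1$ records whether $e$ is traversed with its orientation. Every $a$-edge of $D$ belongs to a unique $a$-corridor or $a$-annulus, because each $2$-cell of $D$ is bounded by a commutator relation $[a,b]$, so iteratively passing from an $a$-edge through its adjacent $2$-cell to the opposite $a$-edge in that cell is deterministic and either closes into a loop (an annulus) or exits $D$ on both sides (a corridor). Using this, partition the edges of $\gamma$ by the corridor or annulus $X$ they belong to, and rewrite
\[
\height(p)=\sum_{X}\Bigl(\sum_{\substack{e\in\gamma\\ e\in X}}\epsilon_e\Bigr).
\]

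The key step is to identify each inner sum with the algebraic intersection number of $\gamma$ with the planar band $X$, and show that it lies in $\{-1,0,+1\}$. The orientation convention on $X$ makes each transverse $a$-edge cross the core from left to right; thus a traversal of $e$ in its positive direction is exactly a left-to-right crossing of $\gamma$ across $X$, contributing $+1$, and otherwise $-1$. Since $D$ is planar and $X$ is a properly embedded band (ending on the boundary of $D$ in the corridor case, or disjoint from it in the annulus case), the signed count depends only on which sides of $X$ the endpoints $p_0$ and $p$ lie on. A short case analysis then shows that the inner sum equals $+1$ precisely when either $X$ is a corridor separating $p_0$ from $p$ with $p$ on its right, or $X$ is a clockwise annulus enclosing $p$ (noting that $p_0$, being on the boundary of $D$, is never enclosed); equals $-1$ in the mirror cases; and equals $0$ otherwise. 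Summing these contributions over all corridors and annuli of $D$ yields exactly $\cheight_+(p)-\cheight_-(p)+\annheight_+(p)-\annheight_-(p)=\cheight(p)+\annheight(p)$, completing the identification with the first expression for $\height(p)$.

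The only mildly delicate part is the algebraic-intersection step: one must verify that the crossings of $\gamma$ with the band $X$ are in bijective correspondence, with matching signs, with the $a$-edges of $X$ lying on $\gamma$. This follows from the fact that the transverse $a$-edges form a properly embedded transversal family inside the planar band $X\subset D$, so that the signed crossings of the band are computed exactly as the signed sum of $\epsilon_e$ over those edges of $\gamma$.
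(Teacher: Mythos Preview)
Your proof is correct and follows essentially the same approach as the paper: choose a path from $p_0$ to $p$, observe that each edge contributes $\pm 1$ to the height according to whether it crosses its corridor or annulus from left to right or right to left, and group these contributions by corridor/annulus. Your treatment is more explicit than the paper's---you carefully reduce to an algebraic intersection number and verify it lies in $\{-1,0,+1\}$---whereas the paper simply states the effect of each crossing and concludes; but the underlying argument is the same.
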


\begin{proof}
	Consider an arbitrary combinatorial path connecting $p_0$ and $p$. Every edge in the path crosses exactly one corridor or annulus. The height increases by one when crossing a corridor from left to right, when entering an annulus oriented clockwise, or when exiting an annulus oriented counterclockwise. Conversely, the height decreases by one when crossing a corridor from right to left, when exiting an annulus oriented clockwise, or when entering an annulus oriented counterclockwise. From this, we get the statement.
\end{proof}

\begin{corollary}\label{many-annuli}
	For every vertex $p \in D$, we have $\abs{\annheight(p)} \geq \abs{\cheight(p)} - 2$.
\end{corollary}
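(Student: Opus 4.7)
The plan is to combine the preceding lemma with the defining property of almost-flat van Kampen diagrams via the (reverse) triangle inequality. Specifically, since $D$ is an almost-flat van Kampen diagram (as it must be for the quantities $\annheight(p)$ and $\cheight(p)$ to be relevant in the corridor/annulus argument outlined at the start of this section), every vertex $p \in D$ satisfies $\height(p) \in \set{0,1,2}$, and in particular $\abs{\height(p)} \leq 2$.

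First, I would invoke the previous lemma to write
\[
    \height(p) = \annheight(p) + \cheight(p).
\]
Then I would apply the triangle inequality in the form $\abs{\cheight(p)} = \abs{(\annheight(p) + \cheight(p)) - \annheight(p)} \leq \abs{\annheight(p) + \cheight(p)} + \abs{\annheight(p)}$, rearrange it as
\[
    \abs{\cheight(p)} - \abs{\annheight(p)} \leq \abs{\annheight(p) + \cheight(p)} = \abs{\height(p)} \leq 2,
\]
and conclude $\abs{\annheight(p)} \geq \abs{\cheight(p)} - 2$.

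There is no real obstacle here; the only subtlety worth flagging is that the bound $\abs{\height(p)} \leq 2$ is precisely what the almost-flat hypothesis on $D$ buys us, and this is exactly the mechanism through which restricting to almost-flat diagrams forces many annuli to appear whenever $\cheight(p)$ is large. In the subsequent arguments for the lower bounds, this asymmetry between $\cheight$ and $\annheight$ will be exploited by constructing boundary words $w_n$ that force $\cheight(p)$ to be large on many vertices, thereby producing many annuli, each of which contributes to the area through their mutual intersections.
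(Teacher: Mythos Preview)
Your proof is correct and matches the paper's approach exactly: both use the previous lemma $\height(p) = \annheight(p) + \cheight(p)$ together with the almost-flat condition $\height(p) \in \set{0,1,2}$, and the triangle inequality you spell out is precisely the implicit step behind the paper's one-line justification.
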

\begin{proof}
	The result follows from \cref{heights} and the fact that every vertex $p$ in an almost-flat van Kampen diagram satisfies $ \height(p) \in \set{0,1,2} $.
\end{proof}

Thus, \cref{many-annuli} tells us that in an almost-flat van Kampen diagram, a vertex $p$ with large $\abs{\cheight(p)}$ must be enclosed by many annuli. We show that, under certain assumptions, these annuli produce a lot of crossings, which in turn yield a large lower bound for the area of $D$.

\newcommand{\enclosing}[1]{\Theta(#1)}
\newcommand{\notenclosing}[1]{\Omega(#1)}
\newcommand{\crossings}[1]{x(#1)}
\begin{definition}
	Let $p$ be a vertex in $D$. We denote by $ \crossings p $ the set of pairs $ (A,A') $, where $A$ is an annulus enclosing $p$, $A'$ is an annulus \emph{not} enclosing $p$, and $A$ crosses $A'$.
\end{definition}

\begin{lemma}\label{crossing-annuli-lower-bound}
	Let $p$ and $q$ be vertices of $D$, and let $ \gamma $ be a (combinatorial) path connecting them. Assume that no annulus encloses both $p$ and $q$, and that every vertex in $\gamma$ is enclosed by at least $k$ annuli of $D$. Then \[
		\cardinality{\crossings p} \geq \frac{k(k+1)}{2}.
	\]
\end{lemma}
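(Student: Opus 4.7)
The plan is to parametrise $\gamma$ as $p = \gamma_0, \gamma_1, \ldots, \gamma_N = q$ and to track, at each vertex, the collection of annuli enclosing it. Since crossing a single edge of $D$ alters this collection by at most one element, its evolution is amenable to combinatorial bookkeeping. Writing $E(v)$ for the set of annuli enclosing $v$ and $m = \cardinality{E(p)} \geq k$, the hypothesis $E(p) \cap E(q) = \emptyset$ guarantees that each $A \in E(p)$ is exited by $\gamma$ at least once; I will let $t_A$ denote its last exit time and order $E(p) = \{A_1, \ldots, A_m\}$ so that $t_{A_1} < \cdots < t_{A_m}$. For each $i$, I then set $v_i = \gamma_{t_{A_i}}$ and $v_i' = \gamma_{t_{A_i}+1}$, so that $v_i$ is enclosed by $A_i$ and $v_i'$ is not.

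The heart of the argument is a geometric claim: any annulus $B$ enclosing $v_i'$ but not enclosing $p$ must cross $A_i$. The proof I have in mind observes that the edge $v_i v_i'$ is dual to $A_i$ and to no other annulus, so that $v_i$ and $v_i'$ lie on the same side of $B$, whence $v_i$ is also enclosed by $B$. Then $p$ is inside $A_i$ but outside $B$, $v_i'$ is inside $B$ but outside $A_i$, and $v_i$ lies inside both; invoking the Jordan-curve-type separation properties of the two annuli in the planar diagram $D$, neither enclosed region can contain the other and the two regions overlap, which forces $A_i$ and $B$ to cross.

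A short counting argument closes the proof. By the choice of the $t_{A_j}$, none of $A_1, \ldots, A_i$ enclose $v_i'$, so at most $m - i$ of the at least $k$ annuli in $E(v_i')$ can belong to $E(p)$; the remaining at least $\max\{0, k - m + i\}$ annuli lie outside $E(p)$ and, by the geometric claim, each contributes a distinct pair $(A_i, B) \in \crossings{p}$. Summing over $i$ yields
\[
\cardinality{\crossings{p}} \;\geq\; \sum_{i=1}^{m} \max\{0, k-m+i\} \;=\; \sum_{j=1}^{k} j \;=\; \frac{k(k+1)}{2}.
\]
The main obstacle will be stating and justifying the geometric claim with sufficient care, in particular handling the planar arrangement of annuli as embedded separating loops so that the containment/overlap dichotomy genuinely forces a crossing; the remaining bookkeeping is routine.
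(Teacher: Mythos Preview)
Your proposal is correct and follows essentially the same route as the paper's proof: order the annuli in $E(p)$ by their last exit time along $\gamma$, look at the exit vertex $v_i'$ (the paper's $p_i$), count the at least $k-m+i$ annuli in $E(v_i') \setminus E(p)$, and argue each must cross $A_i$ via the same Jordan-curve reasoning (using that $v_i$ lies inside both $A_i$ and $B$, while $p$ and $v_i'$ witness that neither enclosed region contains the other). The paper phrases the crossing argument exactly as you do --- ``not one inside the other'' plus ``interiors not disjoint'' --- so no substantive difference remains.
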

\begin{proof}
	Assume that $\gamma \colon [0,\ell] \to D$ is parametrised by length, with $ \gamma(0)=p$ and $\gamma(\ell)=q $. For a vertex $p'$ of $D$, denote by $\enclosing{p'}$ the set of annuli enclosing $p'$, and by $\notenclosing {p'}$ the set of annuli not enclosing $p'$.

	For $A \in \enclosing p$, we call the \emph{exit time} of $A$ the number
	\[
		\epsilon(A) := \min\set{t \in \N \cap [0,\ell] \colon A \in \notenclosing{\gamma(s)} \text{ for } s \in \N \cap [t, \ell]}.
	\]
	That is, the number $\epsilon(A)$ is the time after which $ \gamma $ last crosses $A$. Note that this number is well-defined since $ A \in \notenclosing q$. Moreover, an annulus $A \in \enclosing p$ crosses, by definition, the edge between $ \gamma(\epsilon(A)-1) $ and $ \gamma(\epsilon(A)) $, so different annuli have distinct exit times.

	Choose the ordering $ \enclosing p = \set{A_1, \dots, A_n} $ according to the exit times so that $ \epsilon(A_i) < \epsilon(A_j) $ whenever $ i<j $. For example, if the annuli are pairwise disjoint, then $ A_1 $ is the innermost annulus, and $ A_n $ the outermost. For $ i \in \range n $, let $p_i\coloneq \gamma(\epsilon(A_i)) $ denote the exit point of $A_i$.

	By the chosen ordering, we have $A_j \in \notenclosing{p_i}$ for $i\geq j$. In particular, $ \cardinality{\notenclosing{p_i} \cap \enclosing{p}} \geq {i-1} $, and therefore $ \cardinality{\enclosing{p_i} \cap \enclosing{p}} \leq n-(i-1) $.
	By the assumption $ \cardinality{\enclosing{p_i}} \geq k $, it follows that $ \cardinality{\enclosing{p_i} \cap \notenclosing{p}} \geq k - n + i-1 $.
	Every annulus $ A' \in \enclosing{p_i} \cap \notenclosing{p} $ must cross $ A_i $, since $p$ is enclosed by $ A_i $ but not by $ A' $, $ p_i $ is enclosed by $ A' $ but not by $ A_i $, and the vertex after $p_i$ (moving towards $p$) is enclosed by both $A'$ and $A_i$. Therefore, $ (A_i, A') \in \crossings p $.

	Since $n \geq k$, we conclude that $\crossings p$ contains at least $ k+(k-1)+\dots + 1 = \frac{k(k+1)}2 $ pairs of crossing annuli.
\end{proof}

We have the following consequence.

\begin{corollary}\label{estimate-area-by-family-of-points}
	Assume that there exists a family $P = \set{p_1, \dots, p_m}$ of vertices in $D$ such that no annulus encloses two or more vertices of $P$. Furthermore, suppose that for every $i \in \range m$, there is some integer $ k_i \geq 0 $ and a vertex $q_i$ in $D$ such that:
	\begin{itemize}
		\item no annulus encloses both $ p_i $ and $q_i$;
		\item there exists a path connecting $p_i$ and $q_i$ such that every vertex of the path is enclosed by at least $k_i$ annuli.
	\end{itemize}
	Then
	\[
		\area D \geq \frac12 \sum_{i=1}^m{k_i^2}.
	\]
\end{corollary}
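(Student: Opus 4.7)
The plan is to apply \cref{crossing-annuli-lower-bound} pointwise to each $p_i$ (together with its companion $q_i$ and the prescribed path) and then aggregate the resulting estimates for $\cardinality{\crossings{p_i}}$ into a lower bound on $\area D$ via a double-counting argument, where the hypothesis on $P$ is used to control the overcounting.

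For each $i \in \range m$, the two hypotheses of \cref{crossing-annuli-lower-bound} hold by assumption: no annulus encloses both $p_i$ and $q_i$, and every vertex on the given path from $p_i$ to $q_i$ is enclosed by at least $k_i$ annuli. Applying the lemma yields
\[
    \cardinality{\crossings{p_i}} \geq \frac{k_i(k_i+1)}{2} \geq \frac{k_i^2}{2}.
\]
Summing over $i$ gives the lower bound $\sum_i \cardinality{\crossings{p_i}} \geq \tfrac{1}{2}\sum_i k_i^2$.

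Next, I would bound the same sum from above in terms of $\area D$. Swapping the order of summation,
\[
    \sum_{i=1}^m \cardinality{\crossings{p_i}}
    = \sum_{(A,A')} \cardinality{\set{i : A \in \enclosing{p_i},\ A' \in \notenclosing{p_i}}},
\]
where the outer sum ranges over ordered pairs $(A,A')$ of distinct crossing annuli in $D$. The hypothesis that no annulus encloses two vertices of $P$ means that each annulus encloses at most one $p_i$, so the inner cardinality is at most $1$ for every such ordered pair. Since each $2$-cell of $D$ is an instance of a commutator relation of $\raagpres$, it lies at the intersection of a unique unordered pair of corridors/annuli; hence the number of unordered crossing pairs of annuli is at most $\area D$.

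Combining these two estimates while carefully tracking constants yields the claimed bound. The main delicate point will be the bookkeeping in the last step: a single crossing pair $\set{A,A'}$ can be charged to up to two distinct indices $i$ (one for the element of $P$ enclosed by $A$ but not $A'$, and one for the element enclosed by $A'$ but not $A$), and the hypothesis on $P$ is precisely what keeps this multiplicity bounded. Once this bookkeeping is in place, the inequality $\area D \geq \tfrac{1}{2}\sum_i k_i^2$ follows, and no further input beyond \cref{crossing-annuli-lower-bound} is needed.
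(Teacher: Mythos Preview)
Your approach is essentially the same as the paper's: apply \cref{crossing-annuli-lower-bound} to each $p_i$, then relate $\sum_i \cardinality{\crossings{p_i}}$ to $\area D$. The paper phrases the second step as a disjointness argument (the sets $\crossings{p_i}$ are pairwise disjoint subsets of the set $X$ of ordered pairs of crossing annuli, since the first coordinate $A$ can enclose at most one $p_i$), whereas you phrase it as double counting; these are equivalent.

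However, your bookkeeping as written only yields $\area D \geq \tfrac14\sum_i k_i^2$, not the claimed $\tfrac12$. You bound the number of \emph{unordered} crossing pairs of annuli by $\area D$ and then note each such pair is charged to at most two indices; this gives $\sum_i \cardinality{\crossings{p_i}} \leq 2\,\area D$. The missing observation, which the paper makes explicit, is that if two annuli cross then they must cross at least twice (their cores are closed curves in the plane), so each unordered crossing pair contributes at least two $2$-cells. Hence the number of \emph{ordered} pairs of crossing annuli is already at most $\area D$, and since your inner cardinality is at most $1$ for each ordered pair, you get $\sum_i \cardinality{\crossings{p_i}} \leq \area D$ directly. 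With this one-line fix the constant comes out right.
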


\begin{proof}
	The area of $D$ coincides with the total number of crossings between annuli and/or corridors. In particular, it is bounded below by the cardinality of
	\[
		X \coloneq \set{(A, A'): A \neq A' \text{ are annuli and } A \text{ crosses } A'}.
	\]
	Note that we do not divide by two to take the symmetry into account, since if two annuli cross, they must do so at least twice.

	Denote by $ L_1, \dots, L_m $ the families of annuli enclosing $ p_1, \dots, p_m $, respectively. By assumption, the sets $L_i$ are pairwise disjoint, so $ x(p_1), \dots, x(p_m) \subseteq X $ are also pairwise disjoint. By \cref{crossing-annuli-lower-bound}, we have $ \cardinality{x(p_i)} \geq \frac{k_i^2}2 $ for every $ i \in \range m $. Therefore, we obtain $ \cardinality X \geq \frac{1}2 \sum_{i=1}^m k_i^2 $, as desired.
\end{proof}

\subsection{Cubic lower bound}\label{sec:cubic-lower-bound}

Throughout this subsection, we assume that $ \Gamma $ has property \ref{D3}; that is, it admits a maximal reducible subgraph $\Lambda$ that is essentially $2$-reducible. In particular, the subgraph $\Lambda$ decomposes as a join $\Lambda' \ast \Lambda''$, where $ \Lambda'$ and $\Lambda''$ are irreducible and each has at least two vertices. We aim to show that, in this case, the Dehn function of $ \bbg \Gamma $ is bounded below by a cubic polynomial.

\begin{remark}
	The results in this subsection also apply when $\Gamma$ has \D4, since \D4 implies \D3. In fact, the proof of the quartic lower bound in \cref{sec:quartic-lower-bound} builds on the work done for the cubic lower bound in \cref{sec:cubic-lower-bound}, using the additional hypothesis given by property \ref{D4} to promote the cubic lower bound into a quartic one.
\end{remark}

Denote by $\mathcal A$ and $\mathcal B$ the sets of generators corresponding to the vertices of $ \Lambda' $ and $ \Lambda'' $, respectively, and by $ \mathcal C \coloneq \raaggens \setminus  (\mathcal A \cup \mathcal B) $ the generators  corresponding to the vertices outside the join.

We begin by noting the following fact.

\begin{lemma}\label{star-of-graph-is-other-graph}
	Let $s \in \raaggens$ be a generator. If $s$ commutes with all $ a \in \mathcal A $, then $s \in \mathcal B$. Similarly, if $s$ commutes with all $b \in \mathcal B $, then $s \in \mathcal A$.
\end{lemma}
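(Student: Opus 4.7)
The plan is to give a short graph-theoretic argument using just two ingredients: the irreducibility of $\Lambda'$ and the maximality of the reducible subgraph $\Lambda = \Lambda' \ast \Lambda''$ inside $\Gamma$. I will only treat the first implication; the second follows by swapping the roles of $\Lambda'$ and $\Lambda''$.

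First I would translate the algebraic hypothesis into graph language: $s$ commuting with every $a \in \mathcal A$ means that for each vertex $v_a$ of $\Lambda'$, either $v_s = v_a$ or $\{v_s, v_a\} \in \ee\Gamma$. In other words, $v_s$ is adjacent in $\Gamma$ to every vertex of $\Lambda'$ distinct from $v_s$ itself.

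Next I would rule out the possibility $v_s \in \vv{\Lambda'}$. If $v_s$ were such a vertex, the previous observation would show that $v_s$ is adjacent to every other vertex of $\Lambda'$, producing a nontrivial join decomposition $\Lambda' = \{v_s\} \ast (\Lambda' \setminus \{v_s\})$. Since $\Lambda'$ has at least two vertices by the essentiality of $\Lambda$, the second factor would be nonempty, contradicting the irreducibility of $\Lambda'$.

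Finally I would invoke maximality to place $v_s$ inside $\Lambda''$. Assuming for contradiction that $v_s \notin \vv\Lambda$, consider the induced subgraph on $\vv\Lambda \cup \{v_s\}$; by the join structure of $\Lambda$ together with the fact that $v_s$ is adjacent to all of $\vv{\Lambda'}$, this subgraph decomposes as $\Lambda' \ast \Lambda'''$, where $\Lambda'''$ is the induced subgraph on $\vv{\Lambda''} \cup \{v_s\}$. This is a reducible subgraph of $\Gamma$ strictly containing $\Lambda$, contradicting maximality. Hence $v_s \in \vv{\Lambda''}$, i.e.\ $s \in \mathcal B$. There is no real obstacle in this argument; everything follows directly from the definitions of maximal reducible subgraph, essential reducible graph, and irreducibility recalled in \cref{sec:graphs}.
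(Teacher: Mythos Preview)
Your proof is correct and follows essentially the same approach as the paper's own argument: translate the commuting hypothesis into adjacency, use irreducibility of $\Lambda'$ to exclude $v_s \in \vv{\Lambda'}$, and use maximality of $\Lambda$ to exclude $v_s \notin \vv\Lambda$. The paper's version is slightly terser but the logic is identical.
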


\begin{proof}
	We prove the first assertion; the other follows by symmetry. Suppose that $s\in\raaggens$ commutes with all $a\in\mathcal{A}$, so the vertex $v_s$ belongs to $\cap_{a\in\mathcal{A}}\Star{v_a}$. If $v_s=v_a$ for some $a\in\mathcal{A}$, then $\Lambda'$ would be a cone graph, contradicting the fact that $\Lambda'$ is irreducible. If $v_s\neq v_a$ for all $a\in\mathcal{A}$, then $v_s$ must belong to $\Lambda''$; otherwise, the graph $\Lambda'\ast(v_s\cup\Lambda'')$ would be a reducible graph containing $\Lambda$, contradicting the maximality of $\Lambda$. Thus, we have $v_s\in\Lambda''$, and therefore $s\in\mathcal{B}$.
\end{proof}

\cref{star-of-graph-is-other-graph} has an immediate corollary.

\begin{corollary}\label{no-cone-point}
	There is no generator that commutes with all $ s \in \mathcal A \cup \mathcal B $.
\end{corollary}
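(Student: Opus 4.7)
The proof I would give is a short contradiction argument that essentially invokes \cref{star-of-graph-is-other-graph} twice. Suppose, for the sake of contradiction, that there exists a generator $t \in \raaggens$ that commutes with every $s \in \mathcal{A} \cup \mathcal{B}$. In particular, $t$ commutes with every $a \in \mathcal{A}$, so by the first assertion of \cref{star-of-graph-is-other-graph}, $t \in \mathcal{B}$. Symmetrically, since $t$ commutes with every $b \in \mathcal{B}$, the second assertion gives $t \in \mathcal{A}$.

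The plan is then to derive a contradiction from $t \in \mathcal{A} \cap \mathcal{B}$. Recall that $\mathcal{A}$ and $\mathcal{B}$ correspond respectively to the vertex sets of $\Lambda'$ and $\Lambda''$, which are the two factors of the join decomposition $\Lambda = \Lambda' \ast \Lambda''$; in particular, $\vv{\Lambda'}$ and $\vv{\Lambda''}$ are disjoint by definition of a join, so $\mathcal{A} \cap \mathcal{B} = \emptyset$, yielding the desired contradiction. Alternatively, one could observe that $t \in \mathcal{B}$ means $v_t \in \vv{\Lambda''}$, and since $t$ is supposed to commute with every element of $\mathcal{B}$ including itself, $v_t$ would need to be adjacent to itself in $\Gamma$, contradicting the fact that $\Gamma$ is a simplicial graph (no loops).

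There is no real obstacle here: the content of the statement is entirely absorbed by \cref{star-of-graph-is-other-graph}, which already encodes the maximality and irreducibility of $\Lambda'$ and $\Lambda''$. The role of \cref{no-cone-point} is merely to package these two assertions into a convenient form for subsequent use in the construction of null-homotopic words witnessing the cubic lower bound.
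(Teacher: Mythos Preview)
Your proof is correct and matches the paper's approach exactly: the paper's one-line proof states that such a generator would belong to both $\mathcal{A}$ and $\mathcal{B}$ (implicitly invoking \cref{star-of-graph-is-other-graph} twice, as you spell out), while $\mathcal{A} \cap \mathcal{B} = \emptyset$.
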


\begin{proof}
	Such a generator should belong to both $ \mathcal A $ and $ \mathcal B $, but $ \mathcal A \cap \mathcal B = \emptyset $.
\end{proof}

\begin{lemma}\label{non-commuting-sequence}
	There exist $ k \in \N$ and a sequence $ a_0, a_1, \dots, a_k=a_0 $ of non-necessarily distinct elements of $\mathcal A$ such that $ \set{a_1, \dots, a_k} = \mathcal A $, and $ [a_{i-1},a_{i}] \neq 1 $ for all $ i \in \range k $. Similarly, there exist $ \ell \in \N$ and a sequence $ b_0, b_1, \dots, b_\ell=b_0 $ of non-necessarily distinct elements of $\mathcal B$ such that $ \set{b_1, \dots, b_\ell} = \mathcal B $, and $ [b_{j-1},b_{j}] \neq 1 $ for all $ j \in \range \ell $.
\end{lemma}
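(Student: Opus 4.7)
The plan is to translate the sought sequence into a graph-theoretic statement about a closed walk, and then use connectedness of the complement graph.

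First I would recall the standard fact (stated earlier in the paper) that a finite simplicial graph is irreducible if and only if its complement is connected. Applying this to $\Lambda'$, which is irreducible with at least two vertices, we get that the complement $\compl{\Lambda'}$ is connected. Moreover, $\Lambda'$ cannot be a complete graph, since a complete graph on at least two vertices decomposes as a join of single vertices; hence $\compl{\Lambda'}$ has at least one edge, and thus at least one edge in every connected component (there is only one). By definition of the complement, two distinct generators $a, a' \in \mathcal{A}$ are adjacent in $\compl{\Lambda'}$ if and only if $[a, a'] \neq 1$ in $\raag{\Gamma}$.

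So the problem reduces to producing a closed walk $a_0, a_1, \dots, a_k = a_0$ in $\compl{\Lambda'}$ that visits every vertex. For this, I would take a spanning tree $T$ of $\compl{\Lambda'}$ (which exists because $\compl{\Lambda'}$ is connected), pick any vertex $a_0 \in T$ as both the start and end, and perform a depth-first traversal of $T$ starting from $a_0$. Concretely, one can double every edge of $T$ to get a multigraph in which every vertex has even degree, and take an Eulerian circuit; this circuit uses each edge of $T$ exactly twice, visits every vertex of $T$ (hence every vertex of $\compl{\Lambda'}$), and returns to $a_0$. Each consecutive pair $(a_{i-1}, a_i)$ in this walk is an edge of $T \subseteq \compl{\Lambda'}$, so $[a_{i-1}, a_i] \neq 1$, as required.

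The corresponding statement for $\mathcal B$ follows by the same argument applied to $\Lambda''$, which is also irreducible with at least two vertices by the assumption that $\Lambda$ is essential. There is no main obstacle here: the claim is a routine consequence of the irreducibility hypothesis, and the only subtlety is ruling out the degenerate case where $\compl{\Lambda'}$ has no edges, which is handled by observing that $\Lambda'$ is not complete.
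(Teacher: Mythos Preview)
Your proof is correct and follows the same approach as the paper: both use that irreducibility of $\Lambda'$ means $\compl{\Lambda'}$ is connected, and then take a closed walk in $\compl{\Lambda'}$ visiting every vertex. The paper simply asserts such a walk exists (allowing backtracking), while you give an explicit construction via a spanning tree traversal; your remark that $\Lambda'$ is not complete is harmless but unnecessary, since a connected graph on at least two vertices automatically has an edge.
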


\begin{proof}
	We prove the first statement; the other follows from a similar argument. Since $ \Lambda' $ is irreducible, its complement $ \compl{(\Lambda')} $ is connected. Thus, the graph $ \compl{(\Lambda')} $ admits a combinatorial closed path $ \gamma \colon [0, k] \to \compl{(\Lambda')}$ (that is, $\gamma(k)=\gamma(0)$) that visits all the vertices at least once. Notice that we do not require the path $\gamma$ to be injective, for instance, it may backtrack. Let $ a_0, a_1, \dots, a_k \in \mathcal A $ be the sequence of generators corresponding to the vertices of $\gamma$, so that $ \gamma(i) = v_{a_i} $. Since consecutive vertices on $\gamma$ are not adjacent in $\Lambda'$, the corresponding generators do not commute.
\end{proof}

Let $ a_1, \dots, a_k = a_0 \in \mathcal A$ and $ b_1, \dots, b_\ell = b_0 \in \mathcal B $ be any choice of elements satisfying the assumptions of \cref{non-commuting-sequence}. For $ n \in \N $, consider the alternating words
\begin{align*}
	w_n'  & \coloneq (a_1 b_1^{-1} \cdots a_k b_1^{-1})^n (b_1 a_1^{-1} \cdots b_1 a_k^{-1})^n,          \\
	w''_n & \coloneq (b_1 a_1^{-1} \cdots b_{\ell} a_1^{-1})^n(a_1 b_1^{-1} \cdots a_1 b_{\ell}^{-1})^n.
\end{align*}
Notice that $ w'_n $ and $w''_n$ represent elements in $\bbg{\Lambda'}$ and $\bbg{\Lambda''}$, respectively. Therefore, these two words commute; that is, the commutator $[w_n', w''_n]$ is a null-homotopic alternating word.

Let $D_n$ be an almost-flat van Kampen diagram for the null-homotopic alternating word
\[
	w_n \coloneq [w_n', w''_n].
\]
The diagram $D_n$ is pictured as a square, where the top and bottom sides are labelled by $ w'_n $, and the left and right sides are labelled by $w''_n$; see \cref{fig:grid}. In this subsection, we prove that the area of $D_n$ is bounded below by a cubic polynomial.

\begin{figure}
	\centering
	\begin{tikzpicture}[scale=.5]
	\usetikzlibrary{decorations.markings}
	\definecolor{happyblue}{RGB}{26,133,255}
	\definecolor{happypink}{RGB}{212,17,89}
	\tikzset{|->-|/.style={decoration={markings,
			mark=at position 0 with {\arrow[ultra thin]{|}},
			mark=at position #1 with {\arrow[thick]{>}},
			mark=at position 1 with {\arrow[ultra thin]{|}},
		},postaction={decorate}}}
	\tikzset{->-/.style={decoration={markings,mark=at position #1+.01 with {\arrow{>}}},postaction={decorate}}}
	\tikzset{-<-/.style={decoration={markings,mark=at position #1-.01 with {\arrowreversed{>}}},postaction={decorate}}}
	\tikzset{|-|>-|/.style={decoration={markings,
			mark=at position 0 with {\arrow[ultra thin]{|}},
			mark=at position #1 with {\arrow{latex}},
			mark=at position 1 with {\arrow[ultra thin]{|}},
		},postaction={decorate}}}
	\tikzset{-|>-/.style={decoration={markings,mark=at position #1+.01 with {\arrow{latex}}},postaction={decorate}}}
	\tikzset{-<|-/.style={decoration={markings,mark=at position #1-.01 with {\arrowreversed{latex}}},postaction={decorate}}}

	\def\n{2}
	\def\k{3}
	\def\squarelength{4*\n*\k}
	\coordinate (O) at (\squarelength/2,\squarelength/2);

	\foreach \ii in {1, ..., \n} {
			\foreach \jj in {1, ..., \k} {
					\foreach \ll in {0, 1} {
							\def\t{\ll*2*\k*\n + 2*\ii*\k + 2*\jj - 2 - 2*\k};

							\draw[|->-|=.7] (0, \t+2-2*\ll) -- (0, \t+1);
							\draw[|-|>-|=.7] (0, \t+2*\ll) -- (0, \t+1);
							\draw[|->-|=.7] (\squarelength, \t + 2 - 2*\ll) -- (\squarelength, \t+1);
							\draw[|-|>-|=.7] (\squarelength, \t + 2*\ll) -- (\squarelength, \t+1);
							\draw[|-|>-|=.7] (\t+2-2*\ll, 0) -- (\t+1, 0);
							\draw[|->-|=.7] (\t+2*\ll, 0) -- (\t+1, 0);
							\draw[|-|>-|=.7] (\t+2-2*\ll, \squarelength) -- (\t+1, \squarelength);
							\draw[|->-|=.7] (\t+2*\ll, \squarelength) -- (\t+1, \squarelength);

							\node[left,happyblue] at (0, \t + .5 + \ll) {$b_\jj$};
							\node[left,happypink] at (0, \t + 1.5 - \ll) {$a_1$};
							\node[right,happyblue] at (\squarelength, \t + .5 + \ll) {$b_\jj$};
							\node[right,happypink] at (\squarelength, \t + 1.5 - \ll) {$a_1$};
							\node[below,happypink] at (\t + .5 + \ll,0) {$a_\jj$};
							\node[below,happyblue] at (\t + 1.5 - \ll, 0) {$b_1$};
							\node[above,happypink] at (\t + .5 + \ll,\squarelength) {$a_\jj$};
							\node[above,happyblue] at (\t + 1.5 - \ll, \squarelength) {$b_1$};

							\def\inclinationangle{(\t-(\squarelength/2))*2}
							\ifthenelse{\ll=0}{
								\draw[happypink, ->-=.4, ->-=.6, thick, name path global=happypink-\ii-\jj-\ll-path] (\t+.5+\ll, 0) to[in={270-(\inclinationangle)},out=90+(\inclinationangle)] (\t+.5+\ll, \squarelength);
							}{
								\draw[happypink, -<-=.4, -<-=.6, thick, name path global=happypink-\ii-\jj-\ll-path] (\t+.5+\ll, 0) to[in={270-(\inclinationangle)},out=90+(\inclinationangle)] (\t+.5+\ll, \squarelength);
							}
							\ifthenelse{\jj=\k \AND \ii=\n \AND \ll=0}{
								\draw[happyblue, -<|-=.21, -<|-=.5, -<|-=.8, thick, name path global=happyblue-\ii-\jj-\ll-path] plot[smooth] coordinates {
										(0*\squarelength, \t+.5+\ll)
										($(.3*\squarelength, \t+.5+\ll)+(0,1.5)$)
										($(.65*\squarelength, \t+.5+\ll)+(0,2.5)$)
										($(.2*\squarelength, \t+.5+\ll)+(0,0)$)
										(1*\squarelength, \t+.5+\ll)
									};
							}{
								\ifthenelse{\jj=1 \AND \ii=1 \AND \ll=1}{
									\draw[happyblue, -|>-=.21, -|>-=.5,-|>-=.8, thick, name path global=happyblue-\ii-\jj-\ll-path] plot[smooth] coordinates {
											(0*\squarelength, \t+.5+\ll)
											($(.8*\squarelength, \t+.5+\ll)+(0,0)$)
											($(.35*\squarelength, \t+.5+\ll)+(0,-2.5)$)
											($(.7*\squarelength, \t+.5+\ll)+(0,-1.5)$)
											(1*\squarelength, \t+.5+\ll)
										};

								}{
									\ifthenelse{\ll=1}{
										\draw[happyblue, -|>-=.41, -|>-=.6, thick, name path global=happyblue-\ii-\jj-\ll-path] (0, \t+.5+\ll) to[in={180+(\inclinationangle)},out=-(\inclinationangle)] (\squarelength, \t+.5+\ll);
									}{
										\draw[happyblue, -<|-=.41, -<|-=.6, thick, name path global=happyblue-\ii-\jj-\ll-path] (0, \t+.5+\ll) to[in={180+(\inclinationangle)},out=-(\inclinationangle)] (\squarelength, \t+.5+\ll);
									}
								}
							}

							\ifthenelse{\ll=0}{
								\draw[happypink, thick, ->-=.5] (0,\t+1.5) to[out=0, in=0, looseness=0.4] (0,{\squarelength-(\t)-1.5});
								\draw[happyblue, thick, -<|-=.5] ({\t+1.5},0) to[out=90,in=90,looseness=0.4] ({{\squarelength-(\t)-1.5}},0);
								\draw[happypink, thick, -<-=.5] (\squarelength,\t+1.5) to[out=180, in=180, looseness=0.4] (\squarelength,{\squarelength-(\t)-1.5});
								\draw[happyblue, thick, -|>-=.5] ({\t+1.5},\squarelength) to[out=270,in=270,looseness=0.4] ({{\squarelength-(\t)-1.5}},\squarelength);
							}
						}
				}
			\ifthenelse{\ii=1}{}{
				\foreach \ll in {0,1} {
						\tikzfillbetween[of=happypink-\ii-1-\ll-path and happypink-\the\numexpr\ii-1\relax-\k-\ll-path]{pattern=north west lines, pattern color=happypink};
						\tikzfillbetween[of=happyblue-\ii-1-\ll-path and happyblue-\the\numexpr\ii-1\relax-\k-\ll-path]{pattern=north east lines, pattern color=happyblue};
					}
			}
		}
	\tikzfillbetween[of=happypink-\n-\k-0-path and happypink-1-1-1-path]{pattern=north west lines, pattern color=happypink};
	\tikzfillbetween[of=happyblue-\n-\k-0-path and happyblue-1-1-1-path]{pattern=north east lines, pattern color=happyblue};

	\tikzset{nodes={fill=white,fill opacity=.6,text opacity=1,rounded corners=10}}
	\node[below] at (2.5,\squarelength) {$e'$};
	\node[above] at (2.5,0) {$e$};
	\node[above] at (\squarelength - 2.5,0) {$e''$};
	\node at (6,4) {$\Sigma(\mathcal A, -1)$};
	\node at (12,4) {$\Sigma(\mathcal A, 0)$};
	\node at (18,4) {$\Sigma(\mathcal A, 1)$};
	\node at (4,6.5) {$\Sigma(\mathcal B, -1)$};
	\node at (4,12.5) {$\Sigma(\mathcal B, 0)$};
	\node at (4,18) {$\Sigma(\mathcal B, 1)$};
\end{tikzpicture}
	\caption{A schematic depiction of a van Kampen diagram for $w_n$, where $ k=\ell=3 $ and $n=2$. The coloured lines represent corridor cores. We call \emph{bridges} the corridors that connect opposite sides of the square, and \emph{half-annuli} the corridors that start and end on the same side of the square. The regions between an $ a_k $-bridge and an $ a_1 $-bridge are called $ \mathcal A $-gaps, and the regions between a $ b_\ell $-bridge and a $ b_1 $-bridge are called $ \mathcal B $-gaps.}
	\label{fig:grid}
\end{figure}

\begin{remark}
	Following the notation in \cref{sec:coloured-words}, we have
	\begin{align*}
		w'_n  & = \pushdown{\colword{(a_1 \cdots a_k)^n(a_1^{-1} \cdots a_k^{-1})}{b_1}}        \\
		w''_n & = \pushdown{\colword{(b_1 \cdots b_\ell)^n(b_1^{-1} \cdots b_\ell^{-1})}{a_1}}.
	\end{align*}
	So, the word $w_n=[w'_n,w''_n]$ is the pushdown of the coloured commutator $[\colword{w'_n}{b_1}, \colword{w''_n}{a_1}]$ (recall \cref{coloured-commutators}). While this fact is not used in the following arguments, it is still worthy of note: we have seen in \cref{sec:upper-bounds} that upper bounds on the almost-flat area of the pushdowns of fundamental pieces (monochromatic words, coloured bigons, and coloured commutators) yield an upper bound on the Dehn function of $\bbg \Gamma$. This suggests that the Dehn functions of $\bbg\Gamma$ should be witnessed by a family of such words.
\end{remark}

\begin{definition}
	We call a corridor of $D_n$ an \emph{$ \mathcal A $-corridor}, \emph{$ \mathcal B $-corridor} or \emph{$ \mathcal C $-corridor}, if it is an $ s $-corridor for some $ s \in \mathcal A, \mathcal B, \mathcal C $ respectively.
	Define \emph{$ \mathcal A $-annulus}, \emph{$ \mathcal B $-annulus} and \emph{$ \mathcal C $-annulus} similarly.
\end{definition}

\begin{remark}
	Since $ w_n $ is a word in $ \mathcal A \cup \mathcal B $, there are no $ \mathcal C $-corridors in $D_n$; however, $ \mathcal C $-annuli may be present.
\end{remark}

The properties described in \cref{non-commuting-sequence} place strong restrictions on how $ w_n $ can be transformed into the trivial word by applying relations. This allows us to describe precisely the behaviour of corridors inside $D_n$.

\begin{definition}
	A corridor in $D_n$ is called a \emph{bridge} if it connects two $i$th letters on the opposite sides of the square (counting from left to right and from bottom to top).
	It is called a \emph{half-annulus} if it connects the $i$th letter with the $(4n-i)$th letter on the same side of the square.
\end{definition}

\begin{lemma}\label{corridors-in-square}
	Every $ \mathcal A $-corridor in $D_n$ is either a half-annulus that starts and ends on the left or right side of the square, or a bridge connecting the top and bottom sides. Analogously, every $\mathcal B$-corridor in $D_n$ is either a half-annulus that starts and ends on the top or bottom side of the square, or a bridge connecting the left and right sides; see \cref{fig:grid}.
\end{lemma}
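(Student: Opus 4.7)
The strategy combines two tools: a projection that kills the centralizer of each $a_i$, and non-crossing constraints between corridors of non-commuting generators. For each $i \in \range k$, define $\phi_i \colon \raag \Gamma \to \Z^2$ as the composition of the retraction $\pi_i \colon \raag \Gamma \to \raag{\Gamma \setminus \Star{v_{a_i}}}$ (killing $\Star{v_{a_i}}$) with the abelian map sending $a_{i-1} \mapsto (1, 0)$, $a_{i+1} \mapsto (0, 1)$, and everything else to zero. By the centralizer theorem for right-angled Artin groups, $\pi_i$ vanishes on the centralizer of $a_i$, hence so does $\phi_i$. When repetitions in the non-commuting sequence cause $a_{i-1}$ or $a_{i+1}$ to coincide with some other generator appearing in $w_n$, one refines $\phi_i$ -- for instance by enlarging the abelian target or by recording in parallel the height at each $a_i$-edge -- to recover an invariant that still distinguishes the various $a_i$-edges on $\partial D_n$.

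Given an $a_i$-corridor $C$ with boundary endpoints $e$ and $e'$, each side of $C$ is labelled by a word supported on $\Star{v_{a_i}}$, so $\phi_i$ kills it; removing the interior of $C$ splits $D_n$ into two regions, and applying $\phi_i$ to the boundary of either region shows that the cumulative value of $\phi_i$ along the counter-clockwise traversal of $\partial D_n$ agrees at $e$ and $e'$. A direct count on $w_n'$ and $w_n''$ gives the key values: $\phi_i$ remains constant at $0$ along the left and right sides (since $w_n''$ contains no surviving $a_{i\pm 1}$ letters after $\pi_i$), while on the top and bottom it equals $(j, j-1)$ at the $j$-th $a_i$-edge in the first half of $w_n'$ and $(2n-j, 2n-j+1)$ in the second half. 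These values are pairwise distinct within each half and do not match across halves, so $\phi_i(e) = \phi_i(e')$ forces matching horizontal positions.

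For $i \neq 1$, since $a_i$ does not appear on the left or right sides, this immediately yields the bridge claim. For $i = 1$, the same analysis rules out non-bridge top-bottom corridors and same-side top-top or bottom-bottom corridors, and leaves only left-left, right-right (where $\phi_1 \equiv 0$ is automatic), or mixed corridors with endpoints on disjoint pairs of sides. To rule out the mixed cases I use $[a_1, a_2] \neq 1$ and $[a_1, a_k] \neq 1$: any such mixed $a_1$-corridor would separate a portion of the top or bottom containing $a_2$- or $a_k$-edges from the opposite side, thereby trapping the top-to-bottom $a_2$- or $a_k$-bridges (already established from the $i \neq 1$ case) in a region with no opposite boundary, a contradiction. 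The remaining same-side $a_1$-corridors are then pinned down to the mirror-image half-annulus pairings by the non-crossing constraint with the left-right $b_j$-bridges for $j \neq 1$, which are obtained symmetrically. The claim for $\mathcal B$-corridors follows by the symmetric argument, exchanging $\mathcal A \leftrightarrow \mathcal B$ and top/bottom $\leftrightarrow$ left/right.

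The main obstacle will be the last step: identifying the exact half-annulus pairings on the left and right sides, since non-crossing alone admits many nested matchings, so one must carefully use the positions of the $b_j$-bridges together with the sign structure of the $a_1$-edges in $w_n''$ to pin down the mirror pairing. A secondary obstacle is the bookkeeping in the case of repeated generators in the non-commuting sequence, where the definition of $\phi_i$ must be adapted so that its values at the various $a_i$-edges on $\partial D_n$ remain distinguishable.
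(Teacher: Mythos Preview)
Your approach is essentially the paper's counting argument dressed up as a homomorphism: the paper directly counts signed $a_{i-1}$-edges on each side of a putative $a_i$-corridor (this count is forced to zero because $[a_{i-1},a_i]\neq 1$ means $a_{i-1}$-corridors cannot cross it), while your $\phi_i$ encodes the same balance as the first coordinate of a map to $\Z^2$. The paper's version is shorter and sidesteps your repeated-generator bookkeeping by working with a single neighbour at a time rather than the full retraction.

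There is one genuine slip. Your claim that $\phi_i$ is constant along the left and right sides is false whenever $a_1\in\{a_{i-1},a_{i+1}\}$ as generators---for instance at $i=2$---since $w_n''$ contains many $a_1$-letters and these survive $\pi_i$ (because $[a_1,a_i]\neq 1$). This does not actually break your conclusion: globally $\phi_i(w_n'')=0$, so the top and bottom values still line up; and for $i\neq 1$ the real reason no side-matches occur is simply that there are no $a_i$-edges on the left or right. But the justification you wrote is wrong and should be replaced by this observation.

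For the half-annulus pairing you flag as the main obstacle, the paper's argument is brief and close to what you sketch. Once the $a_2$- and $a_k$-bridges are established, the remaining $a_1$-edges (those on the left and right sides together with the outermost $a_1$-edges on top and bottom) are confined to a region cut off by those bridges, and the alternating orientation pattern of these edges leaves the concentric non-crossing matching as the only possibility. So your outline is correct; the paper just asserts the combinatorics directly rather than building further machinery.
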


\begin{proof}
	We prove the first statement; the second statement follows from a similar argument.
	Consider the $j$th leftmost $a_i$-edge $e$ on the bottom side of $D_n$, with $2 \leq i \leq n$. The $ \mathcal A $-corridor starting at $e$ must end at another $a_i$-edge with opposite orientation. This corridor divides the other edges of $D_n$ into two sets, one on each side of the corridor. Since two $a_i$-corridors cannot cross, the total number of boundary $a_i$-edges on each side of the corridor, counted with sign, must be zero. To satisfy this condition, there are only two possibilities: the corridor must end either at the $j$th leftmost $a_i$-edge $e'$ on the top side, or at the $j$th rightmost $a_i$-edge $e''$ on the bottom side.

	Suppose, by contradiction, that there is a corridor $C$ connecting $e$ to $e''$. Then there are $(n-j)$ many $a_{i-1}$-edges below $C$ oriented to the right and $(n-j+1)$ many $a_{i-1}$-edges below $C$ oriented to the left. Since $a_{i-1}$ and $a_i$ do not commute, every corridor that starts at a letter $a_{i-1}$ below $C$ cannot intersect $C$. Thus, there is no way to pair all the $a_{i-1}$ edges below $C$. Therefore, a corridor that starts at $e$ must end at $e'$, that is, the corridor $C$ is a bridge.

	For $ a_1 $-edges, the reasoning is similar: let $e$ be an $a_1$-edge on the bottom side that is not the leftmost one. There is only one other edge with the same label that can be reached by a corridor $C$ without intersecting any $ a_2 $-corridors or $a_k$-corridors, namely the $a_1$-edge directly above on the top side. Thus, the corridor $C$ is a bridge.

	The leftmost $a_1$-edges on the top and bottom sides, together with all the $a_1$-edges on the left and right sides, are oriented in such a way that the only possible $a_1$-corridors that do not pairwise intersect and do not intersect any $a_2$-corridors or $a_k$-corridors are bridges and half-annuli, respectively, as shown in \cref{fig:grid}.
\end{proof}

\cref{corridors-in-square} shows that bridges in $D_n$ look almost like a grid pattern, in the sense that $ \mathcal A $-bridges are parallel and do not intersect, and the same holds for $ \mathcal B $-bridges. Every $\mathcal A$-bridge crosses every $\mathcal B$-bridge at least once, and possibly multiple times, as shown in \cref{fig:grid}.
Moreover, the half-annuli must stay ``close'' to the boundary, as they cannot cross a bridge: for example, the bottom half-annuli labelled by $b_1$ would need to cross a $b_1$-bridge, and the top ones would have to cross a $b_k$-bridge, and both are not allowed.
This gives a complete description of the corridors inside $D_n$.

We now proceed to investigate the behaviour of the annuli. To this end, we introduce the following definition.

\begin{definition}
	For $ -(n-1) \leq i \leq n-1 $, define the \emph{$\mathcal A$-gap}, denoted by $\Sigma(\mathcal A, i)$, to be the region between the $(n+i)$th $a_k$-bridge and the $(n+i+1)$th $a_1$-bridge, counted from left to right. Similarly, define the \emph{$\mathcal B$-gap}, denoted by $\Sigma(\mathcal B, j)$, to be the region between the $(j+n)$th $b_{\ell}$-bridge and the $(j+n+1)$th $b_1$-bridge, counted from bottom to top.
\end{definition}

\begin{lemma}\label{corridor-touching-two-gaps}
	Let $A$ be an annulus that intersects two distinct $ \mathcal A $-gaps. Then $A$ is a $ \mathcal B $-annulus. Similarly, if it intersects two distinct $ \mathcal B $-gaps, then it is an $ \mathcal A $-annulus.
\end{lemma}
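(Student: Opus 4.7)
The plan is to show that if an $s$-annulus $C$ has $2$-cells in two distinct $\mathcal{A}$-gaps, then the generator $s$ must commute with every element of $\mathcal{A}$; combining this with \cref{star-of-graph-is-other-graph} forces $s\in\mathcal{B}$, so $C$ is a $\mathcal{B}$-annulus. The symmetric statement is proved in the same way with the roles of $\mathcal{A}$ and $\mathcal{B}$ swapped.

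First, I would analyse the family of $\mathcal{A}$-bridges that lies between two distinct $\mathcal{A}$-gaps $\Sigma(\mathcal{A}, i)$ and $\Sigma(\mathcal{A}, j)$ with $i<j$. By definition, $\Sigma(\mathcal{A},i)$ is bounded on the right by an $a_1$-bridge and $\Sigma(\mathcal{A},i+1)$ is bounded on the left by an $a_k$-bridge. The portion of the bottom boundary word lying strictly between these two bridges reads $b_1^{-1}a_2b_1^{-1}\cdots a_{k-1}b_1^{-1}$, so by \cref{corridors-in-square} each letter $a_m$ gives rise to an $\mathcal{A}$-bridge separating $\Sigma(\mathcal{A},i)$ from $\Sigma(\mathcal{A},i+1)$. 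Iterating from $i$ up to $j$, every element of $\{a_1,\dots,a_k\}$ appears as the label of some bridge in the region that separates $\Sigma(\mathcal{A}, i)$ from $\Sigma(\mathcal{A}, j)$.

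Next, I would argue that $C$ crosses each such bridge. Since an annulus is a cyclically ordered sequence of $2$-cells, any path along $C$ from a $2$-cell in $\Sigma(\mathcal{A}, i)$ to a $2$-cell in $\Sigma(\mathcal{A}, j)$ must transversally cross every bridge separating these two gaps. As recalled in \cref{sec:prelims}, two corridors that cross must be labelled by commuting generators. Therefore $s$ commutes with every $a_m$ for $m\in\range k$, and since $\{a_1,\dots,a_k\}=\mathcal{A}$ by the choice made in \cref{non-commuting-sequence}, the generator $s$ commutes with every element of $\mathcal{A}$. Applying \cref{star-of-graph-is-other-graph} yields $s\in\mathcal{B}$, concluding the proof.

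The only mildly delicate point will be justifying that $C$, viewed as a subdiagram, truly must cross each of the intermediate bridges rather than somehow ``jumping'' across them. This is handled by the standard fact that the bridges subdivide $D_n$ into the gaps and their complement, so any connected subdiagram meeting two distinct gaps has to cross each bridge between them; this is exactly the same principle used to define the separation of vertices by corridors in \cref{sec:lower-bounds}.
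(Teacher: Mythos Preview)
Your proof is correct and follows essentially the same route as the paper's: an annulus meeting two distinct $\mathcal A$-gaps must cross every $\mathcal A$-bridge in between, hence its label commutes with all of $\mathcal A$, and \cref{star-of-graph-is-other-graph} finishes. The paper's version is terser, but your added justification that the bridges between consecutive gaps exhaust all of $\mathcal A$ (via $\{a_1,\dots,a_k\}=\mathcal A$ from \cref{non-commuting-sequence}) makes explicit exactly the point the paper leaves implicit.
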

\begin{proof}
	Let $s \in \raaggens$. If $A$ is an $s$-annulus intersecting two distinct $\mathcal A$-gaps, it must also cross every $\mathcal A$-bridge in between. Then $s$ commutes with all the generators of $\mathcal A$, so it belongs to $\mathcal B$ by \cref{star-of-graph-is-other-graph}. The argument for the second statement is similar.
\end{proof}

For $i,j \in \Z$ with $\abs i, \abs j \leq n-1 $, denote by $R_{i,j}$ the intersection $\Sigma(\mathcal A,i) \cap \Sigma(\mathcal B, j)$, which is non-empty as it contains at least intersections of the sides of the bridges. However, it may be disconnected. For example, in \cref{fig:grid}, the region $ R_{0,0} $ has three connected components.

\begin{lemma}\label{corridor-height-of-region}
	For every vertex $p \in R_{i,j}$, we have $ \cheight(p) = k (n-\abs{i}) +\ell (n-\abs{j})$
\end{lemma}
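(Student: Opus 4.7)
The plan is to use the explicit classification of corridors in $D_n$ provided by \cref{corridors-in-square}. Since $w_n$ involves only generators in $\mathcal{A} \cup \mathcal{B}$, every $\mathcal{C}$-object in $D_n$ is an annulus (hence contributes to $\alpha(p)$ rather than $\kappa(p)$), while every $\mathcal{A}$- or $\mathcal{B}$-corridor is either a bridge between opposite sides of the square or a half-annulus with both endpoints on the same side. I will first argue that the half-annuli contribute $0$ to $\kappa(p)$ and then carry out a signed count of the bridges.

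For the half-annuli, consider an $a_1$-half-annulus on the left side of the square; the other three cases (an $a_1$-half-annulus on the right side, or a $b_1$-half-annulus on the top or bottom side) are analogous. Since two $a_1$-corridors cannot cross, this half-annulus is trapped in the thin strip between the left boundary and the leftmost $a_1$-bridge, so the cap it encloses is contained in that strip. Both endpoints of the half-annulus lie strictly above $p_0$ (since the bottommost edge on the left side, starting from $p_0$, is a $b_1$-edge rather than an $a_1$-edge), so $p_0$ does not belong to the sub-arc of the left boundary that bounds the cap. Moreover, $p \in R_{i,j}$ sits between the $(n+i)$-th $a_k$-bridge and the $(n+i+1)$-th $a_1$-bridge, so in particular $p$ lies to the right of the leftmost $a_1$-bridge, hence also outside the cap. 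The half-annulus therefore does not separate $p_0$ from $p$.

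For the bridges I focus on the $\mathcal{A}$-contribution; the $\mathcal{B}$-contribution will follow by the obvious top/bottom--left/right symmetry. For each $m \in \range k$, the bottom side contains $2n$ $a_m$-edges: the first $n$ appear as positive letters in the first half of $w'_n$ (edges pointing right) and the last $n$ as negative letters in the second half (edges pointing left). By the orientation convention for corridors, a bridge whose bottom edge points right has positive orientation pointing upward, so that ``right of $C$ in positive orientation'' is the physical right; a bridge whose bottom edge points left has positive orientation pointing downward, so that ``right of $C$ in positive orientation'' is the physical left. For $p \in \Sigma(\mathcal{A}, i)$ the $\mathcal{A}$-bridges separating $p_0$ from $p$ are exactly those strictly to the left of $p$, and among those with label $a_m$ there are $\min\set{n, n+i}$ positive and $\max\set{0, i}$ negative. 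A positive bridge on the left contributes $+1$ to $\kappa(p)$ and a negative bridge on the left contributes $-1$, giving a net contribution of $n - |i|$ per label, regardless of the sign of $i$. Summing over $m \in \range k$ and adding the analogous $\mathcal{B}$-contribution $\ell(n - |j|)$ yields the claimed identity.

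The main obstacle will be the orientation bookkeeping: one must translate consistently between the sign of a boundary letter, the orientation of the corresponding edge in the diagram, the positive orientation of the bridge, and the sign in $\kappa_\pm$. Once this dictionary is in place across the four cases (positive and negative $\mathcal{A}$- and $\mathcal{B}$-bridges), everything reduces to a routine count of letters in the two halves of $w'_n$ and $w''_n$.
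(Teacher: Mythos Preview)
Your proposal is correct and follows the same approach as the paper's proof. The paper's own argument is extremely terse: it simply observes that a left or right half-annulus, being an $a_1$-corridor, cannot intersect any $\mathcal A$-gap (since to do so it would have to cross an $a_1$-bridge), and symmetrically for top/bottom half-annuli and $\mathcal B$-gaps; hence no half-annulus separates $p$ from $p_0$, and one is left with the straightforward signed count of bridges, which the paper does not even spell out. Your version supplies exactly this bridge count in detail and gives a slightly different (but equivalent) phrasing of the half-annulus argument, confining the half-annulus to the strip between the boundary and the nearest $a_1$-bridge rather than invoking all bridge labels at once. One small point worth tightening: the step ``$p_0$ does not belong to the sub-arc of the left boundary that bounds the cap'' is not by itself the reason $p_0$ lies outside the cap; what you actually use is that $p_0$ lies on the \emph{complementary} boundary arc of $\partial D_n$, which forces it into the non-cap component. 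With that clarified, the argument is complete.
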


\begin{proof}
	The bottom and top half-annuli, which are the $b_1$-corridors, do not intersect $\mathcal B$-gaps, as to do so they would need to intersect a $b_i$-bridge for every $i \in \range {\ell}$. Similarly, the left and right half-annuli do not intersect $\mathcal A$-gaps. In particular, they cannot separate $p$ from the base point, so only the bridges contribute to $\cheight(p)$. The statement now follows from the definition of $\cheight$.
\end{proof}

For $\abs i\leq n-1$, choose a point $p_i$ inside $R_{i,i}$. We verify in the following two lemmas that the family $P=\set{p_i : \abs i\leq n-1}$ satisfies the two conditions in the hypotheses of \cref{estimate-area-by-family-of-points}.

\begin{lemma}\label{distinct-annuli-cubic-case}
	No annulus can enclose both $p_i$ and $p_j$ for $i \neq j$.
\end{lemma}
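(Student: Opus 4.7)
The plan is to argue by contradiction: assume that some $s$-annulus $A$ in $D_n$ encloses both $p_i$ and $p_j$ with $i < j$. First I will identify, for each $m \in \range k$, a specific $a_m$-bridge separating $p_i$ from $p_j$. Namely, I take the $(n+i+1)$-th $a_m$-bridge, which by \cref{corridors-in-square} runs from the bottom to the top side of $D_n$ and by the definition of the gap regions lies strictly between $\Sigma(\mathcal A, i)$ and $\Sigma(\mathcal A, i+1)$, hence separates $p_i \in R_{i,i}$ from $p_j \in R_{j,j}$ in $D_n$. Call this bridge $B_m$.

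Next I will show that $A$ must share a $2$-cell with each $B_m$. Since the region enclosed by $A$ is a connected topological disk containing both $p_i$ and $p_j$, any path inside this region from $p_i$ to $p_j$ must cross $B_m$, placing some $2$-cell of $B_m$ strictly inside $A$; meanwhile, the two endpoints of $B_m$ on $\partial D_n$ are strictly outside $A$. Tracing $B_m$ from one endpoint to the other, the sequence of its $2$-cells must transition from strictly outside $A$ to strictly inside $A$. Two consecutive cells of $B_m$ share an $a_m$-edge, and such an edge can only separate a cell inside $A$ from a cell outside $A$ if at least one of the two cells lies in $A$ itself, since the inner boundary of $A$ consists precisely of edges of cells of $A$. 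Therefore, the transition must occur at a $2$-cell lying in both $A$ and $B_m$, and such a cell is the commutator relation $[s, a_m]$; in particular $s \neq a_m$ and $[s, a_m] = 1$.

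Applying this for every $m \in \range k$ and using \cref{non-commuting-sequence} to identify $\set{a_1, \dots, a_k} = \mathcal A$, the generator $s$ commutes with every element of $\mathcal A$, so by \cref{star-of-graph-is-other-graph} we have $s \in \mathcal B$. The symmetric argument, applied to the $(n+i+1)$-th $b_m$-bridges for $m \in \range \ell$, yields $s \in \mathcal A$, contradicting $\mathcal A \cap \mathcal B = \emptyset$.

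The main obstacle is the topological step: because $D_n$ is only almost-flat rather than minimal-area, I cannot invoke \cref{no-annuli-and-bigons} to rule out annuli or multi-crossings. Instead, I must carry out the transition argument directly, using that the annulus $A$, viewed as a subdiagram, genuinely partitions $D_n \setminus A$ into an inside and an outside region and that edges separating the two regions are forced to lie on cells of $A$. All other steps are bookkeeping supported by the structural description of corridors in $D_n$ established earlier in this section.
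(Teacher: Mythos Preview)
Your proof is correct and follows essentially the same approach as the paper. The paper's argument is a two-line application of \cref{corridor-touching-two-gaps}: since $p_i \in \Sigma(\mathcal A, i) \cap \Sigma(\mathcal B, i)$ and $p_j \in \Sigma(\mathcal A, j) \cap \Sigma(\mathcal B, j)$, an enclosing annulus would intersect two distinct $\mathcal A$-gaps (forcing it to be a $\mathcal B$-annulus) and two distinct $\mathcal B$-gaps (forcing it to be an $\mathcal A$-annulus), a contradiction. Your argument simply inlines the content of \cref{corridor-touching-two-gaps}---showing directly that the annulus must cross an $a_m$-bridge for every $m$, hence $s$ commutes with all of $\mathcal A$, and symmetrically with all of $\mathcal B$---and then invokes \cref{star-of-graph-is-other-graph} in the same way. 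The extra care you take with the topological transition step is not wrong, but it is also not needed beyond what the paper already assumes when it speaks of an annulus ``crossing'' a bridge; in the dual graph the core of $A$ is a cycle with $v_\infty$ outside, and the core of $B_m$ is a path from $v_\infty$ to $v_\infty$ that must pass through a dual vertex inside the cycle, hence must share a dual vertex (i.e.\ a $2$-cell) with the cycle.
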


\begin{proof}
	If such an annulus exists, then it would intersect two distinct $\mathcal A$-gaps and two distinct $ \mathcal B $-gaps. By \cref{corridor-touching-two-gaps}, it would be both an $ \mathcal A $-annulus and a $ \mathcal B $-annulus, which is a contradiction.
\end{proof}

\begin{lemma}\label{path-inside-grid}
	Let $ p \in R_{i,j} $ be a vertex, with $ \abs{i}, \abs{j} \leq n-2$. There exists $q \in R_{i', j'}$, for some $ i', j' $ satisfying $ \abs{i-i'}=\abs{j-j'}=1 $, and a (combinatorial) path connecting $p$ and $q$ that intersects only  the four gaps $ \Sigma(\mathcal A, i)$, $\Sigma(\mathcal A, i')$, $\Sigma(\mathcal B, j)$, and $\Sigma(\mathcal B, j')$. In particular, for every point $x$ on the path between $p$ and $q$, we have $ \abs{\cheight(x)} \geq k (n-\abs {i} - 1) + \ell(n-\abs j - 1)$.
\end{lemma}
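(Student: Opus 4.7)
The plan is to place the combinatorial path from $p$ to $q$ inside a ``plus-shaped'' subcomplex $K$ of $D_n$. First I would choose signs $\varepsilon, \eta \in \set{-1, +1}$ with $\abs{i + \varepsilon}, \abs{j + \eta} \leq n-1$ and set $i' \coloneq i + \varepsilon$, $j' \coloneq j + \eta$; such choices exist because $\abs i, \abs j \leq n-2$. Let $K$ denote the subcomplex of $D_n$ obtained as the union of the four prescribed gaps together with the two bridge-clusters---of $k$ and $\ell$ bridges respectively---separating $\Sigma(\mathcal A, i)$ from $\Sigma(\mathcal A, i')$ and $\Sigma(\mathcal B, j)$ from $\Sigma(\mathcal B, j')$. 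By construction, any path contained in $K$ avoids all gaps other than the four allowed ones.

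The two $\mathcal A$-gaps joined by the $\mathcal A$-bundle form a single connected vertical strip $V$ of the square, and the two $\mathcal B$-gaps joined by the $\mathcal B$-bundle form a horizontal strip $H$; since $V$ and $H$ necessarily cross inside the square, their union $K$ is connected. The $1$-skeleton of $K$ is then connected as a subgraph of $D_n$, so I can fix an arbitrary vertex $q \in R_{i', j'} \subset K$ and connect $p$ to $q$ by a combinatorial path inside $K$. For the height estimate, by \cref{corridor-height-of-region} every vertex $x \in R_{i'', j''}$ with $i'' \in \set{i, i'}$ and $j'' \in \set{j, j'}$ satisfies
\[
    \abs{\cheight(x)} = k(n - \abs{i''}) + \ell(n - \abs{j''}) \geq k(n - \abs i - 1) + \ell(n - \abs j - 1),
\]
since $\abs{i''} \leq \abs i + 1$ and $\abs{j''} \leq \abs j + 1$. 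For $x$ lying inside a bundle, $\cheight(x)$ differs from its value in the adjacent region by the number of bundle bridges the path has crossed---at most $k$ for the $\mathcal A$-bundle and $\ell$ for the $\mathcal B$-bundle---and the worst case, at the intersection of both bundles, still satisfies the required bound exactly.

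The principal technical obstacle is to argue that the $1$-skeleton of $K$ remains connected despite the presence of annuli and half-annuli in $D_n$. This follows from the localisation of such corridors: by \cref{corridor-touching-two-gaps} every $\mathcal C$-annulus touches at most one $\mathcal A$-gap and at most one $\mathcal B$-gap, so it is topologically contained in a single $R_{i'', j''}$ and introduces only holes within $K$ rather than separating it; and by \cref{corridors-in-square} half-annuli are confined to narrow neighbourhoods of the sides of the square, which under the hypothesis $\abs i, \abs j \leq n-2$ lie away from the interior paths we need. Accordingly, no annulus can obstruct the construction.
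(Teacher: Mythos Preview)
Your key claim---``by construction, any path contained in $K$ avoids all gaps other than the four allowed ones''---is false. The vertical strip $V = \Sigma(\mathcal A, i) \cup [\text{$\mathcal A$-bridge bundle}] \cup \Sigma(\mathcal A, i')$ runs the full height of the square, so it meets \emph{every} $\mathcal B$-gap $\Sigma(\mathcal B, j'')$, not only $j$ and $j'$; symmetrically $H$ meets every $\mathcal A$-gap. Hence an arbitrary combinatorial path in $K = V \cup H$ from $p$ to your pre-chosen $q$ is free to wander up an arm of the plus into, say, $R_{i, n-1} \subset V$, where $\cheight = k(n-\abs i) + \ell$ by \cref{corridor-height-of-region}. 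This violates the required lower bound $k(n-\abs i - 1) + \ell(n-\abs j - 1)$ as soon as $\abs j < n-2$. So both the gap-intersection condition in the lemma and the $\cheight$-estimate fail for your path.

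The paper sidesteps this by not pre-selecting $i', j'$ and not building a plus-shape at all. It stays inside the single gap $\Sigma(\mathcal A, i)$, follows any path towards the boundary of the square, and \emph{truncates} at the first vertex $p'$ lying in a $\mathcal B$-gap different from $\Sigma(\mathcal B, j)$; this forces $\abs{j'-j}=1$ and guarantees that the first leg meets only $\Sigma(\mathcal B, j)$ and $\Sigma(\mathcal B, j')$. It then repeats from $p'$ inside $\Sigma(\mathcal B, j')$ to determine $i'$. The resulting L-shaped path lies entirely in $\Sigma(\mathcal A, i) \cup \Sigma(\mathcal B, j')$, where the $\cheight$-bound follows by the same bridge-count as in \cref{corridor-height-of-region}. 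Your final paragraph about annuli and half-annuli threatening the connectedness of $K$ is a red herring: $V$ and $H$ are strips between non-crossing bridges in a disk diagram and hence connected regardless of any annuli inside them; connectedness was never the obstruction.
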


\begin{proof}
	Consider a path, starting from $p$ and ending on the boundary of $D_n$, that is entirely contained in $ \Sigma(\mathcal A,i) $. This path must intersect a $ \mathcal B $-gap different from $ \Sigma(\mathcal B, j) $. Let $p'$ be the first vertex inside such a gap $ \Sigma(\mathcal B, j') $. Clearly, $ \abs{j-j'}=1 $.

	Now, repeat the same argument by taking a path inside $ \Sigma(\mathcal B, j') $ starting at $p'$ and ending on the boundary of $D_n$; let $q$ be the first vertex on this path that lies in a different gap $\Sigma(\mathcal A, i') $. Then, $ \abs{i-i'}=1 $. The union of the two paths, from $ p $ to $p'$ and from $ p' $ to $q$, intersects only the four gaps in the statement. Since this path lies entirely within these four gaps, the estimate for $ \cheight(x) $ follows analogously to the proof of \cref{corridor-height-of-region}.
\end{proof}

Now, we can apply \cref{estimate-area-by-family-of-points} to obtain the cubic lower bound.

\begin{proposition}
	If $\Gamma$ has property \ref{D3}, then $\Dehn{\bbg \Gamma}(n) \succcurlyeq n^3$.
\end{proposition}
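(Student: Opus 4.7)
The plan is to use the explicitly constructed family $w_n = [w_n', w_n'']$, which satisfies $\length{w_n} = O(n)$, and to show that every almost-flat van Kampen diagram for $w_n$ has area $\Omega(n^3)$. Combined with \cref{prop:area-bbg-vs-almost-flat}, this yields $\Dehn{\bbg\Gamma}(n) \succcurlyeq n^3$.

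So fix an arbitrary almost-flat van Kampen diagram $D_n$ for $w_n$. The first step is to observe that the entire corridor framework of this section --- \cref{corridors-in-square,corridor-touching-two-gaps,corridor-height-of-region,distinct-annuli-cubic-case,path-inside-grid} --- applies to $D_n$: their proofs depend only on boundary edge orientations and on the fact that two corridors with non-commuting labels cannot cross, and in particular do not require $D_n$ to be of minimal area. Consequently the bridges, the gaps $\Sigma(\mathcal A, i)$ and $\Sigma(\mathcal B, j)$, and the formula $\kappa(p) = k(n - \abs{i}) + \ell(n - \abs{j})$ on each $R_{i,j}$ are all available, together with the key structural constraint that no annulus can simultaneously intersect two distinct $\mathcal A$-gaps and two distinct $\mathcal B$-gaps.

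Next, for each $i$ with $\abs{i} \leq n - 2$, I would pick $p_i \in R_{i,i}$ and apply \cref{path-inside-grid} to obtain a companion $q_i$ in a neighbouring region $R_{i', j'}$ (with $\abs{i - i'} = \abs{j' - i} = 1$) together with a combinatorial path from $p_i$ to $q_i$ along which $\abs{\kappa(x)} \geq (k + \ell)(n - \abs{i} - 1)$. By \cref{many-annuli}, every vertex of the path is therefore enclosed by at least $k_i \coloneq (k + \ell)(n - \abs{i} - 1) - 2$ annuli. Moreover, the same argument that proves \cref{distinct-annuli-cubic-case} (via \cref{corridor-touching-two-gaps}) shows that no annulus can enclose two vertices lying in regions $R_{a,b}$ and $R_{a', b'}$ with $a \neq a'$ and $b \neq b'$. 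This gives the two disjointness properties required by \cref{estimate-area-by-family-of-points}: no annulus encloses two distinct $p_i$, and no annulus encloses both $p_i$ and $q_i$.

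Applying \cref{estimate-area-by-family-of-points} then yields $\area{D_n} \geq \frac{1}{2} \sum_{\abs{i} \leq n - 2} k_i^2 \asymp n^3$, since the sum is essentially $2 \sum_{j = 1}^{n - 1} ((k + \ell) j - 2)^2$. Combined with $\length{w_n} = O(n)$ and \cref{prop:area-bbg-vs-almost-flat}, this completes the proof. The main subtlety, to which I would devote the most care, is the inspection that the structural lemmas genuinely apply to any almost-flat van Kampen diagram and not only to a minimal one, and the verification that \cref{path-inside-grid} can be applied at every $p_i$ we select; once these points are confirmed, the remaining steps are bookkeeping and a routine summation.
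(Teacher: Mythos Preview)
Your proposal is correct and follows essentially the same route as the paper: pick diagonal points $p_i\in R_{i,i}$, use \cref{path-inside-grid} and \cref{many-annuli} to guarantee many enclosing annuli along a path to a neighbouring region, invoke the \cref{corridor-touching-two-gaps} argument for the two disjointness conditions, and conclude via \cref{estimate-area-by-family-of-points}. Your restriction to $\abs{i}\le n-2$ and your explicit check that no annulus encloses both $p_i$ and $q_i$ are, if anything, slightly more careful than the paper's own write-up.
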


\begin{proof}
	We prove that there exists a constant $ C>0 $, independent on the choice of the almost-flat diagram $D_n$ for $w_n$, such that the area satisfies
	\[
		\area{D_n} \geq \frac 1C n^3 - C.
	\]

	Consider the family $ P=\set{p_i\in R_{i,i} : \abs i \leq n-1 } $. \cref{distinct-annuli-cubic-case} says that there are no annuli enclose $p_i$ and $p_j$ for $i\neq j$. By \cref{path-inside-grid}, for each $i$, we get a point $q_i\in R_{i \pm 1, i \pm 1}$ and a path from $p_i $ to $q_i$, and every point $x$ on this path is enclosed by at least
	\[
		\annheight_+(x) + \annheight_-(x) \geq \abs{\annheight(x)} \geq \abs{\cheight(x)} - 2 \geq (k+\ell)(n-\abs{i}-1)-2
	\]
	many annuli. Thus, the family $P$ satisfies the hypotheses of \cref{estimate-area-by-family-of-points}, and applying the conclusion yields
	\begin{align*}
		\area{D_n} & \geq  \frac12 \sum_{i=-(n-1)}^{n-1} \left((k+\ell)(n-\abs{i} -1)-2\right)^2 \\
		           & \geq \frac1C n^3 -C
	\end{align*}
	for some  constant $ C>0 $ sufficiently large and independent of $n$.
\end{proof}

\subsection{Quartic lower bound}\label{sec:quartic-lower-bound}

In this section, we assume that $ \Gamma $ has property \ref{D4}; that is, it has a maximal reducible subgraph $\Lambda \subgraph \Gamma$ such that $\flag\Lambda$ is not simply connected. Thus, the subgraph $\Lambda$ decomposes as a join $\Lambda' * \Lambda''$, where $\Lambda'$ and $\Lambda''$ are both disconnected. In particular, the subgraph $\Lambda$ is essentially $2$-reducible.

In this case, we may choose the generators $a_1, \dots, a_k$ and $ b_1, \dots, b_\ell $ that satisfy the hypotheses of \cref{non-commuting-sequence}, and with the additional constraint that the vertices $ v_{a_1}$ and $v_{a_k}$ in $\Lambda'$, corresponding to $ a_1 $ and $ a_k $, lie in different connected components of $ \Lambda' $, and the same holds for $v_{b_1}$ and $v_{b_\ell}$ in $\Lambda''$.

With this additional assumption, we show that the area of an almost-flat van Kampen diagram $D_n$ for the word $w_n$, defined in \cref{sec:cubic-lower-bound}, has a quartic lower bound. We begin with the following standard topological result.

\begin{lemma}\label{vertical-or-horizontal-line}
	Let $C$ be a $2$-dimensional contractible cell complex, and let $A \sqcup B$ be a partition of its vertices. Assume that for every $2$-dimensional cell $\sigma$ of $C$, the full subcomplexes of $ \sigma $ whose vertices belong to $A$ and to $B$ are connected or empty.
	Let $\gamma \colon S^1 \to C $ be a combinatorial loop (not necessarily simple) in $C$, obtained by concatenating four combinatorial paths $\gamma_1$, $\gamma_2$, $\gamma_3$, and $\gamma_4$.
	Then, either there exists a combinatorial path connecting the images of $\gamma_1$ and $\gamma_3$, with all vertices in $A$, or there exists a combinatorial path connecting the images of $\gamma_2$ and $\gamma_4$, with all vertices in $B$.
\end{lemma}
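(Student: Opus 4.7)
The plan is to interpret the statement as a discrete Hex-type lemma: one of the two colours must percolate across the loop $\gamma$. I would use the contractibility of $C$ to fill $\gamma$ with a combinatorial disk, and then carry out a planar separation argument on this disk using the hypothesis on $2$-cells.

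First, I would fill the loop $\gamma$ with a combinatorial disk $f \colon D \to C$. Since $C$ is contractible, $\gamma$ is null-homotopic, so such a cellular $f$ exists with $D$ a topological $2$-disk whose boundary is the concatenation of four arcs $\alpha_a, \alpha_b, \alpha_a', \alpha_b'$ mapping respectively to $\gamma_a, \gamma_b, \gamma_a', \gamma_b'$. I would take $f$ minimal (e.g.\ a reduced van Kampen disk), so that each open $2$-cell of $D$ maps homeomorphically to a $2$-cell of $C$, and in particular the boundary of each $2$-cell of $D$ maps combinatorially to the boundary of a $2$-cell of $C$. Pulling back the partition through $f$ gives a partition $A_D \sqcup B_D$ of the vertices of $D$, and the hypothesis on $2$-cells transfers to $D$: for every $2$-cell $\sigma$ of $D$, the full subcomplexes of $\partial\sigma$ on $A_D$ and on $B_D$ are each connected or empty. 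It is enough to find the required path in $D$, as its image under $f$ gives the path in $C$.

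Second, I would prove the disk version. Let $X \subseteq D$ be the full subcomplex spanned by the vertices in $A_D$ that are connected to $\alpha_a$ by an $A_D$-path in the $1$-skeleton of $D$. If $X \cap \alpha_a' \neq \emptyset$ we are done. Otherwise, I would ``trace the combinatorial boundary of $X$'' to build a $B_D$-path from $\alpha_b$ to $\alpha_b'$. The key observation is that in any $2$-cell $\sigma$ meeting both $X$ and $D \setminus X$, the $A_D$-arc on $\partial\sigma$ is entirely contained in $X$ (because the arc itself is an $A_D$-path from a vertex of $X$ to any other $A_D$-vertex on $\partial\sigma$), and is flanked by the $B_D$-arc of $\partial\sigma$; this gives exactly two transition edges per such $2$-cell, both ending in $B_D$. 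Stringing together the $B_D$-arcs of the $2$-cells meeting $X$ along these transition edges produces a cycle of $B_D$-vertices encircling $X$ in $D$. Since $D$ is a planar disk and $X$ contains vertices of $\alpha_a$ but none of $\alpha_a'$, this cycle must separate $\alpha_a$ from $\alpha_a'$ inside $D$, hence meets $\partial D$ only inside $\alpha_b$ and $\alpha_b'$, yielding the desired $B_D$-path.

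The main obstacle is making rigorous the ``encircling cycle'' in the last paragraph: one must check that the $B_D$-arcs of neighbouring $2$-cells genuinely glue along the transition edges into a connected edge-path, and that the hypothesis on each $2$-cell prevents this boundary from branching or falling apart. This amounts to verifying, at each vertex of $D$ with one side in $X$ and the other in $B_D$, that there is a well-defined way to pass from one adjacent $2$-cell meeting $X$ to the next --- a local statement that follows precisely because $A_D$ and $B_D$ each span a connected arc on the boundary of every $2$-cell. Once this local combinatorics is settled, the Jordan curve theorem applied to $D$ turns the cycle into a genuine separator of $\alpha_a$ and $\alpha_a'$, and extracting an edge-path from $\alpha_b$ to $\alpha_b'$ inside the cycle completes the argument.
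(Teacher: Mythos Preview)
Your plan is sound and would lead to a correct proof, but it follows a genuinely different route from the paper.

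The paper argues by contradiction and works directly on $C$, without ever passing to a disk. Assuming neither path exists, it defines a map $f\colon C\to S^1$ on vertices by sending $v\in A$ to $1$ or $-1$ according to whether $v$ is $A$-connected to $\gamma_a$ or not, and $v\in B$ to $i$ or $-i$ according to whether $v$ is $B$-connected to $\gamma_b$ or not. Adjacent vertices never map to antipodes, so $f$ extends canonically to the $1$-skeleton; the hypothesis on $2$-cells ensures that $f$ restricted to the boundary of each $2$-cell misses a point of $S^1$, so it extends over $2$-cells as well. One then checks that $f\circ\gamma$ has degree $1$, contradicting the contractibility of $C$.

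Your approach instead reduces to a planar disk via a filling and then runs a Hex-type boundary-tracing argument. This is more combinatorial and avoids degree theory, at the cost of the two technical steps you correctly flag: arranging that the filling disk has $2$-cells mapping homeomorphically so the cell hypothesis transfers, and carefully gluing the $B_D$-arcs into a separating path. The paper's obstruction-theoretic argument sidesteps both issues in one stroke and uses the full strength of contractibility (rather than just simple connectedness) directly; your argument is closer in spirit to the classical proof of the Hex theorem and would be somewhat longer to write out rigorously, but is equally valid.
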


\begin{remark}
	Note that \cref{vertical-or-horizontal-line} does not assume that the vertices of $\gamma_1$, $ \gamma_3$ belong to $A$, nor that the vertices of $\gamma_2$, $ \gamma_4$ belong to $B$. However, a counterexample cannot be constructed by requiring $\gamma_1$, $\gamma_3$ to be proper subsets of $B$ and $\gamma_2$, $\gamma_4$ to be proper subsets of $A$, since $(\gamma_1 \cup \gamma_3) \cap (\gamma_2 \cup \gamma_4) \neq \emptyset$ (it contains the four points where the paths are concatenated).
\end{remark}

\begin{proof}
	Suppose that this is not the case. We claim that there exists $f \colon C \to S^1$ such that $f \circ \gamma$ is homotopic to a homeomorphism. This leads to a contradiction, as $C$ being contractible implies that the composition $f \circ \gamma$ must be null-homotopic. For the following, we view $ S^1 $ as the unit circle in $ \mathbb C$.

	To define $f$, we first note that a vertex $v \in A$ cannot be connected inside $A$ to both $\gamma_1$ and $\gamma_3$. For $v \in A$, define $f(v) = 1$ if it is connected to $\gamma_1$, and $f(v) = -1$ if it is connected to $\gamma_3$ or to neither. Similarly, for $ v \in B $, define $f(v)=i$ if $v$ is connected to $\gamma_2$, and $f(v) = -i$ otherwise.

	If two vertices $v$ and $v'$ share an edge, then $f(v) \neq - f(v')$. So there is a canonical way to extend $f$ to the $1$-skeleton by choosing the shortest path in $S^1$ connecting the images. Then, if $\sigma$ is a $2$-cell, by hypothesis the vertices $ \partial \sigma \cap A$ are all in the same connected component, so $\restrict f{\partial \sigma}$ is not surjective. Therefore, the map $f$ can be extended to $ \sigma $ continuously.

	It remains to show that $ f \circ \gamma  $ is homotopic to a homeomorphism. This follows since $f$ never assumes the value $-1$ on $\gamma_1$, $-i$ on $\gamma_2$, $1$ on $ \gamma_3 $, or $ i $ on $\gamma_4$, so $f$ is a degree-one map.
\end{proof}

We now use \cref{vertical-or-horizontal-line} to obtain information about the structure of the $\mathcal A$-gaps and $\mathcal B$-gaps.

\begin{lemma}\label{path-outside-annuli}
	In every $\mathcal A$-gap (respectively, $ \mathcal B $-gap) of $D_n$, there is a combinatorial path connecting the two opposite boundary components that does not cross any $\mathcal A$-annuli (respectively, $ \mathcal{B} $-annuli).
\end{lemma}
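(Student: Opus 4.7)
The plan is to apply \cref{vertical-or-horizontal-line} to the closure $C$ of a fixed $\mathcal A$-gap $\Sigma(\mathcal A, i)$, viewed as a contractible $2$-dimensional subcomplex of $D_n$ (it is a topological disk in the plane). The boundary of $C$ splits naturally into four combinatorial paths: the right side of the left bounding $a_k$-bridge ($\gamma_b$), a portion of the top of $\partial D_n$ ($\gamma_a$), the left side of the right bounding $a_1$-bridge ($\gamma_b'$), and a portion of the bottom of $\partial D_n$ ($\gamma_a'$). Partition the vertices of $C$ by declaring $v \in A$ if $v$ is a corner of some $2$-cell belonging to an $\mathcal A$-annulus, and $v \in B$ otherwise. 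The crucial observation is that every transverse edge of an $\mathcal A$-annulus has both its endpoints in $A$, so any combinatorial path all of whose vertices lie in $B$ automatically uses no $\mathcal A$-annulus edges and hence does not cross any $\mathcal A$-annulus.

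Next, I would verify the $2$-cell hypothesis of \cref{vertical-or-horizontal-line}. If a $2$-cell $\sigma$ of $C$ lies in an $\mathcal A$-annulus, then all four corners of $\sigma$ belong to $A$, so the full subcomplex on $A$ equals $\partial \sigma$ (connected) and the subcomplex on $B$ is empty. Otherwise, $\sigma$ has no $\mathcal A$-annulus edges at all, and vertices of $\sigma$ in $A$ arise solely from neighboring $\mathcal A$-annulus $2$-cells meeting $\sigma$ at these vertices. A short planarity argument, tracking how an $\mathcal A$-annulus can approach a corner of $\sigma$ from outside (together with the fact that two distinct $\mathcal A$-annuli touching $\sigma$ at diagonally opposite corners would be forced to cross each other or cross one of the bounding bridges, contradicting \cref{corridors-in-square,no-annuli-and-bigons}) shows that the $A$-vertices of $\sigma$ form a connected sub-arc of $\partial\sigma$.

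Then, \cref{vertical-or-horizontal-line} produces either (a) an $A$-path from $\gamma_a$ to $\gamma_a'$, or (b) a $B$-path from $\gamma_b$ to $\gamma_b'$. Alternative (b) gives exactly the desired combinatorial path from the left bridge to the right bridge of the gap. To rule out (a), I would argue that every $\mathcal A$-annulus is a simple closed curve lying in the interior of $D_n$: its transverse $a$-edges cannot sit on $\partial D_n$ because every boundary $a$-edge is paired into a bridge by \cref{corridors-in-square}. Consequently, a vertex of $\partial C$ lying on $\gamma_a \cup \gamma_a'$ cannot belong to $A$, since it would have to be incident to a transverse $\mathcal A$-annulus edge on $\partial D_n$. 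This immediately makes an $A$-path from $\gamma_a$ to $\gamma_a'$ impossible, forcing (b). The analogous argument with $\mathcal A$ and $\mathcal B$ interchanged handles the $\mathcal B$-gap case.

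I expect the main obstacle to be the rigorous verification of the $2$-cell hypothesis in the non-annulus case above, which requires a careful local planarity analysis of how distinct $\mathcal A$-annuli and bridges can meet at a single $2$-cell; once this is in hand, the exclusion of alternative (a) follows directly from the corridor structure established in \cref{corridors-in-square}.
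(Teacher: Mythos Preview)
Your argument has a genuine gap: it never invokes the hypothesis \D4, yet this hypothesis is essential. Concretely, your exclusion of alternative~(a) rests on the claim that no vertex of $\gamma_a \cup \gamma_a'$ can be a corner of an $\mathcal A$-annulus $2$-cell. This is false. An $\mathcal A$-annulus cell can certainly have a corner on $\partial D_n$: nothing prevents a side edge of such a cell from lying on the boundary (its label need only commute with the annulus label, and the $b_1$-edge on $\gamma_a$ qualifies), and even without that, a high-degree boundary vertex can be a corner of an interior cell without any edge of that cell lying on $\partial D_n$. So alternative~(a) cannot be ruled out this way, and your ``short planarity argument'' for the $2$-cell hypothesis is likewise unsubstantiated.

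There is also a direction error: the path the lemma produces must run from the top to the bottom of the gap (the two segments of $\partial D_n$), not from bridge to bridge; this is what is needed downstream so that the $\mathcal A$-gap path and the $\mathcal B$-gap path intersect at a point of $R_{i,j}$. With the correct orientation, what you must exclude is an $A$-path connecting the two bridge sides, and this is precisely where \D4 enters. The paper's approach is to take the partition ``enclosed by some $\mathcal A$-annulus'' versus ``not enclosed'' (rather than your ``corner of an annulus cell''), and to define, for each enclosed vertex $p$, the connected component $\operatorname{Comp}(p)$ of $\Lambda'$ determined by the labels of the outermost $\mathcal A$-annuli enclosing $p$. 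One checks that $\operatorname{Comp}$ is locally constant along paths of enclosed vertices. An $A$-path from the $a_k$-bridge to the $a_1$-bridge would then force $\operatorname{Comp}$ to equal both the component of $v_{a_k}$ and that of $v_{a_1}$; these differ exactly because of the \D4 assumption that $v_{a_1}$ and $v_{a_k}$ lie in distinct components of $\Lambda'$. Your partition does not support any analogue of this invariant, which is why the argument stalls.
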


\begin{proof}
	\newcommand{\Comp}{\operatorname{Comp}}
	Let $p$ be a vertex in $D_n$ that is enclosed in an $ \mathcal A $-annulus. Consider the set of outermost $ \mathcal A $-annuli that enclose $p$, that is, those not enclosed by any larger $ \mathcal A$-annulus. Since they pairwise cross, all the corresponding generators must commute, so their associated vertices all belong to the same connected component of $ \Lambda ' $. Denote this connected component by $ \Comp(p) $. If $p$ and $p'$ are adjacent vertices in $D_n$ and each is enclosed by at least one $ \mathcal A $-annulus, then there is an $ \mathcal A $-annulus that encloses both, and therefore $ \Comp(p)=\Comp(p') $.

	We now apply \cref{vertical-or-horizontal-line} to the $ \mathcal A $-gap, where the partition of the vertices is given by those enclosed by some $ \mathcal A $-annulus and those that are not. Suppose by contradiction that there is no path that connects the boundary components of an $\mathcal{A}$-gap and lies outside every $ \mathcal A $-annulus. Then, there is a path $L$ connecting the $ a_k $-corridor to the $ a_1 $-corridor such that every vertex of $L$ lies in at least one $ \mathcal A $-annulus. Therefore, repeating the argument in the previous paragraph on the vertices of $L$ gives $ \Comp(p)=\Comp(q) $, where $p$ and $q$ are the endpoints of $L$.

	However, all $ \mathcal A$-annuli enclosing $p$ must cross the $ a_k $-corridor, so $ \Comp(p) $ is the connected component containing $ v_{a_k} $. Similarly, the connected component $ \Comp(q) $ contains $ v_{a_1} $. Since we assumed that $ v_{a_1} $ and $ v_{a_k} $ belong to different connected components of $ \Lambda' $, we obtain a contradiction.
\end{proof}

We apply \cref{path-outside-annuli} to the gaps $\Sigma( \mathcal A, i)$ and $ \Sigma(\mathcal B, j) $ and deduce the existence of two paths $\gamma \subseteq \Sigma(\mathcal A, i)$ and $\gamma' \subseteq \Sigma(\mathcal B,j)$, whose vertices are not enclosed by any $\mathcal A$-annuli and any $\mathcal B$-annuli, respectively. By intersecting $\gamma$ and $\gamma'$, we obtain a vertex $ p_{i,j} \in R_{i,j}=\Sigma(\mathcal A, i) \cap \Sigma(\mathcal B, j)$ that is not enclosed by any $\mathcal A$-annulus or any $\mathcal B$-annulus.

We are now ready to apply \cref{estimate-area-by-family-of-points} to establish the quartic lower bound.

\begin{proposition}
	If $\Gamma$ has property \D4, then $\Dehn{\bbg \Gamma}(n) \succcurlyeq n^4$.
\end{proposition}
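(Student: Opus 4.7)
The plan is to upgrade the cubic lower bound of \cref{sec:cubic-lower-bound} by applying \cref{estimate-area-by-family-of-points} to a two-dimensional grid of points $P = \{p_{i,j} : |i|, |j| \leq n-2\}$ of size of order $n^2$, using the same word $w_n = [w_n', w_n'']$ and the same almost-flat van Kampen diagram $D_n$. Each point will contribute of order $n^2$ to the sum, yielding the desired $n^4$ lower bound. The extra flexibility over the cubic case comes from \D4 via \cref{path-outside-annuli} together with the remark following it: for every $(i,j)$ with $|i|, |j| \leq n-2$, intersecting an $\mathcal A$-annulus-avoiding path in $\Sigma(\mathcal A, i)$ with a $\mathcal B$-annulus-avoiding one in $\Sigma(\mathcal B, j)$ produces a vertex $p_{i,j} \in R_{i,j}$ not enclosed by any $\mathcal A$- or $\mathcal B$-annulus.

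The key combinatorial step is to verify that no annulus of $D_n$ encloses two distinct points of $P$. By \cref{corridor-touching-two-gaps} (and its symmetric counterpart), any annulus whose label commutes with all of $\mathcal A$ must lie in $\mathcal B$ and vice versa, so an $\mathcal A$-annulus is confined to a single $\mathcal A$-gap, a $\mathcal B$-annulus to a single $\mathcal B$-gap, and a $\mathcal C$-annulus (with label in $\raaggens \setminus (\mathcal A \cup \mathcal B)$) to a single region $R_{i,j}$. Consequently, $\mathcal A$- and $\mathcal B$-annuli never enclose any $p_{i,j}$ by our construction, while a $\mathcal C$-annulus confined to $R_{i,j}$ can only enclose vertices in $R_{i,j}$ and therefore cannot reach any $p_{i',j'}$ with $(i',j') \neq (i,j)$.

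For each $p_{i,j}$ I would then apply \cref{path-inside-grid} to obtain a vertex $q_{i,j} \in R_{i',j'}$ with $|i-i'|=|j-j'|=1$ together with a combinatorial path from $p_{i,j}$ to $q_{i,j}$ staying in only four gaps. Since $q_{i,j}$ lies in a different region than $p_{i,j}$, the same trichotomy of annulus types shows that no annulus encloses both. Every vertex $x$ on this path satisfies $|\cheight(x)| \geq k(n-|i|-1) + \ell(n-|j|-1)$, so \cref{many-annuli} gives that $x$ is enclosed by at least $K_{i,j} \coloneq k(n-|i|-1) + \ell(n-|j|-1) - 2$ annuli. Applying \cref{estimate-area-by-family-of-points} then yields
\[
    \area{D_n} \;\geq\; \frac{1}{2} \sum_{|i|,|j|\leq n-2} K_{i,j}^2 \;\geq\; \frac{1}{C}\,n^4 - C
\]
for a suitable constant $C>0$, since the indices with $|i|,|j| \leq n/4$ alone already contribute of order $n^2$ terms each of size of order $n^2$. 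Combined with $\length{w_n} = O(n)$ and \cref{prop:area-bbg-vs-almost-flat}, this gives $\Dehn{\bbg\Gamma}(n) \succcurlyeq n^4$.

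The main subtlety is the confinement of each $\mathcal C$-annulus inside a single $R_{i,j}$, which ultimately rests on \cref{star-of-graph-is-other-graph} and the maximality of the reducible subgraph $\Lambda$; once this trichotomy is in place, the argument is essentially a two-dimensional promotion of the cubic proof.
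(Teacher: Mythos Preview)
Your proposal is correct and follows essentially the same approach as the paper's own proof: construct the two-parameter family $p_{i,j}$ via \cref{path-outside-annuli}, verify that no annulus encloses two of them, feed the paths from \cref{path-inside-grid} into \cref{estimate-area-by-family-of-points}, and sum. Your restriction to $|i|,|j|\leq n-2$ is in fact slightly more careful than the paper, which sums over $|i|,|j|\leq n-1$ even though \cref{path-inside-grid} is only stated for $|i|,|j|\leq n-2$; this makes no difference asymptotically.

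One small point of phrasing: your ``trichotomy'' sentence says an $\mathcal A$-annulus is \emph{confined to} a single $\mathcal A$-gap and a $\mathcal C$-annulus to a single $R_{i,j}$, but \cref{corridor-touching-two-gaps} only gives that such annuli \emph{intersect at most one} gap of each type (they may also extend outside all gaps). This imprecision is harmless, since the facts you actually use---that no $\mathcal A$- or $\mathcal B$-annulus encloses $p_{i,j}$ (by construction), and that a $\mathcal C$-annulus enclosing two $p_{i,j}$'s would intersect two distinct $\mathcal A$-gaps or two distinct $\mathcal B$-gaps---are correct. The paper argues this last point directly without splitting into annulus types: any annulus enclosing $p_{i,j}$ and $p_{i',j'}$ with $i\neq i'$ must touch two $\mathcal A$-gaps, hence be a $\mathcal B$-annulus, which is excluded by the choice of $p_{i,j}$.
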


\begin{proof}
	We claim that the area of the almost-flat van Kampen diagram $D_n$ for $w_n$ is bounded below by a quartic polynomial.	First, note that there is no annulus enclosing two distinct points $ p_{i,j} $ and $ p_{i',j'} $. Suppose, for instance, that $ i \neq i' $ (the case $j\neq j'$ is similar). Then an annulus enclosing both $ p_{i,j} $ and $ p_{i',j'} $ would have to intersect different $ \mathcal A $-gaps. By \cref{corridor-touching-two-gaps}, it must be a $ \mathcal B $-annulus, but this is ruled out by the definition of $ p_{i,j} $.

	By \cref{path-inside-grid}, we obtain a path connecting $ p_{i,j} $ to some other point $q_{i,j} \in R_{i\pm1, j\pm1}$, with every point along the path contained in at least $ k(n - \abs i - 1) + \ell(n - \abs j - 1) - 2$ many annuli. By the argument above, no annulus can enclose both $ p_{i,j} $ and $ q_{i,j} $. Therefore, we can apply \cref{estimate-area-by-family-of-points} to the family $ p_{i,j} $ and obtain
	\begin{align*}
		\area{D_n} & \geq  \frac12 \sum_{i=-(n-1)}^{n-1} \sum_{j=-(n-1)}^{n-1} \left(k(n-\abs{i} -1)+\ell(n-\abs j -1)-2\right)^2 \\
		           & \geq \frac1C n^4 -C
	\end{align*}
	for some constant $ C>0 $ sufficiently large and independent of $n$.
\end{proof}

\bibliographystyle{alpha}

\bibliography{bib}

\end{document}